\title{Distribution of Ruelle resonances for real-analytic Anosov diffeomorphisms.}
\author{Malo Jézéquel\footnote{Malo Jézéquel, CNRS, Univ. Brest, UMR 6205, Laboratoire de Mathématiques de Bretagne Atlantique, France. Email: malo.jezequel@math.cnrs.fr}}
\date{}
\newcommand{\set}[1]{\left\{ #1 \right\}}
\newcommand{\p}[1]{\left(#1\right)}
\newcommand{\n}[1]{\left\|#1\right\|}
\newcommand{\va}[1]{\left|#1\right|}
\newcommand{\brac}[1]{\langle #1 \rangle}
\DeclareMathOperator{\Diff}{Diff}
\DeclareMathOperator{\Anos}{Anos}
\DeclareMathOperator{\supp}{supp}
\DeclareMathOperator{\tr}{tr}
\DeclareMathOperator{\im}{Im}
\DeclareMathOperator{\re}{Re}
\DeclareMathOperator{\SL}{SL}
\DeclareMathOperator{\Id}{Id}
\newtheorem{thm}{Theorem}
\newtheorem{lemma}{Lemma}[section]
\newtheorem{proposition}[lemma]{Proposition}
\newtheorem{corollary}[thm]{Corollary}
\theoremstyle{remark}
\newtheorem{remark}[lemma]{Remark}
\theoremstyle{definition}
\newtheorem{definition}[lemma]{Definition}
\begin{document}
\maketitle
\begin{abstract}
We prove an upper bound for the number of Ruelle resonances for Koopman operators associated to real-analytic Anosov diffeomorphisms: in dimension $d$, the number of resonances larger than $r$ is a $\mathcal{O}(|\log r|^d)$ when $r$ goes to $0$. For each connected component of the space of real-analytic Anosov diffeomorphisms on a real-analytic manifold, we prove a dichotomy: either the exponent $d$ in our bound is never optimal, or it is optimal on a dense subset. Using examples constructed by Bandtlow, Just and Slipantschuk, we see that we are always in the latter situation for connected components of the space of real-analytic Anosov diffeomorphisms on the $2$-dimensional torus.
\end{abstract}
\section{Introduction}

Anosov diffeomorphisms are extensively studied hyperbolic dynamical systems. Due to their chaotic properties, one often studies the global or statistical behavior of these maps (rather than the pointwise dynamics). For instance, it is well-established that Anosov diffeomorphisms have many invariant measures with rich ergodic properties (see for instance Bowen's textbook \cite{bowen_book}). A fruitful approach to the study of the statistical properties of dynamical systems is the so-called \emph{functional approach}: instead of studying the dynamics itself, one considers associated composition operators (sometimes referred as the Koopman and (Ruelle) transfer operators). This field has been very active in the last two decades. Indeed, the appearance of the notion of \emph{spaces of anisotropic distributions} adapted to hyperbolic dynamics made possible to sharpen the understanding of the spectral properties of the associated composition operators. This line of work has been developed by many authors, see for instance \cite{blank_keller_liverani, gouezel_liverani_1, gouezel_liverani_2, baladi_tsujii, baladi_tsujii_determinant, faure_roy_sjostrand} in the context of Anosov diffeomorphisms. The textbook \cite{baladi_book} gives an introduction to the functional approach of hyperbolic dynamical systems, discussing modern techniques and including recent references. The interested reader may also refer to \cite{baladi_quest} for a discussion of the different approach of the construction of spaces of anisotropic distributions adapted to hyperbolic dynamics.

The notion of \emph{Ruelle resonances} is central in the functional approach to Anosov diffeomorphisms. Ruelle resonances are eigenvalues of composition operators that are relevant to understand statistical properties of Anosov diffeomorphisms. The goal of this paper is to discuss upper and lower bounds on the number of Ruelle resonances for \emph{real-analytic} Anosov diffeomorphisms. Before stating our main results, we will need to recall some basic facts about the spectral properties of composition operators associated to Anosov diffeomorphisms.

\subsection{Functional approach to Anosov diffeomorphisms}

Let us give more details on the functional approach of statistical properties of hyperbolic diffeomorphism. If $F$ is a $C^\infty$ Anosov diffeomorphism (see \S \ref{subsection:Anosov_diffeomorphisms} for the definition) on a compact manifold $M$, one may understand the statistical properties of $F$ by studying the associated \emph{weighted Koopman operators} defined by 
\begin{equation}\label{eq:def_Koopman}
\mathcal{L}_{F,w} : u \mapsto w. u \circ F,
\end{equation} 
where $w$ is a $C^\infty$ function on $M$, and $\mathcal{L}_{F,w}$ acts on functions or distributions on $M$ for instance. By letting the operator $\mathcal{L}_{F,w}$ act on suitable spaces of anisotropic distributions (adapted to the geometry of the hyperbolic splitting of $F$), one can define a notion of spectrum for $\mathcal{L}_{F,w}$: the \emph{Ruelle spectrum} (whose elements are called the Ruelle resonances). In the context of $C^\infty$ Anosov diffeomorphisms and weights, we can use the following result to define the Ruelle spectrum.

\begin{thm}[Theorem 2.3 in \cite{gouezel_liverani_1}]\label{theorem:resonances}
Let $M$ be a closed $C^\infty$ manifold and $F$ be a $C^\infty$ Anosov diffeomorphism on $M$. Let $w$ be a $C^\infty$ function on $M$. Then the family $z \mapsto (z - \mathcal{L}_{F,w})^{-1}$ of operators from $C^\infty(M)$ to $\mathcal{D}'(M)$, defined for $z \in \mathbb{C}$ large by
\begin{equation*}
(z - \mathcal{L}_{F,w})^{-1} : u \mapsto \sum_{n \geq 0} z^{-(n+1)} \mathcal{L}_{F,w}^n u,
\end{equation*}
admits a meromorphic extension to $\mathbb{C} \setminus \set{0}$ with residues of finite rank.
\end{thm}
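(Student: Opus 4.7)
The plan is to prove the result by constructing a family of anisotropic Banach spaces of distributions on $M$ on which $\mathcal{L}_{F,g}$ acts as a quasi-compact operator whose essential spectral radius becomes arbitrarily small, and then identifying the meromorphic extension with the resolvents on these spaces.

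First, I would use the Anosov hypothesis to fix a continuous $DF$-invariant splitting $TM = E^s \oplus E^u$ together with adapted cones in the cotangent bundle. For parameters $p,q \geq 0$, I would then define a Banach space $\mathcal{B}^{p,q}(M)$ of distributions that are (heuristically) of $C^p$ regularity in the unstable conormal directions and of Sobolev order $-q$ in the stable conormal directions. Two standard implementations are available: the geometric one of Gouëzel--Liverani, using a finite atlas and anisotropic H\"older-type seminorms on unstable leaves paired against test functions transverse to them, or the microlocal one of Faure--Roy--Sj\"ostrand, using a pseudodifferential weight $\hat{A}(x,\xi)$ that is large on the stable cone and small on the unstable cone. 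Either way, one checks the continuous embeddings $C^\infty(M) \hookrightarrow \mathcal{B}^{p,q}(M) \hookrightarrow \mathcal{D}'(M)$.

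Second, I would show that $\mathcal{L}_{F,g}$ extends to a bounded operator on $\mathcal{B}^{p,q}(M)$ and prove a Lasota--Yorke inequality of the form
\begin{equation*}
\n{\mathcal{L}_{F,g}^{n} u}_{\mathcal{B}^{p,q}} \leq C_1 r(p,q)^{n} \n{u}_{\mathcal{B}^{p,q}} + C_{2,n} \n{u}_{\mathcal{B}^{p-1,q-1}},
\end{equation*}
where $r(p,q)$ involves the minimal expansion on $E^u$ raised to a negative power depending on $p$ and the minimal contraction on $E^s$ raised to a negative power depending on $q$, and $r(p,q) \to 0$ as $p,q \to \infty$. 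The essential point is the hyperbolic gain: differentiation along the unstable direction contracts under $F$, and the Sobolev weight along the stable direction contracts under $F^{-1}$; the smooth weight $g$ contributes only through its sup-norm. Combined with the compactness of the embedding $\mathcal{B}^{p,q} \hookrightarrow \mathcal{B}^{p-1,q-1}$, Hennion's theorem then yields $r_{\mathrm{ess}}(\mathcal{L}_{F,g}|_{\mathcal{B}^{p,q}}) \leq r(p,q)$.

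Third, I would conclude by a standard gluing argument. On each space $\mathcal{B}^{p,q}$, the resolvent $(z - \mathcal{L}_{F,g})^{-1}$ is meromorphic on $\set{|z| > r(p,q)}$ with finite-rank residues (the spectral projectors for isolated eigenvalues of finite multiplicity). Composing with the inclusions $C^\infty(M) \hookrightarrow \mathcal{B}^{p,q} \hookrightarrow \mathcal{D}'(M)$, one gets a meromorphic family of operators $C^\infty(M) \to \mathcal{D}'(M)$ on this domain, agreeing with the Neumann series for large $|z|$. Letting $p,q \to \infty$ exhausts $\mathbb{C}$, and a uniqueness-of-meromorphic-continuation argument shows the extensions on overlapping annuli coincide; the eigenvalues and residues they predict are intrinsic (in particular independent of the choice of space).

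The main obstacle is the second step: establishing the Lasota--Yorke inequality requires careful bookkeeping of how the adapted cones transform under $F$ (non-smoothness of $E^s, E^u$ forces one to work with cones rather than with the splitting directly), and controlling commutators or pseudodifferential remainders so that the subprincipal contributions really land in the weaker norm $\mathcal{B}^{p-1,q-1}$. Everything else is essentially functional-analytic packaging once this key estimate is in hand.
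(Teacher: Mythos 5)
The paper does not prove this theorem; it is cited as Theorem~2.3 of Gouëzel--Liverani \cite{gouezel_liverani_1} and used as a black box. Your outline accurately reproduces the strategy of that reference (and of the parallel microlocal treatment in \cite{faure_roy_sjostrand}): anisotropic Banach spaces $\mathcal{B}^{p,q}$ sandwiched between $C^\infty(M)$ and $\mathcal{D}'(M)$, a Lasota--Yorke inequality yielding quasi-compactness via Hennion's theorem with essential spectral radius $r(p,q) \to 0$, and a gluing argument across $p,q$ identifying the meromorphic continuations on nested annuli and showing the poles and residues are intrinsic. You have also correctly isolated the delicate point, namely that $E^s$ and $E^u$ are only H\"older continuous, which forces one to work with invariant cone fields rather than the splitting itself and to control the pseudodifferential or commutator remainders so that the subprincipal terms really land in the weaker norm.
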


The poles of $(z - \mathcal{L}_{F,w})^{-1}$ are called the (Ruelle) resonances of $\mathcal{L}_{F,w}$. The residues of $(z - \mathcal{L}_{F,w})^{-1}$ are related to spectral projectors of $\mathcal{L}_{F,w}$. The elements of their images are called the (generalized) resonant states of $\mathcal{L}_{F,w}$. The dimension of the image of the residue $\Pi_\lambda$ of $(z - \mathcal{L}_{F,w})^{-1}$ at a resonance $\lambda$ is called the multiplicity of $\lambda$ as a resonance.

When $w$ is positive, the left and right eigenvectors associated to the largest resonance of $\mathcal{L}_{F,w}$ may be used to construct the equilibrium measure $\mu_w$ for $F$ associated to the weight $\log w - \log |J_u|$ where $J_u$ denotes the unstable Jacobian of $F$. The smaller resonances may then be used to describe finer statistical properties of $\mu_w$. In particular, one can make explicit the asymptotics of correlations for this invariant measure for $F$. When $u$ and $v$ are $C^\infty$ functions on $M$, one can get an asymptotic expansion with an arbitrarily small geometric error for 
\begin{equation*}
\int_M u. v \circ F^n \mathrm{d}\mu_w
\end{equation*}
when $n$ goes to $+ \infty$. Further statistical properties of $F$ may be obtained by studying the spectral properties of $\mathcal{L}_{F,w}$: central limit theorem, almost sure invariance principle, \dots

If the case of positive $w$ (in particular the case $w = 1$ that corresponds to the SRB measure) is more relevant from a dynamical point of view, there is no particular reason to restrict to this case if we are merely interested in the spectral properties of $\mathcal{L}_{F,w}$. Hence, we will consider the general case in this paper.

In order to compute the resonances of $\mathcal{L}_{F,w}$, on may introduce the dynamical determinant defined for $z \in \mathbb{C}$ small by
\begin{equation}\label{eq:dynamical_determinant}
d_{F,w}(z) \coloneqq \exp\p{ - \sum_{n \geq 1} \frac{z^n}{n} \sum_{F^n x = x} \frac{\prod_{k = 0}^{n-1} w(F^k x)}{\va{\det\p{ I - D_x F^n}}}}.
\end{equation}
We will need the following important result that relates the holomorphic extension of $d_{F,w}$ with the Ruelle resonances of $\mathcal{L}_{F,w}$.

\begin{thm}[\cite{kitaev, kitaev_corrigendum,liverani_tsujii,baladi_tsujii_determinant}]\label{theorem:determinant}
Let $M$ be a closed $C^\infty$ manifold and $F$ be a $C^\infty$ Anosov diffeomorphism on $M$. Let $w$ be a $C^\infty$ function on $M$. The dynamical determinant $d_{F,w}$ has a holomorphic extension to $\mathbb{C}$ whose zeroes are the inverses of the Ruelle resonances of $\mathcal{L}_{F,w}$ (counted with multiplicities).
\end{thm}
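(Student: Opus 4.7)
The plan is to bootstrap from Theorem \ref{theorem:resonances}: choose a scale of anisotropic Banach spaces $\mathcal{B}_s$ (for instance the Gouëzel--Liverani or Baladi--Tsujii spaces) such that $\mathcal{L}_{F,g} : \mathcal{B}_s \to \mathcal{B}_s$ is bounded, its essential spectral radius satisfies $r_\mathrm{ess}(\mathcal{L}_{F,g}|_{\mathcal{B}_s}) \to 0$ as $s \to \infty$, and the point spectrum of $\mathcal{L}_{F,g}|_{\mathcal{B}_s}$ outside a disk of radius $r_\mathrm{ess}$ coincides with the Ruelle resonances of modulus at least $r_\mathrm{ess}$ (together with their multiplicities). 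Granting such a family, it suffices to construct, for each large $s$, a holomorphic function $d_{F,g}^{(s)}$ on the disk of radius $r_\mathrm{ess}(\mathcal{L}_{F,g}|_{\mathcal{B}_s})^{-1}$ whose zeros are exactly the inverses of the resonances in that disk, and which agrees with $d_{F,g}$ near $0$; letting $s \to \infty$ glues these together into the desired entire extension.

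To construct $d_{F,g}^{(s)}$, I would approximate $\mathcal{L}_{F,g}|_{\mathcal{B}_s}$ by finite-rank operators in a way compatible with some ideal of trace-class type operators (Grothendieck's approximable/nuclear operators, or an $s$-number ideal with sufficiently fast decay). The anisotropic spaces are designed precisely so that $\mathcal{L}_{F,g}$ lies in such an ideal for $s$ large. This yields a Fredholm determinant $\det(I - z\mathcal{L}_{F,g}|_{\mathcal{B}_s})$ that is entire in $z$ on the appropriate disk, whose zeros (with multiplicity) are the inverses of the eigenvalues, i.e.\ of the resonances.

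The heart of the argument, and the step I expect to be the main obstacle, is to check that this Fredholm determinant coincides with the formula \eqref{eq:dynamical_determinant}. Taking logarithmic derivatives reduces this to proving the flat trace identity
\begin{equation*}
\tr^\flat \mathcal{L}_{F,g}^n = \sum_{F^n x = x} \frac{\prod_{k=0}^{n-1} g(F^k x)}{\va{\det(I - D_xF^n)}}
\end{equation*}
for all $n \geq 1$. Formally the Schwartz kernel of $\mathcal{L}_{F,g}^n$ pulled back to the diagonal produces a sum of delta functions supported at periodic points weighted by $1/\va{\det(I - D_xF^n)}$; this is a version of the Atiyah--Bott fixed point formula. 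Making this rigorous is subtle because the kernel is distributional and the diagonal restriction is a priori ill-defined. The strategy I would follow is to approximate $\mathcal{L}_{F,g}^n$ by smoothing in a scale consistent with the Banach space $\mathcal{B}_s$ (e.g.\ a wavelet truncation, or a mollification transverse to the stable/unstable directions), compute the smoothed trace by integration, and check that as the smoothing is removed the contribution concentrates at the hyperbolic fixed points of $F^n$, with Jacobian factors exactly canceling to give the stated formula. The Anosov assumption enters to guarantee that every periodic point is isolated and that $I - D_xF^n$ is invertible, and is also what allows the choice of $\mathcal{B}_s$ in which the approximation argument converges.

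Having established the flat trace formula and the identification of the Fredholm determinant with \eqref{eq:dynamical_determinant} on a neighborhood of $0$, the zeros of $d_{F,g}$ in the disk of radius $r_\mathrm{ess}(\mathcal{L}_{F,g}|_{\mathcal{B}_s})^{-1}$ are, with multiplicity, the inverses of the Ruelle resonances of modulus at least $r_\mathrm{ess}(\mathcal{L}_{F,g}|_{\mathcal{B}_s})$. Since $r_\mathrm{ess}(\mathcal{L}_{F,g}|_{\mathcal{B}_s}) \to 0$, uniqueness of analytic continuation glues the $d_{F,g}^{(s)}$ into an entire function whose zeros are exactly the inverses of all Ruelle resonances, with correct multiplicities.
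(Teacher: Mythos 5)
This is a theorem the paper cites from the literature (\cite{kitaev,kitaev_corrigendum,liverani_tsujii,baladi_tsujii_determinant}) rather than proves, so there is no in-paper proof to compare against; I will compare your sketch to those sources. You do capture the overall flavor of the arguments there: anisotropic Banach spaces, a determinant whose logarithmic derivative is a series of flat traces, an Atiyah--Bott fixed-point computation for those traces, and gluing over $s$ by analytic continuation.

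The genuine gap is your claim that ``the anisotropic spaces are designed precisely so that $\mathcal{L}_{F,g}$ lies in such an ideal for $s$ large'' and, consequently, that $\det(I - z\mathcal{L}_{F,g}|_{\mathcal{B}_s})$ is a bona fide Fredholm determinant. In the $C^\infty$ setting $\mathcal{L}_{F,g}$ on the Gou\"ezel--Liverani or Baladi--Tsujii spaces is \emph{quasi-compact, not compact}: its essential spectral radius $r_{\mathrm{ess}}(s)$ is strictly positive for every fixed $s$, even though $r_{\mathrm{ess}}(s)\to 0$ as $s\to\infty$. A bounded operator is compact if and only if its approximation numbers tend to $0$; here they stay bounded away from $0$ (near $r_{\mathrm{ess}}(s)$), so $\mathcal{L}_{F,g}|_{\mathcal{B}_s}$ is not in any Schatten/$s$-number ideal and its Fredholm--Grothendieck determinant does not exist. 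You cannot simply ``take the determinant'' of the operator on $\mathcal{B}_s$; this is exactly the obstruction that makes the theorem hard, and each cited reference circumvents it by a different device. Baladi--Tsujii decompose $\mathcal{L}_{F,g}$ dyadically in frequency into genuinely nuclear blocks, assign each block a regularized determinant via a flat-trace computation, and recombine; Liverani--Tsujii build a kneading-type determinant directly from periodic-orbit data and identify it with the resolvent on the anisotropic scale; Kitaev has another functional-analytic construction. By contrast, in the real-analytic (and Gevrey) class the transfer operator \emph{is} trace class on a suitable Hilbert space --- that is precisely the content of Theorem~\ref{theorem:anisotropic_space} of this paper --- so in that regularity class the naive Fredholm-determinant route you describe does go through, which is what \S\ref{section:consequences} exploits. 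In short: your flat-trace and gluing steps are fine in spirit, but the determinant you propose to use does not exist at finite $C^\infty$ regularity, and overcoming this is the real content of the theorem.
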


\subsection{Main results}

We are interested in this paper in the distribution of Ruelle resonances. More precisely, if $F$ is a $C^\infty$ Anosov diffeomorphism on a compact manifold $M$ and $w$ a $C^\infty$ function on $M$, we will study the asymptotics when $r$ goes to $0$ of the counting function
\begin{equation*}
N_{F,w}(r) = \# \set{ \lambda \textup{ resonance of } \mathcal{L}_{F,w} \textup{ such that } \va{\lambda} \geq r},
\end{equation*}
where resonances are counted with multiplicities. The results from \cite{local_and_global} suggest that there is actually no general upper bound on $N_{F,w}(r)$ when $r$ goes to $0$. See in particular \cite[Proposition 2.10]{local_and_global} that proves the absence of general upper bound in the more general context of open hyperbolic diffeomorphisms.

However, when $M,F$ and $w$ are real-analytic, one expects a bound of the form
\begin{equation}\label{eq:upper_bound_resonances}
N_{F,w}(r) \underset{r \to 0}{=} \mathcal{O}\p{\va{\log r}^d},
\end{equation}
where $d$ is the dimension of $M$. This bound is suggested by related results on expanding maps \cite{ruelle_zeta_expanding, bandtlow_naud} and local model for the transfer operators associated to real-analytic expanding dynamical systems \cite{naud_local}. The bound \eqref{eq:upper_bound_resonances} can be deduced from the work of Rugh \cite{rugh_correlation, rugh_fredholm_axiomA} when $d=2$. In \cite[Theorem 7]{faure_roy}, Faure and Roy gives a new proof of this bound for $C^1$ small real-analytic perturbations of linear cat maps on the $2$-dimensional torus. Let us also mention related results for real-analytic Anosov flows in \cite{fried_selberg,fried_zeta,BJ20}. Our first result is the validity of the bound \eqref{eq:upper_bound_resonances} in full generality.

\begin{thm}\label{theorem:upper_bound_resonances}
Let $F$ be a real-analytic Anosov diffeomorphism on a real-analytic closed manifold $M$ of dimension $d$. Let $w$ be a real-analytic function on $M$. The number $N_{F,w}(r)$ of Ruelle resonances of $\mathcal{L}_{F,w}$ of modulus more than $r$ satisfies the asymptotic bound
\begin{equation*}
N_{F,w}(r) \underset{r \to 0}{=} \mathcal{O}\p{\va{\log r}^d}.
\end{equation*}
\end{thm}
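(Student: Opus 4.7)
The plan is to invoke Theorem \ref{theorem:determinant} to recast the bound as an upper estimate on the density of zeros of the entire function $d_{F,g}$, and then to realize $d_{F,g}$ as the Fredholm determinant of $\mathcal{L}_{F,g}$ acting on a carefully chosen Hilbert space $\mathcal{H}$ of anisotropic real-analytic type. Since the zeros of $d_{F,g}$ with multiplicity are precisely the inverses of the Ruelle resonances, the statement becomes equivalent to
\begin{equation*}
n(R) \coloneqq \#\set{\text{zeros of } d_{F,g} \text{ in } \va{z} \leq R} = \mathcal{O}\p{\p{\log R}^d} \quad \text{as } R \to +\infty,
\end{equation*}
which I would deduce from a singular value estimate of the form $\mu_n\p{\mathcal{L}_{F,g}|_{\mathcal{H}}} \leq C e^{-c n^{1/d}}$ via Weyl's majorant inequality.

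To build $\mathcal{H}$, I would complexify $M$ to a Grauert tube and, following the FBI/Bergman strategy used by Faure and Roy for analytic perturbations of cat maps (and the parallel constructions available for analytic expanding maps), construct an anisotropic space by gluing local models in charts. Each local model would be a weighted Bergman-type space: functions are required to admit holomorphic extensions to a thin complex tube around $M$ in the unstable cotangent directions (where $F$ contracts the fibers) and are conjugated through an analytic FBI transform to a weighted $L^2$ space carrying a positive order function on the stable cotangent cone. One would then identify $d_{F,g}$, up to a non-vanishing entire factor, with the Fredholm determinant $\det\p{I - z \mathcal{L}_{F,g}|_{\mathcal{H}}}$, using Theorem \ref{theorem:determinant} in the $C^\infty$ category together with density of $C^\infty(M)$ in $\mathcal{H}$.

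The key quantitative step is the singular value bound
\begin{equation*}
\mu_n\p{\mathcal{L}_{F,g}|_{\mathcal{H}}} \leq C e^{-c n^{1/d}}, \qquad n \geq 1.
\end{equation*}
This should come from the classical nuclearity estimate for Bergman spaces: the inclusion of holomorphic functions on a polydisk into holomorphic functions on a concentric smaller polydisk in $\mathbb{C}^d$ has monomial eigenfunctions $z^\alpha$ with weights $\p{r/R}^{\va{\alpha}}$, and since there are $\mathcal{O}\p{k^d}$ multi-indices of length at most $k$, the $n$-th largest such weight is of order $\p{r/R}^{n^{1/d}}$. Because the dynamics strictly contracts the admissible complex polydisks, $\mathcal{L}_{F,g}$ should factor through such nested inclusions and inherit the stretched-exponential decay. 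Weyl's inequality would then force any $N$ eigenvalues of modulus at least $r$ to satisfy $r^N \leq \prod_{j = 1}^N \mu_j \leq C^N e^{-c' N^{1 + 1/d}}$, giving $N = \mathcal{O}\p{\va{\log r}^d}$, which together with the determinant identification yields Theorem \ref{theorem:upper_bound_resonances}.

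The main obstacle is the construction of $\mathcal{H}$: the space must be anisotropic enough to accommodate the mixed hyperbolic behavior of an Anosov map, globally coherent on a general real-analytic manifold (rather than specific models such as the torus handled by Faure and Roy), and yet retain the sharp nuclearity responsible for the $e^{-c n^{1/d}}$ decay rather than a weaker rate. Once such a space is available, the identification of $d_{F,g}$ with a Fredholm determinant and the Weyl estimate are essentially routine; everything is concentrated in the microlocal/complex-analytic construction of $\mathcal{H}$.
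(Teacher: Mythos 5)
Your overall plan matches the paper's: construct a Hilbert space $\mathcal{H}$ on which $\mathcal{L}_{F,g}$ acts with approximation numbers $a_n\leq Ce^{-cn^{1/d}}$ (the paper calls this ``exponential class $1/d$''), identify $d_{F,g}$ with $\det(I-z\,\mathcal{L}_{F,g}|_{\mathcal{H}})$ via a trace formula, and then pass from the determinant bound to the zero-count (the paper uses Jensen's formula inside Lemma~\ref{lemma:order_exponential} rather than Weyl's majorant inequality, but these are interchangeable here). You also correctly localize the difficulty in the construction of $\mathcal{H}$.

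However, your sketch of that construction contains a genuine gap, not just an omission. First, the claim that ``the dynamics strictly contracts the admissible complex polydisks'' so that $\mathcal{L}_{F,g}$ ``should factor through such nested inclusions'' is an expanding-map mechanism and does not apply to an Anosov $F$: the Koopman operator $u\mapsto g\cdot u\circ F$ improves the domain of holomorphy only in the stable direction while degrading it in the unstable one, so there is no nested pair of polydiscs through which $\mathcal{L}_{F,g}$ factors. In the paper, the stretched-exponential decay of the matrix elements comes from an escape function $G$ on $T^*M$ that strictly decreases under the symplectic lift $\mathcal{F}$ (see \eqref{eq:definition_escape_function} and \eqref{eq:decay_escape}); the anisotropic weight $e^{-\gamma G}$ is then responsible for the $e^{-\tau|k|}$ decay in Lemma~\ref{lemma:individual_bound}, and the $\mathcal{O}(m^d)$ lattice-point count converts this to exponential class $1/d$ via Lemma~\ref{lemma:concrete}. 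Second, ``gluing local models in charts'' is precisely what the paper argues you cannot do naively, since there is no real-analytic partition of unity; the analytic FBI transform is not merely a conjugation gadget but is the substitute localization process (via the operators $A_\omega=S\chi_\omega T$, etc.) that makes the global space compatible with analyticity. You mention the FBI transform but do not explain this role, and without it the gluing step has no justification. Everything downstream in your argument is fine, but the singular-value bound --- the one step you cannot take for granted --- rests on the wrong mechanism as written.
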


Theorem \ref{theorem:upper_bound_resonances} is an immediate consequence of the following bound on the dynamical determinant \eqref{eq:dynamical_determinant} associated to $\mathcal{L}_{F,w}$.

\begin{thm}\label{thm:upper_bound_determinant}
Let $F$ be a real-analytic Anosov diffeomorphism on a real-analytic closed manifold $M$ of dimension $d$. Let $w$ be a real-analytic function on $M$. Then, there is a constant $C > 0$ such that for every $z \in \mathbb{C}$ we have
\begin{equation*}
\va{d_{F,w}(z)} \leq C \exp\p{C\p{\log(1 + \va{z})}^{d+1}}.
\end{equation*}
\end{thm}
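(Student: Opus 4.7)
The target bound $\va{d_{F,g}(z)} \leq C \exp\p{C\p{\log(1+\va{z})}^{d+1}}$ is precisely the growth rate expected of a Fredholm determinant $\det(I - zT)$ of an operator $T$ whose singular values decay like $\mu_n(T) \lesssim \exp(-c\, n^{1/d})$. My plan is therefore threefold: (i) realize $d_{F,g}$ as such a Fredholm determinant of $\mathcal{L}_{F,g}$ on an appropriate anisotropic Hilbert space $\mathcal{H}$; (ii) establish the singular-value bound $\mu_n(\mathcal{L}_{F,g}|_\mathcal{H}) \lesssim \exp(-c\, n^{1/d})$; (iii) combine the two.

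For stage (i), I would construct $\mathcal{H}$ using an anisotropic analytic FBI transform, in the spirit of Helffer--Sjöstrand, Faure--Sjöstrand and the more recent analytic microlocal framework of Bonthonneau--Jézéquel. Via the FBI transform, $\mathcal{H}$ embeds into a weighted Bergman-type space of functions on a complex neighborhood of $T^*M$. The weight is chosen both as an escape function for the symplectic lift of $F$ (growing in the unstable cotangent direction and decaying in the stable one, to make $\mathcal{L}_{F,g}$ bounded and compact) and as a weight growing linearly in $\va{\xi}$ at infinity (to encode the real-analytic regularity). A flat-trace / Atiyah--Bott computation, together with Theorem \ref{theorem:determinant}, then identifies the Fredholm determinant $\det(I - z \mathcal{L}_{F,g})_\mathcal{H}$ with the dynamical determinant $d_{F,g}(z)$.

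Stage (ii) is the technical heart of the argument, and I expect it to be the main obstacle. In the FBI representation, the Schwartz kernel of $\mathcal{L}_{F,g}$ extends holomorphically in a complex neighborhood of its support and is exponentially concentrated near the graph of the symplectic lift $(F, {}^{t}DF^{-1})$, with rate controlled by the common analytic radius of $F$ and $g$. Truncating the operator to frequencies $\va{\xi} \leq R$ approximates $\mathcal{L}_{F,g}$ in operator norm to within $\mathcal{O}(e^{-cR})$ by an operator of rank $\mathcal{O}(R^d)$ (since $M$ is $d$-dimensional and the number of FBI modes at frequency $\leq R$ scales like $R^d$). The min--max principle then yields $\mu_n(\mathcal{L}_{F,g}) \lesssim \exp(-c\, n^{1/d})$. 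The delicate point is to build a single weight that is simultaneously an escape function and a good proxy for analytic regularity, despite the fact that the stable/unstable bundles of a general Anosov diffeomorphism are only Hölder continuous; this forces an anisotropic analytic microlocal calculus where the quantitative gains must be tracked carefully uniformly in the frequency cut-off.

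Stage (iii) is then a standard Weyl-type computation: from $\va{\det(I - zT)} \leq \prod_{k \geq 1}(1 + \va{z}\mu_k(T))$ applied to $T = \mathcal{L}_{F,g}$ and the bound from stage (ii),
\[\log \va{d_{F,g}(z)} \leq \sum_{k \geq 1} \log\p{1 + \va{z}\, e^{-c k^{1/d}}} = \mathcal{O}\p{\p{\log(1+\va{z})}^{d+1}},\]
which gives the claim. (The count is standard: the terms with $c k^{1/d} \leq \log\va{z}$ contribute $\mathcal{O}(\log\va{z})$ each and there are $\mathcal{O}((\log\va{z})^d)$ of them, while the tail is summable and bounded.)
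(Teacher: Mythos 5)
Your plan is essentially the paper's: construct an anisotropic Hilbert space $\mathcal{H}$ through a real-analytic FBI transform and escape function, show that $\mathcal{L}_{F,g}$ acts there with singular values decaying like $e^{-cn^{1/d}}$ (``exponential class $1/d$''), identify the Fredholm determinant $\det(I-z\mathcal{L}_{F,g})_{\mathcal{H}}$ with $d_{F,g}(z)$, and finish with the standard entire-function estimate. The one place where the paper's implementation genuinely diverges from what you sketch is the mechanism behind the singular-value estimate. You propose to combine the escape function with a continuous weight growing in $\va{\xi}$, embed $\mathcal{H}$ in a weighted Bergman space, and run a full anisotropic analytic microlocal calculus, and you correctly flag the obstruction: the bundles $E_s^*,E_u^*$ are only Hölder continuous, so the anisotropic symbol is not smooth enough to feed into such a calculus without a careful regularization. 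The paper avoids exactly that difficulty. Instead of weighting a Bergman-type space by a smooth anisotropic symbol, it builds a discrete frame $\set{e_k^\omega}_{(\omega,k)\in\Omega\times\mathbb{Z}^d}$ of chart-localized Fourier modes conjugated through the FBI transform, proves that the matrix coefficients $\brac{\mathcal{L}_{F,g}\tilde e_k^\omega,e_{k'}^{\omega'}}$ concentrate on the set $\set{(\omega,k)\hookrightarrow(\omega',k')}$ with exponentially small off-diagonal tail (Proposition \ref{proposition:localisation_graph} and Lemma \ref{lemma:discard}), and places the escape-function weight $e^{-\gamma G_\omega(k)}$ directly on the coefficients. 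No smoothness of $G$ is ever invoked; only the discrete Lyapunov inequality \eqref{eq:decay_escape}. The rank-$\mathcal{O}(R^d)$ count you want then follows by summing ranks over the lattice $\mathbb{Z}^d$ (Lemma \ref{lemma:concrete}). For the determinant identification, the paper uses the trace formula $\tr\widetilde{\mathcal{L}}^n_{F,g}=\sum_{F^n x=x}\prod_k g(F^kx)/\va{\det(I-D_xF^n)}$ (Proposition \ref{proposition:trace_formula}, itself reduced to a Gevrey-class result rather than a fresh flat-trace argument), which plays precisely the role your Atiyah--Bott step plays. So your proposal is a correct high-level route; the key technical insight of the paper — replacing the hard-to-regularize anisotropic weight by an atomic/frame decomposition so that the escape function never needs to be differentiated — is what you left inside the black box at the point you identified as the main obstacle.
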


A natural question with respect to Theorem \ref{theorem:upper_bound_resonances} is whether the bound is optimal or not. Our main result addresses this question.

\begin{thm}\label{theorem:optimal_dense}
Let $M$ be a closed real-analytic manifold of dimension $d$. Let $\mathcal{A}$ denote the set of $F \in \Anos^\omega(M)$ such that 
\begin{equation}\label{eq:optimality}
\limsup_{r \to 0} \frac{\log N_{F,1}(r)}{\log \va{\log r}} = d.
\end{equation}
If $W$ is a connected component of $\Anos^\omega(M)$ such that $\mathcal{A}\bigcap W \neq \emptyset$ then for every $F \in W$ there is a sequence of elements of $\mathcal{A}$ that converges to $F$ in the $C^\omega$ topology.
\end{thm}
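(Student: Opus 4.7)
By Theorem \ref{theorem:upper_bound_resonances}, the limsup in \eqref{eq:optimality} is always at most $d$, so $F \in \mathcal{A}$ iff for every integers $m, n \geq 1$ there exists $r < 1/n$ with $N_{F,1}(r) > n \va{\log r}^{d - 1/m}$. Denote by $U_{m,n}$ this condition on $F$; then $\mathcal{A} \cap W = \bigcap_{m,n} U_{m,n}$. By Theorem \ref{theorem:determinant}, $N_{F,1}(r)$ is (up to a measure-zero choice of $r$) the number of zeros of $d_{F,1}$ in the open disc $\set{|z| < 1/r}$; since $d_{F,1}$ depends continuously on $F$ uniformly on compacts in $z$, Hurwitz's theorem renders this count lower semicontinuous in $F$, so each $U_{m,n}$ is open in $W$. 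By the Baire category theorem, it is enough to show each $U_{m,n}$ is dense in $W$.

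\textbf{Step 2 (Complexification of a real-analytic path).} Given $F \in W$ and $F_0 \in \mathcal{A} \cap W$, join them by a real-analytic path $\gamma : [0,1] \to W$, using local path-connectedness of $\Anos^\omega(M)$ in the $C^\omega$ topology. Real-analyticity allows one to extend $\gamma$ to a holomorphic map $\tilde\gamma : \Omega \to W_\mathbb{C}$ on a thin complex neighborhood $\Omega \subset \mathbb{C}$ of $[0,1]$, where $W_\mathbb{C}$ denotes a natural complexification (e.g., holomorphic self-maps of a complex neighborhood of $M$). Revisiting the construction of the anisotropic Hilbert spaces underlying Theorem \ref{thm:upper_bound_determinant} with $\tau$-uniform estimates, one verifies that $(\tau, z) \mapsto d_{F_\tau, 1}(z)$ is jointly holomorphic on $\Omega \times \mathbb{C}$, and $\va{d_{F_\tau, 1}(z)} \leq C \exp(C (\log(1 + \va{z}))^{d+1})$ uniformly for $\tau$ in compact subsets of $\Omega$.

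\textbf{Step 3 (Subharmonicity and main obstacle).} A key observation is that $F_0 \in \mathcal{A}$ forces the logarithmic order of $d_{F_0, 1}$ to equal $d+1$: given $N_{F_0, 1}(1/R_k) \geq (\log R_k)^{d - \epsilon}$ along a sequence $R_k \to \infty$, monotonicity of the counting function $n_{F_0,1}(t) := N_{F_0,1}(1/t)$ and Jensen's formula (using $d_{F_0, 1}(0) = 1$) yield
\begin{equation*}
\log \max_{|z| = R_k^2} \va{d_{F_0, 1}(z)} \;\geq\; \int_0^{R_k^2} \frac{n_{F_0, 1}(t)}{t}\, dt \;\geq\; (\log R_k)^{d-\epsilon} \cdot \log R_k \;\gtrsim\; (\log R_k^2)^{d + 1 - \epsilon}.
\end{equation*}
Now suppose for contradiction that some $U_{m, n}$ is not dense in $W$; then there is an interval $I \subset [0,1]$ on which $F_\tau \notin U_{m, n}$, and the canonical product expression of $d_{F_\tau, 1}$ gives $\log \max_{|z|=R}\va{d_{F_\tau, 1}(z)} \leq C(\log R)^{d + 1 - 1/m}$ for $\tau \in I$ and $R$ large. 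The functions $u_R(\tau) := \log \max_{|z|=R}\va{d_{F_\tau, 1}(z)}$ are subharmonic on $\Omega$, bounded by $C(\log R)^{d+1}$ on $\Omega$ and by $C(\log R)^{d+1 - 1/m}$ on $I$. The Phragmén--Lindelöf (two-constants) theorem then delivers
\begin{equation*}
u_R(1) \;\leq\; (1 - \omega_0)\, C(\log R)^{d+1} + \omega_0\, C(\log R)^{d+1 - 1/m} \quad \text{for some } \omega_0 > 0.
\end{equation*}
The main technical obstacle is that this improvement is only in the leading constant, not in the exponent, so it does not immediately contradict the lower bound above. Closing the argument — the most delicate point in the proof — likely requires either iterating the Phragmén--Lindelöf step at many points of $[0,1]$ to drive the constant down, or invoking a Hartogs--Josefson-type pluripotential principle for the normalized PSH family $\tau \mapsto u_R(\tau) / (\log R)^{d+1}$, in order to reduce the effective exponent at $\tau = 1$ strictly below $d + 1 - \epsilon$ and thereby derive the sought contradiction.
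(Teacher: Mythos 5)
Your Step 2 is essentially the paper's Lemma \ref{lemma:family_determinant}, and your Step 3 correctly identifies both the relevant pluripotential-theoretic idea and the obstruction to the naive two-constants argument — the paper does indeed invoke a Hartogs-type result for orders of holomorphic families of entire functions (Propositions 1.39 and 1.40 of Lelong--Gruman), precisely because the Phragm\'en--Lindel\"of step only improves the constant, not the exponent. However, as you acknowledge, Step 3 is not actually closed: you name the missing ingredient as a conjecture rather than proving it, and the Lelong--Gruman machinery (showing that $\tau \mapsto \frac{1}{d+1} - \frac{1}{\rho(\tau)}$ is a pointwise $\limsup$ of subharmonic functions bounded above, hence the set $\{\rho(\tau) < d+1\}$ is polar as soon as $\rho(\tau_0) = d+1$ for some $\tau_0$, and polar Borel sets have Hausdorff dimension zero) is the real content of the proof and cannot be dismissed as a detail.

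There is a second, independent problem in Step 1. The Baire reduction ``it is enough to show each $U_{m,n}$ is dense'' presupposes that $W$ (with the $C^\omega$ topology) is a Baire space. But the $C^\omega$ topology on $\Anos^\omega(M)$ is locally modeled on the inductive limit $\mathcal{V}^\omega = \lim_{\to}\mathcal{V}^{\mathbb{R}}_\epsilon$ of an increasing family of Banach spaces, which is never Baire. The paper avoids this issue entirely: it does not attempt to intersect countably many dense open sets, but rather shows directly that on any real-analytic curve through a point of $\mathcal{A}$ the exceptional set has Hausdorff dimension zero (in particular, it is nowhere dense in the interval), so $\mathcal{A}$-points accumulate on the endpoint of the curve in the $C^\omega$ topology, and then propagates this by a connectedness argument over $W$. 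Your version would need a different mechanism to recombine the $U_{m,n}$'s. (There is also a minor gap in the passage ``then there is an interval $I \subset [0,1]$ on which $F_\tau \notin U_{m,n}$'': non-density of $U_{m,n}$ gives an open set of $W$ avoiding it, not automatically a subinterval of the specific path $\gamma$; one must choose $\gamma$ so that it actually enters that open set on an interval.)
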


\begin{remark}
The equality \eqref{eq:optimality} expresses that the exponent $d$ in \eqref{eq:upper_bound_resonances} is optimal.
\end{remark}

\begin{remark}
In \S \ref{subsection:definitions}, we detail the different topologies that we consider on the space $\Anos^\omega(M)$. In particular, we explain what it means to converge in the $C^\omega$ topology.

The three topologies that we define in \S \ref{subsection:definitions} ($C^1,C^\infty$ and $C^\omega$) induce the same connected components. This is why we did not specify in Theorem \ref{theorem:optimal_dense} which topology we use to define connected components of $\Anos^\omega(M)$.
\end{remark}

\begin{remark}
One may use \cite[Theorem 2.7]{gouezel_liverani_1} to prove that $\mathcal{A}$ is a $G_\delta$ for the $C^\infty$ topology. Notice also that, while we stated Theorem \ref{theorem:optimal_dense} in the most dynamically relevant case $w = 1$, we are going to prove a slightly more general statement, see Theorem \ref{theorem:more_general}.
\end{remark}

The proof of Theorem \ref{theorem:optimal_dense} is an adaptation of the method used by Bandtlow and Naud \cite{bandtlow_naud} in the context of real-analytic expanding maps of the circle. This method is based itself on a strategy in the context of scattering resonances by Christiansen and her coauthors \cite{christiansen_several,christiansen_euclidean,christiansen_potential,christiansen_schrodinger,christiansen_hyperbolic}. 

The main ingredient needed to apply this method in our context is the existence, when $F$ and $w$ are real-analytic, of a Hilbert space $\mathcal{H}$ on which the operator $\mathcal{L}_{F,w}$ defines a compact operator whose singular values decay fast enough (this fact also implies Theorems \ref{theorem:upper_bound_resonances} and \ref{thm:upper_bound_determinant}). Moreover, it is essential for the proof that the space $\mathcal{H}$ is constructed with enough flexibility to deal with the perturbations (even complex) of $F$ and $w$. The construction of the space $\mathcal{H}$ is carried out in \S \ref{section:construction}. 

The main idea in order to construct the space $\mathcal{H}$ is to use the construction from \cite{faure_roy} as a local model. However, due to the absence of real-analytic partition of unity, one cannot easily glue the locally defined spaces to get global spaces, as it is commonly done in the $C^\infty$ case. This issue will be solved by using a real-analytic Fourier--Bros--Iagolnitzer transform to design a process of localization that preserves real-analyticity. We will use the FBI transform described in \cite{BJ20}. However, we will only need to use the most basic properties of this transform, they are recalled in \S \ref{subsection:FBI}.

Functional spaces suited for the study of Koopman (or transfer) operators associated to real-analytic hyperbolic dynamical systems have already been proposed in \cite{faure_roy,first_example,other_example,optimal_examples}. The main novelty of our work is that we are able to design such a space $\mathcal{H}$ without restricting to the case of diffeomorphisms that are close to a linear model, or admit constant cone field. 

Theorem \ref{theorem:optimal_dense} would be less interesting in the absence of examples of real-analytic Anosov diffeomorphism satisfying \eqref{eq:optimality}. The existence of such examples are non-trivial. For instance, if $F$ is a \emph{cat map}, the simplest example of Anosov diffeomorphism, then $1$ is the only resonance of $\mathcal{L}_{F,1}$. The first example of diffeomorphism satisfying \eqref{eq:optimality} we are aware of appear in \cite{first_example}. Related examples are discussed in \cite{other_example}. In \cite{optimal_examples}, Bandtlow, Just and Slipantschuk produce many explicit examples of Anosov diffeomorphisms of the $2$-dimensional torus satisfying \eqref{eq:optimality}. With Theorem \ref{theorem:optimal_dense}, the existence of these examples implies:

\begin{thm}\label{theorem:optimality_torus}
Let $F$ be a real-analytic Anosov diffeomorphism on $\mathbb{T}^2$. Then, there is a sequence $(F_n)_{n \in \mathbb{N}}$ of Anosov diffeomorphisms on $\mathbb{T}^2$ that converges to $F$ in the $C^\omega$ topology and such that 
\begin{equation*}
\limsup_{r \to 0} \frac{\log N_{F_n,1}(r)}{\log \va{\log r}} = 2
\end{equation*}
for every $n \in \mathbb{N}$.
\end{thm}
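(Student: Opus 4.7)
The plan is to combine Theorem \ref{theorem:optimal_dense} with the examples constructed by Bandtlow, Just and Slipantschuk in \cite{optimal_examples}. Given $F \in \Anos^\omega(\mathbb{T}^2)$, let $W$ denote the connected component of $F$ in $\Anos^\omega(\mathbb{T}^2)$. By Theorem \ref{theorem:optimal_dense}, as soon as $\mathcal{A} \cap W \neq \emptyset$, every element of $W$ is a $C^\omega$ limit of a sequence in $\mathcal{A}$; in particular this applies to $F$, which would yield the required sequence $(F_n)_{n \in \mathbb{N}}$. Thus the entire proof reduces to exhibiting a single element of $\mathcal{A}$ in the connected component of the given $F$.

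To carry this out I would first classify the connected components of $\Anos^\omega(\mathbb{T}^2)$. By the Franks--Newhouse theorem every Anosov diffeomorphism of $\mathbb{T}^2$ is topologically conjugate to a hyperbolic linear automorphism of the torus, and the connected components of $\Anos^\omega(\mathbb{T}^2)$ are indexed by the induced action on $\pi_1(\mathbb{T}^2) = \mathbb{Z}^2$, i.e.\ by a hyperbolic matrix $A \in \SL(2,\mathbb{Z})$ with $\va{\tr A} > 2$. So the task becomes: for each such $A$, produce a real-analytic Anosov diffeomorphism in the homotopy class of $A$ that satisfies \eqref{eq:optimality}.

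This is exactly what \cite{optimal_examples} provides. The key verification is that the family of explicit real-analytic Anosov diffeomorphisms built there is rich enough to hit every homotopy class of hyperbolic toral automorphisms (rather than, say, only the standard cat map). Picking one such example $G \in \mathcal{A}$ in the same homotopy class as $F$ places $G$ in $W$, hence $\mathcal{A} \cap W \neq \emptyset$, and Theorem \ref{theorem:optimal_dense} produces the desired sequence converging to $F$.

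The hard part of this proof is not analytical at all, since both Theorem \ref{theorem:optimal_dense} and the existence of examples in $\mathcal{A}$ are imported black-box. The only genuine step is the bookkeeping verification that the Bandtlow--Just--Slipantschuk construction indeed covers every connected component of $\Anos^\omega(\mathbb{T}^2)$; once granted, the statement is an immediate corollary.
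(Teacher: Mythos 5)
Your proposal follows the same overall route as the paper: reduce via Theorem~\ref{theorem:optimal_dense} to exhibiting one element of $\mathcal{A}$ in the connected component of $F$, and get that element from the Bandtlow--Just--Slipantschuk construction in the right homotopy class. However, you gloss over two non-trivial verifications that the paper handles explicitly, and both deserve attention.

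First, the examples in \cite{optimal_examples} have their spectra computed on a bespoke Hilbert space $H_\nu$; it is not automatic that the eigenvalues of $\mathcal{L}_{F,1}$ on $H_\nu$ are the Ruelle resonances as defined via Theorem~\ref{theorem:resonances}. The paper devotes Lemma~\ref{lemma:lesmemes} to exactly this: it checks, via the Lefschetz-type trace computation and Theorem~\ref{theorem:determinant}, that the Fredholm determinant of $\mathcal{L}_{F,1}$ on $H_\nu$ agrees with the dynamical determinant $d_{F,1}$, so the two notions of spectrum coincide. Without this step, membership of the Bandtlow--Just--Slipantschuk maps in $\mathcal{A}$ is not established. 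Second, your claim that "connected components of $\Anos^\omega(\mathbb{T}^2)$ are indexed by the induced action on $\pi_1$" is stated as a black box, but the paper actually assembles it from several ingredients: Manning's result that the induced matrix $A$ is hyperbolic, Franks--Manning topological conjugacy to $A$ via a homeomorphism homotopic to the identity, then \cite[Theorem 1]{space_Anosov} to produce a $C^\infty$ path, and finally a mollification argument to upgrade that to a $C^\omega$ path — which is what is needed since connectedness in $\Anos^\omega(M)$ is defined with respect to the $C^\omega$ topology (or equivalently $C^1$, but one must land inside $\Anos^\omega(M)$). Your one-line classification claim therefore hides a genuine (if routine) step. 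Neither gap is fatal, but both need to be filled to make the argument complete.
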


More concretely, if $M = \mathbb{T}^2$ and $w = 1$, then the exponent $d$ in Theorem \ref{theorem:upper_bound_resonances} is optimal for a dense subset of real-analytic diffemorphisms.

Finally, let us mention that Theorems \ref{theorem:upper_bound_resonances} and \ref{thm:upper_bound_determinant} have an analogue using Gevrey regularity instead of real-analytic regularity, with a slightly simpler proof. This yields quantitative improvements (Theorem \ref{theorem:gevrey_determinant} and Corollary \ref{corollary:upper_bound_resonances_gevrey}) over the results from \cite[\S 2.2]{local_and_global} in the case of Anosov diffeomorphisms. The method of proof of Theorem \ref{theorem:optimal_dense} is restricted to real-analytic regularity, as we need to be able to work with complex perturbations of an Anosov diffeomorphism. However, we can consider complex perturbations of the weight $w$. Since it is easier to construct Gevrey than real-analytic weights, we are able to find another application of the method of proof of Theorem \ref{theorem:optimal_dense}, see Theorem \ref{theorem:optimal_strange}. Results in Gevrey regularity are gathered in Appendix \ref{section:Gevrey}.

\subsection{Structure of the paper}

In \S \ref{section:generalities}, we recall some fundamental facts about Anosov diffeomorphisms and several tools that will be needed for the proof of our main results.

In \S \ref{section:construction}, for $F$ and $w$ real-analytic, we construct a space on which $\mathcal{L}_{F,w}$ as good spectral properties. This is the core of the proof of our main results.

In \S \ref{section:consequences}, we deduce Theorems \ref{theorem:upper_bound_resonances}, \ref{thm:upper_bound_determinant}, \ref{theorem:optimal_dense} and \ref{theorem:optimality_torus}.

In Appendix \ref{appendix:kernel_estimates}, we prove several technical estimates that are needed for the analysis in \S \ref{section:construction}.

In Appendix \ref{section:Gevrey}, we explain how our analysis can be partially adapted to the Gevrey case. In particular, we improve certain results from \cite{local_and_global}.

\section{Generalities}\label{section:generalities}

In this section, we recall several definitions and results that we will need for the proof of our main results. In \S \ref{subsection:Anosov_diffeomorphisms}, we recall the definition of Anosov diffeomorphism. In \S \ref{subsection:definitions}, we introduce notation that are useful when working with real-analytic functions on a manifold. In \S \ref{subsection:koopman}, we discuss briefly Koopman operators associated to real-analytic dynamics. In \S \ref{subsection:FBI}, we recall some basic properties of a real-analytic Fourier--Bros--Iagolnitzer that we will use in the proof of Theorem \ref{theorem:anisotropic_space} below. In \S \ref{subsection:exponential_class}, we discuss a class of operators which is relevant to the study of Koopman operators associated to real-analytic hyperbolic dynamics.

\subsection{Anosov diffeomorphism}\label{subsection:Anosov_diffeomorphisms}

Let $M$ be a smooth closed manifold. Endow $M$ with any Riemannian metric. We recall \cite[Definition 6.4.2]{katok_hasselblatt} that a $C^1$ diffeomorphism $F: M \to M$ is said to be \emph{Anosov} if there is a splitting of the tangent bundle of $M$ as $TM = E_s \oplus E_u$ into the sum of two continuous bundles $E_s$ and $E_u$ invariant by the derivative of $F$, and such that there are constants $C > 0$ and $\lambda > 1$ with
\begin{itemize}
\item for every $x \in M, n \in \mathbb{N}$ and $v \in E_s(x)$, we have $|DF^{n}_x \cdot v| \leq C \lambda^{-n} |v|$;
\item for every $x \in M, n \in \mathbb{N}$ and $v \in E_u(x)$, we have $|DF^{-n}_x \cdot v| \leq C \lambda^{-n} |v|$.
\end{itemize}

As microlocal analysis is more naturally formulated on the cotangent bundle, we will rather work with the related decomposition $T^* M = E_s^* \oplus E_u^*$, where $E_u^*$ and $E_s^*$ are respectively the annihilators of $E_u$ and $E_s$. This convention ensures that for $\lambda > 1$ as above and some $C > 0$, we have
\begin{itemize}
\item for every $x \in M, n \in \mathbb{N}$ and $\xi \in E_s^*(x)$, we have $|{}^t (DF^{n}_x)^{-1} \cdot \xi| \leq C \lambda^{-n} |\xi|$;
\item for every $x \in M, n \in \mathbb{N}$ and $\xi \in E_u^*(x)$, we have $|{}^t (DF^{-n}_x)^{-1} \cdot \xi| \leq C \lambda^{-n} |\xi|$.
\end{itemize}

Let us point out that this definition does not depend on the choice of the Riemannian metric on $M$. In particular, one may choose an adapted metric, that is a metric for which $C = 1$ in the inequalities above.

If $F : M \to M$ is an Anosov diffeomorphism on $M$, then one may construct following \cite{faure_roy_sjostrand} an escape function $G$ on $T^* M$, that is a function that decreases under the action of $F$ on $T^* M$. We will define $G$ as follows. For $(x,\xi) \in T^* M$, we write $\xi = \xi_u + \xi_s$ for the decomposition of $\xi$ with respect to $T^*_x M = E_u^*(x) \oplus E_s^*(x)$, and then
\begin{equation}\label{eq:definition_escape_function}
G(x,\xi) = \va{\xi_s} - \va{\xi_u}.
\end{equation}
By choosing an adapted metric on $M$, we may ensure that for $\xi$ large enough, we have
\begin{equation*}
G(\mathcal{F}(x,\xi)) - G(x,\xi) \leq - C^{-1} \va{\xi},
\end{equation*}
for some constant $C > 0$ and all $(x,\xi) \in T^* M$. Here, $\mathcal{F}$ denotes the symplectic lift of $F$ defined by $\mathcal{F}(x,\xi) = (Fx , {}^t (DF_x)^{-1} \xi)$. For a general diffeomorphims $H$, if we do not want to introduce a specific notation for the symplectic lift, we will denote it by ${}^t D H^{-1}$. Let us point out here that we can give a particularly simple definition of the escape function $G$ because we do not care much about the smoothness of $G$.

\subsection{Real-analytic manifolds and diffeomorphisms}\label{subsection:definitions}

Let us introduce now notation that are useful when working with real-analytic functions on manifolds.

Let $M$ be a real-analytic compact manifold of dimension $d$. For convenience, we endow $M$ with a real-analytic Riemannian metric $g$ (which is possible according to \cite{morrey_embedding}). Let $\widetilde{M}$ denote a complexification for $M$, and endow $\widetilde{M}$ with a real, $C^\infty$ Riemannian metric $\tilde{g}$ (in particular, there is a smooth distance on $\widetilde{M}$). For $\epsilon > 0$ small, we let $(M)_\epsilon$ denotes the Grauert tube \cite{grauert_tube_I,grauert_tube_II} of size $\epsilon$ for $M$:
\begin{equation*}
(M)_\epsilon = \set{\exp_x(iv) : x \in M, v \in T_x M, g_x(v) < \epsilon^2},
\end{equation*}
where $\exp_x$ denotes the holomorphic extension of the exponential map. Here, we will use the Grauert tubes of $M$ as convenient complex neighbourhoods (they are Kähler and pseudoconvex). The Riemannian metric $g$ induces a decomposition of $T(T^* M)$ into a horizontal and a vertical bundle: for every $\alpha = (\alpha_x,\alpha_\xi) \in T^* M$, there is an identification
\begin{equation*}
T_\alpha(T^* M) \simeq T_{\alpha_x} M \oplus T_{\alpha_x}^* M,
\end{equation*}
such that the derivative of the canonical projection $T^* M \to M$ is given in this decomposition by $(u,v) \mapsto u$. We endow $T^* M$ with the Kohn--Nirenberg metric defined for $\alpha \in T^* M$ and $(u,v) \in T_\alpha(T^* M) \simeq T_{\alpha_x} M \oplus T_{\alpha_x}^* M$ by
\begin{equation*}
g_{KN,\alpha}(u,v) = g_{\alpha_x}(u) + \frac{g_{\alpha_x}(v)}{\brac{\alpha}^2},
\end{equation*}
where the Japanese bracket $\brac{\alpha}$ is defined by $\brac{\alpha} = \sqrt{1 + g_{\alpha_x}(\alpha_\xi)}$. Here, we identify a metric with the associated quadratic form. The distance associated to this metric will be denoted by $d_{KN}$. Notice that two points $\alpha,\beta \in T^* M$ are close for the Kohn--Nirenberg distance $d_{KN}$ if $\alpha_x$ and $\beta_x$ are close, $\alpha_\xi$ and $\beta_\xi$ have the same magnitude and, in local coordinates, which makes sense since $\alpha_x$ and $\beta_x$ are close, $|\alpha_\xi - \beta_\xi|$ is small with respect to this magnitude. We will sometimes also need the Japanese bracket $\brac{\va{\alpha}} = \sqrt{1 + \tilde{g}_{\alpha_x}(\alpha_\xi)}$ defined and positive for $\alpha= (\alpha_x,\alpha_\xi)$ in $T^* \widetilde{M}$ (we can identify $T^* \widetilde{M}$ with $T \widetilde{M}$ using a Hermitian metric). When $\epsilon > 0$ is small enough, we may define as above the Grauert tube $(T^* M)_\epsilon$ of $T^* M$ (using the metric $g_{KN}$), that identifies with a subset of $T^* \widetilde{M}$. \emph{Very roughly}, $(T^* M)_{\epsilon}$ may be thought in coordinates as a set of points $(x,\xi)$ such that the imaginary part of $x$ is less than $\epsilon$, and the imaginary part of $\xi$ is less than $\epsilon \brac{\re \xi}$. We refer to \cite[\S 1.1.1.2]{BJ20} for further discussion of this notion (see also \cite[\S 5.1]{upper_bound_resonances}).

Let us fix once for all $\epsilon_0$ such that $(M)_{\epsilon_0}, (M \times M)_{\epsilon_0}$ and $(T^* M)_{\epsilon_0}$ are well-defined and the holomorphic extension of the exponential map $(x,v) \mapsto \exp_x(v)$ for $g$ is well defined on neighbourhood of the zero section of $T(M)_{\epsilon_0}$. Notice that since we do not care about the actual choice of $g$, we could choose the value of $\epsilon_0$.

For $\epsilon \in (0,\epsilon_0)$. We let $\widetilde{\mathcal{O}}_\epsilon$ denotes the space of bounded holomorphic functions on $(M)_\epsilon$, endowed with the supremum norm. We write $\mathcal{O}_\epsilon$ for the closure in $\widetilde{\mathcal{O}}_\epsilon$ of the space of holomorphic functions on $(M)_{\epsilon_0}$. It follows from the Oka--Weil Theorem that the dependance of $\mathcal{O}_\epsilon$ on $\epsilon_0$ is irrelevant. We shall write $\mathcal{O}_\epsilon(M)$ if it is needed to specify the manifold $M$. We will often identify an element of $\mathcal{O}_\epsilon$ with its restriction to $M$, which is unambiguous due to the analytic continuation principle. Hence, if we say that a function $f$ on $M$ belongs to $\mathcal{O}_\epsilon$, strictly speaking it means that it has an extension to $(M)_\epsilon$ that belongs to $\mathcal{O}_\epsilon$, and $\n{f}_{\mathcal{O}_\epsilon}$ denotes the sup norm of this extension. We also let $\mathcal{V}_\epsilon$ be the space of holomorphic sections $X$ of $T (M)_\epsilon$ such that
\begin{equation*}
\va{X}_\infty \coloneqq \sup_{x \in (M)_\epsilon} \sqrt{\tilde{g}_x(X(x))} < + \infty.
\end{equation*}
We endow $\mathcal{V}_\epsilon$ with the norm $\va{\cdot}_\infty$ and let $\mathcal{V}_\epsilon^{\mathbb{R}}$ be the closed subspace of $\mathcal{V}_\epsilon$ consisting of vector fields that are tangent to $M$. We let then $\mathcal{V}^\omega$ be the space of real-analytic vector fields on $M$, that we identify with $\bigcup_{\epsilon \in (0,\epsilon_0)} \mathcal{V}_\epsilon^{\mathbb{R}}$. We can put on $\mathcal{V}^\omega$ the topology of the inductive limit (in the category of locally convex topological vector space)
\begin{equation*}
\mathcal{V}^\omega = \lim_{\to} \mathcal{V}_\epsilon^{\mathbb{R}}.
\end{equation*}
There are other possible topologies on $\mathcal{V}^\omega$: the $C^1$ and the $C^\infty$ topologies. They are the metrizable topologies associated with the uniform convergence of vector fields and respectively of their first derivatives or all their derivatives (which makes sense in coordinates for instance). Notice that these two topologies make $\mathcal{V}^\omega$ a locally convex topological vector space which is not complete.

Let $\Diff^\omega(M)$ denote the space of real-analytic diffeomorphisms from $M$ to itself and $\Anos^\omega(M)$ the subset of $\Diff^\omega(M)$ made of real-analytic Anosov diffeomorphisms on $M$.

Let $\pi : TM \to M$ be the canonical projection and $\exp : TM \to M$ the exponential map associated to $g$. There is a neighbourhood $\mathbb{U}$ in $TM$ of the zero section and a neighbourhood $\mathbb{V}$ of the diagonal in $M \times M$ such that the map $(\pi,\exp) : \mathbb{U} \to \mathbb{V}$ is a real-analytic diffeomorphism. If $X$ is a vector field on $M$ taking values in $\mathbb{U}$, we let $\Psi_X$ denote the map from $M$ to itself defined by
\begin{equation}\label{eq:near_identity}
\Psi_X : x \mapsto \exp_x(X(x)).
\end{equation}
Notice that if $X$ is $C^1$ small, then $\Psi_X$ is a diffeomorphism from $M$ to itself. For $F \in \Diff^\omega(M)$, write
\begin{equation*}
\mathcal{U}_F = \set{ G \in \Diff^\omega(M): (F(x),G(x)) \in \mathbb{V}}.
\end{equation*}
and
\begin{equation*}
\mathfrak{U} = \set{ X \in \mathcal{V}^\omega : \forall x \in M, X(x) \in \mathbb{U} \textup{ and } \Psi_X \in \Diff^\omega(M)}
\end{equation*}
It follows from the definition of $\mathbb{V}$ that the map
\begin{equation*}
H_F : X \mapsto \Psi_X \circ F
\end{equation*}
is a bijection from $\mathfrak{U}$ to $\mathcal{U}_F$. Notice also that $\mathfrak{U}$ is an open neighbourhood of the origin in $\mathcal{V}^\omega$ for the $C^1$ topology (and thus for the $C^\infty$ and $C^\omega$ topologies too). We define the $C^1, C^\infty$ and $C^\omega$ topologies on $\Diff^\omega(M)$ by saying that a a subset $W$ of $\Diff^\omega(M)$ is open if and only if for every $F \in \Diff^\omega(M)$ the set $H_F^{-1}\p{W \cap \mathcal{U}_F}$ is open in $\mathcal{V}^\omega$, respectively for the $C^1,C^\infty$ or $C^\omega$ topology. All these topologies are finer than the $C^1$ topology and consequently, $\Anos^\omega(M)$ is always an open subset of $\Diff^\omega(M)$ \cite{original_anosov}. 

The $C^\omega$ topology on $\Diff^\omega(M)$ is rather intricate. The interested reader may refer to \cite{kriegl_michor_real_analytic} or \cite[Chapter IX]{kriegl_michor_global_analysis} for further discussion of the structure of the space of analytic mappings from $M$ to $M$, including the (infinite-dimensional) real-analytic manifold structure. Fortunately, the only thing that matters to us concerning the $C^\omega$ topology on $\Diff^\omega(M)$ is the associated notion of convergence. A sequence $(F_n)_{n \in \mathbb{N}}$ of real-analytic diffeomorphisms on $M$ converges to $F \in \Diff^\omega(M)$ if and only if there are complex neighbourhoods $U$ and $V$ of $M$ such that $F$ and the $F_n$'s have holomorphic extensions from $U$ to $V$ and the holomorphic extension of $F_n$ converges to the holomorphic extension of $F$ uniformly on $U$. One can use \cite[7.5]{lokalkonvexe} to check that this is indeed the notion of convergence induced by the $C^\omega$ topology. Notice that we will not even need that, since we could use this characterization as a definition. Similarly, we will say that a sequence $(w_n)_{n \in \mathbb{N}}$ converges to $w$ in the $C^\omega$ topology, if the $w_n$'s have holomorphic extension that converges uniformly to a holomorphic extension of $g$ on a complex neighbourhood of $M$. Once again, one may check that this coincides with the notion of convergence associated to the topology on $C^\omega$ defined in \cite[\S 3]{kriegl_michor_real_analytic}.

Most of the time, we will not work directly with the space of real-analytic vector fields, as we will need some control over the size of the domain of the holomorphic extensions of vectors fields we are working with. To do so, let us introduce some additional notation. For $\epsilon \in (0,\epsilon_0)$, we will write $B_{\epsilon,\delta}$ for the ball of radius $\delta$ centered at the origin in $\mathcal{V}_\epsilon$ and $B_{\epsilon,\delta}^{\mathbb{R}}$ for $ B_{\epsilon,\delta} \cap \mathcal{V}_\epsilon^{\mathbb{R}}$. Notice that for any $\epsilon \in (0,\epsilon_0)$, if $\delta$ is small enough then, for every $X \in B_{\epsilon,\delta}^{\mathbb{R}}$, the map $\Psi_X$ given by \eqref{eq:near_identity} is well-defined and a real-analytic diffeomorphism from $M$ to itself.

Now if $\epsilon,\epsilon_2 \in (0, \epsilon_0)$ and $\epsilon_1 \in (0,\epsilon)$, notice that we can find $\delta > 0$ such that for every $X \in B_{\epsilon,\delta}$ the formula \eqref{eq:near_identity} defines a holomorphic embedding $\Psi_X$ of $(M)_{\epsilon_1}$ into $(M)_\epsilon$ such that $\Psi_X(M) \subseteq (M)_{\epsilon_2}$.

\subsection{Koopman operator}\label{subsection:koopman}

For $F \in \Diff^\omega(M)$ and $w \in C^\omega(M,\mathbb{R})$, we recall that the Koopman operator associated to $F$ and $w$ is defined by \eqref{eq:def_Koopman}.

Let $\epsilon \in (0,\epsilon_0)$ and $F \in \Diff^\omega(M)$. Since $F$ is real-analytic, for any $\epsilon'> 0$ small enough, the operator $\mathcal{L}_{F,w}$ is bounded from $\mathcal{O}_\epsilon$ to $\mathcal{O}_{\epsilon'}$ for every $w \in \mathcal{O}_{\epsilon'}$. This follows from the fact that the holomorphic extension of $F$ sends $(M)_{\epsilon'}$ into $(M)_\epsilon$.

Similarly, we can find a $\delta > 0$ such that for $\epsilon' > 0$ small enough and every $X \in B_{\epsilon,\delta}$ the map $\Psi_X \circ F$ sends $(M)_{\epsilon'}$ into $(M)_\epsilon$, and thus, for every $w \in \mathcal{O}_{\epsilon'}$, we may define the operator $\mathcal{L}_{\Psi_X \circ F,w}$ by \eqref{eq:def_Koopman} as a bounded operator from $\mathcal{O}_\epsilon$ to $\mathcal{O}_{\epsilon'}$, and its norm is less than $C \n{w}_{\mathcal{O}_{\epsilon'}}$, for some constant $C> 0$ that depends on $F$ and $\epsilon$, but not on $w$. When $X \in B_{\epsilon,\delta}^{\mathbb{R}}$, that is when $X$ is tangent to $M$, this is a standard Koopman operator associated to the map $\Psi_X \circ F : M \to M$, and its action on any function on $M$ may consequently be defined.

Let us define the formal adjoint of $\mathcal{L}_{\Psi_X \circ F,w}$ by
\begin{equation*}
\mathcal{L}_{\Psi_X \circ F,w}^* = \mathcal{L}_{F^{-1} \circ \Psi_X^{-1}, J (F^{-1} \circ \Psi_X^{-1}) w \circ F^{-1} \circ \Psi_{X}^{-1}},
\end{equation*}
where the Jacobian $J(F^{-1} \circ \Psi_X^{-1})$ is defined by the relation $(F^{-1} \circ \Psi_X^{-1})^* \mathrm{d}x = \pm J(F^{-1} \circ \Psi_X^{-1}) \mathrm{d}x$, with $\mathrm{d}x$ the holomorphic extension of the Riemannian volume of $M$, if $M$ is orientable (and $\pm$ is $+$ if $F$ preserves orientation and $-$ otherwise). If $M$ is not orientable, we define $J(F^{-1} \circ \Psi_X^{-1})$ by going to the bundle of orientation of $M$.

Notice that this is the formal adjoint for the $L^2$ bilinear scalar product (and not the sesquilinear scalar product). This definition makes sense under the same assumption as above (in particular, $X$ does not need to be tangent to $M$ provided it is small enough), and it has the same mapping properties than $\mathcal{L}_{\Psi_X \circ F,w}$. Moreover, for $\epsilon$ small enough and every $u,v \in \mathcal{O}_\epsilon$, we find by a contour shift that
\begin{equation*}
\int_M u (\mathcal{L}_{\Psi_X \circ F,w} v) \mathrm{d}x = \int_M (\mathcal{L}_{\Psi_X \circ F,w}^* u) v \mathrm{d}x.
\end{equation*}
As above, in the non-orientable case this formula is proved by going to the bundle of orientation of $M$ (and the integration is with respect to the Riemannian density on $M$).

\subsection{FBI transform}\label{subsection:FBI}

Let us recall that we want to construct a functional space adapted to a real-analytic hyperbolic dynamics (see Theorem \ref{theorem:anisotropic_space} below). As mentioned in the introduction, we will use a real-analytic Fourier--Bros--Iagolnitzer transform to get a localization procedure that preserves real-analyticity. We will use the real-analytic transform defined in \cite{BJ20}. However, we will only work with the basic properties of this transform (and thus avoid to use the most complicated results from \cite{BJ20}).

We recall that $M$ is a closed real-analytic manifold of dimension $d$. Let us consider a real-analytic FBI transform $T$ on $M$, as in \cite[Definition 2.1]{BJ20}. This is an operator, for instance from $C^\infty(M)$ to $C^\infty(T^* M)$, given by the formula
\begin{equation}\label{eq:fbi_from_kernel}
T u (\alpha) = \int_M K_T(\alpha,y) u(y) \mathrm{d}y,
\end{equation}
for every $u \in C^\infty(M)$ and $\alpha \in T^* M$. Here, $\mathrm{d}y$ denotes the (real-analytic) Riemannian density on $M$ and $K_T$ is a real-analytic kernel that satisfies the following properties:
\begin{itemize}
\item there is $\epsilon \in (0,\epsilon_0)$ such that $K_T$ has a holomorphic extension to $(T^* M)_{\epsilon} \times (M)_\epsilon$;
\item for every $c > 0$, there are $\epsilon \in (0,\epsilon_0)$ and $C > 0$ such that for every $\alpha = (\alpha_x,\alpha_\xi) \in (T^* M)_\epsilon$ and $y \in (M)_\epsilon$ if the distance between $\alpha_x$ and $y$ is larger than $c$ then 
\begin{equation}\label{eq:pseudo_local}
\va{K_T(\alpha,y)} \leq C \exp\p{- \frac{\brac{\va{\alpha}}}{C}};
\end{equation}
\item there are $c > 0, \epsilon \in (0,\epsilon_0)$ and $C > 0$ such that for every $\alpha = (\alpha_x,\alpha_\xi) \in (T^* M)_\epsilon$ and $y \in (M)_\epsilon$ if the distance between $\alpha_x$ and $y$ is less than $c$ then 
\begin{equation}\label{eq:local_descrption}
\va{K_T(\alpha,y) - e^{i \Phi_T(\alpha,y)} a(\alpha,y)} \leq C \exp\p{- \frac{\brac{\va{\alpha}}}{C}}.
\end{equation}
\end{itemize}
In \eqref{eq:local_descrption}, the symbol $a(\alpha,y)$ is a holomorphic function on the set $$\set{ (\alpha,y) \in (T^* M)_\epsilon \times (M)_\epsilon: d(\alpha_x,y) < 2c}$$ that satisfies
\begin{equation*}
C^{-1} \brac{\va{\alpha}}^{\frac{d}{4}} \leq \va{a(\alpha,y)} \leq C \brac{\va{\alpha}}^{\frac{d}{4}}.
\end{equation*}
The phase $\Phi_T$ is holomorphic on the same set and satisfies there
\begin{equation}\label{eq:control_phase}
\va{\Phi_T(\alpha,y)} \leq C \brac{\va{\alpha}}.
\end{equation}
Moreover, for every $\alpha = (\alpha_x,\alpha_\xi) \in T^* M$, we have the relations $\Phi_T(\alpha,\alpha_x) = 0, \mathrm{d}_y \Phi_T(\alpha,\alpha_x)= - \alpha_\xi$ and if $y \in M$ is at distance less than $2c$ from $\alpha_x$ then we have
\begin{equation}\label{eq:coercivity_phase}
\im \Phi_T(\alpha,y) \geq C^{-1} d(\alpha_x,y)^2.
\end{equation}

According to \cite[Theorem 6]{BJ20}, such an operator exists. Moreover, we may (and will) assume that $T$ is an isometry from $L^2(M)$ to $L^2(T^* M)$ (where $M$ and $T^* M$ are associated respectively with the Riemannian density and the volume form $\mathrm{d}\alpha$ associated to the canonical symplectic form). For every $u,v \in C^\infty(M)$, we have
\begin{equation*}
\int_M u \bar{v} \mathrm{d}y = \int_{T^* M} Tu \overline{T v} \mathrm{d}\alpha.
\end{equation*}
The integral on the right hand side converges, as one can check using the non-stationary phase method (i.e. doing repeated integration by parts) that, if $u \in C^\infty(M)$, then $Tu$ decreases faster than the inverse of any polynomial. One may define the formal adjoint $S = T^*$ of $T$ as the operator with real-analytic kernel $K_S(x,\alpha) = \overline{K_T(\bar{\alpha},\bar{x})}$. For $u$ a function on $T^* M$ that decreases faster than the inverse of any polynomial and $x \in M$, we set
\begin{equation*}
S u (x) = \int_{T^* M} K_S(x,\alpha) u(\alpha) \mathrm{d}\alpha.
\end{equation*}
Since $T$ is an isometry, we see that if $u \in C^\infty(M)$ then $S(Tu) = u$. One may object that in \cite{BJ20}, the FBI transform $T$ depends on a small semi-classical parameter $h > 0$, and is only defined and isometric when $h$ is small enough. However, one may reduce to the description given above by taking $h > 0$ small enough and fixed, so that there is a well defined and isometric FBI transform $\widetilde{T}$ and then define the kernel of $T$ as $K_T(\alpha,y) = h^{\frac{n}{2}} K_{\widetilde{T}}(h\alpha,y)$, where for $\alpha = (\alpha_x,\alpha_\xi) \in T^* M$ we set $h \alpha = (\alpha_x,h \alpha_\xi)$. This rescaling makes the parameter $h$ artificially disappear from the notation.

Notice that since the kernel $K_T$ of the FBI transform is real-analytic, formula \eqref{eq:fbi_from_kernel} makes sense when $u$ is a distribution (or even a hyperfunction) and defines a smooth function on $T^* M$. Similarly, one can define $Su$ as a distribution when $u$ is a measuable function on $T^* M$ growing at most polynomially. We refer to \cite[\S 2.1]{BJ20} for further discussions of the mapping properties of $S$ and $T$. It will not play a role here, as we will define our spaces of anisotropic hyperfunctions as completions of spaces of analytic functions.

We will need the following fact relating the real-analytic FBI transform $T$ to the real-analytic regularity.

\begin{proposition}[Lemma 2.4 in \cite{BJ20}]\label{proposition:decay_from_analytic}
Let $\epsilon > 0$. Then there are $C,\rho > 0$ such that for every $u \in \mathcal{O}_\epsilon$ and for every $\alpha \in T^* M$ we have
\begin{equation*}
\va{Tu(\alpha)} \leq C \n{u}_{\mathcal{O}_\epsilon} \exp\p{- \rho \brac{\alpha}},
\end{equation*}
where we recall that $\n{u}_{\mathcal{O}_\epsilon}$ denotes the supremum of $u$ on $(M)_\epsilon$.
\end{proposition}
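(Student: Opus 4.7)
The plan is to prove the estimate by a standard FBI contour deformation, exploiting both the built-in exponential decay of the kernel $K_T$ away from the diagonal and the holomorphic extension of $u$ to a complex neighborhood of $M$.

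First, I would fix a constant $c > 0$ (to be chosen later, smaller than the radius of validity of the local description of $K_T$) and split
\begin{equation*}
Tu(\alpha) = \int_{d(\alpha_x,y) \geq c} K_T(\alpha,y) u(y)\,\mathrm{d}y + \int_{d(\alpha_x,y) < c} K_T(\alpha,y) u(y)\,\mathrm{d}y.
\end{equation*}
For the far piece, the pseudo-local estimate \eqref{eq:pseudo_local} gives $|K_T(\alpha,y)| \leq C \exp(-\langle|\alpha|\rangle/C)$ uniformly in $y$, and since $|u(y)| \leq \|u\|_{\mathcal{O}_\epsilon}$ on $M$, the far piece is bounded by $C' \|u\|_{\mathcal{O}_\epsilon}\exp(-\langle\alpha\rangle/C')$, which is of the desired form.

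For the near piece, I would use \eqref{eq:local_descrption} to replace $K_T(\alpha,y)$ by $e^{i\Phi_T(\alpha,y)} a(\alpha,y)$, modulo an exponentially small error that is absorbed into the previous bound. Then, using that $u$ extends holomorphically to $(M)_\epsilon$, I would deform the contour of integration in $y$ from the real local chart near $\alpha_x$ to the complex contour
\begin{equation*}
y \longmapsto y - i t \chi(y)\, \alpha_\xi / \langle\alpha\rangle,
\end{equation*}
where $\chi$ is a smooth cutoff equal to $1$ near $\alpha_x$ and supported in $\{d(\cdot,\alpha_x) < c\}$, and $t > 0$ is a small fixed constant (chosen small enough that the shifted contour lies in $(M)_\epsilon$, which requires $t \leq \epsilon$ up to a constant depending only on the metric). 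Since $u\,\mathrm{d}y$ is a closed holomorphic form on $(M)_\epsilon$, Stokes' theorem gives no boundary contribution; the contributions at the boundary $\{d(\cdot,\alpha_x) = c\}$ of the cutoff region are again controlled by the pseudo-local decay.

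On the shifted contour, the Taylor expansion $\Phi_T(\alpha,\alpha_x + z) = -\alpha_\xi \cdot z + O(|z|^2)$ (from $\Phi_T(\alpha,\alpha_x) = 0$ and $\mathrm{d}_y\Phi_T(\alpha,\alpha_x) = -\alpha_\xi$) yields
\begin{equation*}
\im \Phi_T\bigl(\alpha, y - i t \alpha_\xi/\langle\alpha\rangle\bigr) \geq t\, |\alpha_\xi|^2/\langle\alpha\rangle - C t^2 \langle\alpha\rangle \geq \rho \langle\alpha\rangle
\end{equation*}
for $t$ small and $|\alpha_\xi|$ large, combined with \eqref{eq:coercivity_phase} giving a quadratic lower bound in the real direction $d(\alpha_x, \re y)$. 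Hence $|e^{i\Phi_T}| \leq \exp(-\rho \langle\alpha\rangle - \rho d(\alpha_x,\re y)^2)$ on the shifted contour. Together with the polynomial bound on $a$ and the Gaussian factor, the near piece integrates to at most $C \|u\|_{\mathcal{O}_\epsilon} \langle\alpha\rangle^{d/4+d/2} \exp(-\rho\langle\alpha\rangle)$, which, after adjusting $\rho$, gives the claimed bound. The case of small $|\alpha_\xi|$ is trivial since $\langle\alpha\rangle$ is bounded and the integrand is obviously bounded by $\|u\|_{\mathcal{O}_\epsilon}$ times a constant.

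The main obstacle is purely bookkeeping: verifying that the deformed contour remains inside $(M)_\epsilon$ uniformly in $\alpha$, that the cutoff region $\{d(\alpha_x,\cdot) < c\}$ is small enough to make the stationary phase direction well-defined in a local chart, and that the transition piece between shifted and unshifted contours contributes only exponentially small terms. These are routine but require a careful choice of the cutoff $\chi$ and of the parameters $c$ and $t$ in terms of $\epsilon$.
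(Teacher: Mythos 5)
The paper does not prove this Proposition: it simply cites Lemma~2.4 of \cite{BJ20}, so there is no proof in the text to compare your argument against. Your overall strategy — split the integral at $d(\alpha_x,y)=c$, bound the far piece via \eqref{eq:pseudo_local}, and for the near piece replace $K_T$ by its WKB form \eqref{eq:local_descrption} and deform the contour in $y$ using the holomorphy of $u$ — is the standard FBI argument and is the right one.

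There is, however, a genuine gap in the near-piece estimate. You Taylor-expand $\Phi_T$ around $\alpha_x$ and write the remainder as $O(|z|^2)$, but by Cauchy estimates on a fixed complex neighbourhood the bound \eqref{eq:control_phase} yields $O(|z|^2\brac{\va{\alpha}})$, and your $z$ is $(y-\alpha_x)-it\alpha_\xi/\brac{\alpha}$, so the remainder carries a contribution of size $C\,d(\alpha_x,y)^2\brac{\alpha}$ with $d(\alpha_x,y)$ ranging up to $c$. You then propose to absorb this with \eqref{eq:coercivity_phase}, ``combined with \eqref{eq:coercivity_phase} giving a quadratic lower bound in the real direction.'' But as recalled in \S\ref{subsection:FBI}, \eqref{eq:coercivity_phase} only gives $\im\Phi_T(\alpha,y)\geq C^{-1}d(\alpha_x,y)^2$ \emph{without} a $\brac{\alpha}$ factor, so it cannot dominate a $d^2\brac{\alpha}$ error uniformly in $\alpha$. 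The coercivity therefore cannot play the role you assign it, and the displayed chain $\im\Phi_T(\alpha,y-it\alpha_\xi/\brac{\alpha})\geq t|\alpha_\xi|^2/\brac{\alpha}-Ct^2\brac{\alpha}\geq\rho\brac{\alpha}$ only establishes the endpoint $y=\alpha_x$.

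The fix is not just ``routine bookkeeping'' on $c$ and $t$: it requires a structural change in how you expand. Taylor-expand $\Phi_T$ only in the imaginary direction, around the \emph{real} point $y$ rather than around $\alpha_x$, so that
\begin{equation*}
\im\Phi_T(\alpha,y+iz)=\im\Phi_T(\alpha,y)+z\cdot\re\,\mathrm{d}_y\Phi_T(\alpha,y)+O(|z|^2\brac{\alpha}).
\end{equation*}
The zeroth-order term is $\geq 0$ (this nonnegativity is in fact all one needs of \eqref{eq:coercivity_phase}); for the linear term, $\re\,\mathrm{d}_y\Phi_T(\alpha,y)=-\alpha_\xi+O(d(\alpha_x,y)\brac{\alpha})$ by the same Cauchy bound on the Hessian of $\Phi_T$, so with $z=-t\alpha_\xi/\brac{\alpha}$ the gain is $t|\alpha_\xi|^2/\brac{\alpha}-O(t\,d(\alpha_x,y)\brac{\alpha})$. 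This forces $c$ to be chosen small relative to the Cauchy constant (independently of $\epsilon$ and $\alpha$) so that the $O(tc\brac{\alpha})$ loss is a small fraction of the $t\brac{\alpha}$ gain. That explicit constraint on $c$ — together with $t\lesssim\epsilon$, which you do note — is the crux of the argument and is missing from your write-up.
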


To go in the other direction, we will use the following consequence of the inversion formula $STu = u$ and of \cite[Lemma 2.6]{BJ20} (see also \cite[Proposition 2.6]{BJ20})

\begin{proposition}\label{proposition:analytic_from_decay}
Let $\rho > 0$. There are $C,\epsilon > 0$ such that if $u \in C^\infty(M)$ is such that
\begin{equation*}
\sup_{\alpha \in T^* M} e^{\rho \brac{\alpha}} \va{Tu(\alpha)} < + \infty
\end{equation*}
then $u \in \mathcal{O}_\epsilon$ and
\begin{equation*}
\n{u}_{\mathcal{O}_\epsilon} \leq C \sup_{\alpha \in T^* M} e^{\rho \brac{\alpha}} \va{Tu(\alpha)}.
\end{equation*}
\end{proposition}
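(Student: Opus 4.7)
The strategy is to use the reproducing identity $u = S(Tu)$ that follows from $T$ being an $L^2$-isometry, and to show directly that the integral
\begin{equation*}
u(x) = \int_{T^* M} K_S(x,\alpha) \, Tu(\alpha) \, \mathrm{d}\alpha
\end{equation*}
extends holomorphically in $x$ to a Grauert tube $(M)_\epsilon$ for some $\epsilon > 0$ depending only on $\rho$. Since $K_S(x,\alpha) = \overline{K_T(\bar\alpha,\bar x)}$ and $K_T$ is holomorphic on $(T^* M)_\epsilon \times (M)_\epsilon$, the function $x \mapsto K_S(x,\alpha)$ extends holomorphically to $(M)_{\epsilon_1}$ for a fixed small $\epsilon_1 > 0$, and the real task is to obtain a quantitative bound on $\va{K_S(x,\alpha)}$ in $\alpha$ that is compatible with the exponential decay of $Tu$.

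The key estimate I would establish is: for $x \in (M)_\epsilon$ with $\epsilon \leq \epsilon_1$, and every $\alpha \in T^* M$,
\begin{equation*}
\va{K_S(x,\alpha)} \leq C \brac{\alpha}^{d/4} \exp\p{C \epsilon \brac{\alpha}} + C \exp\p{- \brac{\alpha}/C}.
\end{equation*}
When $\alpha_x$ is far from $\re x$, this follows from the pseudo-local bound \eqref{eq:pseudo_local}. When $\alpha_x$ is close to $\re x$, I use the WKB description \eqref{eq:local_descrption}: the amplitude is under control by the stated $\brac{\alpha}^{d/4}$ bound, and for the phase I would Taylor-expand $\Phi_T(\alpha,\bar x)$ around $y = \alpha_x$, using $\Phi_T(\alpha,\alpha_x) = 0$, $\mathrm{d}_y \Phi_T(\alpha,\alpha_x) = -\alpha_\xi$, together with Cauchy estimates applied to the holomorphic bound \eqref{eq:control_phase} in order to dominate the quadratic remainder by $C \brac{\alpha} \va{\bar x - \alpha_x}^2$. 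For $\bar x$ in the Grauert tube of size $\epsilon$ this yields $\va{\im \Phi_T(\alpha,\bar x)} \leq C \epsilon \brac{\alpha}$, hence $\va{e^{i \Phi_T(\alpha,\bar x)}} \leq e^{C \epsilon \brac{\alpha}}$.

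Combining this estimate with the hypothesis $\va{Tu(\alpha)} \leq M e^{-\rho \brac{\alpha}}$ (where $M$ denotes the supremum in the statement), the integrand is dominated by
\begin{equation*}
C M \brac{\alpha}^{d/4} \exp\p{- (\rho - C \epsilon) \brac{\alpha}}
\end{equation*}
plus an exponentially decreasing tail. Picking $\epsilon > 0$ small enough that $C \epsilon \leq \rho/2$, and noting that the volume of the shells $\{\brac{\alpha} \sim t\}$ grows only polynomially in $t$, the integrand is bounded by an $L^1$ function of $\alpha$ uniformly in $x \in (M)_\epsilon$. Holomorphy of the extension then follows by dominated convergence (or Morera), and integrating the pointwise bound yields $\n{u}_{\mathcal{O}_\epsilon} \leq C M$, as required. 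The main obstacle, and really the only nontrivial step, is the phase estimate in the second paragraph: everything else is a direct assembly of the FBI properties recalled in \S\ref{subsection:FBI} with a dominated-convergence argument.
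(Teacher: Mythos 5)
Your outline is sound: represent $u = S(Tu)$, use the holomorphy of $x \mapsto K_S(x,\alpha)$ to extend the integral to a Grauert tube, and show the exponential decay of $Tu$ beats the exponential growth of $K_S$ there. The amplitude bound, the dominated-convergence argument, and the reduction to a pointwise bound on $K_S$ are all fine. But the key step you single out — the phase estimate — has a genuine gap. You Taylor-expand $\Phi_T(\alpha,\bar x)$ around $y = \alpha_x$, bound the quadratic remainder by $C\brac{\alpha}\va{\bar x - \alpha_x}^2$, and conclude $\va{\im\Phi_T(\alpha,\bar x)} \leq C\epsilon\brac{\alpha}$ for $x \in (M)_\epsilon$. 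This does not follow. Membership in the Grauert tube of size $\epsilon$ controls only the \emph{imaginary} part of $\bar x$; the real part $\re\bar x$ is allowed to range over a fixed neighbourhood of $\alpha_x$ whose radius is the structural constant $c$ from \eqref{eq:local_descrption}, which you do not get to shrink with $\epsilon$. Hence $\va{\bar x - \alpha_x}$ is of order $c$, the quadratic remainder is $O(\brac{\alpha}c^2)$, and this is not $O(\epsilon\brac{\alpha})$. In fact the claimed two-sided bound $\va{\im\Phi_T(\alpha,\bar x)} \leq C\epsilon\brac{\alpha}$ is literally false: the coercivity \eqref{eq:coercivity_phase} already forces $\im\Phi_T(\alpha,y)$ to be of order $c^2$ at real $y$ with $d(\alpha_x,y)\approx c$, which exceeds $C\epsilon\brac{\alpha}$ for bounded $\brac{\alpha}$ and small $\epsilon$.

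The fix is to notice that you only need the one-sided bound $\im\Phi_T(\alpha,\bar x) \geq -C\epsilon\brac{\alpha}$, and that this is exactly where \eqref{eq:coercivity_phase} enters — a property of the transform you never invoke. Write $\bar x = w + iv$ with $w = \re\bar x$ and $\va{v} \leq \epsilon$. The coercivity gives $\im\Phi_T(\alpha,w) \geq C^{-1}d(\alpha_x,w)^2 \geq 0$. Then Taylor-expand in the imaginary direction only, from $w$ to $\bar x$: the linear term contributes $O(\brac{\alpha}\epsilon)$ (Cauchy estimates on \eqref{eq:control_phase} bound $\mathrm{d}_y\Phi_T$ by $C\brac{\alpha}$), and the remainder contributes $O(\brac{\alpha}\epsilon^2)$. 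Combining, $\im\Phi_T(\alpha,\bar x) \geq -C\epsilon\brac{\alpha}$, whence $\va{e^{i\Phi_T(\alpha,\bar x)}} \leq e^{C\epsilon\brac{\alpha}}$, and the rest of your argument closes. (A variant that avoids coercivity is to shrink the near-diagonal region to radius $c'$ chosen small after $\rho$, using \eqref{eq:pseudo_local} outside it; but you must then track the extra factor $e^{C(c')^2\brac{\alpha}}$ and choose $c'$ and $\epsilon$ in the correct order.)
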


Let us recall here that when we write $u \in \mathcal{O}_\epsilon$, we mean that $u$ has a holomorphic extension that belongs to $\mathcal{O}_\epsilon$.

In addition to the real-analytic FBI transform $T$ on $M$, let us choose a real-analytic FBI transform $\mathcal{T}$ on the torus $\mathbb{T}^d$, and write $\mathcal{S} = \mathcal{T}^*$ for its adjoint. This is the same as above in the particular case $M = \mathbb{T}^d$, and we endow $\mathbb{T}^d$ with its standard flat metric.

For every $k \in \mathbb{Z}^d$, let us introduce the function
\begin{equation*}
e_k : x \mapsto e^{2 i \pi k \cdot x}
\end{equation*}
on the torus $\mathbb{T}^d$. Here, $k\cdot x$ denotes a scalar product. In the following proposition, we use the standard parallelization of $\mathbb{T}^d = \mathbb{R}^d / \mathbb{Z}^d$ to identify its cotangent bundle $T^* \mathbb{T}^d$ with $\mathbb{T}^d \times \mathbb{R}^d$.

\begin{proposition}\label{proposition:localisation_torus}
Let $c > 0$. There is a constant $C > 0$ such that for every $k \in \mathbb{Z}^d$ and $(x,\xi) \in T^*  \mathbb{T}^d$ such that $\va{2 \pi k - \xi} \geq c \brac{\xi}$ we have
\begin{equation}\label{eq:better_torus}
\va{\mathcal{T}e_k(x,\xi)} \leq C \exp\p{- \frac{\max(\va{\xi},\va{k})}{C}}.
\end{equation}
For every $(x,\xi) \in T^* M$, we have
\begin{equation}\label{eq:straightforward_torus}
\va{\mathcal{T}e_k(x,\xi)} \leq C \brac{\xi}^{\frac{d}{4}}.
\end{equation}
\end{proposition}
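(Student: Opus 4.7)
The plan is to estimate $\mathcal{T}e_k(\alpha) = \int_{\mathbb{T}^d} K_{\mathcal{T}}(\alpha,y) e^{2i\pi k \cdot y}\,\mathrm{d}y$ directly, exploiting the fact that both $K_{\mathcal{T}}$ and $e_k$ extend holomorphically in $y$ to the Grauert tube $(\mathbb{T}^d)_{\epsilon_0}$.

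The straightforward estimate \eqref{eq:straightforward_torus} is essentially immediate. Combining the local description \eqref{eq:local_descrption} with $\im \Phi_\mathcal{T}(\alpha,y) \geq 0$ on the real domain (from \eqref{eq:coercivity_phase}) and $|a(\alpha,y)| \leq C\brac{\va{\alpha}}^{d/4}$, together with the pseudo-local bound \eqref{eq:pseudo_local} on the complement, yields $|K_{\mathcal{T}}(\alpha,y)| \leq C \brac{\va{\alpha}}^{d/4}$ uniformly on $\mathbb{T}^d$. Integrating over the bounded manifold $\mathbb{T}^d$ and using $\brac{\va{\alpha}} = \brac{\xi}$ for real $\alpha$ proves \eqref{eq:straightforward_torus}.

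For \eqref{eq:better_torus}, the key device is a contour deformation $y \mapsto y + i\tau v$ with $\tau > 0$ small and $v \in \mathbb{R}^d$ a unit vector depending on $(k,\xi)$. Such a deformation is legitimate by Stokes' theorem because $\mathbb{T}^d$ has no boundary and the integrand is holomorphic on a neighborhood of the homotopy. The holomorphic extension of $e_k$ satisfies $|e_k(y+i\tau v)| = e^{-2\pi\tau k\cdot v}$, and the strategy is to choose $v$ so that this factor, combined with the variation of $|K_{\mathcal{T}}(\alpha,y+i\tau v)|$, delivers the required decay. I split the integration between the far region $\{d(\alpha_x,y) \geq a\}$ and the near region $\{d(\alpha_x,y) < a\}$, with $a$ the threshold of the local description. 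On the far region, the pseudo-local bound \eqref{eq:pseudo_local} continues to hold on a shifted contour of size $O(\epsilon_0)$, which handles the $e^{-|\xi|/C}$ part directly; a further shift by $\tau k/\va{k}$ for a small fixed $\tau$ supplies the additional $e^{-c|k|}$ factor. On the near region, the error $O(\exp(-\brac{\va{\alpha}}/C))$ in \eqref{eq:local_descrption} contributes only $C\exp(-|\xi|/C)$ and is negligible; the main oscillatory term has combined phase $\Phi_{\mathcal{T}}(\alpha,y) + 2\pi k\cdot y$, whose $y$-derivative at $y=\alpha_x$ equals $2\pi k - \xi$, of magnitude at least $c\brac{\xi}$ by hypothesis. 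Taking $v = (2\pi k - \xi)/|2\pi k - \xi|$ and Taylor-expanding in $\tau$, using $\partial_y \Phi_{\mathcal{T}}(\alpha,\alpha_x) = -\xi$ and the a priori Hessian bound $|\partial_y^2 \Phi_{\mathcal{T}}| = O(\brac{\va{\alpha}})$ obtained from \eqref{eq:control_phase} via a Cauchy estimate on a shrunken polydisk, the imaginary part of the shifted combined phase at $y = \alpha_x$ satisfies
\begin{equation*}
\im\bigl(\Phi_{\mathcal{T}}(\alpha,\alpha_x + i\tau v) + 2\pi k\cdot(\alpha_x + i\tau v)\bigr) \geq \tau|2\pi k - \xi| - C\tau^2 \brac{\xi}.
\end{equation*}
Choosing $\tau = c_0 \min(\epsilon_0, |2\pi k - \xi|/\brac{\xi})$ yields a gain of order $\exp(-\delta|2\pi k - \xi|)$, and the elementary inequality $|2\pi k - \xi| \geq c\max(|\xi|,|k|)$ (verified by separating $|\xi| \leq \pi|k|$, where $|2\pi k-\xi| \geq \pi|k|$, from $|\xi| > \pi|k|$, where $|2\pi k - \xi| \geq c\brac{\xi} \geq c|\xi|/2$ suffices) converts this into \eqref{eq:better_torus}.

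The principal obstacle is absorbing the Taylor remainder in the contour shift on the near region: a cross term of order $\tau \brac{\xi} |y-\alpha_x|$ arises from $\partial_y\Phi_{\mathcal{T}}(\alpha,y) + \xi = O(\brac{\xi}|y-\alpha_x|)$ and must be dominated using the coercivity \eqref{eq:coercivity_phase}. This is accomplished by completing the square and, if necessary, further restricting the $y$-integration to the natural FBI scale $|y-\alpha_x| \lesssim \brac{\xi}^{-1/2}$ via an additional cut-off, so that the quadratic cost stays under control and the linear gain $\tau|2\pi k - \xi|$ dominates.
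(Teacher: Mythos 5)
Your overall route is the same as the paper's: prove \eqref{eq:straightforward_torus} by an $L^\infty$ bound on the kernel, and prove \eqref{eq:better_torus} by a contour shift of fixed size in the direction $v=(2\pi k-\xi)/\va{2\pi k-\xi}$, a near/far splitting around $\alpha_x$, the pseudo-locality \eqref{eq:pseudo_local} on the far part, and a Taylor expansion of the phase (with the Hessian controlled through \eqref{eq:control_phase} and Cauchy estimates) on the near part. The gap is exactly at the point you flag as the ``principal obstacle'', and the fixes you propose do not close it. You take the near region to be $\set{d(\alpha_x,y)<a}$ with $a$ the fixed threshold of the local description \eqref{eq:local_descrption}, so the cross term $\tau\brac{\xi}\va{y-\alpha_x}$ can be as large as $C\tau a\brac{\xi}$, which is not dominated by the gain $\tau\va{2\pi k-\xi}\geq c\tau\brac{\xi}$ when $c$ is small compared with $Ca$ (the proposition must hold for every $c>0$). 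Completing the square against the stated coercivity \eqref{eq:coercivity_phase}, which is only $\im\Phi_{\mathcal{T}}\geq C^{-1}d(\alpha_x,y)^2$ with no factor $\brac{\xi}$, produces a loss of order $\exp\p{C'\tau^2\brac{\xi}^2}$, fatal since your $\tau$ is bounded below by a constant on the set $\va{2\pi k-\xi}\geq c\brac{\xi}$; and cutting at the scale $\va{y-\alpha_x}\lesssim\brac{\xi}^{-1/2}$ leaves the annulus $\brac{\xi}^{-1/2}\lesssim\va{y-\alpha_x}<a$, where the coercivity only yields a bounded factor and cannot beat a possible growth $e^{C\tau\brac{\xi}\va{y-\alpha_x}}$ coming from the unknown direction of $\mathrm{d}_y\Phi_{\mathcal{T}}(\alpha,y)+\xi$. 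Note also that any cutoff must be inserted \emph{after} the contour shift: splitting the integral first and shifting each piece separately (as your far-region treatment with the extra shift by $\tau k/\va{k}$ suggests) is not legitimate, since the cut-off integrands are no longer holomorphic; a single shift direction, or a genuinely variable contour, is needed.

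The repair is elementary and is precisely what the paper does. Perform one flat shift of small size $s$ in the direction $v$, and make the near/far threshold a free small parameter $r$ instead of $a$. On $\set{d(\alpha_x,y)\geq r}$, apply \eqref{eq:pseudo_local} with threshold comparable to $r$ (valid for the shifted, complexified $y$ once $s$ is small depending on $r$), and absorb the possible loss $e^{s\va{\xi}}$ from the factor $e^{-s\,2\pi k\cdot v}$, via $2\pi k\cdot v\geq\va{2\pi k-\xi}-\va{\xi}$, into $e^{-\brac{\xi}/C}$ by taking $s<1/(2C)$. On $\set{d(\alpha_x,y)<r}$, the cross term is $O\p{s r\brac{\xi}}$ and is absorbed by the gain $s\va{2\pi k-\xi}\geq\frac{c}{c+1}\,s\max(\va{\xi},\va{k})$ once $r$ and $s$ are chosen small enough depending on $c$; no use of \eqref{eq:coercivity_phase} beyond $\im\Phi_{\mathcal{T}}\geq 0$ is needed, and the separate $k/\va{k}$ shift on the far region becomes unnecessary. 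With this modification your argument coincides with the paper's proof.
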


It will be crucial in \S \ref{section:construction} below to understand how the operators $T$ and $S$ lift the Koopman operators defined in \S \ref{subsection:koopman} to operators acting on functions on the cotangent bundle. In the following proposition, we identify the operator $T \mathcal{L}_{\Psi_X \circ F,w} S$ with its kernel, which is a real-analytic function on $T^* M \times T^* M$. See Remark \ref{remark:meaning_kernel} for the definition of this kernel.

\begin{proposition}\label{proposition:localisation_graph}
Let $F \in \Diff^\omega(M)$. Recall that the symplectic lift of $F$ is $\mathcal{F} : (\alpha_x,\alpha_\xi) \to (F \alpha_x, {}^t (D_{\alpha_x} F^{-1}) \alpha_\xi)$, and let 
\begin{equation*}
\mathcal{G} = \set{ (\alpha,\mathcal{F}(\alpha)): \alpha \in T^* M}
\end{equation*}
be the graph of $\mathcal{F}$. Let $c,\epsilon > 0$. Then there are constants $C,\delta > 0$ such that for every $X \in B_{\epsilon,\delta}, w \in \mathcal{O}_\epsilon$ and $\alpha,\beta \in T^* M$, then
\begin{equation}\label{eq:small_exponential_bound}
\va{T \mathcal{L}_{\Psi_X \circ F,w} S(\alpha,\beta)} \leq C \n{w}_{\mathcal{O}_\epsilon} \exp\p{c (\brac{\alpha} + \brac{\beta})}.
\end{equation}
If in addition the distance between $(\alpha,\beta)$ and $\mathcal{G}$ (for the Kohn--Nirenberg distance) is greater than $c$ then
\begin{equation}\label{eq:exponential_decay_Koopman}
\va{T \mathcal{L}_{\Psi_X \circ F,w} S(\alpha,\beta)} \leq C \n{w}_{\mathcal{O}_\epsilon} \exp\p{ - \frac{\brac{\alpha} + \brac{\beta}}{C}}.
\end{equation}
\end{proposition}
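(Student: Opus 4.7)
The plan is to write the kernel explicitly, estimate it region by region using the local and global bounds on $K_T$ built into the FBI axioms, and then use a contour deformation to get the exponential decay away from the graph. By composition of the kernels,
\[
T\mathcal{L}_{\Psi_X \circ F, g} S(\alpha, \beta) = \int_M K_T(\alpha, y)\, g(y)\, K_S(\Psi_X(F(y)), \beta)\, \mathrm{d}y.
\]
For $\delta$ small enough, $\Psi_X \circ F$ embeds $M$ holomorphically into a tube $(M)_{\epsilon'}$ with $\epsilon'$ as small as I wish, so $g(y)$ and $K_S(\Psi_X F(y), \beta)$ are well defined via holomorphic extension.

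For the crude estimate \eqref{eq:small_exponential_bound}, I would bound each factor pointwise. For real $\alpha$ and real $y$, \eqref{eq:local_descrption}, \eqref{eq:coercivity_phase} and the bound on the amplitude give $|K_T(\alpha, y)| \leq C \brac{\alpha}^{d/4}$. For the second factor, $K_S$ is evaluated at a complex point $\Psi_X(F(y))$ at distance $O(\epsilon')$ from the real, so by \eqref{eq:control_phase} together with a Taylor expansion $\Phi_T(\beta, \cdot) = -\beta_\xi \cdot (\cdot - \beta_x) + O(\brac{\beta} \va{\cdot - \beta_x}^2)$ near the diagonal, $|\im \Phi_S(\Psi_X F(y), \beta)| \leq C \epsilon' \brac{\beta}$. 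Choosing $\delta$ small enough that $C\epsilon' < c$ and integrating over the compact manifold $M$ yields \eqref{eq:small_exponential_bound}.

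For the exponential decay \eqref{eq:exponential_decay_Koopman}, the idea is that the total phase $\Phi(y) = \Phi_T(\alpha, y) + \Phi_S(\Psi_X F(y), \beta)$ is critical in $y$ precisely where $(\alpha, \beta)$ lies on $\mathcal{G}$: the relations $\Phi_T(\alpha, \alpha_x) = 0$, $\mathrm{d}_y \Phi_T(\alpha, \alpha_x) = -\alpha_\xi$ (and the analogous ones for $\Phi_S$) show that, at $X = 0$, a real critical point forces $y = \alpha_x$, $F(y) = \beta_x$ and $\alpha_\xi = {}^t DF \cdot \beta_\xi$. Assuming $d_{KN}((\alpha,\beta), \mathcal{G}) \geq c$, I split the integration domain into three regions. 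When $y$ is away from $\alpha_x$, \eqref{eq:pseudo_local} gives an $e^{-\brac{\va{\alpha}}/C}$ factor from $K_T$; when $\Psi_X F(y)$ is away from $\beta_x$, \eqref{eq:pseudo_local} yields the analogous decay in $\brac{\beta}$ from $K_S$. In the remaining region, where $y$ is close to $\alpha_x$ and simultaneously $\Psi_X F(y)$ is close to $\beta_x$, the spatial projections of $(\alpha, \beta)$ already sit near $\mathcal{G}$, so the Kohn--Nirenberg separation forces a mismatch of momenta: the gradient $\mathrm{d}_y \Phi(y) = -\alpha_\xi + {}^t(D_y(\Psi_X F)) \cdot \beta_\xi$ has norm $\gtrsim \brac{\alpha} + \brac{\beta}$ on this region. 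A contour deformation $y \mapsto y + i t$ with $t$ aligned with $\overline{\mathrm{d}_y \Phi(y)}/|\mathrm{d}_y \Phi(y)|$ and of a small fixed size then produces $\im \Phi \geq C^{-1}(\brac{\alpha} + \brac{\beta})$ along the shifted contour, and \eqref{eq:exponential_decay_Koopman} follows after combining with the earlier size bounds.

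The main obstacle is the third case: one must show the contour deformation stays inside the domain of holomorphy of all factors ($K_T$, $K_S$, $g$, and the holomorphic extensions of $\Psi_X, F$), and carefully combine the two components of the Kohn--Nirenberg distance (base and fibre) to obtain the uniform gradient lower bound that drives the decay. These technical estimates fit the framework of the kernel computations gathered in Appendix \ref{appendix:kernel_estimates}, and the argument must be uniform in $X$ over $B_{\epsilon, \delta}$ and in $g$ in the unit ball of $\mathcal{O}_\epsilon$.
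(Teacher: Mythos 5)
Your overall approach matches the paper's: write the kernel as $\int_M K_T(\alpha,y) g(y) K_S(\Psi_X \circ F(y),\beta)\,\mathrm{d}y$, bound crudely for \eqref{eq:small_exponential_bound}, and split the integration region and use non-stationary phase for \eqref{eq:exponential_decay_Koopman}. The crude bound derivation is fine and essentially what the paper does.

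There is, however, a genuine gap in your treatment of \eqref{eq:exponential_decay_Koopman}. You split $M$ into (a) $y$ far from $\alpha_x$, (b) $\Psi_X F(y)$ far from $\beta_x$, and (c) both close, and apply the stationary-phase/contour-shift argument only in region (c). But in region (a) you only extract $\exp(-\brac{\alpha}/C)$ and in region (b) only $\exp(-\brac{\beta}/C)$, whereas the statement demands $\exp(-(\brac{\alpha}+\brac{\beta})/C)$. These are genuinely different when $\brac{\alpha}$ and $\brac{\beta}$ have very different magnitudes, which is precisely one way $(\alpha,\beta)$ can be far from $\mathcal{G}$. Concretely, take $\brac{\alpha}$ bounded and $\brac{\beta}$ large: the Kohn--Nirenberg distance to $\mathcal{G}$ is of order one, but in the sub-region of (a) where $\Psi_X F(y)$ is still close to $\beta_x$, the factor $\exp(-\brac{\alpha}/C)$ is just a constant and $K_S$ gives only a polynomial in $\brac{\beta}$, so nothing forces decay in $\brac{\beta}$. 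You must run the non-stationary phase argument there too: restrict to a small ball around $F^{-1}\beta_x$, peel off the exponentially small error from $K_T$ and from the non-oscillatory part of $K_S$, and use that $\mathrm{d}_y\bigl(\Phi_S(F(y),\beta)\bigr)$ has magnitude $\asymp\brac{\beta}$ to get $\exp(-\tau\brac{\beta})$, then combine with the $\exp(-\gamma\brac{\alpha})$ coming from $K_T$ being small away from $\alpha_x$. The symmetric argument is needed on a small ball around $\alpha_x$ when $\Psi_X F(y)$ is far from $\beta_x$. This is exactly the two ``nearby one base point, far from the other'' sub-pieces in the paper's Case 1 split (ball around $\alpha_x$, ball around $F^{-1}\beta_x$, and the rest), with Case 2 being your region (c). With that fix, the remaining concerns you flag (domain of holomorphy for the contour shift, and combining base and fibre parts of the Kohn--Nirenberg separation) are exactly the right technical points to resolve, and the argument goes through as in the paper.
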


The proofs of Propositions \ref{proposition:localisation_torus} and \ref{proposition:localisation_graph} are given in Appendix \ref{appendix:kernel_estimates}.

\begin{remark}
Notice that if $X \in B_{\epsilon,\delta}^{\mathbb{R}}$ in Proposition \ref{proposition:localisation_graph} then the exponential bound \eqref{eq:small_exponential_bound} may be replaced by a polynomial bound.
\end{remark}

\begin{remark}
If $X = 0$ in Proposition \ref{proposition:localisation_graph} then the kernel $T \mathcal{L}_{\Psi_X \circ F,w} S(\alpha,\beta)$ may be approximated when $\alpha$ is close to $\mathcal{F}(\beta)$ by an application of the holomorphic stationary phase method. This computation is explained in the case $F = \Id_M$ in \cite[Lemma 2.10]{BJ20}. However, for our approach here the bound \eqref{eq:small_exponential_bound} is enough. The main point of Proposition \ref{proposition:localisation_graph} is to prove that, on the FBI transform side, the action on $\mathcal{L}_{\Psi_X \circ F, w}$ is negligible away from $\mathcal{G}$ (see the proof of Lemma \ref{lemma:discard} below).
\end{remark}

\subsection{Operators of exponential class}\label{subsection:exponential_class}

Let us end this toolbox section with the definition of a particular class of operators that appear naturally when studying Koopman operators associated to real-analytic or Gevrey dynamics. We refer for instance to \cite[\S 2.3.1]{pietsch_book} for the definition of approximation numbers (however, we use the convention that natural numbers start at $0$ rather than $1$, hence the indices in sequences of approximation numbers are shifted with respect to this reference).

\begin{definition}
Let $\alpha  >0$. Let $L : \mathcal{B}_1 \to \mathcal{B}_2$ be a compact operator between two Banach spaces. We say that $L$ is of exponential class $\alpha$ if there is a constant $C > 0$ such that the sequence of approximation numbers $(a_n)_{n \in \mathbb{N}}$ of $L$ satisfies $\va{a_n} \leq C \exp\p{ - n^\alpha /C}$ for every $n \geq 0$.
\end{definition}

This kind of operators appeared in the context of hyperbolic dynamics already in \cite{ruelle_zeta_expanding,fried_selberg,fried_zeta, rugh_correlation}. Similar classes of operators are used in \cite{bandtlow_naud} and studied in \cite{bandtlow_exponential} (the main difference is that we include here Banach spaces instead of only considering Hilbert spaces). Notice that it is clear that the product of a bounded operator and an operator of exponential class $\alpha$ is of exponential class $\alpha$.

Let us start by explaining how such operators usually appear in concrete setting. In the following lemma, the summation over $\mathbb{Z}^d$ naturally appears in two (related) contexts: it can correspond to a decomposition in Fourier modes or in power series.

\begin{lemma}\label{lemma:concrete}
Let $\mathcal{B}_1$ and $\mathcal{B}_2$ be two Banach spaces. Let $\nu > 0$. Let $(L_k)_{k \in \mathbb{Z}^d}$ be a family of operators of rank at most $1$ from $\mathcal{B}_1$ to $\mathcal{B}_2$ such that there are constants $C,\beta > 0$ such that $\n{L_k} \leq C e^{- \beta |k|^\nu}$ for every $k \in \mathbb{Z}^d$. Then the series $\sum_{k \in \mathbb{Z}^d} L_k$ converges to an operator of exponential class $\nu/d$.
\end{lemma}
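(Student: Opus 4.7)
The plan is to bound the approximation numbers of $L = \sum_{k \in \mathbb{Z}^d} L_k$ directly by truncating the series and using that each $L_k$ has rank at most $1$.

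First I would check that the series actually converges in operator norm: since $\nu > 0$, the sum $\sum_{k \in \mathbb{Z}^d} e^{-\beta |k|^\nu}$ is finite, so $\sum_k L_k$ is absolutely convergent in the Banach space of bounded operators from $\mathcal{B}_1$ to $\mathcal{B}_2$, and in particular $L$ is well-defined and compact.

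Next, for $R \geq 1$, let $S_R = \sum_{|k| \leq R} L_k$. Since each $L_k$ has rank at most $1$, the operator $S_R$ has rank at most $N(R) \coloneqq \#\{k \in \mathbb{Z}^d : |k| \leq R\}$, and there is a dimensional constant $c_d > 0$ such that $N(R) \leq c_d R^d$. For the remainder, I would use the elementary splitting $|k|^\nu \geq \tfrac{1}{2} R^\nu + \tfrac{1}{2}|k|^\nu$ valid for $|k| \geq R$, giving
\begin{equation*}
\n{L - S_R} \leq \sum_{|k| > R} C e^{-\beta |k|^\nu} \leq C e^{-\frac{\beta}{2} R^\nu} \sum_{k \in \mathbb{Z}^d} e^{-\frac{\beta}{2}|k|^\nu} \leq C' e^{-\frac{\beta}{2} R^\nu}.
\end{equation*}

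Finally, for each $n \geq 1$, I would set $R_n = (n/c_d)^{1/d}$, so that $N(R_n) \leq n$ and therefore $a_n(L) \leq \n{L - S_{R_n}} \leq C' \exp\!\p{-\tfrac{\beta}{2 c_d^{\nu/d}} n^{\nu/d}}$. For $n = 0$ one just uses $a_0(L) \leq \n{L} \leq C''$, which is absorbed into the constant after increasing it. This yields the exponential class $\nu/d$ bound. The argument is essentially routine once the rank budget and the tail estimate are paired correctly; the only point that deserves a little attention is the elementary counting bound $N(R) \lesssim R^d$, which is exactly what produces the exponent $\nu/d$ from the exponent $\nu$ in the individual estimates.
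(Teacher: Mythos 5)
Your argument is correct and follows essentially the same route as the paper: truncate the series at $|k| \leq R$, count the rank of the truncation by the lattice-point bound $N(R) \lesssim R^d$, and estimate the tail exponentially. The only cosmetic difference is in how the tail sum is bounded (you use the split $|k|^\nu \geq \tfrac12 R^\nu + \tfrac12 |k|^\nu$, while the paper sums over spherical shells), but this leads to the same conclusion.
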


\begin{proof}
The series is easily seen to be summable. Let us write $L$ for the sum, and $(a_n)_{n \in \mathbb{N}}$ for the sequence of approximation numbers of $L$. For $m \in \mathbb{N}$, let us approximate $L$ by the operator
\begin{equation*}
\sum_{\substack{k \in \mathbb{Z}^d \\ |k| \leq m}} L_k
\end{equation*}
whose rank is at most $C_0 m^d+1$ for some constant $C_0 > 0$ that does not depend on $m$. Then we bound
\begin{equation*}
\begin{split}
\n{L - \sum_{\substack{k \in \mathbb{Z}^d \\ |k| \leq m}} L_k} & \leq \sum_{\substack{k \in \mathbb{Z}^d \\ |k| \geq m}} \n{L_k} \leq C \sum_{p \geq m} p^{d-1} e^{- \beta p^\nu} \\
 & \leq C e^{- \frac{\beta m^{\nu}}{2}}
\end{split}
\end{equation*}
where the constant $C > 0$ may change from one line to another. We find consequently that $a_{C_0 m^d +1} \leq C e^{- \frac{\beta m^\nu}{2}}$, and the result then follows since the sequence $(a_n)_{n \geq 1}$ is decreasing.
\end{proof}

The following lemma is based on standard estimates on Fredholm determinant. We will use it to bound the number of Ruelle resonances for real-analytic Anosov diffeomorphism.

\begin{lemma}\label{lemma:order_exponential}
Let $\alpha > 0$. Let $\mathcal{H}$ be a Hilbert space and $L : \mathcal{H} \to \mathcal{H}$ be an operator of exponential class $\alpha$. Then, $L$ is trace class and there is a constant $C > 0$ such that for every $z \in \mathbb{C}$ we have
\begin{equation*}
\va{\det\p{ I - z L}} \leq C \exp\p{C \p{\log(1 + \va{z})}^{1+ \frac{1}{\alpha}}},
\end{equation*} 
and for every $r \in (0,\frac{1}{2})$ we have
\begin{equation}\label{eq:bound_eigenvalues}
\# \set{ \lambda \in \sigma(L) : \va{\lambda} \geq r} \leq C |\log r|^{\frac{1}{\alpha}},
\end{equation}
where the eigenvalues are counted with multiplicities.
\end{lemma}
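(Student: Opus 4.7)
The plan is to combine two standard facts about singular values: (i) the Fredholm determinant bound via the Weierstrass product over singular values, and (ii) Weyl's inequality relating the product of moduli of eigenvalues to the product of singular values. Since $\mathcal{H}$ is Hilbert, the approximation numbers $a_n$ of $L$ coincide with the singular values, so the hypothesis gives $a_n \leq C e^{-n^\alpha/C}$. In particular $\sum_n a_n < +\infty$, so $L$ is trace class and the Fredholm determinant $\det(I-zL)$ is well defined and entire.

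For the determinant bound, I would use the Hadamard-type inequality
\[
|\det(I-zL)| \leq \prod_{n \geq 0}\bigl(1 + |z| a_n\bigr),
\]
take logarithms, and split the sum at $N := \lceil (2C\log(1+|z|))^{1/\alpha}\rceil$. For $n \leq N$ we bound crudely $\log(1+|z|a_n) \leq \log(1+C|z|)$, giving a contribution bounded by $C' N \log(1+|z|) \leq C''(\log(1+|z|))^{1+1/\alpha}$. For $n > N$ the exponential $|z| a_n \leq C|z| e^{-n^\alpha/C}$ is smaller than $1$ and decays super-exponentially in $n$, so the tail is bounded uniformly in $z$. Combining the two contributions gives the claimed upper bound.

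For the eigenvalue count, I would apply Weyl's inequality: if $(\lambda_k)_{k\geq 0}$ are the eigenvalues of $L$ arranged so that $|\lambda_0| \geq |\lambda_1| \geq \dots$, then
\[
\prod_{k=0}^{n-1} |\lambda_k| \leq \prod_{k=0}^{n-1} a_k \leq C^n \exp\Bigl(-\frac{1}{C}\sum_{k=0}^{n-1} k^\alpha\Bigr) \leq C^n \exp\bigl(-c\, n^{\alpha+1}\bigr)
\]
for some $c>0$. Setting $n = N(r) := \#\{k : |\lambda_k| \geq r\}$, the left-hand side is at least $r^n$, so
\[
n \log(1/r) \geq c\, n^{\alpha+1} - n \log C,
\]
which for $r \in (0,1/2)$ yields $n^\alpha \lesssim |\log r|$ and hence $N(r) \leq C|\log r|^{1/\alpha}$.

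The argument is essentially a bookkeeping exercise once the right two ingredients (Hadamard product bound and Weyl's inequality) are in place; the only mildly delicate point is choosing the cutoff $N$ in the determinant estimate so that both pieces of the split sum match up to the correct exponent $1+1/\alpha$. I do not anticipate any genuine obstacle.
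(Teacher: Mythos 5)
Your proof is correct, but it follows a genuinely different route from the paper's. The paper first builds a telescoping chain of finite-rank approximants of rank $2^n$, applies Auerbach's lemma and Hahn--Banach to represent $L$ as a sum $\sum_n \lambda_n\, e_n \otimes l_n$ of rank-one operators with $|\lambda_n|\leq Ce^{-n^\alpha/C}$, then quotes \cite[Lemma 2.13]{local_and_global} for the determinant bound and finally derives the eigenvalue count from the determinant bound via Jensen's formula applied to $w\mapsto\det(I-e^wL)$. You instead exploit the Hilbert-space structure directly: since approximation numbers equal singular values, you invoke the Hadamard-type inequality $|\det(I-zL)|\leq\prod_n(1+|z|a_n)$ and Weyl's inequality $\prod_{k<n}|\lambda_k|\leq\prod_{k<n}a_k$, and run the two estimates independently. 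Your route is more elementary and self-contained, avoiding both the rank-one decomposition and the external citation; it also obtains the eigenvalue count directly rather than routing it through the determinant bound, which is logically cleaner. The paper's decomposition argument, on the other hand, is the one that adapts to Banach spaces (where approximation numbers need not be singular values and the Hadamard product bound is unavailable), and the same technique is reused in Lemma~\ref{lemma:exponential_inclusion}, so the authors probably preferred uniformity of method. Both arguments are valid for the stated Hilbert-space hypothesis; the tail estimate in your split at $N\asymp(\log(1+|z|))^{1/\alpha}$ and the $\sum_{k<n}k^\alpha\gtrsim n^{\alpha+1}$ bound in the Weyl step are both routine and unproblematic.
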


\begin{proof}
Let $(a_n)_{n \in \mathbb{N}}$ denote the sequence of approximation numbers for $L$. For every $n \in \mathbb{N}$, let $L_n$ be an operator from $\mathcal{H}$ to itself of rank at most $2^n$ such that $\n{L - L_n} \leq 2 a_{2^n}$. Notice that we have $\n{L_n - L_{n+1}} \leq 2 (a_{2^n} + a_{2^{n+1}})$. Since $L$ is of exponential class $\alpha$, there is a constant $C > 0$ such that $\n{L_n - L_{n+1}} \leq C \exp(- 2^{\alpha n}/C)$ for every $n \in \mathbb{N}$. It follows that the series $\sum_{n \geq 0} L_{n+1} - L_{n}$ converges to $L - L_0$. 

For $n \in \mathbb{N}$, the rank of $L_{n+1} - L_n$ is at most $2^{n+2}$. From Auerbach's lemma and Hahn--Banach theorem, we find that for every $n \in \mathbb{N}$, there are linear forms $l_{1,n},\dots,l_{2^{n+2},n}$ of norm $1$, elements $e_{1,n},\dots,e_{2^{n+2},n}$ of $\mathcal{H}$ of norms $1$, and complex numbers $\lambda_{1,n},\dots,\lambda_{2^{n+2},n}$ of modulus less than $\n{L_{n+1} - L_n}$ such that
\begin{equation*}
L_{n+1} - L_n = \sum_{k = 1}^{2^{n+2}} \lambda_{k,n} e_{k,n} \otimes l_{k,n}.
\end{equation*}
Here, we write ``$e \otimes l$'' for the rank $1$ operator $u \mapsto l(u) e$. Summing over $n$ and relabelling, we find that there are a sequence of linear forms $(l_n)_{n \geq 1}$ of norm $1$, a sequence $(e_n)_{n \geq N}$ of elements of $\mathcal{H}$ of norms $1$, and a sequence $(\lambda_n)_{n \in \mathbb{N}}$ of complex numbers such that
\begin{equation*}
L = \sum_{n = 0}^{+ \infty} \lambda_n e_n \otimes l_n,
\end{equation*}
and $|\lambda_n| \leq C \exp(- n^\alpha/C)$ for some $C > 0$ and every $n \in \mathbb{N}$.

With this representation of $L$, the bound on the Fredholm determinant of $L$ follows for instance from \cite[Lemma 2.13]{local_and_global}. The bound of the number of resonances can then be deduced by applying Jensen's formula to the entire function $f : v \mapsto \det(I - e^v L)$. Indeed, we find that the number of zeros of $f$ of modulus less than $r$ is a $\mathcal{O}(r^{1 + \frac{1}{\alpha}})$. Using that each zero $z_0$ of $z \mapsto \det(I - zL)$ produces a line of zeros of $f$ of the form $v_0 + 2i \pi \mathbb{Z}$ with $\re v_0 = \log |z_0|$ and $\im v_0 \in [-\pi,\pi]$, we get the estimate \eqref{eq:bound_eigenvalues}.
\end{proof}

We end this section with a lemma that relates the notion of operators of exponential class $1/d$ with the holomorphic regularity. While the result is not surprising, we were not able to find a proof in the literature adapted to our particular case.

\begin{lemma}\label{lemma:exponential_inclusion}
Let $\epsilon,\epsilon' \in (0,\epsilon_0)$ be such that $\epsilon' < \epsilon$. Then the inclusion of $\mathcal{O}_\epsilon$ into $\mathcal{O}_{\epsilon'}$ is of exponential class $1/d$, where we recall that $d$ is the dimension of the closed real-analytic manifold $M$.
\end{lemma}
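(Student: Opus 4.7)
The strategy is to construct, for each integer $N \geq 1$, a linear operator $L_N : \mathcal{O}_\epsilon \to \mathcal{O}_{\epsilon'}$ of rank at most $C_0 N^d$ such that $\n{\iota - L_N}_{\mathcal{O}_\epsilon \to \mathcal{O}_{\epsilon'}} \leq C \theta^N$ for some $C, C_0 > 0$ and $\theta \in (0,1)$ depending only on $\epsilon, \epsilon'$. Setting $n = C_0 N^d$, this gives $a_n(\iota) \leq C \exp(-c n^{1/d})$ with $c = |\log \theta|/C_0^{1/d} > 0$, which is exactly the exponential class $1/d$ property.

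To build $L_N$, fix an intermediate radius $\epsilon'' \in (\epsilon',\epsilon)$ and cover $M$ by finitely many open sets $U_1, \dots, U_K$ equipped with real-analytic charts whose holomorphic extensions $\varphi_j$ biholomorphically identify complex neighborhoods $\widetilde{U}_j$ of $U_j$ in $(M)_\epsilon$ with polydiscs $P_j \subset \mathbb{C}^d$. By shrinking, arrange two nested chains of smaller polydiscs $P_j^{(1)} \Subset P_j^{(2)} \Subset P_j$ such that $(M)_{\epsilon'} \subset \bigcup_j \varphi_j^{-1}(P_j^{(1)})$ and $\bigcup_j \varphi_j^{-1}(P_j^{(2)}) \subset (M)_{\epsilon''}$. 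For $f \in \mathcal{O}_\epsilon$, set $f_j := f \circ \varphi_j^{-1}$; this is a holomorphic function on $P_j$ of supremum norm at most $\n{f}_{\mathcal{O}_\epsilon}$, and Cauchy estimates control its Taylor coefficients at the center of $P_j$. Let $p_{j,N}$ be the Taylor partial sum of $f_j$ of total degree at most $N$; a geometric-series estimate yields $\sup_{P_j^{(2)}} |f_j - p_{j,N}| \leq C \theta^N \n{f}_{\mathcal{O}_\epsilon}$ for some $\theta \in (0,1)$ controlled by the maximal ratio of polyradii (polynomial factors in $N$ are absorbed in $\theta$). Each $p_{j,N}$ belongs to the space of polynomials of degree $\leq N$ on $\mathbb{C}^d$, of dimension $\binom{N+d}{d} = \mathcal{O}(N^d)$, and its coefficients depend linearly on $f$.

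It remains to glue the collection $g_{j,N} := p_{j,N} \circ \varphi_j$ into a single holomorphic function on $(M)_{\epsilon'}$, still approximating $f$. Writing $W_j := \varphi_j^{-1}(P_j^{(1)})$, the differences $g_{j,N} - g_{i,N}$ on overlaps $W_i \cap W_j$ form a \v{C}ech $1$-cocycle of holomorphic functions bounded by $2C \theta^N \n{f}_{\mathcal{O}_\epsilon}$. Choose a smooth partition of unity $(\chi_j)$ subordinate to $(W_j)$ and set $f^\sharp := \sum_j \chi_j g_{j,N}$, a $C^\infty$ function on $\bigcup_j W_j$ with $\sup_{W_j} |f^\sharp - g_{j,N}| = \mathcal{O}(\theta^N \n{f}_{\mathcal{O}_\epsilon})$. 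Its $\bar{\partial}$ is a smooth $(0,1)$-form supported on the overlaps of the cover, with sup-norm $\mathcal{O}(\theta^N \n{f}_{\mathcal{O}_\epsilon})$. Since the Grauert tube $(M)_{\epsilon''}$ is a relatively compact pseudoconvex (Stein) subset of the complexification $\widetilde{M}$, Hörmander's $L^2$-$\bar{\partial}$-theorem provides a bounded linear solution operator, and elliptic interior regularity upgrades this to a sup-norm bound: one obtains a $C^\infty$ function $u$ on a neighborhood of $(M)_{\epsilon'}$ with $\bar{\partial} u = \bar{\partial} f^\sharp$ and $\sup |u| \leq C' \theta^N \n{f}_{\mathcal{O}_\epsilon}$. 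Then $F_N := f^\sharp - u$ is a well-defined global holomorphic function on $(M)_{\epsilon'}$ satisfying $\n{f - F_N}_{\mathcal{O}_{\epsilon'}} \leq C'' \theta^N \n{f}_{\mathcal{O}_\epsilon}$, and depends linearly on $f$ through the $\mathcal{O}(N^d)$-dimensional space spanned by the Taylor coefficients of the $f_j$, so one may take $L_N : f \mapsto F_N$.

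The main obstacle is the quantitative gluing step: one needs a \emph{bounded linear} inverse to $\bar{\partial}$ on the Grauert tube $(M)_{\epsilon''}$, with operator norm depending only on the geometry of the tube and not on $N$, and producing sup-norm rather than merely $L^2$ control. This is what allows the $\theta^N$ bound on the cocycle to survive the gluing intact. The cover, the partition of unity, and the $\bar{\partial}$-solution operator are all fixed once and for all before $N$ enters the picture. The exponent $1/d$ in the conclusion reflects that $(M)_{\epsilon''}$ has complex dimension $d$, which is what determines the combinatorial dimension $\binom{N+d}{d}$ of the space of available Taylor monomials of degree $\leq N$.
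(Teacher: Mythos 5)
Your proof is correct, but it takes a genuinely different route from the paper. You handle the gluing step with the standard $\bar{\partial}$-machinery of several complex variables: truncate Taylor expansions locally to degree $N$, sum against a smooth partition of unity, observe that the resulting $C^\infty$ function has $\bar{\partial}$ of size $\mathcal{O}(\theta^N)$, and correct by solving $\bar{\partial} u = \bar{\partial} f^\sharp$ on the (Stein) Grauert tube via Hörmander's $L^2$-estimates and interior elliptic regularity. This works, and you correctly identify the crux: the $\bar{\partial}$-solution operator must be linear, bounded uniformly in $N$, and convertible to sup-norm control on a slightly smaller tube. The paper avoids the $\bar{\partial}$-problem entirely. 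After the same local power-series step (showing each restriction $E_i \to F_i$ between holomorphic function spaces on nested polydiscs is of exponential class $1/d$, via Lemma~\ref{lemma:concrete}), the paper notices that the restriction map $R : \mathcal{O}_\epsilon \to \prod_i F_i$ automatically lands in the closed subspace $\mathcal{F}$ of compatible local data, and that the gluing map $P : \mathcal{F} \to \mathcal{O}_{\epsilon'}$ is already a bounded operator with no approximation needed. The remaining difficulty is purely functional-analytic: passing from approximation numbers of $R$ (valued in $\prod_i F_i$) to those of its corestriction $\widetilde{R}$ (valued in $\mathcal{F}$), since an $n$-rank approximant of $R$ need not have image in $\mathcal{F}$. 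The paper fixes this with Auerbach's lemma and Hahn--Banach, producing $\widetilde{R}_n$ with image in $\mathcal{F}$ at the cost $\tilde{a}_n \leq 2(n+1)a_n$, which costs nothing at the level of exponential class. What the two approaches buy: yours is a textbook SCV argument (geometrically transparent, but relies on Hörmander's theorem and the Stein structure of the Grauert tube); the paper's is shorter and elementary, using only soft Banach space theory, which is consonant with the rest of \S\ref{subsection:exponential_class}.
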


\begin{proof}
Since $(M)_{\epsilon'}$ is relatively compact in $(M)_\epsilon$, we can find two finite families $(A_i)_{i \in I}$ and $(B_i)_{i \in I}$ of open subsets of $(M)_\epsilon$ such that the closure of $(M)_{\epsilon'}$ is contained in $\bigcup_{i \in I} A_i$, the closure of $\bigcup_{i \in I} B_i$ is contained in $(M)_\epsilon$, and for every $i \in I$ there are holomorphic coordinates in which $A_i$ and $B_i$ are polydiscs with the same center, with the polyradius of $A_i$ strictly smaller than the polyradius of $B_i$. For $i \in I$, we write $E_i$ and $F_i$ for the spaces of bounded holomorphic functions respectively on $B_i$ and on $A_i$. 

For each $i \in I$, the map $E_i \to F_i$ obtained by restriction is of exponential class $1/d$, as can be seen by developing the elements of $E_i$ in power series around the center of $A_i$ and using Lemma \ref{lemma:concrete}. It follows that the map
\begin{equation*}
\begin{array}{ccccc}
\mathfrak{i} & : & \prod_{i \in I} E_i& \to & \prod_{i \in I} F_i \\
 & & (f_i)_{i \in I} & \mapsto & (f_{i|A_i})_{i \in I}
\end{array}
\end{equation*}
is also of exponential class $1/d$. 

Define then
\begin{equation*}
\mathcal{F} = \set{ (f_i)_{i \in I} \in \prod_{i \in I} F_i: \forall i,j \in I, f_{i|U_j} = f_{j|U_i}}.
\end{equation*}
This is a closed subspace of $\prod_{i \in I} F_i$, and there is a natural bounded linear map $P : \mathcal{F} \to \mathcal{O}_{\epsilon'}$ that associates to $(f_i)_{i \in I}$ the function on $(M)_{\epsilon'}$ that coincides with $f_i$ on $U_i \cap(M)_{\epsilon'}$ for each $i \in I$. As this function can be extended to a larger Grauert tube, it belongs indeed to $\mathcal{O}_{\epsilon'}$.

Let now $Q$ denotes the bounded linear map from $\mathcal{O}_{\epsilon}$ to $\prod_{i \in I} E_i$ given by $Qf = (f_{|B_i})_{i \in I}$. Using $Q$, we can define the linear operator $R = \mathfrak{i} \circ Q : \mathcal{O}_\epsilon \to \prod_{i \in I} F_i$, which is of exponential class $1/d$ as $\mathfrak{i}$ is. Notice also that $R$ actually maps $\mathcal{O}_\epsilon$ into $\mathcal{F}$, and let $\widetilde{R}$ be $R$ with codomain restricted to $\mathcal{F}$. Since the inclusion of $\mathcal{O}_\epsilon$ into $\mathcal{O}_{\epsilon'}$ is just the map $P \circ \widetilde{R}$, we only need to prove that $\widetilde{R}$ is of exponential class $1/d$.

Let us write $(a_n)_{n \in \mathbb{N}}$ for the approximation numbers of $R$ and $(\tilde{a}_n)_{n \in \mathbb{N}}$ for the approximation numbers of $\widetilde{R}$. Let $n \geq 1$. There is an operator $R_n : \mathcal{O}_\epsilon \to \prod_{i \in I} F_i$ of rank at most $n$ such that the operator norm of $R - R_n$ is less than $2 a_n$. Using Auerbach's lemma and the Hahn--Banach theorem, we find linear forms $l_1,\dots,l_n$ of norm $1$ on $\mathcal{O}_\epsilon$, functions $f_1,\dots,f_n$ of norm $1$ in $\mathcal{O}_\epsilon$ and elements $e_1,\dots,e_n$ of $\prod_{i \in I} F_i$ such that $R_n = \sum_{k = 1}^n e_k \otimes l_k$ and $l_k(f_\ell) = \delta_{k,\ell}$ for $k,\ell = 1,\dots,n$. Define then $\tilde{e}_k = \widetilde{R} f_k$ for $k =1,\dots,n$, and notice that these are elements of $\mathcal{F}$. Then, introduce the operator $\widetilde{R}_n = \sum_{k = 1}^n \tilde{e}_k \otimes l_k$. We have in operator norm
\begin{equation*}
\n{ R_n - \widetilde{R}_n} \leq \sum_{k = 1}^n \n{e_k - \tilde{e_k}} \n{l_k} = \sum_{k =1}^n \n{(R_n - R) f_k} \leq 2 n a_n.
\end{equation*}
It follows that $\n{\widetilde{R}_n - \widetilde{R}} \leq 2(n+1) a_n$ and thus that $\tilde{a}_n \leq  2(n+1) a_n$. This inequality implies that $\widetilde{R}$ is of exponential class $1/d$ since $R$ is.
\end{proof}

\section{Construction of a space of anisotropic hyperfunctions}\label{section:construction}

The goal of this section is to prove the following technical result from which our main results will follow, as explained in \S \ref{section:consequences} below. In this result, we refer to the Koopman operators discussed in \S \ref{subsection:koopman} and the map $\Psi_X$ for $X$ a holomorphic vector field in a neighbourhood of $M$ defined in \S \ref{subsection:definitions}.

\begin{thm}\label{theorem:anisotropic_space}
Let $M$ be a real-analytic closed manifold. Let $F \in \Anos^\omega(M)$. Let $\epsilon \in (0,\epsilon_0)$. There are $\delta > 0$ and a separable Hilbert space $\mathcal{H}$ with the following properties:
\begin{enumerate}[label=(\roman*)]
\item there are continuous injections with dense images $\iota$ and $j$ from $\mathcal{O}_\epsilon$ respectively to $\mathcal{H}$ and to $\mathcal{H}^*$; \label{item:injections}
\item for every $X \in B_{\epsilon,\delta}$ and $w \in \mathcal{O}_\epsilon$, there is a compact operator $\widetilde{\mathcal{L}}_{\Psi_X \circ F,w}$ from $\mathcal{H}$ to itself of exponential class $1/d$; \label{item:extension_operator}
\item if $z \mapsto X(z)$ is a holomorphic family of elements of $B_{\epsilon,\delta}$ and $z \mapsto w_z$ is a holomorphic family of elements of $\mathcal{O}_\epsilon$ then $z \mapsto \widetilde{\mathcal{L}}_{\Psi_{X(z)} \circ F,w_z}$ is a holomorphic family of trace class operator on $\mathcal{H}$; \label{item:holomorphic_family}
\item for every $X \in B_{\epsilon,\delta}^{\mathbb{R}}, w \in \mathcal{O}_\epsilon, n \in \mathbb{N}$ and $u,v \in \mathcal{O}_\epsilon$, we have
\begin{equation*}
j(u)\p{\widetilde{\mathcal{L}}^n_{\Psi_X \circ F,w} \iota(v)} = \int_M u \p{\mathcal{L}_{\Psi_X \circ F,w}^n v} \mathrm{d}x.
\end{equation*} \label{item:integrals}
\end{enumerate}
\end{thm}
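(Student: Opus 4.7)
The plan is to realize $\mathcal{H}$ as a weighted $L^2$-type space built from the FBI transform $T$ of \S \ref{subsection:FBI} and the escape function $G$ of \eqref{eq:definition_escape_function}. Concretely, I would fix a small parameter $\tau > 0$ and take $\mathcal{H}$ to be the completion of $\mathcal{O}_\epsilon$ under the norm $\n{u}_{\mathcal{H}} = \n{e^{\tau G} T u}_{L^2(T^*M)}$. Since $\va{G(\alpha)} \leq \brac{\alpha}$, Proposition \ref{proposition:decay_from_analytic} guarantees that $\iota : \mathcal{O}_\epsilon \to \mathcal{H}$ is well-defined and continuous as soon as $\tau$ is strictly smaller than the decay rate $\rho$ given there; density of $\iota$ is automatic by construction. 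The second injection $j : \mathcal{O}_\epsilon \to \mathcal{H}^*$ is defined through the bilinear pairing $\langle j(v), u \rangle = \int_M v u \, \mathrm{d}x$ on $\iota(\mathcal{O}_\epsilon)$, rewritten on $T^*M$ via $T$ and extended by density; the required boundedness against $\n{u}_{\mathcal{H}}$ comes from Cauchy--Schwarz pairing the exponentially decaying $T v$ against $e^{-\tau G} \cdot (e^{\tau G} T u)$, while density of $j(\mathcal{O}_\epsilon)$ in $\mathcal{H}^*$ reduces to density of $\mathcal{O}_\epsilon$ in the ``opposite'' weighted $L^2$ space, which is standard.

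The core step is the kernel estimate for $K(\alpha,\beta) = e^{\tau G(\alpha)} \, T \mathcal{L}_{\Psi_X \circ F, g} S(\alpha,\beta) \, e^{-\tau G(\beta)}$, which represents $\widetilde{\mathcal{L}}_{\Psi_X \circ F, g}$ via the isometric identification $u \mapsto e^{\tau G} T u$. I would split according to Proposition \ref{proposition:localisation_graph}. Off a fixed neighborhood of the graph $\mathcal{G}$ of $\mathcal{F}$, the decay \eqref{eq:exponential_decay_Koopman} beats $\tau\va{G(\alpha)-G(\beta)} \leq \tau(\brac{\alpha}+\brac{\beta})$ for $\tau$ small. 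Near $\mathcal{G}$, one writes $\tau(G(\alpha)-G(\beta)) = \tau(G(\mathcal{F}(\beta))-G(\beta)) + O(\tau \va{\alpha-\mathcal{F}(\beta)})$ and combines the escape bound $G(\mathcal{F}(\beta))-G(\beta) \leq -C^{-1}\brac{\beta}$ with \eqref{eq:small_exponential_bound}, whose constant $c$ can be made as small as needed. Since $\brac{\alpha}$ and $\brac{\beta}$ are comparable on $\mathcal{G}$, this yields a uniform bound $\va{K(\alpha,\beta)} \leq C\n{g}_{\mathcal{O}_\epsilon} \exp(-(\brac{\alpha}+\brac{\beta})/C')$. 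In particular $\widetilde{\mathcal{L}}_{\Psi_X \circ F, g}$ is bounded on $\mathcal{H}$, and Cauchy--Schwarz applied to the same kernel together with Proposition \ref{proposition:analytic_from_decay} upgrades this to the smoothing statement that $\widetilde{\mathcal{L}}_{\Psi_X \circ F, g} : \mathcal{H} \to \mathcal{O}_{\epsilon'}$ is bounded for some $\epsilon' > 0$.

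With the smoothing property in hand, property \ref{item:extension_operator} follows by factoring $\widetilde{\mathcal{L}}_{\Psi_X \circ F, g} : \mathcal{H} \to \mathcal{O}_{\epsilon'} \to \mathcal{O}_{\epsilon''} \to \mathcal{H}$ with $\epsilon'' < \epsilon'$: the middle inclusion is of exponential class $1/d$ by Lemma \ref{lemma:exponential_inclusion}, and the outer maps are bounded, so the composition inherits this class. Property \ref{item:holomorphic_family} reduces to holomorphy of the bounded map $\mathcal{H} \to \mathcal{O}_{\epsilon'}$ in the parameters, which is clear because its $T$-kernel depends holomorphically on $(X(z), g_z)$; composition with a fixed nuclear operator preserves trace-class-valued holomorphy. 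Property \ref{item:integrals} is verified for $n=0$ from the definition of $j$ and the contour-shift identity of \S \ref{subsection:koopman}, then iterated using that $\widetilde{\mathcal{L}}_{\Psi_X \circ F, g}$ agrees with the classical Koopman operator on $\iota(\mathcal{O}_\epsilon)$, and extended by density.

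The main obstacle is the near-graph part of the kernel estimate: $\tau$, the distance threshold defining ``near $\mathcal{G}$'', and the constant $c$ in \eqref{eq:small_exponential_bound} must be chosen in the correct order---depending on the Anosov expansion rate and the implicit constants from the FBI apparatus---so that the escape gain $e^{-\tau C^{-1} \brac{\beta}}$ strictly beats every error term, uniformly in $X \in B_{\epsilon,\delta}$ and $g \in \mathcal{O}_\epsilon$. Once this ordering of constants is fixed, the remainder of the proof is an exercise in applying Lemma \ref{lemma:exponential_inclusion}, standard contour shifts, and density arguments.
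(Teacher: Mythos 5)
Your construction of $\mathcal{H}$ as the completion of $\mathcal{O}_\epsilon$ under $\n{u}_{\mathcal{H}} = \n{e^{\tau G}Tu}_{L^2(T^*M)}$ is a genuinely different route from the paper, which builds $\mathcal{H}_{\gamma,\epsilon}$ from a discrete family $(e_k^\omega)_{\omega,k}$ and a weighted $\ell^2$ norm indexed by $\Omega \times \mathbb{Z}^d$. The definition of $\iota$, $j$ and the verification of \ref{item:integrals} go through as you describe. However, the crucial step — obtaining exponential class $1/d$ — contains a genuine gap.

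The smoothing claim that $\widetilde{\mathcal{L}}_{\Psi_X \circ F,g} : \mathcal{H} \to \mathcal{O}_{\epsilon'}$ is bounded is false. Trace the exponents: by Cauchy--Schwarz you get $\va{(e^{\tau G}T\widetilde{\mathcal{L}}u)(\alpha)} \leq \bigl(\int \va{K(\alpha,\beta)}^2\,\mathrm{d}\beta\bigr)^{1/2}\n{u}_{\mathcal{H}}$, and the kernel decay near the graph, coming from the escape inequality, is at best $e^{-\tau C^{-1}\brac{\beta}}$ (with $C > 1$ the escape constant; indeed $C \approx (1-\lambda^{-1})^{-1}$). After integrating this gives $\va{T\widetilde{\mathcal{L}}u(\alpha)} \lesssim e^{-\tau G(\alpha)}\,e^{-\eta\brac{\alpha}}\n{u}_{\mathcal{H}}$ with $\eta = \mathcal{O}(\tau/C) < \tau$. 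In the unstable cone $G(\alpha) < 0$ can be as negative as $-\brac{\alpha}$, so the bound grows like $e^{(\tau-\eta)\brac{\alpha}}$ and Proposition \ref{proposition:analytic_from_decay} cannot be invoked. Conceptually this is unavoidable: elements of an anisotropic space are hyperfunction-like in the unstable direction, and the image under one application of $\widetilde{\mathcal{L}}$ remains anisotropic, not analytic. Consequently the factoring $\mathcal{H} \to \mathcal{O}_{\epsilon'} \to \mathcal{O}_{\epsilon''} \to \mathcal{H}$ through Lemma \ref{lemma:exponential_inclusion} does not exist, and with it properties \ref{item:extension_operator} and \ref{item:holomorphic_family} are unproved.

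Even setting aside the smoothing issue, a direct kernel argument in your framework would give the wrong exponent. Your kernel $K(\alpha,\beta)$ lives on $T^*M \times T^*M$, a $4d$-dimensional space, and is concentrated near a $2d$-dimensional graph with radial exponential decay. A singular-value count for such a Hilbert--Schmidt kernel on $L^2(T^*M)$ yields at best exponential class $1/(2d)$, because nothing in your argument records that the relevant operator only sees the image of $T$, a ``$d$-dimensional'' subspace of $L^2(T^*M)$. The paper circumvents this by introducing the explicit $\mathbb{Z}^d$-indexed basis $(e_k^\omega)$ via the partition of unity and the torus Fourier modes, so that the rank-one sum in \eqref{eq:def_L_tilde} is over $\Omega \times \mathbb{Z}^d$ and Lemma \ref{lemma:concrete} lands exactly on $1/d$. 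To repair your approach you would need a substitute for this Bergman-kernel reduction, e.g.\ an explicit orthonormal basis of the image of $T$ with phase-space localization, which is essentially what the paper does. As it stands the exponent $d+1$ in Theorem \ref{thm:upper_bound_determinant} and hence all of Theorem \ref{theorem:optimal_dense} would not follow.
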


We are going to construct $\mathcal{H}$ as the completion of $\mathcal{O}_\epsilon$ for a certain norm. However, one could also interpret $\mathcal{H}$ as a space of bounded linear functionals on $\mathcal{O}_\epsilon$. This is why we might call $\mathcal{H}$ a space of hyperfunctions. It does not agree with the most common terminology though, as the elements of $\mathcal{H}$ are not linear functionals on the space of real-analytic functions on $M$, but only on real-analytic functions with large enough radius of convergence. The elements of $\mathcal{H}$ are said to be anisotropic since they satisfy regularity hypotheses that are adapted to the dynamics of $F$. Morally, the elements of $\mathcal{H}$ are real-analytic in the stable direction of $F$ and dual of real-analytic in the unstable direction. Let us recall from the introduction the idea behind the construction of the space $\mathcal{H}$: we want to use the space from \cite{faure_roy} as a local model. However, due to the absence of real-analytic partition of unity, we need to use another localization procedure. We will rely on the FBI transform, see \S \ref{subsection:FBI}.

Let us start the proof of Theorem \ref{theorem:anisotropic_space}, which is the technical core of the paper. For the rest of this section, we fix a closed real-analytic manifold $M$ and $F \in \Anos^\omega(M)$. We let $G$ be an escape function for $F$, as defined in \S \ref{subsection:Anosov_diffeomorphisms}.

\subsection{Definition of the space}

Let $\varpi > 0$ be a very small constant. Assume that $\alpha,\beta,\gamma$ are three points in $T^* M$ large enough and such that $d_{KN}(\alpha,\beta) \leq \varpi$ and $d_{KN}(\mathcal{F} \beta,\gamma) \leq \varpi$. Then
\begin{equation}\label{eq:approximate_decay}
\begin{split}
G(\gamma) - G(\alpha) & = G(\mathcal{F}(\beta)) - G(\beta) + G(\gamma) - G(\mathcal{F}(\beta)) + G(\alpha) - G(\beta) \\
     & \leq - C^{-1} \va{\beta} + C \varpi^\varrho \va{\beta} \\
     & \leq - C^{-1} \va{\beta},
\end{split}
\end{equation}
where the constant $C$ may change on the last line, and we assume that $\varpi$ is small enough. Notice that we used that the unstable and stable directions are Hölder-continuous \cite[Theorem 6.3]{hirsch_pugh} (hence the exponent $\varrho$). Here, $\mathcal{F}$ denotes the symplectic lift of $\mathcal{F}$, as in \S \ref{subsection:Anosov_diffeomorphisms}.

Let $(U_\omega)_{\omega \in \Omega}$ be an open cover of $M$, such that for every $\omega \in \Omega$, there is a real-analytic diffeomorphism $\kappa_\omega : U_\omega \to V_\omega$ where $V_\omega$ is an open subset of the torus $\mathbb{T}^d = \mathbb{R}^d / \mathbb{Z}^d$. We assume in addition that the $\kappa_\omega$'s have Jacobian identically equal to $1$ and that the $U_\omega$'s are so small that for every $\omega \in \Omega$ and $k \in \mathbb{Z}^d$ the diameter (for the Kohn--Nirenberg distance) of
\begin{equation*}
\mathcal{W}_{\omega,k} = \set{\alpha \in T^* M : \alpha_x \in U_\omega, {}^t (D_{\alpha_x} \kappa_\omega)^{-1} \alpha_\xi = 2 \pi k}.
\end{equation*}
is less than $\varpi / 10$, where $\varpi$ is the constant defined above.

Let $(\chi_\omega)_{\omega \in \Omega}$ be a partition of unity subordinated to $(U_\omega)_{\omega \in \Omega}$. For every $\omega \in \Omega$, choose a $C^\infty$ function $\tilde{\chi}_\omega : M \to [0,1]$, supported in $U_\omega$ and such that $\tilde{\chi}_\omega \equiv 1$ on the support of $\chi_\omega$. Let also $\theta_\omega,\rho_\omega$ be $C^\infty$ functions from $\mathbb{T}^d$ to $[0,1]$ supported in $V_\omega$ and such that $\rho_\omega \equiv 1$ on a neighbourhood of the support of $\tilde{\chi}_\omega \circ \kappa_\omega^{-1}$ and $\theta_\omega \equiv 1$ on the neighbourhood of the support of $\rho_\omega$.

Introduce the operators $A_\omega = S \chi_\omega T$,$\widetilde{A}_\omega = S\tilde{\chi}_\omega T$ and $B_\omega = \mathcal{S}\rho_\omega \mathcal{T}$. Here, if $f$ is a function on $M$, we identify it with a function on $T^* M$ depending only on the position $(\alpha_x,\alpha_\xi) \mapsto f(\alpha_x)$, and with the associated multiplication operator. Let us also define the functions
\begin{equation*}
e_k^\omega = A_\omega (\kappa_\omega)^* \theta_\omega B_\omega e_k \textup{ and } \tilde{e}_k^\omega = \widetilde{A}_\omega (\kappa_\omega)^* \theta_\omega B_\omega e_k,
\end{equation*}
where the pullback $(\kappa_\omega)^*$ denotes just the operator of composition by $\kappa_\omega$.

\begin{remark}
The $e_k^\omega$'s and $\tilde{e}_k^\omega$'s will play the roles of local version the $e_k$'s that are defined on $M$. We will see below (Lemma \ref{lemma:ekomega_analytic}) that the $e_k^\omega$'s and $\tilde{e}_k^\omega$'s are analytic. It might seem surprising since their definitions involve the cut-off functions $\theta_\omega$'s. Let us explain how one can see quickly that $e_k^\omega$ is analytic.

Since $e_k$ is analytic, its FBI transform $\mathcal{T} e_k$ decays exponentially fast, and it follows then from an inspection of the kernel of $\mathcal{S}$ that $B_\omega e_k$ is analytic (this is the idea behind the proof of Proposition \ref{proposition:analytic_from_decay}, see \cite[Lemma 2.6]{BJ20}). Multiplying by $\theta_\omega$, we lose the real-analytic property, \emph{but not everywhere}. Indeed, $(\kappa_\omega)^* \theta_\omega B_\omega e_k$ is real-analytic on a neighbourhood of the support of $\chi_\omega$. We can use this knowledge to prove that $T (\kappa_\omega)^* \theta_\omega B_\omega e_k(\alpha)$ is exponentially small when $\alpha_x$ is in the support of $\chi_\omega$ (see for instance Lemma \ref{lemma:chgt_manifold}). It follows that $\chi_\omega T(\kappa_\omega)^* \theta_\omega B_\omega e_k$ decays exponentially fast and thus that $e_k^\omega$ is real-analytic.

Notice that we do not need the presence of $B_\omega$ in order to prove the analyticity of $e_k^\omega$. It is needed for the proof of Proposition \ref{proposition:reconstruction}, which will play the role of the Fourier inversion formula in our context.
\end{remark}

Let study the $e_k^\omega$'s and $\tilde{e}_k^\omega$'s more precisely.

\begin{lemma}\label{lemma:localisation}
Let $\omega \in \Omega$ and $c > 0$. Then, there is a constant $C > 0$ such that for every $k \in \mathbb{Z}^d$ and $\alpha \in T^* M$ such that the (Kohn--Nirenberg) distance between $\alpha$ and $\mathcal{W}_{\omega,k}$ is more than $c$ we have
\begin{equation}\label{eq:exponential_localisation}
\va{T e_k^\omega(\alpha)} \leq C \exp\p{- \frac{\max(\brac{\alpha},\va{k})}{C}}.
\end{equation}
Moreover, there are constants $C,N > 0$ such that for every $\omega \in \Omega,k \in \mathbb{Z}^d$ and $\alpha \in T^* M$, we have 
\begin{equation}\label{eq:straight_local}
\va{T e_k^\omega(\alpha)} \leq C \brac{\alpha}^{N}.
\end{equation}

The same result holds with the $e_k^\omega$'s replaced by the $\tilde{e}_k^\omega$'s.
\end{lemma}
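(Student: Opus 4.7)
The plan is to unfold $Te_k^\omega$ and track how microlocal concentration around $\mathcal{W}_{\omega,k}$ is preserved through the successive operations defining $e_k^\omega$. Writing
$$Te_k^\omega(\alpha)=\int_{T^*M} K_{TS}(\alpha,\beta)\,\chi_\omega(\beta_x)\,T\bigl[(\kappa_\omega)^*\theta_\omega B_\omega e_k\bigr](\beta)\,d\beta,$$
the proof reduces to two claims: (i) the kernel $K_{TS}$ decays exponentially in $\brac{\va{\alpha}}+\brac{\va{\beta}}$ whenever $d_{KN}(\alpha,\beta)$ stays bounded below, and (ii) $T\bigl[(\kappa_\omega)^*\theta_\omega B_\omega e_k\bigr]$ satisfies the analog of \eqref{eq:exponential_localisation} with concentration set $\mathcal{W}_{\omega,k}$ and decay in $\max(\brac{\va{\beta}},\va{k})$. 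Given (i) and (ii), one splits the $\beta$-integral according to whether $d_{KN}(\alpha,\beta)$ is larger or smaller than $c/3$: on the far region (i) yields exponential decay that absorbs any polynomial growth; on the near region the triangle inequality forces $d_{KN}(\beta,\mathcal{W}_{\omega,k})\ge 2c/3$, so (ii) gives the required decay.

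Claim (i) follows from the pseudo-local estimate \eqref{eq:pseudo_local} combined with a contour-shift / stationary-phase analysis of $\int K_T(\alpha,y)K_S(y,\beta)\,dy$, using \eqref{eq:local_descrption}--\eqref{eq:coercivity_phase} on the near-diagonal piece. For claim (ii), one first handles the torus side: by \eqref{eq:better_torus}, $\mathcal{T}e_k$ is exponentially localized in $\max(\brac{\alpha},\va{k})$ near $\mathbb{T}^d\times\{2\pi k\}$; the multiplier $\rho_\omega$ restricts this concentration to $V_\omega\times\{2\pi k\}$; the torus analog of (i) then shows that $\mathcal{T}(B_\omega e_k)=\mathcal{T}\mathcal{S}\rho_\omega\mathcal{T}e_k$ retains the localization, which multiplication by $\theta_\omega$ preserves. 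It remains to transfer this information through $T\circ(\kappa_\omega)^*$: after the change of variable $y=\kappa_\omega(x)$, legitimate because $\kappa_\omega$ has unit Jacobian, one has
$$T[(\kappa_\omega)^*f](\beta)=\int_M K_T\bigl(\beta,\kappa_\omega^{-1}(y)\bigr)\,f(y)\,dy,$$
and the phase $\Phi_T(\beta,\kappa_\omega^{-1}(y))$ agrees, to first order in $y$ at $y=\kappa_\omega(\beta_x)$, with the analogous FBI phase for $\mathcal{T}$ evaluated at the symplectic lift $\mathcal{K}_\omega\beta=\bigl(\kappa_\omega(\beta_x),{}^t(D_{\beta_x}\kappa_\omega)^{-1}\beta_\xi\bigr)$; this is a consequence of the normalization $\Phi(\alpha,\alpha_x)=0$, $d_y\Phi(\alpha,\alpha_x)=-\alpha_\xi$. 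A contour deformation in the complexified $y$-variable exploiting \eqref{eq:coercivity_phase} then shows that $T[(\kappa_\omega)^*f](\beta)$ and $\mathcal{T}f(\mathcal{K}_\omega\beta)$ differ by an exponentially small error, converting the torus-side localization into the desired localization of $T[(\kappa_\omega)^*\theta_\omega B_\omega e_k]$ around $\mathcal{W}_{\omega,k}$.

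The polynomial bound \eqref{eq:straight_local} is a by-product of these manipulations: \eqref{eq:straightforward_torus} gives $\va{\mathcal{T}e_k}\le C\brac{\xi}^{d/4}$, and each subsequent operator ($\mathcal{S}$, $T$, $S$, the pullback by $\kappa_\omega$, and the multiplications by $\chi_\omega,\rho_\omega,\theta_\omega$) only adds polynomial factors in $\brac{\va{\alpha}}$, thanks to the pointwise size bound $\va{K_T(\alpha,y)}\le C\brac{\va{\alpha}}^{d/4}$ coming from \eqref{eq:local_descrption} together with \eqref{eq:pseudo_local}. The proof for $\tilde e_k^\omega$ is identical: the only relevant feature of $\chi_\omega$ in the argument is that its support lies in $U_\omega$, a property shared by $\tilde\chi_\omega$. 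The main obstacle is the comparison of $T\circ(\kappa_\omega)^*$ with the pullback of $\mathcal{T}$ by the symplectic lift $\mathcal{K}_\omega$ — an Egorov-type compatibility of FBI transforms which must be executed in a complex neighborhood with careful control of the phase; the precise properties \eqref{eq:control_phase}--\eqref{eq:coercivity_phase} of $\Phi_T$ (and of its torus analog) from \cite{BJ20} are essential here.
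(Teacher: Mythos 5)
Your overall architecture follows the paper's fairly closely: the same inputs (\eqref{eq:pseudo_local}, \eqref{eq:better_torus}, \eqref{eq:straightforward_torus}, coercivity of the phase), the same reduction to (i) off-diagonal decay of the kernel of $TS$ and (ii) microlocal concentration of the inner function near $\mathcal{W}_{\omega,k}$, the same route to \eqref{eq:straight_local}. The gap is the ``Egorov-type'' step by which you propose to establish (ii). You claim that $T[(\kappa_\omega)^*f](\beta)$ and $\mathcal{T}f(\mathcal{K}_\omega\beta)$ differ by an exponentially small error, but this cannot hold in general: the FBI transforms $T$ (on $M$) and $\mathcal{T}$ (on $\mathbb{T}^d$) are only constrained through the first-order jet of the phase (the normalization $\Phi(\alpha,\alpha_x)=0$, $\mathrm{d}_y\Phi(\alpha,\alpha_x)=-\alpha_\xi$), coercivity, and a symbol estimate $\brac{\alpha}^{d/4}$ on the amplitude. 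The second-order phase jets and the amplitudes of $T$ pulled back through $\kappa_\omega$ and of $\mathcal{T}$ need not agree (e.g.\ the amplitude of $T$ is adapted to the chosen metric on $M$, that of $\mathcal{T}$ to the flat metric), so $K_T(\beta,\kappa_\omega^{-1}(\cdot))$ and $K_{\mathcal{T}}(\mathcal{K}_\omega\beta,\cdot)$ differ by a contribution of the \emph{same} symbolic order as the main term, not an exponentially small one. A contour deformation using \eqref{eq:coercivity_phase} shows that each of the two quantities is concentrated near $\mathcal{W}_{\omega,k}$, but it gives no pointwise comparison between them.

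What is actually needed, and what the paper isolates as Lemma \ref{lemma:chgt_manifold}, is the weaker statement that the \emph{kernel} of $T(\kappa_\omega)^*\theta_\omega\mathcal{S}$ decays exponentially in $\brac{\alpha}+\brac{\beta}$ when $d_{KN}({}^tD\kappa_\omega^{-1}\alpha,\beta)>c$ (on the region where $\theta_\omega$ is real-analytic), and is polynomially bounded everywhere; this localizes the composite operator on the lifted diagonal without identifying it with a pullback of the model transform. If you replace your comparison claim by this kernel estimate, your splitting argument closes — this is essentially the paper's proof, with the domain decomposition written as the four-way split \eqref{eq:split_in_four}.
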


\begin{proof}
We will prove the result for the $e_k^\omega$'s, the proof for the $\tilde{e}_k^\omega$'s is the same. Let us start by noticing that we have
\begin{equation}\label{eq:transform_ekomega}
T e_k^\omega = TS \chi_\omega T(\kappa_\omega)^* \theta_\omega \mathcal{S} \rho_\omega \mathcal{T} e_k.
\end{equation}
We are going to prove first that $\chi_\omega T(\kappa_\omega)^* \theta_\omega  \mathcal{S} \rho_\omega \mathcal{T} e_k$ satisfies the bounds \eqref{eq:exponential_localisation} and \eqref{eq:straight_local} that we claimed for $T e_k^\omega$. Introducing the kernel of the operator $T(\kappa_\omega)^* \theta_\omega \mathcal{S}$ as in Remark \ref{remark:meaning_kernel}, we write for $\alpha = (\alpha_x,\alpha_\xi) \in T^* M$
\begin{equation}\label{eq:first_kernel_to_transform}
\chi_\omega T(\kappa_\omega)^* \theta_\omega \mathcal{S} \rho_\omega \mathcal{T} e_k(\alpha) = \int_{T^* \mathbb{T}^d} \chi_\omega(\alpha_x) T(\kappa_\omega)^* \theta_\omega \mathcal{S}(\alpha,\beta) \rho_\omega(\beta_x) \mathcal{T} e_k(\beta) \mathrm{d}\mathrm{\beta}.
\end{equation}
Pick some small $s > 0$ and split the domain of integration in \eqref{eq:first_kernel_to_transform} into the sets
\begin{equation}\label{eq:split_in_four}
\begin{split}
& \set{\beta: d_{KN}(\beta,{}^t D \kappa_\omega^{-1} \alpha) \geq s \textup{ and } |\beta_\xi - 2 \pi k| \geq s \brac{\beta_\xi}},\\
& \set{\beta: d_{KN}(\beta,{}^t D \kappa_\omega^{-1} \alpha) \geq s \textup{ and } |\beta_\xi - 2 \pi k| \leq s \brac{\beta_\xi}}, \\ &
\set{\beta: d_{KN}(\beta,{}^t D \kappa_\omega^{-1} \alpha) \leq s \textup{ and } |\beta_\xi - 2 \pi k| \geq s \brac{\beta_\xi}} \\ & \textup{and }
\set{\beta: d_{KN}(\beta,{}^t D \kappa_\omega^{-1} \alpha) \leq s \textup{ and } |\beta_\xi - 2 \pi k| \leq s \brac{\beta_\xi}}.
\end{split}
\end{equation}
We are going to estimate the integral on each of these sets separately.

We start with the first set. When $\alpha$ is such that $\alpha_x$ is in the support of $\chi_\omega$ and $\beta$ is such that $\beta_x$ is in the support of $\rho_\omega$, since $\theta_\omega \circ \kappa_\omega \equiv 1$ on a neighbourhood of $\alpha_x$ and $\theta_\omega \equiv 1$ on a neighbourhood of $\beta_x$, we may use the first point in Lemma \ref{lemma:chgt_manifold} to control the kernel $T(\kappa_\omega)^* \theta_\omega \mathcal{S}(\alpha,\beta)$ when ${}^t D \kappa_\omega^{-1} \alpha$ is at distance at least $s$ from $\beta$. We find that it is bounded by $C\exp( - (\brac{\alpha} + \brac{\beta})/C)$ for some $C > 0$. If in addition we have $|\beta_\xi - 2 \pi k| \geq s \brac{\beta_\xi}$, then we can bound $\mathcal{T}e_k(\beta)$ using \eqref{eq:better_torus}. Thus, we find that the integral over the first set in \eqref{eq:split_in_four} is bounded by $C \exp( - \max(\brac{\alpha} ,|k|)/C)$ for some $C > 0$. 

To deal with the second set, we notice that the exponential bound on the kernel $T(\kappa_\omega)^* \theta_\omega \mathcal{S}(\alpha,\beta)$ is still valid here. We can only use \eqref{eq:straightforward_torus} to bound $\mathcal{T}e_k(\beta)$, but since we only consider a set of $\beta$'s that are approximatively of the size $|k|$ (in particular, we are integrating on a set of measure bounded by $C \brac{k}^d$), we get that the integral over the second set in \eqref{eq:split_in_four} is bounded by $C \exp( - \max(\brac{\alpha} ,|k|)/C)$ for some $C > 0$ using only the bound on the kernel $T(\kappa_\omega)^* \theta_\omega \mathcal{S}(\alpha,\beta)$.

For the third set, notice that it has measure less than $C \brac{\alpha}^d$ and that $\alpha$ is approximately of the size of $\beta$ there. We can then bound the kernel $T(\kappa_\omega)^* \theta_\omega \mathcal{S}(\alpha,\beta)$ by $C \brac{\alpha}^{\frac{d}{2}}$ (using directly the definition of the transforms) and bound $\mathcal{T}e_k(\beta)$ using \eqref{eq:better_torus}. It follows that the integral over the third set in \eqref{eq:split_in_four} is also bounded by $C \exp( - \max(\brac{\alpha} ,|k|)/C)$ for some $C > 0$.

It remains to deal with the last set. As in the previous case, this set has measure at most $C \brac{\alpha}^d$ and the integrand is bounded by some polynomials in $\brac{\alpha}$ there (using \eqref{eq:straightforward_torus} and the definition of the transforms). Hence, the integral on the third set is at most $C \brac{\alpha}^N$ for some $C,N > 0$. This ends the proof of \eqref{eq:straight_local}. Now, if we assume in addition that the distance between $\alpha$ and $\mathcal{W}_{\omega,k}$ is larger than $c$ then, by taking $s$ small enough, we may ensure that the last set in \eqref{eq:split_in_four} is empty, which proves \eqref{eq:exponential_localisation}.

We proved that $\chi_\omega T(\kappa_\omega)^* \theta_\omega \mathcal{S}\rho_\omega \mathcal{T} e_k$ satisfies the bounds that we claimed for $T e_k^\omega$. From Lemma \ref{lemma:chgt_manifold} (or \cite[Lemma 2.9]{BJ20}, see also Remark \ref{remark:TS_away_diagonal}), we see that the kernel of $TS$ is exponentially decaying away from the diagonal, which is enough to end the proof of the lemma writing
\begin{equation*}
T e_k^\omega (\alpha) = \int_{T^* M} TS(\alpha,\beta)\chi_\omega T(\kappa_\omega)^* \theta_\omega \mathcal{S}\rho_\omega \mathcal{T} e_k(\beta) \mathrm{d}\beta,
\end{equation*}
and splitting as above the integral with respect to $\beta$'s that are close to or away from $\alpha$, and close or away from $\mathcal{W}_{\omega,k}$.
\end{proof}

We can then use Lemma \ref{lemma:localisation} and the properties of the FBI transform from \S \ref{subsection:FBI} to relate the $e_k^\omega$'s and $\tilde{e}_k^\omega$'s to real-analytic regularity.

\begin{lemma}\label{lemma:ekomega_analytic}
Let $\rho > 0$. There are $C > 0$ and $\epsilon \in (0,\epsilon_0)$ such that for every $\omega \in \Omega$ and $k \in \mathbb{Z}^d$ we have $e_k^\omega, \tilde{e}_k^\omega \in \mathcal{O}_\epsilon$ and
\begin{equation*}
\n{e_k^\omega}_{\mathcal{O}_\epsilon} \leq C e^{\rho \va{k}} \textup{ and } \n{\tilde{e}_k^\omega}_{\mathcal{O}_\epsilon} \leq C e^{\rho \va{k}}.
\end{equation*}
\end{lemma}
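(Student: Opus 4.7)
The plan is to combine the pointwise estimates on $Te_k^\omega$ from Lemma~\ref{lemma:localisation} with Proposition~\ref{proposition:analytic_from_decay}, which converts exponential decay of the FBI transform into membership in a Grauert-tube space. Given a prescribed $\rho > 0$, I would produce $\rho' > 0$ (to be chosen small depending on $\rho$) and $C > 0$ such that
\begin{equation*}
|Te_k^\omega(\alpha)| \leq C e^{-\rho' \brac{\alpha}} e^{\rho \va{k}}
\end{equation*}
uniformly in $\omega \in \Omega$, $k \in \mathbb{Z}^d$, $\alpha \in T^*M$. Applying Proposition~\ref{proposition:analytic_from_decay} with this $\rho'$ then yields some $\epsilon > 0$ and a constant $C' > 0$ with $\n{e_k^\omega}_{\mathcal{O}_\epsilon} \leq C' e^{\rho\va{k}}$. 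The same argument applies verbatim to $\tilde{e}_k^\omega$ since Lemma~\ref{lemma:localisation} gives the identical bounds for $T\tilde{e}_k^\omega$. Uniformity over $\omega$ follows from $\Omega$ being finite (as $M$ is compact).

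To establish the pointwise bound, I would split $T^*M$ into the two regions determined by the Kohn--Nirenberg distance from $\mathcal{W}_{\omega,k}$. In the near region $d_{KN}(\alpha, \mathcal{W}_{\omega,k}) \leq c$: since $\mathcal{W}_{\omega,k}$ has KN-diameter at most $\varpi/10$ and each of its points satisfies $\brac{\alpha} \leq C_0(1+\va{k})$ (the fibre momentum is $2\pi k$ pulled back through $\kappa_\omega$), the triangle inequality yields $\brac{\alpha} \leq C_1(1+\va{k})$ throughout this region. Using the polynomial bound \eqref{eq:straight_local} then gives
\begin{equation*}
e^{\rho' \brac{\alpha}} \va{Te_k^\omega(\alpha)} \leq C \brac{\alpha}^N e^{\rho' C_1 (1+\va{k})} \leq C_\rho e^{\rho \va{k}},
\end{equation*}
provided $\rho' < \rho / (2C_1)$, so that the polynomial factor in $\va{k}$ is absorbed into $e^{\rho\va{k}/2}$. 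In the far region $d_{KN}(\alpha, \mathcal{W}_{\omega,k}) \geq c$, the exponential bound \eqref{eq:exponential_localisation} together with $\max(\brac{\alpha},\va{k}) \geq (\brac{\alpha}+\va{k})/2$ gives $\va{Te_k^\omega(\alpha)} \leq C \exp(-\brac{\alpha}/(2C) - \va{k}/(2C))$, which beats $e^{-\rho'\brac{\alpha}}e^{\rho\va{k}}$ as soon as $\rho' \leq 1/(2C)$ (the $e^{-\va{k}/(2C)}$ factor is then trivially dominated by $e^{\rho\va{k}}$).

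There is no serious obstacle here; once the correct choice $\rho' = \min(1/(2C),\,\rho/(2C_1))$ is made, everything reduces to routine bookkeeping involving the two cases above. The only conceptual point worth isolating is the observation that on the support of the polynomial bound the momentum is forced to be of size $O(\va{k})$, so the growth $\brac{\alpha}^N$ can be traded against $e^{\rho\va{k}}$ for arbitrary $\rho > 0$. This is exactly the mechanism that allows the $e^{\rho\va{k}}$ bound while still obtaining genuine holomorphy on some Grauert tube.
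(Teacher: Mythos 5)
Your proof is correct and fills in exactly the reasoning the paper compresses into "This result follows immediately from Proposition~\ref{proposition:analytic_from_decay} and Lemma~\ref{lemma:localisation}." The decomposition of $T^*M$ into near/far from $\mathcal{W}_{\omega,k}$, the observation that $\brac{\alpha}\sim 1+|k|$ on the near region (so the polynomial factor $\brac{\alpha}^N$ trades for $e^{\rho|k|}$ after shrinking $\rho'$), and the use of $\max(\brac{\alpha},|k|)\geq(\brac{\alpha}+|k|)/2$ on the far region are all the right bookkeeping, and uniformity over the (finite) index set $\Omega$ is correctly noted.
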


\begin{proof}
This result follows immediately from Proposition \ref{proposition:analytic_from_decay} and Lemma \ref{lemma:localisation}.
\end{proof}

\begin{lemma}\label{lemma:dual_ekomega}
Let $\epsilon \in(0,\epsilon_0)$. There are $C,\rho > 0$ such that for every $\omega \in \Omega, k \in \mathbb{Z}^d$ and $u \in \mathcal{O}_\epsilon$, we have
\begin{equation*}
\va{\langle u , e_k^\omega \rangle_{L^2}} \leq C \n{u}_{\mathcal{O}_\epsilon}e^{- \rho \va{k}} \textup{ and } \va{\langle u , \tilde{e}_k^\omega \rangle_{L^2}} \leq C \n{u}_{\mathcal{O}_\epsilon} e^{- \rho \va{k}},
\end{equation*}
where the scalar product is in $L^2(M)$.
\end{lemma}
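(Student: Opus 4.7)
The plan is to dualize Lemma \ref{lemma:ekomega_analytic}: whereas that lemma controls the $\mathcal{O}_\epsilon$-norm of $e_k^\omega$ using that $Te_k^\omega$ is concentrated near $\mathcal{W}_{\omega,k}$, here we control its pairing with any analytic $u$ by exploiting that $Tu$ decays exponentially. The key link between the two is the isometry of the FBI transform $T$: since $T : L^2(M) \to L^2(T^*M)$ is isometric, for every $u \in \mathcal{O}_\epsilon \subset L^2(M)$ we have
\begin{equation*}
\langle u, e_k^\omega \rangle = \langle Tu, T e_k^\omega \rangle_{L^2(T^*M)} = \int_{T^*M} Tu(\alpha) \, \overline{T e_k^\omega(\alpha)} \, \mathrm{d}\alpha,
\end{equation*}
so it suffices to bound this integral by $C \n{u}_{\mathcal{O}_\epsilon} e^{-\rho |k|}$. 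By Proposition \ref{proposition:decay_from_analytic}, there exist $C_0,\rho_0 > 0$ depending on $\epsilon$ such that $|Tu(\alpha)| \leq C_0 \n{u}_{\mathcal{O}_\epsilon} e^{-\rho_0 \brac{\alpha}}$ for every $\alpha \in T^*M$.

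Next I would fix a small constant $c > 0$ and split $T^*M$ into the region $U_k = \{\alpha : d_{KN}(\alpha,\mathcal{W}_{\omega,k}) \leq c\}$ and its complement. On $U_k$, by definition every point of $\mathcal{W}_{\omega,k}$ has cotangent part $\alpha_\xi = 2\pi\, {}^t\! D_{\alpha_x}\kappa_\omega \cdot k$, so $\brac{\alpha_\xi} \asymp |k|$ there; moving by Kohn--Nirenberg distance at most $c$ changes $\alpha_\xi$ by at most $c \brac{\alpha}$, which (for $c$ small and $|k|$ large) preserves this magnitude, so $\brac{\alpha} \geq c_1 |k|$ throughout $U_k$ for some $c_1 > 0$. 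Combining the polynomial bound \eqref{eq:straight_local} on $|Te_k^\omega(\alpha)|$ with the exponential decay of $Tu$, and noting that $U_k$ has symplectic volume $\mathcal{O}(|k|^d)$, I obtain
\begin{equation*}
\int_{U_k} |Tu(\alpha)| \, |Te_k^\omega(\alpha)| \, \mathrm{d}\alpha \leq C \n{u}_{\mathcal{O}_\epsilon} |k|^{N+d} e^{-\rho_0 c_1 |k|} \leq C' \n{u}_{\mathcal{O}_\epsilon} e^{-\rho |k|}
\end{equation*}
for any $\rho < \rho_0 c_1$ and $|k|$ large. (The cases of small $|k|$ are absorbed in the constant.)

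On the complement of $U_k$, the localization estimate \eqref{eq:exponential_localisation} gives $|Te_k^\omega(\alpha)| \leq C\exp(-\max(\brac{\alpha},|k|)/C)$, while $|Tu(\alpha)| \leq C_0 \n{u}_{\mathcal{O}_\epsilon}$ trivially. Thus
\begin{equation*}
\int_{T^*M \setminus U_k} |Tu(\alpha)| \, |Te_k^\omega(\alpha)| \, \mathrm{d}\alpha \leq C \n{u}_{\mathcal{O}_\epsilon} e^{-|k|/(2C)} \int_{T^*M} e^{-\brac{\alpha}/(2C)} \mathrm{d}\alpha \leq C' \n{u}_{\mathcal{O}_\epsilon} e^{-|k|/(2C)},
\end{equation*}
and adding the two contributions (with $\rho$ the minimum of the two exponents) finishes the bound for $e_k^\omega$. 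The argument for $\tilde{e}_k^\omega$ is identical, since Lemma \ref{lemma:localisation} gives the same estimates for $T\tilde{e}_k^\omega$. The only subtlety I anticipate is the geometric estimate $\brac{\alpha} \gtrsim |k|$ on $U_k$, which is where the specific definition of the Kohn--Nirenberg distance (in particular the $\brac{\alpha}^{-2}$ weight in the fiber direction) is used; this is however exactly what was implicitly needed in the proof of Lemma \ref{lemma:localisation} and poses no real difficulty.
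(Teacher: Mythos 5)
Your proof is correct and follows exactly the same route as the paper's (very terse) proof: use the isometry of $T$ to write $\langle u, e_k^\omega \rangle = \langle Tu, Te_k^\omega \rangle$, then combine Proposition \ref{proposition:decay_from_analytic} for the decay of $Tu$ with Lemma \ref{lemma:localisation} for the localization of $Te_k^\omega$. The paper simply asserts that "the result then follows" from these two ingredients; your splitting of $T^*M$ into a $c$-neighbourhood of $\mathcal{W}_{\omega,k}$ and its complement, together with the observation that $\brac{\alpha}\gtrsim |k|$ on the former, is precisely the omitted bookkeeping.
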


\begin{proof}
As above, let us prove the result only for the $e_k^\omega$'s. Since $T$ is an isometry, we have
\begin{equation}\label{eq:ex_isometry}
\langle u , e_k^\omega \rangle_{L^2(M)} = \langle T u, T e_k^\omega \rangle_{L^2(T^* M)}.
\end{equation}
The result then follows from Proposition \ref{proposition:decay_from_analytic} and Lemma \ref{lemma:localisation}.
\end{proof}

The construction of the space $\mathcal{H}$ from Theorem \ref{theorem:anisotropic_space} is based on the representation formula for smooth function given in the following proposition.

\begin{proposition}\label{proposition:reconstruction}
There is an operator $\mathcal{K}$ with real-analytic kernel such that for every analytic function $u$, we have
\begin{equation}\label{eq:series}
u = \sum_{\substack{\omega \in \Omega \\ k \in \mathbb{Z}^d}} \langle u , e_k^\omega \rangle_{L^2} \tilde{e}_k^\omega + \mathcal{K}u,
\end{equation}
where the sum converges in the real-analytic topology.
\end{proposition}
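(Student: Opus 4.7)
The plan is to compute the right-hand side of \eqref{eq:series} explicitly via Fourier inversion on the torus, then identify the difference $\mathcal{K}u$ as a smoothing operator with real-analytic kernel. Since $\chi_\omega,\theta_\omega,\rho_\omega$ are real-valued, $S=T^*$ and $\mathcal{S}=\mathcal{T}^*$ for the $L^2$ Hermitian product, and $\kappa_\omega$ has Jacobian $1$ (so the adjoint of $(\kappa_\omega)^*$ is $(\kappa_\omega^{-1})^*$), taking adjoints in the operator chain defining $e_k^\omega$ yields
$$
\langle u,e_k^\omega\rangle = \langle f_\omega,e_k\rangle_{L^2(\mathbb{T}^d)},\qquad f_\omega := B_\omega\,\theta_\omega\,(\kappa_\omega^{-1})^*A_\omega u.
$$
For $u\in C^\infty(M)$ the function $f_\omega$ is smooth on $\mathbb{T}^d$, and using that $\{e_k\}_{k\in\mathbb{Z}^d}$ is an orthonormal basis of $L^2(\mathbb{T}^d)$, Fourier inversion combined with the definition of $\tilde{e}_k^\omega$ gives
$$
\sum_{\omega,k}\langle u,e_k^\omega\rangle\tilde{e}_k^\omega \;=\; \sum_{\omega\in\Omega}\widetilde{A}_\omega\,(\kappa_\omega)^*\,\theta_\omega\,B_\omega^{\,2}\,\theta_\omega\,(\kappa_\omega^{-1})^*A_\omega u.
$$

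Since $\sum_\omega\chi_\omega\equiv 1$ and $ST=I$, one has $u=\sum_\omega A_\omega u$, so $\mathcal{K}u$ equals the sum over $\omega$ of $\bigl(A_\omega-\widetilde{A}_\omega(\kappa_\omega)^*\theta_\omega B_\omega^2\theta_\omega(\kappa_\omega^{-1})^*A_\omega\bigr)u$. The task is then to telescope each summand using $\mathcal{S}\mathcal{T}=I$ on $L^2(\mathbb{T}^d)$, $ST=I$ on $L^2(M)$, and the nested supports $\supp\chi_\omega\subset\{\tilde\chi_\omega=1\}$ and $\supp(\tilde\chi_\omega\circ\kappa_\omega^{-1})\subset\{\rho_\omega=1\}\subset\{\theta_\omega=1\}$. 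Writing $B_\omega=I-\mathcal{S}(1-\rho_\omega)\mathcal{T}$ and $TS=I+(TS-I)$ and expanding, the ``main'' contributions recombine into $A_\omega$, while every remainder is an oscillatory integral over $T^*M$, $T^*\mathbb{T}^d$, or their product in which either a cut-off factor $(1-\theta_\omega)$, $(1-\rho_\omega)$, $(1-\tilde\chi_\omega)$, or the off-diagonal piece of $TS-I$ or $\mathcal{T}\mathcal{S}-I$ appears between two FBI kernels. The pseudolocal estimates \eqref{eq:pseudo_local} and \eqref{eq:local_descrption}, together with their analogues for $TS-I$ and $\mathcal{T}\mathcal{S}-I$ (Lemma~\ref{lemma:chgt_manifold} and Remark~\ref{remark:TS_away_diagonal}), bound the resulting integrand by $C\exp(-\brac{\va{\alpha}}/C)$ uniformly for $(x,y)$ in a complex neighbourhood of $M\times M$; a contour shift in $\alpha$, permitted by the holomorphic extensions of all kernels, then provides a holomorphic extension of the kernel of each remainder to a fixed Grauert tube of $M\times M$. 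Since $\Omega$ is finite, $\mathcal{K}$ itself has a real-analytic kernel.

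Convergence of the series \eqref{eq:series} in the $C^\omega$ topology follows from the two lemmas already established: for $u\in\mathcal{O}_\epsilon$ we have $|\langle u,e_k^\omega\rangle|\le C\|u\|_{\mathcal{O}_\epsilon}e^{-\rho'|k|}$ for some $\rho'>0$ (Lemma~\ref{lemma:dual_ekomega}), while for any $\rho>0$ there is $\epsilon'>0$ with $\|\tilde{e}_k^\omega\|_{\mathcal{O}_{\epsilon'}}\le Ce^{\rho|k|}$ (Lemma~\ref{lemma:ekomega_analytic}); choosing $\rho<\rho'$ yields absolute convergence in $\mathcal{O}_{\epsilon'}$. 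The main obstacle is the telescoping sketched above: the composition $\widetilde{A}_\omega(\kappa_\omega)^*\theta_\omega B_\omega^2\theta_\omega(\kappa_\omega^{-1})^*A_\omega$ interleaves three FBI transforms and their adjoints with cut-offs and a diffeomorphism, so isolating an $A_\omega$-part from a pseudolocal remainder requires careful bookkeeping; once the correct expansion is set up, however, the exponential decay estimates recalled in \S\ref{subsection:FBI} suffice to turn every remainder into an operator with holomorphic kernel on a Grauert tube of $M\times M$.
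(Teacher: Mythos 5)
Your algebraic reduction is exactly the paper's: the adjoint step $\langle u,e_k^\omega\rangle=\langle B_\omega\theta_\omega(\kappa_\omega^{-1})^*A_\omega u,e_k\rangle$ (self-adjointness of $A_\omega,B_\omega$, Jacobian $1$), Fourier inversion on $\mathbb{T}^d$, the identification $\mathcal{K}=\sum_\omega\p{A_\omega-\widetilde{A}_\omega(\kappa_\omega)^*\theta_\omega B_\omega^2\theta_\omega(\kappa_\omega^{-1})^*A_\omega}$ via $\sum_\omega\chi_\omega\equiv 1$ and $STu=u$, and the convergence argument from Lemmas \ref{lemma:ekomega_analytic} and \ref{lemma:dual_ekomega} all match the paper's proof. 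The issue is that you stop at the point where the real work begins, and the way you describe that remaining work would not go through as stated.

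Concretely: to apply the mechanism that turns a remainder into an operator with real-analytic kernel (Remark \ref{remark:small_to_analytic}, i.e.\ an exponentially small kernel sandwiched between $S$ and $T$), you need the inner kernel $\mathfrak{K}(\alpha,\beta)$ to be $\mathcal{O}\p{\exp\p{-(\brac{\alpha}+\brac{\beta})/C}}$ \emph{jointly in both variables}; your claimed bound $C\exp(-\brac{\va{\alpha}}/C)$ decays in one variable only and cannot dominate the growth $e^{\epsilon\brac{\beta}}$ of the holomorphically extended kernel $K_T(\beta,y)$, so it does not yield analyticity in the second factor of $M\times M$ (your ``contour shift in $\alpha$'' is not the relevant device here and does not repair this). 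Moreover, joint exponential decay is genuinely not available for a single factor in which a merely $C^\infty$ cut-off such as $1-\rho_\omega$ sits between two FBI kernels: because there is no analytic partition of unity, the non-stationary phase argument is only holomorphic on the side where the cut-off is locally analytic, and one gets the mixed bounds of Lemma \ref{lemma:chgt_manifold}, e.g.\ $\tilde{\chi}_\omega T(\kappa_\omega)^*\theta_\omega\mathcal{S}\sqrt{1-\rho_\omega}(\alpha,\beta)=\mathcal{O}\p{\exp(-\brac{\alpha}/C)\brac{\beta}^{-N}}$, exponential in $\alpha$ but only polynomial in $\beta$. The missing idea is the pairing that recovers joint decay: split $1-\rho_\omega=\sqrt{1-\rho_\omega}\cdot\sqrt{1-\rho_\omega}$ and compose two such half-kernels, so that the polynomially decaying variable of one factor is integrated against the exponential decay of the other (this is how the paper treats the terms \eqref{eq:analytic_1} and \eqref{eq:analytic_2}, before handling \eqref{eq:analytic_3} and \eqref{eq:analytic_4}, which are the easy pieces where the nested supports do give exponential smallness directly). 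Without exhibiting this decomposition and the composition of mixed estimates, the assertion that ``every remainder'' is exponentially small is not justified, and this is precisely the technical core of the proposition.
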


The proof of Proposition \ref{proposition:reconstruction} is probably the most technical from the paper and can be found in Appendix \ref{appendix:kernel_estimates}.

For every $\omega \in \Omega$, choose a point $x_\omega \in \supp \chi_\omega$ (we may discard the $\omega$'s for which $\supp \chi_\omega$ is empty), and for every $k \in \mathbb{Z}^d$ let us write 
\begin{equation*}
G_\omega(k) = G(x_\omega, {}^t D_{x_\omega} \kappa_\omega (2 \pi k)),
\end{equation*}
where $G$ is the escape function defined in \S \ref{subsection:Anosov_diffeomorphisms}. For $\gamma > 0$, let us introduce the norm
\begin{equation*}
\n{u}_\gamma^2 \coloneqq \sum_{\substack{\omega \in \Omega \\ k \in \mathbb{Z}^d}} e^{-2 \gamma G_\omega(k)} \va{\langle u, e_k^\omega \rangle_{L^2}}^2 + \n{\mathcal{K}u}_{L^2}^2,
\end{equation*}
where $\mathcal{K}$ is the operator from Proposition \ref{proposition:reconstruction}.

\begin{lemma}\label{lemma:finite_norm}
Let $\epsilon \in (0,\epsilon_0)$. Then there are $C,\gamma_0 >0$ such that for every $0 < \gamma \leq \gamma_0$ if $u \in \mathcal{O}_\epsilon$ then $\n{u}_\gamma \leq C \n{u}_{\mathcal{O}_\epsilon} < + \infty$.
\end{lemma}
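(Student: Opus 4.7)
The plan is to estimate each piece of $\|u\|_\gamma^2$ separately, using only the two tools already established: the exponential decay of $\langle u, e_k^\omega\rangle$ from Lemma \ref{lemma:dual_ekomega}, and the fact that the escape function $G$ grows at most linearly in the fiber variable. Throughout we use that $\Omega$ is finite (the cover $(U_\omega)_{\omega\in\Omega}$ can be taken finite since $M$ is compact).

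The main step is the pairing sum. By Lemma \ref{lemma:dual_ekomega}, there are $C,\rho>0$ (depending only on $\epsilon$) such that $|\langle u, e_k^\omega\rangle|\le C\|u\|_{\mathcal{O}_\epsilon}e^{-\rho|k|}$ for every $\omega\in\Omega$ and $k\in\mathbb{Z}^d$. On the other hand, the escape function defined in \eqref{eq:definition_escape_function} satisfies $|G(x,\xi)|\le |\xi|$, and since $\omega$ ranges over a finite set the linear maps ${}^tD_{x_\omega}\kappa_\omega$ have operator norms uniformly bounded by some $C'>0$. Hence
\begin{equation*}
|G_\omega(k)|=\bigl|G\bigl(x_\omega,{}^tD_{x_\omega}\kappa_\omega(2\pi k)\bigr)\bigr|\le 2\pi C'|k|
\end{equation*}
for every $\omega\in\Omega$ and $k\in\mathbb{Z}^d$. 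Setting $\gamma_0=\rho/(4\pi C')$, for every $\gamma\in(0,\gamma_0]$ we obtain
\begin{equation*}
\sum_{\substack{\omega\in\Omega\\ k\in\mathbb{Z}^d}}e^{-2\gamma G_\omega(k)}|\langle u,e_k^\omega\rangle|^2\le C^2|\Omega|\|u\|_{\mathcal{O}_\epsilon}^2\sum_{k\in\mathbb{Z}^d}e^{(4\pi C'\gamma-2\rho)|k|}\le C''\|u\|_{\mathcal{O}_\epsilon}^2,
\end{equation*}
the last sum being finite because its exponent is at most $-\rho|k|$.

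For the remainder term, $\mathcal{K}$ has a real-analytic (hence continuous) kernel on the compact manifold $M\times M$, so it defines a bounded operator from $L^2(M)$ to itself, and in particular from $\mathcal{O}_\epsilon$ to $L^2(M)$ (since $\mathcal{O}_\epsilon$ embeds continuously into $L^2(M)$ by the maximum principle applied in local charts). Thus $\|\mathcal{K}u\|_{L^2}\le C\|u\|_{\mathcal{O}_\epsilon}$. Combining the two bounds yields $\|u\|_\gamma\le C\|u\|_{\mathcal{O}_\epsilon}$ for $\gamma\in(0,\gamma_0]$, as claimed.

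There is no serious obstacle here: the only minor points to be careful about are ensuring $\Omega$ is finite (so the sum over $\omega$ contributes only a multiplicative constant) and checking that the linear upper bound $|G_\omega(k)|\le C|k|$ holds uniformly in $\omega$, which is immediate from the explicit definition of $G$ and the compactness of $\bigcup_\omega \overline{U_\omega}=M$.
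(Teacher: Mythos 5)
Your proof is correct and matches the paper's (one-line) proof, which simply cites Lemma \ref{lemma:dual_ekomega}: you have spelled out the expected details, namely that $\Omega$ is finite, $G_\omega(k)$ grows at most linearly in $|k|$ so the escape-function weight is dominated by the exponential decay from Lemma \ref{lemma:dual_ekomega} for $\gamma$ small, and $\mathcal{K}$ is bounded from $\mathcal{O}_\epsilon$ to $L^2$ since its kernel is real-analytic on the compact $M\times M$. (The only cosmetic imprecision is $|G(x,\xi)|\le|\xi|$, which should be $|G(x,\xi)|\le C|\xi|$ since $E_s^*\oplus E_u^*$ need not be orthogonal; this changes nothing in the argument.)
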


\begin{proof}
This is an immediate consequence of Lemma \ref{lemma:dual_ekomega}.
\end{proof}

When the conclusion from Lemma \ref{lemma:finite_norm} holds, we let $\mathcal{H}_{\gamma,\epsilon}$ denotes the completion of $\mathcal{O}_\epsilon$ for the norm $\n{\cdot}_\gamma$, and $\iota : \mathcal{O}_\epsilon \to \mathcal{H}_{\gamma,\epsilon}$ be the inclusion. The space $\mathcal{H}$ in Theorem \ref{theorem:anisotropic_space} will be $\mathcal{H}_{\gamma,\epsilon}$ with $\gamma$ small enough.

\subsection{Basic properties of the space}

Now that we have a family of spaces defined, we check that they satisfy the basic properties required in Theorem \ref{theorem:anisotropic_space} (those that do not involve Koopman operators). The existence of the inclusion $\iota$ following from the definition, we explain now how the inclusion $j$ is constructed in Lemma \ref{lemma:injection_dual}. After a discussion of the action of operators with real-analytic kernel (Definition \ref{definition:analytic_kernels}), we prove that our space is separable in Lemma \ref{lemma:separability}.

\begin{lemma}\label{lemma:injection_dual}
Let $\epsilon \in(0,\epsilon_0)$. Then there are $C,\gamma_0 >0$ such that for every $0 < \gamma \leq \gamma_0$ if $v \in \mathcal{O}_\epsilon$ then the linear form
\begin{equation*}
u \mapsto \langle u, v \rangle_{L^2}
\end{equation*}
on $\mathcal{O}_\epsilon$ extends to a continuous linear form $l_v$ on $\mathcal{H}_{\gamma,\epsilon}$. Here, the scalar product is in $L^2(M)$ as usual. Moreover, the map $j : v \mapsto l_{\bar{v}}$ is $\mathbb{C}$-linear and continuous from $\mathcal{O}_\epsilon$ to the dual of $\mathcal{H}_{\gamma,\epsilon}$.
\end{lemma}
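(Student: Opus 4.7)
The plan is to use the reconstruction formula from Proposition \ref{proposition:reconstruction} to express the bilinear pairing $\langle u, v\rangle$ in terms of the coefficients appearing in the definition of $\|\cdot\|_\gamma$, then estimate each piece by Cauchy--Schwarz. Concretely, for $u \in \mathcal{O}_\epsilon$, writing
\begin{equation*}
u = \sum_{\omega \in \Omega, k \in \mathbb{Z}^d} \langle u, e_k^\omega \rangle \tilde{e}_k^\omega + \mathcal{K} u
\end{equation*}
and pairing with $v \in \mathcal{O}_\epsilon$ (the series converges in the real-analytic topology, hence in $L^2$), we obtain
\begin{equation*}
\langle u, v\rangle = \sum_{\omega, k} \langle u, e_k^\omega\rangle \langle \tilde{e}_k^\omega, v\rangle + \langle \mathcal{K} u, v\rangle.
\end{equation*}

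Next, I would apply Cauchy--Schwarz to the discrete sum with the weights $e^{\pm \gamma G_\omega(k)}$, so that the $u$-side is controlled by $\|u\|_\gamma$ by the very definition of the norm, while the $v$-side becomes $\sum_{\omega, k} e^{2\gamma G_\omega(k)} |\langle \tilde{e}_k^\omega, v\rangle|^2$. To bound the latter, I would use Lemma \ref{lemma:dual_ekomega} applied to $\tilde{e}_k^\omega$, which gives some $\rho > 0$ (depending only on $\epsilon$) such that $|\langle \tilde{e}_k^\omega, v\rangle| \leq C \|v\|_{\mathcal{O}_\epsilon} e^{-\rho |k|}$, together with the straightforward observation that $|G_\omega(k)| = |G(x_\omega, {}^t D_{x_\omega}\kappa_\omega(2\pi k))| \leq C_0 |k|$ since $G(x,\xi) = |\xi_s| - |\xi_u|$ is bounded by $2|\xi|$. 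The $\mathcal{K}u$ term is handled separately by $|\langle \mathcal{K}u, v\rangle| \leq \|\mathcal{K}u\|_{L^2} \|v\|_{L^2} \leq \|u\|_\gamma \|v\|_{L^2}$, and $\|v\|_{L^2} \lesssim \|v\|_{\mathcal{O}_\epsilon}$.

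Choosing $\gamma_0 > 0$ small enough so that $2 C_0 \gamma_0 < 2\rho$, the weighted series
\begin{equation*}
\sum_{\omega, k} e^{2\gamma G_\omega(k)} |\langle \tilde{e}_k^\omega, v\rangle|^2 \leq C^2 |\Omega| \, \|v\|_{\mathcal{O}_\epsilon}^2 \sum_{k \in \mathbb{Z}^d} e^{-(2\rho - 2 C_0 \gamma)|k|}
\end{equation*}
converges (using the finiteness of $\Omega$, which we may assume after taking a finite subcover of $M$), yielding $|\langle u, v\rangle| \leq C \|u\|_\gamma \|v\|_{\mathcal{O}_\epsilon}$ for all $u, v \in \mathcal{O}_\epsilon$ and every $\gamma \in (0, \gamma_0]$. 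This inequality shows both that $u \mapsto \langle u, v\rangle$ extends to a continuous linear form $l_v$ on $\mathcal{H}_{\gamma,\epsilon}$ (since $\mathcal{O}_\epsilon$ is dense in $\mathcal{H}_{\gamma,\epsilon}$ by construction), and that the resulting conjugate-linear map $v \mapsto l_{\bar v}$ is continuous into $\mathcal{H}_{\gamma,\epsilon}^*$.

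The only mild subtlety is making sure the compatibility between $\gamma_0$ and $\rho$ depends only on $\epsilon$ (not on $u$ or $v$); this is immediate because both the constant $\rho$ from Lemma \ref{lemma:dual_ekomega} and the constant $C_0$ from the linear bound $|G_\omega(k)| \leq C_0 |k|$ are intrinsic to the setup. There is no conceptual difficulty once Proposition \ref{proposition:reconstruction} and Lemma \ref{lemma:dual_ekomega} are in hand; the proof is essentially a weighted Cauchy--Schwarz argument matched to the definition of $\|\cdot\|_\gamma$.
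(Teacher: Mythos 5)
Your proof is correct and follows exactly the route the paper intends: the paper's own proof is the one-line statement that the lemma "is a consequence of Lemma \ref{lemma:dual_ekomega} and Proposition \ref{proposition:reconstruction}", and your weighted Cauchy--Schwarz argument (reconstruction formula, exponential decay of $\va{\langle v,\tilde e_k^\omega\rangle}$, linear bound $\va{G_\omega(k)}\leq C_0\va{k}$, choice of $\gamma_0$ with $C_0\gamma_0<\rho$, separate treatment of the $\mathcal{K}u$ term) is precisely the fleshed-out version of that argument.
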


\begin{proof}
This is a consequence from Lemma \ref{lemma:dual_ekomega} and Proposition \ref{proposition:reconstruction}.
\end{proof}

Operators with real-analytic kernels play an important role in the analysis below, so let us define precisely what we mean by an operator with real-analytic kernel acting on $\mathcal{H}_{\gamma,\epsilon}$.

\begin{definition}\label{definition:analytic_kernels}
Let $\epsilon \in (0,\epsilon_0)$. If $L \in \mathcal{O}_\epsilon(M \times M)$, then we can define the operator with kernel $L$, that we also denote by $L$, and is defined for $u$ an integrable function on $M$ by
\begin{equation*}
Lu (x) = \int_M L(x,y) u(y) \mathrm{d}y \textup{ for } x \in M.
\end{equation*}
Notice that there is $\epsilon' \in (0,\epsilon_0)$ such that the function $(x \mapsto (y \mapsto L(x,y))$ is bounded and holomorphic from a neighbourhood of $\overline{(M)}_{\epsilon'}$ (in $(M)_{\epsilon_0}$) to $\mathcal{O}_{\epsilon'}$. Moreover, Lemma \ref{lemma:dual_ekomega} and Proposition \ref{proposition:reconstruction} imply that, if $\gamma > 0$ is small enough, the inclusion $\iota : \mathcal{O}_\epsilon \to \mathcal{H}_{\gamma,\epsilon}$ and the map $j : \mathcal{O}_\epsilon \to \mathcal{H}_{\gamma,\epsilon}^*$ extend to bounded operators from $\mathcal{O}_{\epsilon'}$ respectively to $\mathcal{H}_{\gamma,\epsilon}$ and $\mathcal{H}_{\gamma,\epsilon}^*$. Here, we recall that it follows from the definitions of $\mathcal{O}_\epsilon$ and $\mathcal{O}_{\epsilon'}$ that $\mathcal{O}_{\epsilon} \cap \mathcal{O}_{\epsilon'}$ is dense in $\mathcal{O}_{\epsilon'}$. We can consequently define an operator from $\mathcal{H}_{\gamma,\epsilon}$ to itself (still denoted by $L$) by the formula
\begin{equation*}
L u = \iota \p{ x \mapsto j(L(x,\cdot))(u)} \textup{ for } u \in \mathcal{H}_{\gamma,\epsilon}.
\end{equation*}
Notice that, up to taking $\epsilon'$ slightly smaller, we can find $\epsilon'' > \epsilon'$ such that the operator $L : \mathcal{H}_{\gamma,\epsilon} \to \mathcal{H}_{\gamma,\epsilon}$ factorizes through an operator from $\mathcal{O}_{\epsilon''} \to \mathcal{O}_{\epsilon'}$. Hence, it follows from Lemma \ref{lemma:exponential_inclusion} that $L$ is an operator in the exponential class $1/d$, where $d$ denotes the dimension of $M$. Notice also that the trace class operator norm of the operator $L$ is less than $C \n{L}_{\mathcal{O}_\epsilon}$ for some constant $C$ that may depend on $\epsilon$ but not on $L$.
\end{definition}

The following notation will be interesting in order to give an expression for the norm of $\mathcal{H}_{\gamma,\epsilon}$ in Lemma \ref{lemma:expression_norm} below.

\begin{definition}
Let $\epsilon \in (0,\epsilon_0)$. Let $\mathcal{K}(x,y)$ denote the kernel of the operator $\mathcal{K}$ from Proposition \ref{proposition:reconstruction}. Working as in Definition \ref{definition:analytic_kernels}, we see that when $\gamma > 0$ is small enough and $u \in \mathcal{H}_{\gamma,\epsilon}$, we can define an analytic function on $M$ by the formula $x \mapsto j(\mathcal{K}(x,\cdot))(u)$. We introduce then the abbreviation, for $u \in \mathcal{H}_{\gamma,\epsilon}$,
\begin{equation*}
\n{\mathcal{K}u}_{L^2}^2 = \int_M \va{j(\mathcal{K}(x,\cdot))(u)}^2 \mathrm{d}x.
\end{equation*}
\end{definition}

With this notation and Lemma \ref{lemma:injection_dual}, we can express the norm of $\mathcal{H}_{\gamma,\epsilon}$.

\begin{lemma}\label{lemma:expression_norm}
Let $\epsilon \in (0,\epsilon_0)$ be small enough. Let $\gamma > 0$ be small enough (depending on $\epsilon$). Then for every $u \in \mathcal{H}_{\gamma,\epsilon}$ the sum 
\begin{equation}\label{eq:finite_sum}
\sum_{\omega \in \Omega, k \in \mathbb{Z}^d} e^{-2 \gamma G_\omega(k)} \va{l_{e_k^\omega}(u)}^2
\end{equation}
converges. Here, the notation $l_v$ for $v \in \mathcal{O}_\epsilon$ is from Lemma \ref{lemma:injection_dual}. Moreover,
\begin{equation}\label{eq:expression_norm}
\n{u}_\gamma^2 = \sum_{\omega \in \Omega, k \in \mathbb{Z}^d} e^{-2 \gamma G_\omega(k)} \va{l_{e_k^\omega}(u)}^2 + \n{\mathcal{K}u}_{L^2}^2.
\end{equation}
In particular, the image of $j$ is dense in the dual of $\mathcal{H}_{\gamma,\epsilon}$.
\end{lemma}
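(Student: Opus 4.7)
The strategy is to build an isometric embedding of $\mathcal{H}_{\gamma,\epsilon}$ into the Hilbert space
\begin{equation*}
\mathcal{E} \coloneqq \ell^2\bigl(\Omega \times \mathbb{Z}^d,\, e^{-2\gamma G_\omega(k)}\bigr) \oplus L^2(M)
\end{equation*}
directly from the definition of the norm, and then to read off the three conclusions from this embedding. The map to consider is
\begin{equation*}
\Phi : u \longmapsto \bigl((l_{e_k^\omega}(u))_{\omega,k},\, \mathcal{K}u\bigr).
\end{equation*}
By Lemma \ref{lemma:ekomega_analytic} each $e_k^\omega$ lies in $\mathcal{O}_\epsilon$, so Lemma \ref{lemma:injection_dual} gives continuous extensions of the functionals $u \mapsto \langle u, e_k^\omega\rangle$ from $\mathcal{O}_\epsilon$ to $\mathcal{H}_{\gamma,\epsilon}$. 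The boundedness of $u \mapsto \mathcal{K}u$ from $\mathcal{H}_{\gamma,\epsilon}$ to $L^2(M)$ follows from the remark preceding the statement: since the kernel $\mathcal{K}(x,y)$ depends holomorphically on $x$ with values in $\mathcal{O}_\epsilon$, the map $x \mapsto j(\mathcal{K}(x,\cdot))$ is uniformly bounded in $\mathcal{H}_{\gamma,\epsilon}^*$, hence $\mathcal{K}u(x) = j(\mathcal{K}(x,\cdot))(u)$ is pointwise bounded by $C\|u\|_\gamma$, uniformly in $x$, giving an $L^2$ bound.

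On $\mathcal{O}_\epsilon$, the very definition of $\|\cdot\|_\gamma$ states that $\Phi$ is a linear isometry into $\mathcal{E}$. By the universal property of completion, $\Phi$ extends uniquely to a linear isometry $\Phi : \mathcal{H}_{\gamma,\epsilon} \to \mathcal{E}$, and by continuity the extended components are precisely the continuous extensions described above. The isometry property immediately yields convergence of the sum \eqref{eq:finite_sum} and the identity \eqref{eq:expression_norm} for every $u \in \mathcal{H}_{\gamma,\epsilon}$.

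The remaining point --- density of the image of $j$ in $\mathcal{H}_{\gamma,\epsilon}^*$ --- is where I expect the main obstacle. Since $\mathcal{H}_{\gamma,\epsilon}$ is a Hilbert space it suffices to show that if $u \in \mathcal{H}_{\gamma,\epsilon}$ satisfies $j(v)(u) = 0$ for every $v \in \mathcal{O}_\epsilon$, then $u = 0$. Taking $v = \mathcal{K}(x,\cdot) \in \mathcal{O}_\epsilon$ for each fixed $x \in M$ yields $\mathcal{K}u(x) = j(\mathcal{K}(x,\cdot))(u) = 0$, so the $L^2(M)$ component of $\Phi(u)$ vanishes. In view of the norm identity just established, it remains to show that $l_{e_k^\omega}(u) = 0$ for every $(\omega,k)$. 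The plan here is to apply the reconstruction formula of Proposition \ref{proposition:reconstruction} to a well-chosen $v \in \mathcal{O}_\epsilon$: writing
\begin{equation*}
v = \sum_{\omega',k'} \langle v, e_{k'}^{\omega'}\rangle\, \tilde{e}_{k'}^{\omega'} + \mathcal{K}v
\end{equation*}
and integrating against $u$, one obtains an expansion of $j(v)(u)$ in terms of the $j(\tilde e_{k'}^{\omega'})(u)$'s and an analytic remainder; by varying $v$ and using that the pairing $(v, e_{k'}^{\omega'}) \mapsto \langle v, e_{k'}^{\omega'}\rangle$ separates indices (and that, in the representation formula, the coefficients $\langle v, e_k^\omega\rangle$ can be prescribed freely on a finite set by a suitable choice of $v$, again by Proposition \ref{proposition:reconstruction}), one isolates each $l_{e_{k_0}^{\omega_0}}(u)$ and concludes it is zero. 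The isometry then forces $u = 0$.

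The hard part will be making the last step rigorous: controlling the convergence of the expansion in the appropriate topology and selecting test functions $v \in \mathcal{O}_\epsilon$ that simultaneously cancel the analytic remainder and isolate a single index. Making sure that one can prescribe the coefficients against the $e_k^\omega$ while keeping control of $\mathcal{K}v$ is the key technical point, and this is where Proposition \ref{proposition:reconstruction} must be used in its full strength rather than just as a convergence statement.
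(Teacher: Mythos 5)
Your first half is fine: the isometric embedding $u \mapsto \bigl((l_{e_k^\omega}(u))_{\omega,k}, \mathcal{K}u\bigr)$ into $\ell^2\bigl(\Omega\times\mathbb{Z}^d, e^{-2\gamma G_\omega(k)}\bigr)\oplus L^2(M)$, extended from $\mathcal{O}_\epsilon$ to the completion and identified componentwise by continuity (the coordinate functionals are bounded by $e^{\gamma G_\omega(k)}$, and your sup-bound on $x\mapsto j(\mathcal{K}(x,\cdot))$ gives continuity of $u\mapsto\mathcal{K}u$ into $L^2$), does yield the convergence of \eqref{eq:finite_sum} and the identity \eqref{eq:expression_norm}. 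The paper gets the same thing by approximating $u$ by $\iota(u_n)$ and combining Fatou's lemma with the reverse triangle inequality; your packaging is an equivalent, arguably cleaner, version of that argument.

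The genuine gap is in the density of the image of $j$, precisely the step you flag as "the hard part". No detour through Proposition \ref{proposition:reconstruction} is needed, and the detour you sketch is not justified: that proposition is only a representation formula, and it gives you no way to "prescribe the coefficients $\langle v, e_k^\omega\rangle$ freely on a finite set" — the family $(e_k^\omega)$ is overcomplete and these coefficients are constrained, so the isolation argument as described has no support in the paper and is left unproven by your own admission. The point you missed is that each functional $l_{e_k^\omega}$ already lies in the image of $j$: by definition $j(v)=l_{\bar v}$, and $\overline{e_k^\omega}$ (the function whose holomorphic extension is $z\mapsto\overline{e_k^\omega(\bar z)}$) again belongs to $\mathcal{O}_\epsilon$ (for $\epsilon$ small enough, by Lemma \ref{lemma:ekomega_analytic} and the symmetry of the Grauert tube), so $l_{e_k^\omega}=j(\overline{e_k^\omega})$. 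Hence the hypothesis $j(v)(u)=0$ for all $v\in\mathcal{O}_\epsilon$ immediately gives $l_{e_k^\omega}(u)=0$ for every $(\omega,k)$, and together with $\mathcal{K}u=0$ (obtained as you do from $v=\mathcal{K}(x,\cdot)$, or more carefully via the extension of $j$ to $\mathcal{O}_{\epsilon'}$ and density, since $\mathcal{K}(x,\cdot)$ need only be holomorphic on a smaller tube), the already-established identity \eqref{eq:expression_norm} forces $\|u\|_\gamma=0$, i.e.\ $u=0$; reflexivity of the Hilbert space then gives the density. This is exactly the paper's one-line conclusion, so the gap is easily repaired, but as written your proof of the density statement is incomplete.
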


\begin{proof}
Notice that if $u \in \mathcal{H}_{\gamma,\epsilon}$ is in the image of $\mathcal{O}_\epsilon$ by $\iota$, then the convergence of the sum \eqref{eq:finite_sum} and the equality \eqref{eq:expression_norm} holds by definition. For a general $u \in \mathcal{H}_{\gamma,\epsilon}$, let $(u_n)_{n \in \mathbb{N}}$ be a sequence of elements of the image of $\iota$ that converges to $u$. Since $\n{u_n}_{\gamma} \underset{n \to + \infty}{\to} \n{u}_\gamma$ and the $u_n$'s satisfy \eqref{eq:expression_norm}, it follows from Fatou's lemma that
\begin{equation}\label{eq:first_inequality}
\sum_{\omega \in \Omega, k \in \mathbb{Z}^d} e^{-2 \gamma G_\omega(k)} \va{l_{e_k^\omega}(u)}^2 + \n{\mathcal{K}u}_{L^2}^2 \leq \n{u}_\gamma^2.
\end{equation}
In particular the sum \eqref{eq:finite_sum} is finite. To get the reversed inequality, use the second triangular inequality to write
\begin{equation*}
\begin{split}
& \p{\sum_{\omega \in \Omega, k \in \mathbb{Z}^d} e^{-2 \gamma G_\omega(k)} \va{l_{e_k^\omega}(u)}^2 + \n{\mathcal{K}u}_{L^2}^2}^{\frac{1}{2}} \\
    & \qquad \qquad \geq \p{\sum_{\omega \in \Omega, k \in \mathbb{Z}^d} e^{-2 \gamma G_\omega(k)} \va{l_{e_k^\omega}(u_n)}^2 + \n{\mathcal{K}u_n}_{L^2}^2}^{\frac{1}{2}} \\ & \qquad \qquad \qquad \qquad - \p{\sum_{\omega \in \Omega, k \in \mathbb{Z}^d} e^{-2 \gamma G_\omega(k)} \va{l_{e_k^\omega}(u - u_n)}^2 + \n{\mathcal{K}(u-u_n)}_{L^2}^2}^{\frac{1}{2}}\\
    & \qquad \qquad \geq \n{u_n}_\gamma - \n{u-u_n}_\gamma.
\end{split}
\end{equation*}
Here, we used \eqref{eq:first_inequality} and the fact that \eqref{eq:expression_norm} holds for $u_n$. Letting $n$ tends to $+ \infty$, we get the reversed inequality to \eqref{eq:first_inequality} and thus \eqref{eq:expression_norm} holds.

It follows from \eqref{eq:expression_norm} that if $u \in \mathcal{H}_{\gamma,\epsilon}$ is such that $j(v)(u) = 0$ for every $v \in \mathcal{O}_{\epsilon}$ then $u = 0$. Since $\mathcal{H}_{\gamma,\epsilon}$ is reflexive (it is a Hilbert space), it follows that the image of $\mathcal{O}_\epsilon$ by $j$ is dense in the dual of $\mathcal{H}_{\gamma,\epsilon}$
\end{proof}

From Lemma \ref{lemma:expression_norm}, we deduce another part of Theorem \ref{theorem:anisotropic_space}.

\begin{lemma}\label{lemma:separability}
Let $\epsilon \in (0,\epsilon_0)$. Let $\gamma > 0$ be small enough (depending on $\epsilon$). Then the Hilbert space $\mathcal{H}_{\gamma,\epsilon}$ is separable.
\end{lemma}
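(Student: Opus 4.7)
The plan is to reduce separability of $\mathcal{H}_{\gamma,\epsilon}$ to separability of $\mathcal{O}_\epsilon$, which will in turn follow by viewing $\mathcal{O}_\epsilon$ as a subspace of continuous functions on a compact metrizable set. By construction, $\mathcal{H}_{\gamma,\epsilon}$ is the completion of $\mathcal{O}_\epsilon$ for the norm $\n{\cdot}_\gamma$. In particular, the image of the inclusion $\iota : \mathcal{O}_\epsilon \to \mathcal{H}_{\gamma,\epsilon}$ is dense. Lemma \ref{lemma:finite_norm} ensures (for $\gamma$ small enough depending on $\epsilon$) that $\iota$ is continuous from $(\mathcal{O}_\epsilon,\n{\cdot}_{\mathcal{O}_\epsilon})$ to $(\mathcal{H}_{\gamma,\epsilon},\n{\cdot}_\gamma)$. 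Consequently, if $D \subset \mathcal{O}_\epsilon$ is any countable $\n{\cdot}_{\mathcal{O}_\epsilon}$-dense subset, then $\iota(D)$ is a countable $\n{\cdot}_\gamma$-dense subset of $\mathcal{H}_{\gamma,\epsilon}$, witnessing separability.

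It therefore suffices to prove that $\mathcal{O}_\epsilon$ is itself separable. Since $\epsilon < \epsilon_0$, the Grauert tube $(M)_\epsilon$ is relatively compact in $(M)_{\epsilon_0}$, so its closure $K \coloneqq \overline{(M)_\epsilon}$ is a compact subset of $(M)_{\epsilon_0}$. By definition, every $u \in \mathcal{O}_\epsilon$ is the $\n{\cdot}_{\mathcal{O}_\epsilon}$-limit of a sequence of functions holomorphic on the larger open set $(M)_{\epsilon_0}$; each such approximant extends continuously to $K$, and uniform convergence on $(M)_\epsilon$ forces the limit to extend continuously to $K$ as well. Hence the natural restriction map realizes $\mathcal{O}_\epsilon$ as an isometric closed subspace of $C^0(K)$ equipped with the uniform norm.

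Because $K$ is a compact metric space, $C^0(K)$ is separable (for instance by the Stone--Weierstrass theorem, or directly because $K$ is second-countable and hence $C^0(K)$ admits a countable dense subset). Any closed subspace of a separable metric space is separable, so $\mathcal{O}_\epsilon$ is separable, which by the reduction above completes the proof. There is no real obstacle here: the argument is purely a matter of compactness bookkeeping and the definition of $\mathcal{H}_{\gamma,\epsilon}$ as a completion.
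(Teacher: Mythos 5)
Your proof is correct, but it takes a genuinely different route from the paper's. The paper exhibits an explicit countable \emph{total} family of linear functionals on $\mathcal{H}_{\gamma,\epsilon}$ — namely the $j(\mathcal{K}(x,\cdot))$ for $x$ ranging over a countable dense subset of $M$ together with the $j(e_k^\omega)$ for $(\omega,k)\in\Omega\times\mathbb{Z}^d$ — and invokes the expression for the norm from Lemma~\ref{lemma:expression_norm} to see that this family separates points; separability of the dual then gives separability of $\mathcal{H}_{\gamma,\epsilon}$. You instead go upstream: separability is inherited along a continuous map with dense range, so it suffices to show $\mathcal{O}_\epsilon$ is separable, which you do by embedding it isometrically into $C^0(K)$ for $K=\overline{(M)_\epsilon}$ compact. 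Your route is more elementary and self-contained (it needs only Lemma~\ref{lemma:finite_norm}, not the finer structural Lemma~\ref{lemma:expression_norm}), and it applies verbatim whenever a Banach space is a completion of a separable normed space. The paper's approach has the side benefit of naming a concrete countable total family in the dual, which can be reused when one needs to test membership or convergence against a countable collection of functionals. Two cosmetic remarks: the map you describe is really an \emph{extension} map (each $u\in\mathcal{O}_\epsilon$ extends continuously to $K$), not a restriction map; and you do not actually need the image in $C^0(K)$ to be closed, since any subset of a separable metric space is separable — though closedness does hold by completeness of $\mathcal{O}_\epsilon$ plus isometry.
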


\begin{proof}
Let $D$ be a countable dense subset of $M$. It follows from Lemma \ref{lemma:expression_norm} that if $u \in \mathcal{H}_{\gamma,\epsilon}$ is such that $j(\mathcal{K}(x,\cdot))(u) = 0$ for every $x \in D$ and $j(e_k^\omega)(u)=0$ for every $\omega \in \Omega$ and $k \in \mathbb{Z}^d$, then $u = 0$. Consequently, the span of the $j(\mathcal{K}(x,\cdot))$ for $x \in D$ and the $j(e_k^\omega)$ for $\omega \in \Omega$ and $k \in \mathbb{Z}^d$ is dense in $\mathcal{H}_{\gamma,\epsilon}^*$. Hence, $\mathcal{H}_{\gamma,\epsilon}^*$, and thus $\mathcal{H}_{\gamma,\epsilon}$, is separable.
\end{proof}

\subsection{Action of Koopman operators}

We are finally ready to study the action of Koopman operators on our space, and complete the proof of Theorem \ref{theorem:anisotropic_space}.

Let us introduce $\Gamma = \Omega \times \mathbb{Z}^d$ (where $\Omega$ is the set indexing our family of real-analytic charts introduced at the beginning of the section) and the relation $\hookrightarrow$ on $\Gamma$ by $(\omega,k) \hookrightarrow (\omega',k')$ if and only if the Kohn--Nirenberg distance between $\mathcal{W}_{\omega,k}$ and $\mathcal{F}\p{\mathcal{W}_{\omega',k'}}$ is less than $\varpi/10$. Here, we recall that $\mathcal{F}$ denotes the symplectic lift of $F$, as defined in \S \ref{subsection:Anosov_diffeomorphisms}. It follows from \eqref{eq:approximate_decay} and our choice of the $U_\omega$'s that there is $C > 0$ such that if $(\omega,k) \hookrightarrow (\omega',k')$ and $k$ is large enough then
\begin{equation}\label{eq:decay_escape}
G_{\omega}(k) \leq G_{\omega'}(k') - C^{-1} \va{k}. 
\end{equation}
This estimate is at the core of the proof of Theorem \ref{theorem:anisotropic_space}, along with the following lemma.

\begin{lemma}\label{lemma:discard}
\begin{enumerate}[label=(\roman*)]
\item \label{item:discard_not} Let $\epsilon \in (0,\epsilon_0)$. There are $C,\delta, \tau >0$ such that, if $X \in B_{\epsilon,\delta}$ and $w \in \mathcal{O}_\epsilon$, for every $(\omega,k),(\omega',k') \in \Gamma$ such that $(\omega,k) \not\hookrightarrow (\omega',k')$, we have 
\begin{equation*}
\va{\langle \mathcal{L}_{\Psi_X \circ F,w} \tilde{e}_{k}^{\omega}, e_{k'}^{\omega'} \rangle_{L^2}} \leq C \n{w}_{\mathcal{O}_\epsilon} e^{ - \tau \max(\va{k},\va{k'})}.
\end{equation*}
\item Let $\epsilon \in (0,\epsilon_0)$ and $\rho > 0$. Then there are $C, \delta > 0$ such that if $X \in B_{\epsilon,\delta}$ and $w \in \mathcal{O}_\epsilon$, for every $(\omega,k),(\omega',k') \in \Gamma$
\begin{equation}\label{eq:weak_discard}
\va{\langle \mathcal{L}_{\Psi_X \circ F,w} \tilde{e}_{k}^{\omega}, e_{k'}^{\omega'} \rangle_{L^2}} \leq C \n{w}_{\mathcal{O}_\epsilon} e^{\rho \max(\va{k},\va{k'})}.
\end{equation}
\end{enumerate}
\end{lemma}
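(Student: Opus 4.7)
The plan is to estimate the pairing via the FBI transform. Since $T$ is an $L^2$-isometry and $ST$ acts as the identity on smooth functions, we may write
\begin{equation*}
\langle \mathcal{L}_{\Psi_X \circ F, g} \tilde e_k^\omega, e_{k'}^{\omega'}\rangle = \int_{T^*M}\int_{T^*M} K(\alpha', \beta)\, T\tilde e_k^\omega(\beta)\, \overline{Te_{k'}^{\omega'}(\alpha')}\, d\beta\, d\alpha',
\end{equation*}
where $K = T\mathcal{L}_{\Psi_X\circ F, g} S$. For any small $c > 0$ and any $X \in B_{\epsilon,\delta}$ with $\delta = \delta(c,\epsilon)$ small enough, Proposition \ref{proposition:localisation_graph} provides the global bound $\va{K(\alpha',\beta)} \leq C \n{g}_{\mathcal{O}_\epsilon} e^{c(\brac{\alpha'}+\brac{\beta})}$ together with the decay $\va{K(\alpha',\beta)} \leq C \n{g}_{\mathcal{O}_\epsilon} e^{-(\brac{\alpha'}+\brac{\beta})/C}$ whenever $d_{KN}((\alpha',\beta), \mathcal{G}) \geq c$. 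Lemma \ref{lemma:localisation} gives, in turn, the global polynomial bound $\va{T\tilde e_k^\omega(\beta)} \leq C\brac{\beta}^N$ and the exponential decay $\va{T\tilde e_k^\omega(\beta)} \leq C \exp(-\max(\brac{\beta},\va{k})/C)$ once $d_{KN}(\beta, \mathcal{W}_{\omega,k}) \geq c$, with analogous bounds for $Te_{k'}^{\omega'}$ at $\alpha'$.

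The next step is to split the domain of integration into three regions: $R_1$ where $d_{KN}(\alpha', \mathcal{W}_{\omega',k'}) \geq c$; $R_2 \subset R_1^c$ where $d_{KN}(\beta, \mathcal{W}_{\omega,k}) \geq c$; and $R_3 = (R_1 \cup R_2)^c$. On $R_1$ (respectively $R_2$) one of the FBI transforms decays exponentially in both $\brac{\cdot}$ and in $\va{k'}$ (respectively $\va{k}$). Choosing $c$ small enough that this decay dominates the small-exponential growth of $K$ and absorbs the polynomial bound on the other transform, the contribution from $R_1 \cup R_2$ is bounded by $C\n{g}_{\mathcal{O}_\epsilon} e^{-\tau\max(\va{k},\va{k'})}$ for some $\tau > 0$, which is stronger than what either statement requires.

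It remains to estimate the contribution of $R_3$. There, both $\brac{\alpha'}$ and $\brac{\beta}$ are of order $\max(\va{k},\va{k'})$, since $\mathcal{W}_{\omega,k}$ and $\mathcal{W}_{\omega',k'}$ have bounded Kohn--Nirenberg diameter, and the integration volume is polynomial in $\max(\va{k},\va{k'})$. For part (ii), combining the small-exponential bound on $K$ with the polynomial bounds on the transforms and taking $c$ small compared to $\rho$ yields the claimed $e^{\rho\max(\va{k},\va{k'})}$ bound. For part (i), the hypothesis $(\omega,k)\not\hookrightarrow(\omega',k')$ forces $d_{KN}(\mathcal{F}(\mathcal{W}_{\omega',k'}),\mathcal{W}_{\omega,k}) \geq \varpi/10$; using the Lipschitz continuity of $\mathcal{F}$ for $d_{KN}$ together with the bounded diameter of $\mathcal{W}_{\omega',k'}$, we check that if $c$ is small enough then every $(\alpha',\beta)\in R_3$ still satisfies $d_{KN}((\alpha',\beta),\mathcal{G}) \geq \varpi/20$. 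The stronger exponential decay of $K$ then applies on $R_3$ too and dominates the polynomial factors, producing the exponential bound of part (i).

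The main obstacle is uniformity: a single choice of $c$ (and $\delta$) must work for \emph{all} quadruples $(\omega,k,\omega',k')$ and all admissible $X$. This rests on two facts: the cover $(U_\omega)_{\omega \in \Omega}$ is finite, so the constants from Lemma \ref{lemma:localisation} can be chosen independently of $\omega$; and $\mathcal{F}$ is uniformly Lipschitz with respect to $d_{KN}$ (a consequence of the fiberwise linearity of $\mathcal{F}$ and the homogeneity built into the Kohn--Nirenberg metric), so the Lipschitz constant used in part (i) is independent of $(\omega',k')$. Granted these, the constants provided by Proposition \ref{proposition:localisation_graph} and Lemma \ref{lemma:localisation} are uniform in the relevant parameters and the argument closes.
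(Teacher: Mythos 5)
Your overall strategy for part \ref{item:discard_not} — writing the pairing via the FBI transform, using Proposition \ref{proposition:localisation_graph} and Lemma \ref{lemma:localisation}, splitting the domain, and invoking $(\omega,k)\not\hookrightarrow(\omega',k')$ on the near--near region — matches the paper's. However, your treatment of $R_1$ and $R_2$ has a genuine gap: the exponential decay you invoke there is in the ``wrong variable.'' On $R_1$, say, the decay $|Te_{k'}^{\omega'}(\alpha')|\lesssim e^{-\max(\brac{\alpha'},|k'|)/C}$ lives in the $\alpha'$-variable and in $|k'|$, whereas the growth $e^{c\brac{\beta}}$ of $K$ and the polynomial bound $\brac{\beta}^N$ on $T\tilde e_k^\omega$ both live in the $\beta$-variable. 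No choice of $c$ makes the former ``dominate'' or ``absorb'' the latter: with only the bounds you cite, the $\beta$-integral over $R_1$ literally diverges. The same issue afflicts $R_2$ symmetrically, and in either case you cannot produce decay in $\max(|k|,|k'|)$ from decay in only one of $|k|$, $|k'|$.

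The missing ingredient is a further subdivision of $R_1$ and $R_2$ according to the position of $\beta$ relative to $\mathcal{W}_{\omega,k}$ (respectively $\alpha'$ relative to $\mathcal{W}_{\omega',k'}$) \emph{and} the position of $(\alpha',\beta)$ relative to $\mathcal{G}$. On, say, $R_2$ ($\alpha'$ near $\mathcal{W}_{\omega',k'}$, $\beta$ far from $\mathcal{W}_{\omega,k}$): when $(\alpha',\beta)$ is far from $\mathcal{G}$, the kernel $K$ decays exponentially in $\brac{\alpha'}+\brac{\beta}$, which together with the transform bounds yields the desired $e^{-\tau(|k|+|k'|)}$; when $(\alpha',\beta)$ is close to $\mathcal{G}$, one uses that $\mathcal{F}$ roughly preserves the Japanese bracket, so $\brac{\beta}\approx\brac{\alpha'}\approx|k'|$, making $e^{-\max(\brac{\beta},|k|)/C}\approx e^{-\max(|k|,|k'|)/C}$, and then the small growth $e^{c(\brac{\alpha'}+\brac{\beta})}$ can be absorbed by choosing $\delta$ (hence $c$) small. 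This is exactly the subcase analysis the paper carries out in its ``third case.'' Without it your bound on $R_1\cup R_2$ is not established.

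A secondary remark: for part (ii) the paper does not go through the FBI transform at all. It simply uses Lemma \ref{lemma:ekomega_analytic} to put $\tilde e_k^\omega$ in $\mathcal{O}_{\epsilon_1}$ with norm $\lesssim e^{\rho|k|}$, the boundedness of $\mathcal{L}_{\Psi_X\circ F,g}:\mathcal{O}_{\epsilon_1}\to\mathcal{O}_{\epsilon_2}$ from \S\ref{subsection:koopman}, and then Lemma \ref{lemma:dual_ekomega} to pair with $e_{k'}^{\omega'}$. This gives $e^{\rho|k|}e^{-\rho'|k'|}\le e^{\rho\max(|k|,|k'|)}$ with no phase-space decomposition. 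Your FBI route can in principle also yield part (ii), but only after fixing the same $R_1,R_2$ issue, and it is considerably less economical.
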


\begin{proof}
To prove \eqref{eq:weak_discard}, let us first apply Lemma \ref{lemma:ekomega_analytic} to find $\epsilon_1 \in (0,\epsilon_0)$ such that for every $(\omega,k) \in \Gamma$ we have $\tilde{e}_{k}^{\omega} \in \mathcal{O}_{\epsilon_1}$ with norm less than $C e^{\rho \va{k}}$. Let us then recall from \S \ref{subsection:koopman} that there are $\delta > 0$ and $\epsilon_2$ such that for every $X \in B_{\epsilon,\delta}$ and $w \in \mathcal{O}_\epsilon$ the operator $\mathcal{L}_{\Psi_X \circ F,w}$ is bounded from $\mathcal{O}_{\epsilon_1}$ to $\mathcal{O}_{\epsilon_2}$ with operator norm less than $C \n{w}_{\mathcal{O}_\epsilon}$ where $C$ depends on $F$ and $\epsilon$. Thus $\mathcal{L}_{\Psi_X \circ F,w} \tilde{e}_{k}^{\omega}$ is in $\mathcal{O}_{\epsilon_2}$ with norm less than $C \n{w}_{\mathcal{O}_\epsilon} e^{\rho \va{k}}$. The estimate \eqref{eq:weak_discard} is then a consequence of Lemma \ref{lemma:dual_ekomega}.

We move to the proof of \ref{item:discard_not}. Let us assume that $(\omega,k) \not\hookrightarrow (\omega',k')$ and use that $T$ is an isometry to write
\begin{equation}\label{eq:integral_transfer}
\langle \mathcal{L}_{\Psi_X \circ F,w} \tilde{e}_{k}^{\omega}, e_{k'}^{\omega'} \rangle_{L^2} = \int_{T^* M \times T^* M} T \mathcal{L}_{\Psi_X \circ F,w} S(\alpha,\beta) T \tilde{e}_{k}^{\omega} (\beta) \overline{ T e_{k'}^{\omega'}(\alpha)} \mathrm{d}\alpha \mathrm{d}\beta,
\end{equation}
where the kernel $T \mathcal{L}_{\Psi_X \circ F,w}S(\alpha,\beta)$ is the one that we described in Proposition \ref{proposition:localisation_graph}. Let us split the integral \eqref{eq:integral_transfer} into pieces. Choose some small $s > 0$ and split the domain of the integral in \eqref{eq:integral_transfer} into the four pieces:
\begin{equation}\label{eq:integral_transfer_split}
\begin{split}
&\set{d_{KN}(\alpha, \mathcal{W}_{\omega',k'}) \geq s, d_{KN}(\beta,\mathcal{W}_{\omega,k})\geq s}, \\ & \set{d_{KN}(\alpha, \mathcal{W}_{\omega',k'}) \leq s, d_{KN}(\beta,\mathcal{W}_{\omega,k})\leq s}, \\ & \set{d_{KN}(\alpha, \mathcal{W}_{\omega',k'}) \leq s, d_{KN}(\beta,\mathcal{W}_{\omega,k})\geq s}, \\ & \textup{ and } \set{d_{KN}(\alpha, \mathcal{W}_{\omega',k'}) \geq s, d_{KN}(\beta,\mathcal{W}_{\omega,k})\leq s}.
\end{split}
\end{equation}

If $\alpha$ and $\beta$ are both at distance more than $s$ respectively from $\mathcal{W}_{\omega',k'}$ and $\mathcal{W}_{\omega,k}$, then we can use the estimate \eqref{eq:exponential_localisation} from Lemma \ref{lemma:localisation} to bound $T e_{k'}^{\omega'} (\alpha)$ and $T \tilde{e}_{k}^{\omega}(\beta)$. We bound the kernel $T \mathcal{L}_{\Psi_X \circ F,w} S(\alpha,\beta)$ using \eqref{eq:small_exponential_bound} where we recall that $c$ may be made arbitrarily small by taking $\delta$ small enough. Hence, the part of the integral in \eqref{eq:integral_transfer} corresponding to the first set in \eqref{eq:integral_transfer_split} is bounded by $C \n{w}_{\mathcal{O}_\epsilon} e^{- \tau \max(\va{k},\va{k'})}$ for some $C,\tau > 0$.

Now, if $\alpha$ and $\beta$ are both at distance less than $s$ respectively from $\mathcal{W}_{\omega',k'}$ and $\mathcal{W}_{\omega,k}$, then we can only use \eqref{eq:straight_local} to bound $T e_{k'}^{\omega'} (\alpha)$ and $T \tilde{e}_{k}^{\omega}(\beta)$. However, since $(\omega,k) \not\hookrightarrow (\omega',k')$, the distance between $\beta$ and $\mathcal{F}(\alpha)$ is more than $ \varpi/20$, provided $s$ is small enough. We can then use \eqref{eq:exponential_decay_Koopman} to bound $T \mathcal{L}_{\Psi_X \circ F,w} S(\alpha,\beta)$. The part of the integral in \eqref{eq:integral_transfer} corresponding to the second set in \eqref{eq:integral_transfer_split} is consequently also bounded by $C \n{w}_{\mathcal{O}_\epsilon} e^{- \tau \max(\va{k},\va{k'})}$ for some $C,\tau > 0$ (notice that in this case the order of magnitude of $\alpha$ and $\beta$ are $|k'|$ and $|k|$).

Let us now consider the third case in \eqref{eq:integral_transfer_split}: $\alpha$ at distance less than $s$ from $\mathcal{W}_{\omega',k'}$ and $\beta$ at distance more than $s$ from $\mathcal{W}_{\omega,k}$ (we will not detail the symmetric case, which is similar). In that case, we can bound $T \tilde{e}_{k}^{\omega}(\beta)$ using \eqref{eq:exponential_localisation}. If $\mathcal{F}(\alpha)$ is away from $\beta$, then we can use \eqref{eq:exponential_decay_Koopman} to bound $T \mathcal{L}_{\Psi_X \circ F,w} S(\alpha,\beta)$, since the order of magnitude of $\alpha$ is $|k'|$. If $\mathcal{F}(\alpha)$ is close to $\beta$, then the size of $\va{k}$ and $\va{k'}$ are approximately the same, so that applying \eqref{eq:exponential_localisation} to bound $T e_{k'}^{\omega'}(\beta)$ is enough to get the required decay (taking $\delta$ small enough so that $c$ in \eqref{eq:small_exponential_bound} is small enough).
\end{proof}

We will use Lemma \ref{lemma:discard} to estimate the norm of $\mathcal{L}_{\Psi_X \circ F,w} \tilde{e}_k^\omega$ in $\mathcal{H}_{\gamma,\epsilon}$. We will also need the following bound to do so.

\begin{lemma}\label{lemma:estimee_analytic_rest}
Let $\epsilon \in(0,\epsilon_0)$. There are $C,\delta, \tau >0$ such that, for every $(\omega,k) \in \Gamma, X \in B_{\epsilon,\delta}$ and $w \in \mathcal{O}_\epsilon$, we have
\begin{equation*}
\n{\mathcal{K}(\mathcal{L}_{\Psi_X \circ F,w} \tilde{e}_{k}^\omega)}_{L^2} \leq C \n{w}_{\mathcal{O}_\epsilon} e^{- \tau \va{k}}.
\end{equation*}
\end{lemma}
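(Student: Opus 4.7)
The plan is to rewrite $\mathcal{K}(\mathcal{L}_{\Psi_X \circ F,g}\tilde e_k^\omega)(x)$ as an $L^2$ pairing of $\tilde e_k^\omega$ against an analytic function whose norm is uniformly controlled by $\n{g}_{\mathcal{O}_\epsilon}$; Lemma \ref{lemma:dual_ekomega} then supplies the exponential decay in $\va{k}$, and integration over the compact manifold $M$ will produce the $L^2$ bound.

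Since $\mathcal{K}$ has a real-analytic kernel on $M \times M$ (Remark \ref{remark:analytic_kernels}), there exists $\epsilon_1 \in (0,\epsilon_0)$ for which the map $x \in M \mapsto \mathcal{K}(x,\cdot) \in \mathcal{O}_{\epsilon_1}$ is bounded. By the mapping properties of the formal adjoint discussed in \S\ref{subsection:koopman}, one may then fix $\delta > 0$ small and $\epsilon_2 \in (0,\epsilon_0)$ such that, for every $X \in B_{\epsilon,\delta}$ and $g \in \mathcal{O}_\epsilon$, the operator $\mathcal{L}^*_{\Psi_X \circ F,g}$ sends $\mathcal{O}_{\epsilon_1}$ continuously into $\mathcal{O}_{\epsilon_2}$ with operator norm at most $C\n{g}_{\mathcal{O}_\epsilon}$. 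Setting $\phi_x \coloneqq \mathcal{L}^*_{\Psi_X \circ F,g}(\mathcal{K}(x,\cdot))$, one has $\phi_x \in \mathcal{O}_{\epsilon_2}$ with $\sup_{x \in M}\n{\phi_x}_{\mathcal{O}_{\epsilon_2}} \leq C \n{g}_{\mathcal{O}_\epsilon}$, and the formal adjoint identity from \S\ref{subsection:koopman} yields
\begin{equation*}
\mathcal{K}(\mathcal{L}_{\Psi_X \circ F,g}\tilde e_k^\omega)(x) = \int_M \mathcal{K}(x,y)\, \mathcal{L}_{\Psi_X \circ F,g}\tilde e_k^\omega(y) \, \mathrm{d}y = \int_M \phi_x(y) \tilde e_k^\omega(y)\, \mathrm{d}y.
\end{equation*}

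The last integral is a bilinear $L^2$ pairing, whereas Lemma \ref{lemma:dual_ekomega} is stated with the sesquilinear $L^2$ product. Writing it as $\overline{\langle \overline{\phi_x}, \tilde e_k^\omega \rangle}$, and observing that the Grauert tube $(M)_{\epsilon_2}$ is stable under conjugation so that $\overline{\phi_x}$ still lies in $\mathcal{O}_{\epsilon_2}$ with the same norm, Lemma \ref{lemma:dual_ekomega} produces
\begin{equation*}
\va{\mathcal{K}(\mathcal{L}_{\Psi_X \circ F,g}\tilde e_k^\omega)(x)} = \va{\langle \overline{\phi_x}, \tilde e_k^\omega\rangle} \leq C \n{\phi_x}_{\mathcal{O}_{\epsilon_2}} e^{-\tau\va{k}} \leq C \n{g}_{\mathcal{O}_\epsilon} e^{-\tau\va{k}}
\end{equation*}
uniformly in $x \in M$; integrating the square of this pointwise bound over the compact manifold $M$ yields the desired estimate. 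The argument being a direct combination of the formal adjoint mechanism of \S\ref{subsection:koopman} with Lemma \ref{lemma:dual_ekomega}, there is no serious obstacle, and the only minor subtlety is the bilinear-versus-sesquilinear bookkeeping, resolved by the pointwise complex conjugation above.
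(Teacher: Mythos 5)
Your proposal is correct and follows essentially the same route as the paper: rewrite the quantity via the formal adjoint $\mathcal{L}^*_{\Psi_X \circ F,g}$ applied to the analytic kernel $\mathcal{K}(x,\cdot)$, bound the resulting family in some $\mathcal{O}_{\epsilon'}$ uniformly by $C\n{g}_{\mathcal{O}_\epsilon}$ using the mapping properties from \S\ref{subsection:koopman}, and conclude with Lemma \ref{lemma:dual_ekomega}. Your extra care with the bilinear-versus-sesquilinear pairing is a detail the paper leaves implicit, and your handling of it is sound.
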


\begin{proof}
Let us rewrite for $x \in M$:
\begin{equation*}
\begin{split}
\mathcal{K}(\mathcal{L}_{\Psi_X \circ F,w} \tilde{e}_{k}^\omega) (x) & = \int_M \mathcal{K}(x,y) \mathcal{L}_{\Psi_X \circ F,w} \tilde{e}_{k}^\omega(y) \mathrm{d}y \\
     & = \int_M \mathcal{L}_{\Psi_X \circ F,w}^* (\mathcal{K}(x,\cdot))(y) \tilde{e}_{k}^\omega(y) \mathrm{d}y,
\end{split}
\end{equation*}
where we identifed the operator $\mathcal{K}$ with its kernel $\mathcal{K}(x,y)$, and the operator $\mathcal{L}_{\Psi_X \circ F,w}^*$ is discussed in \S \ref{subsection:koopman}. Notice that by taking $\delta$ small enough we may ensure that there is $\epsilon' > 0$ such that for every $x \in M$ the function $\mathcal{L}_{\Psi_X \circ F,w}^* (\mathcal{K}(x,\cdot))$ is an element of $\mathcal{O}_{\epsilon'}$ of norm less than $C \n{w}_{\mathcal{O}_\epsilon}$. The result then follows from Lemma \ref{lemma:dual_ekomega}.
\end{proof}

We are now ready to to estimate the norm of $\mathcal{L}_{\Psi_X \circ F,w} \tilde{e}_k^\omega$ in $\mathcal{H}_{\gamma,\epsilon}$.

\begin{lemma}\label{lemma:individual_bound}
Let $\epsilon \in (0,\epsilon_0)$. There are constants $C,\tau,\gamma_0,\delta > 0$ such that if $X \in B_{\epsilon,\delta}, w \in \mathcal{O}_\epsilon$ and $0 < \gamma \leq \gamma_0$ then, for every $(\omega,k) \in \Gamma$, we have
\begin{equation*}
\n{\mathcal{L}_{\Psi_X \circ F,w} \tilde{e}_{k}^\omega}_\gamma \leq C \n{w}_{\mathcal{O}_\epsilon} e^{- \tau \va{k} - \gamma G_\omega(k)}.
\end{equation*}
\end{lemma}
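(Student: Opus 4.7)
The plan is to expand
\begin{equation*}
\n{\mathcal{L}_{\Psi_X \circ F, g} \tilde{e}_k^\omega}_\gamma^2 = \sum_{(\omega', k') \in \Gamma} e^{-2\gamma G_{\omega'}(k')} \va{\langle \mathcal{L}_{\Psi_X \circ F, g} \tilde{e}_k^\omega, e_{k'}^{\omega'}\rangle}^2 + \n{\mathcal{K}(\mathcal{L}_{\Psi_X \circ F, g} \tilde{e}_k^\omega)}_{L^2}^2
\end{equation*}
according to the definition of $\n{\cdot}_\gamma$, then control each piece. The $\mathcal{K}$-term is given directly by Lemma \ref{lemma:estimee_analytic_rest}; the resulting factor $e^{-\tau \va{k}}$ is adjusted to $e^{-\tau \va{k} - \gamma G_\omega(k)}$ (after shrinking $\tau$) by multiplying and dividing by $e^{2\gamma G_\omega(k)}$ and using the linear bound $\va{G_\omega(k)} \leq C_G \va{k}$ read off from \eqref{eq:definition_escape_function}, provided $\gamma_0$ is small. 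For the main sum, I split the index set $\Gamma$ according to whether $(\omega, k) \hookrightarrow (\omega', k')$, getting contributions $S_1$ and $S_2$.

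For $S_1$ (unrelated indices), Lemma \ref{lemma:discard}(i) yields $e^{-2\tau_1 \max(\va{k}, \va{k'})}$. Using $\max(\va{k}, \va{k'}) \geq (\va{k} + \va{k'})/2$, the weight $e^{-2\gamma G_{\omega'}(k')} \leq e^{2\gamma C_G \va{k'}}$ is absorbed into the $e^{-\tau_1 \va{k'}}$ factor as soon as $\gamma_0 C_G$ is much smaller than $\tau_1$, making the sum over $(\omega', k')$ a convergent geometric series (recall $\Omega$ is finite). This yields $S_1 \leq C \n{g}_{\mathcal{O}_\epsilon}^2 e^{-\tau_1 \va{k}}$, which is converted into the target form as above.

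For $S_2$ (related indices), the structural observation is that for fixed $(\omega, k)$ the number of $(\omega', k')$ with $(\omega, k) \hookrightarrow (\omega', k')$ is bounded by some $N_0$ independent of $(\omega, k)$: indeed $\mathcal{F}(\mathcal{W}_{\omega, k})$ has uniformly bounded Kohn--Nirenberg diameter (from the smoothness of $\mathcal{F}$) and the cells $\mathcal{W}_{\omega', k'}$ pack $T^*M$ with bounded overlap, and moreover each related $k'$ satisfies $\va{k'} \leq C_F \va{k}$. I then apply Lemma \ref{lemma:discard}(ii) with some small $\rho > 0$ to bound each inner product by $C \n{g}_{\mathcal{O}_\epsilon} e^{\rho C_F \va{k}}$, and use the escape inequality \eqref{eq:decay_escape} (together with enlarging $C$ to absorb the finitely many $\va{k} \leq K_0$) to get $e^{-2\gamma G_{\omega'}(k')} \leq e^{-2\gamma G_\omega(k)} e^{-2\gamma C_E^{-1} \va{k}}$. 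Combining,
\begin{equation*}
S_2 \leq N_0 C^2 \n{g}_{\mathcal{O}_\epsilon}^2 e^{-2\gamma G_\omega(k)} e^{(2\rho C_F - 2\gamma C_E^{-1})\va{k}}.
\end{equation*}
Choosing $\rho$ (and $\delta$, via Lemma \ref{lemma:discard}(ii)) sufficiently small so that $\rho C_F$ is strictly less than $\gamma C_E^{-1}$ on the admissible range of $\gamma$, the $\va{k}$-exponent becomes strictly negative, yielding the required decay $e^{-2\tau \va{k} - 2\gamma G_\omega(k)}$.

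The delicate step is the calibration of $\rho$ and $\gamma_0$ in $S_2$: the spreading of $\mathcal{L}_{\Psi_X \circ F, g}\tilde{e}_k^\omega$ over the $O(1)$ related basis elements costs an $e^{2\rho C_F \va{k}}$ factor that must be beaten by the escape-function gain $e^{-2\gamma C_E^{-1}\va{k}}$. Beyond Lemmas \ref{lemma:discard} and \ref{lemma:estimee_analytic_rest}, the essential inputs are the finite multiplicity of the relation $\hookrightarrow$, the linear control $\va{G_\omega(k)} = O(\va{k})$, and the hyperbolic decay \eqref{eq:decay_escape} of the escape function.
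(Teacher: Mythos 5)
Your proof is correct and takes essentially the same route as the paper: expand $\n{\cdot}_\gamma^2$, split the $\Gamma$-sum by whether $(\omega,k)\hookrightarrow(\omega',k')$, handle the unrelated part via Lemma \ref{lemma:discard}(i) with $\gamma_0$ small, handle the related part via \eqref{eq:decay_escape} together with Lemma \ref{lemma:discard}(ii) with $\rho$ (hence $\delta$) small, and dispose of the $\mathcal{K}$-term with Lemma \ref{lemma:estimee_analytic_rest}. The only cosmetic difference is that you invoke a uniform multiplicity bound $N_0$ for the relation $\hookrightarrow$ and the comparability $\va{k'}\leq C_F\va{k}$ explicitly, whereas the paper silently absorbs any polynomial count into the $e^{\rho\va{k}}$ factor; both devices are fine and the calibration of $\rho$ against the escape gain is handled the same way in both.
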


\begin{proof}
Let us compute
\begin{equation*}
\begin{split}
& \sum_{\substack{\omega' \in \Omega \\ k' \in \mathbb{Z}^d}} e^{-2 \gamma G_{\omega'}(k')} \va{\langle \mathcal{L}_{\Psi_X \circ F,w} \tilde{e}_{k}^{\omega}, e_{k'}^{\omega'}\rangle_{L^2}}^2 \\ & \qquad \qquad \leq \sum_{\substack{\omega' \in \Omega, k' \in \mathbb{Z}^d \\ (\omega,k) \hookrightarrow (\omega',k')}} e^{-2 \gamma G_{\omega'}(k')} \va{\langle \mathcal{L}_{\Psi_X \circ F,w} \tilde{e}_{k}^{\omega}, e_{k'}^{\omega'}\rangle_{L^2}}^2 \\ & \qquad \qquad \qquad \qquad \qquad + \sum_{\substack{\omega' \in \Omega, k' \in \mathbb{Z}^d \\ (\omega,k) \not\hookrightarrow (\omega',k')}} e^{-2 \gamma G_{\omega'}(k')} \va{\langle \mathcal{L}_{\Psi_X \circ F,w} \tilde{e}_{k}^{\omega}, e_{k'}^{\omega'}\rangle_{L^2}}^2 \\
     & \qquad \qquad \leq e^{-2 \gamma G_\omega(k)} \sum_{\substack{\omega' \in \Omega, k' \in \mathbb{Z}^d \\ (\omega,k) \hookrightarrow (\omega',k')}} e^{- \frac{2 \gamma \va{k}}{C}} \va{\langle \mathcal{L}_{\Psi_X \circ F,w} \tilde{e}_{k}^{\omega}, e_{k'}^{\omega'}\rangle_{L^2}}^2 \\ & \qquad \qquad \qquad \qquad \qquad + C^2 \n{w}_{\mathcal{O}_\epsilon}^2 \sum_{\substack{\omega' \in \Omega, k' \in \mathbb{Z}^d \\ (\omega,k) \not\hookrightarrow (\omega',k')}} e^{-2 \gamma G_{\omega'}(k') - 2 \tau \max(\va{k},\va{k'}) } \\
     & \qquad \qquad \leq C \n{w}_{\mathcal{O}_\epsilon}^2 e^{-2 \gamma G_\omega(k)}e^{- \frac{2 \gamma \va{k}}{C}} e^{\rho \va{k}}  + C \n{w}_{\mathcal{O}_\epsilon}^2 e^{- \frac{\va{k}}{C}} \\
     & \qquad \qquad \leq C \n{w}_{\mathcal{O}_\epsilon}^2 e^{-2 \gamma G_\omega(k)} e^{ - \frac{\va{k}}{C}}
\end{split}
\end{equation*}
where the constant $C$ may change from one line to another. To go from the second to the third line, we applied Lemma \ref{lemma:discard} and the estimate \eqref{eq:decay_escape} that is valid when $(\omega,k) \hookrightarrow (\omega',k')$. To go from the third line to the fourth one, we assumed that $\gamma$ is small enough (to deal with the second sum) and applied the estimate \eqref{eq:weak_discard} in Lemma \ref{lemma:discard}. Notice in particular that $\rho$ may be chosen arbitrarily small by taking $\delta$ small, which allows us to get from the fourth line to the last line. 

It remains to bound $\n{\mathcal{K}(\mathcal{L}_{\Psi_X \circ F,w} \tilde{e}_{k}^\omega)}_{L^2}$. To do so, we apply Lemma \ref{lemma:estimee_analytic_rest} to get
\begin{equation*}
\n{\mathcal{K}(\mathcal{L}_{\Psi_X \circ F,w} \tilde{e}_{k}^\omega)}_{L^2} \leq C \n{w}_{\mathcal{O}_\epsilon} e^{ - \frac{\tau}{2} |k| + \gamma G_\omega(k)}  e^{- \frac{\tau}{2} |k| - \gamma G_\omega(k)}.
\end{equation*}
Since $G_\omega(k)$ is controlled by $|k|$, the factor $e^{ - \frac{\tau}{2} |k| + \gamma G_\omega(k)}$ is bounded provided $\gamma$ is small enough, and the result follows.
\end{proof}

Notice that the norm of the linear form $l_{e_k^\omega}$, defined in Lemma \ref{lemma:injection_dual}, on $\mathcal{H}_{\gamma,\epsilon}$ is less than $e^{\gamma G_\omega(k)}$. Consequently, under the assumptions of Lemma \ref{lemma:individual_bound}, for $\omega\in \Omega$ and $k \in \mathbb{Z}^d$ large, the norm of the rank $1$ operator 
\begin{equation*}
\iota(\mathcal{L}_{\Psi_X \circ F,w} \tilde{e}_{k}^\omega) \otimes l_{e_k^\omega}
\end{equation*}
on $\mathcal{H}_{\gamma,\epsilon}$ is less than $Ce^{- \beta \va{k}}$, for some $C,\beta > 0$. From the estimate above, the sum 
\begin{equation*}
\sum_{\omega \in \Omega,k \in \mathbb{Z}^d} \iota(\mathcal{L}_{\Psi_X \circ F,w} \tilde{e}_{k}^\omega) \otimes l_{e_k^\omega}
\end{equation*}
converges to an operator from $\mathcal{H}_{\gamma,\epsilon}$ to itself of exponential class $1/d$ (see Lemma \ref{lemma:concrete}). Notice also that if $X \in B_{\epsilon,\delta}$, with $\delta$ small enough, and $w \in \mathcal{O}_\epsilon$, then the operator $\mathcal{L}_{\Psi_X \circ F,w} \mathcal{K}$ has a real-analytic kernel, and thus extends to an operator of exponential class $1/d$ on $\mathcal{H}_{\gamma,\epsilon}$, see Definition \ref{definition:analytic_kernels}. Thus, if $\epsilon > 0$, and $\gamma, \delta > 0$ are small enough (depending on $\epsilon$), we may define an operator
\begin{equation}\label{eq:def_L_tilde}
\widetilde{\mathcal{L}}_{\Psi_X \circ F,w} = \sum_{\omega \in \Omega,k \in \mathbb{Z}^d} \iota(\mathcal{L}_{\Psi_X \circ F,w} \tilde{e}_{k}^\omega) \otimes l_{e_k^\omega} + \mathcal{L}_{\Psi_X \circ F,w} \mathcal{K}
\end{equation}
on $\mathcal{H}_{\gamma,\epsilon}$. This is the operator from Theorem \ref{theorem:anisotropic_space}. In the following two lemmas, we check that it satisfies the required properties.

\begin{lemma}\label{lemma:properties_operator}
Let $\epsilon \in (0,\epsilon_0)$. There are constants $C,\gamma_0,\delta,a > 0$ such that, if $\gamma \in (0,\gamma_0)$, then, for every $X \in B_{\epsilon,\delta}$ and $w \in \mathcal{O}_\epsilon$, the operator $\widetilde{\mathcal{L}}_{\Psi_X \circ F,w}$ on $\mathcal{H}_{\gamma,\epsilon}$ is of exponential class $1/d$. 

Moreover, if $z \mapsto X(z)$ is a holomorphic family of elements of $B_{\epsilon,\delta}$ and $z \mapsto w_z$ is a holomorphic family of elements of $\mathcal{O}_\epsilon$ then $z \mapsto \widetilde{\mathcal{L}}_{\Psi_{X(z)} \circ F,w_z}$ is a holomorphic family of trace class operator on $\mathcal{H}_{\gamma,\epsilon}$.
\end{lemma}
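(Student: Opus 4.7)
The plan is to treat the two pieces of \eqref{eq:def_L_tilde} separately, extracting the exponential class from the pointwise bound on each rank-one summand and then promoting uniform-in-parameter bounds to holomorphic dependence.

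For the exponential class $1/d$ statement, I would first estimate the operator norm of each rank-one summand $\iota(\mathcal{L}_{\Psi_X \circ F,g} \tilde{e}_k^\omega) \otimes l_{e_k^\omega}$ as the product $\n{\iota(\mathcal{L}_{\Psi_X \circ F,g} \tilde{e}_k^\omega)}_\gamma \cdot \n{l_{e_k^\omega}}$. Lemma \ref{lemma:individual_bound} controls the first factor by $C \n{g}_{\mathcal{O}_\epsilon} e^{-\tau \va{k} - \gamma G_\omega(k)}$, while the observation made just before the definition \eqref{eq:def_L_tilde} gives $\n{l_{e_k^\omega}} \leq e^{\gamma G_\omega(k)}$. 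The escape-function exponentials cancel exactly, leaving the uniform bound $C \n{g}_{\mathcal{O}_\epsilon} e^{-\tau \va{k}}$. For each fixed $\omega \in \Omega$, Lemma \ref{lemma:concrete} (applied with exponent $\nu = 1$) then yields that $\sum_{k \in \mathbb{Z}^d} \iota(\mathcal{L}_{\Psi_X \circ F,g}\tilde{e}_k^\omega) \otimes l_{e_k^\omega}$ is of exponential class $1/d$ on $\mathcal{H}_{\gamma,\epsilon}$. Since $\Omega$ is finite and the exponential class is preserved by finite sums (via the subadditivity $a_{n+m}(L_1 + L_2) \leq a_n(L_1) + a_m(L_2)$ of approximation numbers), the whole double series has the required property. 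For the remaining term $\mathcal{L}_{\Psi_X \circ F,g}\mathcal{K}$, one checks that it has a real-analytic kernel $(x,y) \mapsto g(x) \mathcal{K}(\Psi_X F x, y)$ which extends holomorphically to a fixed Grauert tube of $M \times M$ provided $\delta$ is small enough; Remark \ref{remark:analytic_kernels} then makes it an operator of exponential class $1/d$ on $\mathcal{H}_{\gamma,\epsilon}$.

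For holomorphy, I would test membership in the Banach space of trace class operators on $\mathcal{H}_{\gamma,\epsilon}$ equipped with the trace norm. Each rank-one summand is trace class and depends holomorphically on $z$: by \S \ref{subsection:koopman}, choosing $\epsilon_1, \epsilon_2 \in (0,\epsilon_0)$ appropriately, $z \mapsto \mathcal{L}_{\Psi_{X(z)} \circ F, g_z}$ is a holomorphic family of bounded operators from $\mathcal{O}_{\epsilon_1}$ to $\mathcal{O}_{\epsilon_2}$; since $\tilde{e}_k^\omega \in \mathcal{O}_{\epsilon_1}$ by Lemma \ref{lemma:ekomega_analytic} (for $\epsilon_1$ small) and $\iota$ is continuous from $\mathcal{O}_{\epsilon_2}$ to $\mathcal{H}_{\gamma,\epsilon}$, the vector $\iota(\mathcal{L}_{\Psi_{X(z)} \circ F, g_z}\tilde{e}_k^\omega)$ depends holomorphically on $z$, and tensoring with the fixed functional $l_{e_k^\omega}$ gives a holomorphic family of rank-one trace class operators. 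The very same trace-norm bound $C \n{g_z}_{\mathcal{O}_\epsilon} e^{-\tau \va{k}}$ obtained above is uniform for $z$ in a compact subset of the parameter domain (on which $\n{g_z}_{\mathcal{O}_\epsilon}$ is bounded), so the series converges locally uniformly in trace norm. A locally uniform limit of Banach-space-valued holomorphic functions is holomorphic, so the first piece of $\widetilde{\mathcal{L}}_{\Psi_{X(z)} \circ F, g_z}$ is holomorphic in trace norm. Holomorphy of the kernel of $\mathcal{L}_{\Psi_{X(z)} \circ F, g_z}\mathcal{K}$ in $z$, together with Remark \ref{remark:analytic_kernels}, handles the remaining term.

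I do not anticipate a conceptually hard step: all of the dynamical content has been packaged into Lemmas \ref{lemma:individual_bound} and \ref{lemma:concrete} and into Remark \ref{remark:analytic_kernels}. The only bookkeeping that deserves care is the compatible choice of the auxiliary radii $\epsilon_1, \epsilon_2$ (and of $\delta$) so that the factorization $\mathcal{O}_\epsilon \supset \mathcal{O}_{\epsilon_1} \xrightarrow{\mathcal{L}_{\Psi_{X(z)} \circ F, g_z}} \mathcal{O}_{\epsilon_2} \xrightarrow{\iota} \mathcal{H}_{\gamma,\epsilon}$ is defined simultaneously for all $z$ in a given compact set and all $X \in B_{\epsilon,\delta}$, matching the parameters for which Lemma \ref{lemma:individual_bound} and Remark \ref{remark:analytic_kernels} apply.
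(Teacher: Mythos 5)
Your proof is correct and follows the same approach as the paper's: bound the operator norm of each rank-one summand by combining Lemma \ref{lemma:individual_bound} with the bound $\n{l_{e_k^\omega}} \leq e^{\gamma G_\omega(k)}$, apply Lemma \ref{lemma:concrete} to get exponential class $1/d$, handle the $\mathcal{L}_{\Psi_X \circ F, g}\mathcal{K}$ term via Remark \ref{remark:analytic_kernels}, and deduce holomorphy from locally uniform trace-norm convergence of the series. The paper's own proof is considerably terser (two sentences) but rests on precisely the observations you have spelled out; your version merely makes the bookkeeping of the finite sum over $\Omega$ and the factorization through $\mathcal{O}_{\epsilon_1} \to \mathcal{O}_{\epsilon_2}$ explicit.
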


\begin{proof}
That $\widetilde{\mathcal{L}}_{\Psi_X \circ F,w}$ is of exponential class $1/d$ follows from Lemma \ref{lemma:concrete}. The holomorphic dependence on the parameter follows from the uniform convergence in \eqref{eq:def_L_tilde}.
\end{proof}

\begin{lemma}\label{lemma:integrals_and_operators}
Let $\epsilon \in (0,\epsilon_0)$. There are constants $C,\gamma_0,\delta > 0$ such that, if $\gamma \in (0,\gamma_0)$, then, for every $X \in B_{\epsilon,\delta}^{\mathbb{R}},w \in \mathcal{O}_\epsilon, n \in \mathbb{N}$ and $u,v \in \mathcal{O}_\epsilon$ we have
\begin{equation}\label{eq:natural_inclusion}
j(u)\p{\widetilde{\mathcal{L}}_{\Psi_X \circ F,w}^n \iota(v)} = \int_M u (\mathcal{L}^n_{\Psi_X \circ F,w}v) \mathrm{d}x.
\end{equation}
\end{lemma}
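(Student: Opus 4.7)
The plan is to establish by induction on $n$ the extension identity
\begin{equation*}
\widetilde{\mathcal{L}}_{\Psi_X \circ F,g}^n \iota(v) = \iota\bigl(\mathcal{L}_{\Psi_X \circ F,g}^n v\bigr) \quad \text{in } \mathcal{H}_{\gamma,\epsilon},
\end{equation*}
where the right-hand side is interpreted via the canonical extension of $\iota$ to any $\mathcal{O}_{\epsilon'}$ containing $\mathcal{L}_{\Psi_X \circ F,g}^n v$. Granted this, \eqref{eq:natural_inclusion} is immediate: apply $j(u)$ to both sides and use that $j(u)(\iota(w)) = \int_M u w\,\mathrm{d}x$, which follows from the definition $j(v)= l_{\bar v}$ combined with the $L^2$-pairing identity in Lemma~\ref{lemma:injection_dual}.

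The case $n=0$ is tautological. For the inductive step, the plan is to expand the left-hand side using the defining formula \eqref{eq:def_L_tilde} for $\widetilde{\mathcal{L}}_{\Psi_X \circ F,g}$ and compare with what the reconstruction formula of Proposition~\ref{proposition:reconstruction} delivers on the right. Substituting the inductive hypothesis gives
\begin{equation*}
\widetilde{\mathcal{L}}^n \iota(v) = \sum_{\omega, k} l_{e_k^\omega}\bigl(\iota(\mathcal{L}^{n-1}v)\bigr) \, \iota(\mathcal{L}\tilde e_k^\omega) + \mathcal{L}\mathcal{K}\, \iota(\mathcal{L}^{n-1}v).
\end{equation*}
Lemma~\ref{lemma:injection_dual} identifies $l_{e_k^\omega}(\iota(\mathcal{L}^{n-1}v))$ with $\langle \mathcal{L}^{n-1}v, e_k^\omega\rangle$. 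Because $X\in B_{\epsilon,\delta}^{\mathbb{R}}$, the composition $\mathcal{L}\mathcal{K}$ has a real-analytic kernel, so Remark~\ref{remark:analytic_kernels} gives $\mathcal{L}\mathcal{K}\,\iota(\mathcal{L}^{n-1}v) = \iota(\mathcal{L}\mathcal{K}\mathcal{L}^{n-1}v)$. Meanwhile, Proposition~\ref{proposition:reconstruction} applied to the real-analytic function $\mathcal{L}^{n-1}v$ reads
\begin{equation*}
\mathcal{L}^{n-1}v = \sum_{\omega,k} \langle \mathcal{L}^{n-1}v, e_k^\omega\rangle \tilde e_k^\omega + \mathcal{K}\mathcal{L}^{n-1}v,
\end{equation*}
and applying the continuous operators $\mathcal{L}_{\Psi_X\circ F,g}$ and then $\iota$ yields exactly the same expression as above for $\iota(\mathcal{L}^n v)$, closing the induction.

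The main obstacle is to make all this rigorous uniformly in $n$, since the analyticity radius $\epsilon_{n-1}$ of $\mathcal{L}_{\Psi_X\circ F,g}^{n-1}v$ can a priori shrink to $0$. The key point is that Lemma~\ref{lemma:individual_bound} yields $\n{\iota(\mathcal{L}\tilde e_k^\omega)}_\gamma \leq C e^{-\tau|k|-\gamma G_\omega(k)}$ with $\tau>0$ independent of the iterate, while $|G_\omega(k)|$ grows at most linearly in $|k|$. Choosing $\gamma_0$ small relative to $\tau$ (uniformly in $n$), the series above converges absolutely in $\mathcal{H}_{\gamma,\epsilon}$ as soon as the coefficients $\langle \mathcal{L}^{n-1}v, e_k^\omega\rangle$ decay at \emph{some} positive exponential rate; Lemma~\ref{lemma:dual_ekomega} applied on $\mathcal{O}_{\epsilon_{n-1}}$ provides such a rate for each $n$. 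The same $\gamma_0$ makes $\iota$ bounded on every $\mathcal{O}_{\epsilon_{n-1}}$ via Lemma~\ref{lemma:finite_norm}, which justifies all the limit-exchanges used when pushing the reconstruction formula through $\mathcal{L}$ and $\iota$.
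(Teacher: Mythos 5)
Your overall plan — induction, substituting the reconstruction formula into the definition \eqref{eq:def_L_tilde} of $\widetilde{\mathcal{L}}$, and identifying the result with $\iota(\mathcal{L}^n v)$ — is a natural one, and you correctly identify the central obstacle: the analyticity radius $\epsilon_n$ of $\mathcal{L}_{\Psi_X\circ F,g}^n v$ shrinks with $n$, because $F^{-n}$ contracts the unstable directions of the Grauert tube exponentially. Where the argument breaks is the claim that ``the same $\gamma_0$ makes $\iota$ bounded on every $\mathcal{O}_{\epsilon_{n-1}}$ via Lemma~\ref{lemma:finite_norm}.'' This is not what Lemma~\ref{lemma:finite_norm} gives. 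Its threshold $\gamma_0=\gamma_0(\epsilon')$ is derived from Lemma~\ref{lemma:dual_ekomega} and ultimately from Proposition~\ref{proposition:decay_from_analytic}: for $u\in\mathcal{O}_{\epsilon'}$ one only has $\va{\langle u,e_k^\omega\rangle}\leq C\n{u}_{\mathcal{O}_{\epsilon'}}e^{-\rho(\epsilon')\va{k}}$, and since $G_\omega(k)\geq -C\va{k}$, summability of $e^{-2\gamma G_\omega(k)}\va{\langle u,e_k^\omega\rangle}^2$ forces $\gamma<\rho(\epsilon')/C$. The rate $\rho(\epsilon')$ degenerates as $\epsilon'\to 0$, so $\gamma_0(\epsilon_{n-1})\to 0$, and for any fixed $\gamma>0$ the extension of $\iota$ to $\mathcal{O}_{\epsilon_{n-1}}$ fails for large $n$. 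Nor can you rescue $\iota(\mathcal{L}^n v)\in\mathcal{H}_{\gamma,\epsilon}$ from the boundedness of $\widetilde{\mathcal{L}}$, since that presupposes the very identity you are proving. In short, the step ``apply $\iota$ to the reconstruction of $\mathcal{L}^{n-1}v$ after hitting it with $\mathcal{L}$'' is not justified uniformly in $n$, and the convergence-in-$\mathcal{H}$ of the series is not the issue — it is whether the limit object can legitimately be called $\iota(\mathcal{L}^n v)$.

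The paper sidesteps this by proving the weak identity directly and shifting the one extra application of the Koopman operator onto the \emph{test-function} side. Concretely, it first fixes $\tilde\epsilon$ and $\delta$ so that $\mathcal{L}^*_{\Psi_X\circ F,g}\colon\mathcal{O}_\epsilon\to\mathcal{O}_{\tilde\epsilon}$ is bounded, and then takes $\gamma$ small enough (once) so that $j$ extends continuously to $\mathcal{O}_{\tilde\epsilon}$. In the inductive step it takes $\iota v_m\to\widetilde{\mathcal{L}}^n\iota v$ with $v_m\in\mathcal{O}_\epsilon$, uses the reconstruction formula (legitimately, since each $v_m\in\mathcal{O}_\epsilon$) to deduce $j(u)(\widetilde{\mathcal{L}}\iota v_m)=\int_M u\,\mathcal{L}v_m\,\mathrm{d}x=\int_M(\mathcal{L}^*u)v_m\,\mathrm{d}x=j(\mathcal{L}^*u)(\iota v_m)$, passes to the limit to get $j(u)(\widetilde{\mathcal{L}}^{n+1}\iota v)=j(\mathcal{L}^*u)(\widetilde{\mathcal{L}}^n\iota v)$, and then invokes the induction hypothesis extended by density from $\mathcal{O}_\epsilon$ to $\mathcal{O}_{\tilde\epsilon}$ to evaluate the right side. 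Because $\mathcal{L}^*$ is applied only once per step and the resulting test function always lands in the fixed space $\mathcal{O}_{\tilde\epsilon}$, there is no compounding loss of analyticity, and a single choice of $\gamma_0,\delta$ works for all $n$. Your proof can be repaired by replacing the ``push $\iota$ through the reconstruction of $\mathcal{L}^{n-1}v$'' step with exactly this duality argument.
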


\begin{proof}
Let $\tilde{\epsilon}$ and $\delta$ be so small that for every $X \in B_{\epsilon,\delta}^{\mathbb{R}}$ and $w \in \mathcal{O}_\epsilon$ the operator $\mathcal{L}_{\Psi_X \circ F,w}^*$ defined in \S \ref{subsection:koopman} is bounded from $\mathcal{O}_\epsilon$ to $\mathcal{O}_{\tilde{\epsilon}}$. By taking $\gamma$ small enough, we ensure that the map $j : \mathcal{O}_\epsilon \to \mathcal{H}_{\gamma,\epsilon}^*$ extends to a continuous map $j : \mathcal{O}_{\tilde{\epsilon}} \to \mathcal{H}_{\gamma,\epsilon}^*$ (this is a consequence of Lemma \ref{lemma:dual_ekomega}).

The proof is by induction, the case $n = 0$ being a consequence of the definition of the maps $\iota$ and $j$. Let $n \in \mathbb{N}$ and assume that the equality \eqref{eq:natural_inclusion} holds for every $u,v \in \mathcal{O}_\epsilon$ (the other parameters are fixed). Let $u,v \in \mathcal{O}_\epsilon$ and consider a sequence $(v_m)_{m \in \mathbb{N}}$ of elements of $\mathcal{O}_\epsilon$ such that $(\iota v_m)_{m \in \mathbb{N}}$ converges to $\widetilde{\mathcal{L}}_{\Psi_X \circ F,w}^n \iota v$ in $\mathcal{H}_{\gamma,\epsilon}$. We have consequently
\begin{equation*}
j(u)\p{\widetilde{\mathcal{L}}_{\Psi_X \circ F,w}^{n+1} \iota(v)} = \lim_{m \to + \infty} j(u)\p{\widetilde{\mathcal{L}}_{\Psi_X \circ F,w} \iota(v_m)}.
\end{equation*}
It follows from the definition \eqref{eq:def_L_tilde} of $\widetilde{\mathcal{L}}_{\Psi_X \circ F,w}$ that
\begin{equation*}
\widetilde{\mathcal{L}}_{\Psi_X \circ F,w} \iota(v_m) =  \sum_{\omega \in \Omega,k \in \mathbb{Z}^d}  \brac{v_m,e_k^\omega}_{L^2} \iota\p{ \mathcal{L}_{\Psi_X \circ F,w} \tilde{e}_{k}^\omega} + \iota\p{\mathcal{L}_{\Psi_X \circ F,w}  \mathcal{K} v_m}.
\end{equation*}
Thus
\begin{equation*}
\begin{split}
& j(u)\p{\widetilde{\mathcal{L}}_{\Psi_X \circ F,w} \iota(v_m)} \\ & \qquad \qquad = \sum_{\omega \in \Omega,k \in \mathbb{Z}^d}  \brac{v_m,e_k^\omega}_{L^2} \int_M u\p{ \mathcal{L}_{\Psi_X \circ F,w} \tilde{e}_{k}^\omega} \mathrm{d}x + \int_M u \p{\mathcal{L}_{\Psi_X \circ F,w}  \mathcal{K} v_m} \mathrm{d}x \\
    & \qquad \qquad = \int_M u \p{\sum_{\omega \in \Omega,k \in \mathbb{Z}^d}  \brac{v_m,e_k^\omega}_{L^2} \mathcal{L}_{\Psi_X \circ F,w} \tilde{e}_{k}^\omega + \mathcal{L}_{\Psi_X \circ F,w}  \mathcal{K} v_m} \mathrm{d}x \\
    & \qquad \qquad = \int_M u \mathcal{L}_{\Psi_X \circ F,w} v_m \mathrm{d}x = \int_M (\mathcal{L}_{\Psi_X \circ F,w}^* u)v_m \mathrm{d}x \\
    & \qquad \qquad = j(\mathcal{L}_{\Psi_X \circ F,w}^* u)(\iota v_m).
\end{split}
\end{equation*}
Here, we can use Lemma \ref{lemma:dual_ekomega} to justify the interversion of the series and the integral (from the second to the third line). We use \eqref{eq:series} to go from the third to the fourth line. In the last line, we use the extension of $j$ to $\mathcal{O}_{\tilde{\epsilon}}$. Letting $m$ tends to $+ \infty$, we find that
\begin{equation*}
j(u)\p{\widetilde{\mathcal{L}}_{\Psi_X \circ F,w}^{n+1} \iota(v)} = j(\mathcal{L}_{\Psi_X \circ F,w}^* u)\p{\widetilde{\mathcal{L}}_{\Psi_X \circ F,w}^{n} \iota(v)}.
\end{equation*}
Since $\mathcal{O}_\epsilon$ is dense in $\mathcal{O}_{\tilde{\epsilon}}$, it follows from our induction hypothesis that
\begin{equation*}
\begin{split}
j(u)\p{\widetilde{\mathcal{L}}_{\Psi_X \circ F,w}^{n+1} \iota(v)} & = \int_M (\mathcal{L}_{\Psi_X \circ F,w}^* u) (\mathcal{L}_{\Psi_X \circ F,w}^{n} v) \mathrm{d}x \\
   & = \int_M u (\mathcal{L}_{\Psi_X \circ F,w}^{n+1} v) \mathrm{d}x.
\end{split}
\end{equation*}
This ends the proof of the lemma.
\end{proof}

Let us finally gather all our findings and prove Theorem \ref{theorem:anisotropic_space}.

\begin{proof}[Proof of Theorem \ref{theorem:anisotropic_space}]
We take $\mathcal{H} = \mathcal{H}_{\gamma,\epsilon}$ with $\gamma$ small enough. Remember that $\mathcal{H}$ is separable (Lemma \ref{lemma:separability}). The point \ref{item:injections} follows from the definition of the space $\mathcal{H}_{\gamma,\epsilon}$ and Lemmas \ref{lemma:injection_dual} and \ref{lemma:expression_norm}. The operator $\widetilde{\mathcal{L}}_{\Psi_X \circ F,g}$ is defined by the formula \eqref{eq:def_L_tilde} (provided $\delta$ is small enough) and satisfies points \ref{item:extension_operator} and \ref{item:holomorphic_family} according to Lemma \ref{lemma:properties_operator}. Finally, point \ref{item:integrals} is a consequence of Lemma \ref{lemma:integrals_and_operators}.
\end{proof}

\section{Consequences}\label{section:consequences}

\subsection{First consequences}

Let $M$ be a closed real-analytic manifold and $F \in \Anos^\omega(M)$. Let $\epsilon \in (0,\epsilon_0)$, where $\epsilon_0$ is as in \S \ref{subsection:definitions}. We will use the notation from Theorem \ref{theorem:anisotropic_space}. The goal of this section is to use Theorem \ref{theorem:anisotropic_space} to study the Ruelle resonances of $F$.  We start by proving Theorems \ref{theorem:upper_bound_resonances} and \ref{thm:upper_bound_determinant}, the proof of Theorems \ref{theorem:optimal_dense} and \ref{theorem:optimality_torus} can be found respectively in \S \ref{subsection:optimal_dense} and \S \ref{subsection:optimality_torus}.

We will only work with the properties given in Theorem \ref{theorem:anisotropic_space}. In particular, we will not need to go back to the construction of the space $\mathcal{H}$. The first step is to explain how the eigenvalues of the operators from Theorem \ref{theorem:anisotropic_space} are related to Ruelle resonances.

\begin{proposition}\label{proposition:determnation_spectrum}
Let $X \in B_{\epsilon,\delta}^{\mathbb{R}}$ and $w \in \mathcal{O}_\epsilon$. The non-zero eigenvalues of $\widetilde{\mathcal{L}}_{\Psi_X \circ F,w}$ are the Ruelle resonances of $\mathcal{L}_{\Psi_X \circ F,w}$ (counted with multiplicity).
\end{proposition}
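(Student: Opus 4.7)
The strategy is to use the pairing in property (iv) of Theorem \ref{theorem:anisotropic_space} to identify the spectral projector of $\widetilde{\mathcal{L}} := \widetilde{\mathcal{L}}_{\Psi_X\circ F,g}$ at each non-zero eigenvalue $\lambda$ with the residue of $(z-\mathcal{L})^{-1}$ at $\lambda$, where $\mathcal{L} := \mathcal{L}_{\Psi_X\circ F,g}$. For $|z|$ large both resolvents are given by convergent Neumann series; summing the identity from (iv) against powers of $z^{-1}$ will yield, for all $u,v \in \mathcal{O}_\epsilon$,
\begin{equation*}
j(u)\bigl((z-\widetilde{\mathcal{L}})^{-1}\iota(v)\bigr) = \int_M u\,(z-\mathcal{L})^{-1}v\,\mathrm{d}x.
\end{equation*}
The left-hand side extends meromorphically to $\mathbb{C}\setminus\{0\}$ because $\widetilde{\mathcal{L}}$ is compact on $\mathcal{H}$, while the right-hand side extends meromorphically to $\mathbb{C}$ by Theorem \ref{theorem:resonances} applied to the real-analytic (hence $C^\infty$) diffeomorphism $\Psi_X\circ F$ and weight $g$. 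By uniqueness of meromorphic continuation the two extensions then agree on $\mathbb{C}\setminus\{0\}$.

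Next, fix $\lambda \neq 0$. Let $\Pi_\lambda$ denote the spectral projector of $\widetilde{\mathcal{L}}$ at $\lambda$ with finite-dimensional image $E_\lambda$, and let $\Pi_\lambda^R$ denote the residue of $(z-\mathcal{L})^{-1}$ at $\lambda$, with finite-dimensional image $R_\lambda \subset \mathcal{D}'(M)$ (so $\Pi_\lambda^R = 0$ when $\lambda$ is not a Ruelle resonance). Taking residues at $\lambda$ in the identity above gives
\begin{equation*}
j(u)(\Pi_\lambda\iota(v)) = \int_M u\,(\Pi_\lambda^R v)\,\mathrm{d}x \quad \text{for all } u,v \in \mathcal{O}_\epsilon,
\end{equation*}
the right-hand side being the standard distribution pairing. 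Because $\iota(\mathcal{O}_\epsilon)$ is dense in $\mathcal{H}$ and $E_\lambda$ is finite-dimensional, the continuity of $\Pi_\lambda$ forces $\Pi_\lambda(\iota(\mathcal{O}_\epsilon)) = E_\lambda$; an analogous argument, using density of $\mathcal{O}_\epsilon$ in $C^\infty(M)$ together with the continuity of $\Pi_\lambda^R : C^\infty(M) \to \mathcal{D}'(M)$ (as a residue of a continuous operator-valued meromorphic function), yields $\Pi_\lambda^R(\mathcal{O}_\epsilon) = R_\lambda$.

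Finally, with these density facts in hand, I will construct a linear map $A : E_\lambda \to R_\lambda$ by $A(\Pi_\lambda\iota(v)) := \Pi_\lambda^R v$. The intertwining identity makes $A$ well-defined (if $\Pi_\lambda\iota(v) = 0$ then $\Pi_\lambda^R v$ pairs to zero against every $u \in \mathcal{O}_\epsilon$, hence vanishes as a distribution by density of $\mathcal{O}_\epsilon$ in $C^\infty(M)$) and injective (if $A(w) = 0$ and $w = \Pi_\lambda\iota(v)$, then $j(u)(w) = 0$ for all $u \in \mathcal{O}_\epsilon$, and the density of $j(\mathcal{O}_\epsilon)$ in $\mathcal{H}^*$ provided by Lemma \ref{lemma:expression_norm} forces $w = 0$); surjectivity was already observed. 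Hence $A$ is a linear isomorphism, which proves that $\lambda \neq 0$ is an eigenvalue of $\widetilde{\mathcal{L}}$ if and only if it is a Ruelle resonance of $\mathcal{L}$, and that the two multiplicities $\dim E_\lambda$ and $\dim R_\lambda$ coincide. The main delicate point is to combine the two density statements in (i) with the finite-dimensionality of the generalized eigenspaces in order to pass freely between the $\mathcal{H}$-picture and the $C^\infty$--$\mathcal{D}'$-picture; once this is correctly set up, the rest is a routine meromorphic continuation argument.
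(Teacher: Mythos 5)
Your proposal is correct and follows essentially the same route as the paper: the pairing $j(u)\bigl((z-\widetilde{\mathcal{L}})^{-1}\iota(v)\bigr)=\int_M u\,(z-\mathcal{L})^{-1}v\,\mathrm{d}x$ obtained from point (iv) via Neumann series, meromorphic continuation, comparison of residues at $\lambda$, and then the two density statements (of $\mathcal{O}_\epsilon$ in $C^\infty(M)$ and of $j(\mathcal{O}_\epsilon)$ in $\mathcal{H}^*$) combined with finite-dimensionality of the generalized eigenspaces. Your explicit map $A$ is just a rephrasing of the paper's observation that the two surjections $\widetilde{\Pi}_\lambda\circ\iota$ and $\Pi_\lambda$ from $\mathcal{O}_\epsilon$ have the same kernel, so the argument is the same in substance.
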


\begin{proof}
Let $\lambda$ be a non-zero complex number. We let $\widetilde{E}_\lambda$ denote the generalized eigenspace of $\widetilde{\mathcal{L}}_{\Psi_X \circ F,w}$ associated to $\lambda$ and $\tilde{\Pi}_\lambda$ the associated spectral projector. Similarly, we let $E_\lambda$ be the space of generalized resonant states for $\mathcal{L}_{\Psi_X \circ F,w}$ and $\Pi_\lambda$ be the associated spectral projector, that is the residue at $\lambda$ of the meromorphic continuation of $(z - \mathcal{L}_{\Psi_X \circ F ,g})^{-1}$ given by Theorem \ref{theorem:resonances}. We recall that $\Pi_\lambda$ is a finite rank operator from $C^\infty(M)$ to $\mathcal{D}'(M)$, and that $E_\lambda$ is its image. Since $\widetilde{E}_\lambda$ and $E_\lambda$ are finite dimensional and $\iota(\mathcal{O}_\epsilon)$ and $\mathcal{O}_\epsilon$ are dense respectively in $\mathcal{H}$ and in $C^\infty(M)$, we see that the operators $\widetilde{\Pi}_\lambda \circ \iota : \mathcal{O}_\epsilon \to \widetilde{E}_\lambda$ and $\Pi_\lambda : \mathcal{O}_\epsilon \to E_\lambda$ are surjective. We will prove that these operators have the same kernel, which will imply the existence of an isomorphism between $\widetilde{E}_\lambda$ and $E_\lambda$, and hence the result.

Let $u,v \in \mathcal{O}_\epsilon$. Let us introduce for $z \in \mathbb{C}$ large
\begin{equation*}
\Psi_{u,v}(z) = \sum_{n \geq 0} z^{-(n+1)} \int_M u (\mathcal{L}_{\Psi_X \circ F,w}^n v) \mathrm{d}x = \int_{M} u (z - \mathcal{L}_{\Psi_X \circ F,w})^{-1} v \mathrm{d}x.
\end{equation*}
Thanks to the meromorphic continuation of the resolvent $(z - \mathcal{L}_{\Psi_X \circ F,w})^{-1}$ given by Theorem \ref{theorem:resonances}, we see that $\Psi_{u,v}$ has a meromorphic continuation to $\mathbb{C}$. Moreover, the residue of $\Psi_{u,v}$ at $\lambda$ is $\int_M u \Pi_\lambda v \mathrm{d}x$.

On the other hand, using the last point in Theorem \ref{theorem:anisotropic_space}, we see that for $|z|$ large, we have
\begin{equation*}
\Psi_{u,v}(z) = \ \sum_{n \geq 0} z^{-(n+1)} j(u)\p{\widetilde{\mathcal{L}}_{\Psi_X \circ F,w}^n \iota(v)} = j(u)\p{(z - \widetilde{\mathcal{L}}_{\Psi_X \circ F,w})^{-1} \iota v}.
\end{equation*}
Consequently, the residue of $\Psi_{u,v}$ at $\lambda$ is $j(u)\p{\widetilde{\Pi}_\lambda \iota(v)}$. Hence, we have
\begin{equation*}
\int_M u \Pi_\lambda v \mathrm{d}x = j(u)\p{\widetilde{\Pi}_\lambda \iota(v)}.
\end{equation*}
Since $\mathcal{O}_\epsilon$ is dense in $C^\infty(M)$ and the image of $j$ is dense in the dual of $\mathcal{H}$, we find that the kernels of $\widetilde{\Pi}_\lambda \circ \iota : \mathcal{O}_\epsilon \to \widetilde{E}_\lambda$ and $\Pi_\lambda : \mathcal{O}_\epsilon \to E_\lambda$ are the same, and thus there is an isomorphism between $\widetilde{E}_\lambda$ and $E_\lambda$.
\end{proof}

\begin{remark}
One could actually prove that the isomorphism between $\widetilde{E}_\lambda$ and $E_\lambda$ constructed in the proof of Proposition \ref{proposition:determnation_spectrum} conjugates the actions of $\widetilde{\mathcal{L}}_{\Psi_X \circ F,w}$ and $\mathcal{L}_{\Psi_X \circ F,w}$. In particular, the operators $\widetilde{\mathcal{L}}_{\Psi_X \circ F,w}$ and $\mathcal{L}_{\Psi_X \circ F,w}$ have the same Jordan blocks.
\end{remark}

In prevision of the proof of Theorem \ref{thm:upper_bound_determinant}, we relate the dynamical determinant \eqref{eq:dynamical_determinant} with the operators from Theorem \ref{theorem:anisotropic_space}.

\begin{proposition}\label{proposition:trace_formula}
Let $X \in B_{\epsilon,\delta}^{\mathbb{R}}$ and $w \in \mathcal{O}_\epsilon$. For every $n \in \mathbb{N}^*$, the trace of the trace class operator $\widetilde{\mathcal{L}}_{\Psi_X \circ F,w}^n$ is
\begin{equation}\label{eq:trace_formula}
\tr\p{\widetilde{\mathcal{L}}_{\Psi_X \circ F,w}^n} = \sum_{(\Psi_X \circ F)^n x = x} \frac{\prod_{k = 0}^{n-1} w((\Psi_X \circ F)^k x)}{\va{\det\p{ I - D_x (\Psi_X \circ F)^n}}}.
\end{equation}
\end{proposition}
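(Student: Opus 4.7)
The identity \eqref{eq:trace_formula} is the classical Atiyah--Bott--Guillemin trace formula for the composition operator. The strategy is to show that the Hilbert space trace $\tr(\widetilde{\mathcal{L}}_{\Psi_X \circ F, g}^n)$ coincides with the flat trace of $\mathcal{L}_{\Psi_X \circ F, g}^n$, and then evaluate the latter at the (hyperbolic) fixed points of $(\Psi_X \circ F)^n$ by the standard localization in charts.

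First, following \eqref{eq:def_L_tilde}, split $\widetilde{\mathcal{L}} = A + B$ where $A = \sum_{\omega, k} \iota(\mathcal{L}_{\Psi_X \circ F, g}\tilde{e}_k^\omega) \otimes l_{e_k^\omega}$ and $B = \mathcal{L}_{\Psi_X \circ F, g} \mathcal{K}$. The distributional kernel of $A$ is $\sum_{\omega, k} (\mathcal{L}\tilde{e}_k^\omega)(x) e_k^\omega(y)$, while $B$ has real-analytic kernel (Remark \ref{remark:analytic_kernels}). Proposition \ref{proposition:reconstruction}, read as the distributional identity $\sum_{\omega, k} \tilde{e}_k^\omega(y) e_k^\omega(z) + \mathcal{K}(y, z) = \delta(y - z)$, shows that the sum of the kernels of $A$ and $B$ is $g(x) \delta(y - (\Psi_X \circ F)(x))$, that is, the Schwartz kernel of $\mathcal{L}_{\Psi_X \circ F, g}$. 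Iterating, $\widetilde{\mathcal{L}}^n$ has the same Schwartz kernel as $\mathcal{L}^n_{\Psi_X \circ F, g}$, namely $g_n(x) \delta(y - (\Psi_X \circ F)^n(x))$ with $g_n(x) = \prod_{k = 0}^{n-1} g((\Psi_X \circ F)^k x)$.

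Next, expand $\widetilde{\mathcal{L}}^n = \sum_{\varepsilon \in \{A, B\}^n} \varepsilon_1 \cdots \varepsilon_n$ and compute each summand's trace using $\tr(u \otimes l) = l(u)$ for rank-one factors, together with the identity $\tr(L) = \int_M L(x, x) \mathrm{d}x$ for operators on $\mathcal{H}_{\gamma, \epsilon}$ with real-analytic kernel. The latter follows from Remark \ref{remark:analytic_kernels} and item \ref{item:integrals} of Theorem \ref{theorem:anisotropic_space}: decompose $L$ as a norm-convergent sum of rank-ones $\iota(f_j) \otimes j(h_j)$ and use $j(h_j)(\iota f_j) = \int h_j f_j \mathrm{d}x$. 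Applying the reconstruction identity from Step 1 at each of the $n-1$ interior composition points and summing over $\varepsilon$, the combinatorial sum telescopes to the flat trace
\[
\tr\bigl(\widetilde{\mathcal{L}}_{\Psi_X \circ F, g}^n\bigr) = \int_M g_n(x)\, \delta\bigl(x - (\Psi_X \circ F)^n(x)\bigr)\, \mathrm{d}x.
\]
Since $\Psi_X \circ F$ is Anosov for $X \in B_{\epsilon, \delta}^{\mathbb{R}}$ with $\delta$ small enough, the fixed points of $(\Psi_X \circ F)^n$ are hyperbolic (isolated, with $I - D_x(\Psi_X \circ F)^n$ invertible), and a change of variable in charts around each such fixed point yields the right-hand side of \eqref{eq:trace_formula}.

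\textbf{Main obstacle.} The telescoping step is the delicate one: for each $\varepsilon$ the trace is a specific iterated integral, and reassembling the sum over $\varepsilon$ into the single flat trace requires interchanging infinite summations over $\Gamma = \Omega \times \mathbb{Z}^d$ with integrations that in the limit are taken against the distribution $\delta(x - (\Psi_X \circ F)^n(x))$. The exponential estimates from Lemmas \ref{lemma:ekomega_analytic}, \ref{lemma:dual_ekomega} and \ref{lemma:individual_bound} provide the absolute convergence needed to legitimate each individual interchange. An alternative, avoiding direct distributional manipulations, is to regularize $\mathcal{L}_{\Psi_X \circ F, g}$ (for instance by inserting a mollifier concentrated on the diagonal) so that both sides of \eqref{eq:trace_formula} are integrals of continuous functions, prove the trace formula in that smoothed setting, and pass to the limit using the holomorphic dependence from item \ref{item:holomorphic_family} of Theorem \ref{theorem:anisotropic_space}.
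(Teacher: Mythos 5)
Your overall strategy --- identifying $\tr(\widetilde{\mathcal{L}}_{\Psi_X\circ F,g}^n)$ with the flat trace of $\mathcal{L}_{\Psi_X\circ F,g}^n$ and then localizing at the hyperbolic fixed points --- is the classical Atiyah--Bott route, and it is genuinely different from what the paper does. The paper's proof is a short reduction: since $\widetilde{\mathcal{L}}_{\Psi_X\circ F,g}^n$ is trace class, Lidskii's theorem gives that its trace is the sum of its eigenvalues, Proposition \ref{proposition:determnation_spectrum} identifies these with the Ruelle resonances of $\mathcal{L}_{\Psi_X\circ F,g}^n$, and the fixed-point expression for the sum of the resonances is then quoted from the Gevrey trace formula of \cite[Theorem 2.12 (iv)]{local_and_global}. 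In other words, the hard analytic content of \eqref{eq:trace_formula} is outsourced to a known result, whereas you are attempting to re-prove it from scratch.

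That is where your proposal has a genuine gap: the step you yourself call the ``main obstacle'' is not a routine interchange of limits that the exponential estimates settle, it is the heart of the matter. Absolute convergence of the sums over $\Gamma$ (Lemmas \ref{lemma:ekomega_analytic}, \ref{lemma:dual_ekomega}, \ref{lemma:individual_bound}) lets you write $\tr(\widetilde{\mathcal{L}}_{\Psi_X\circ F,g}^n)$ as a convergent sum of explicit integrals, but it does not identify the value of that sum with $\int_M g_n(x)\,\delta\bigl(x-(\Psi_X\circ F)^n(x)\bigr)\mathrm{d}x$: the kernel identity $\sum_{\omega,k}\tilde e_k^\omega(x)\overline{e_k^\omega(y)}+\mathcal{K}(x,y)=\delta(x-y)$ extracted from Proposition \ref{proposition:reconstruction} holds only distributionally (the partial sums do not converge pointwise where you need them), and pairing it with the pullback along $x\mapsto\bigl((\Psi_X\circ F)^n(x),x\bigr)$ requires exactly the transversality/wavefront control (or uniform estimates on the partial sums) that constitutes the Atiyah--Bott--Guillemin theorem; hyperbolicity of the periodic points must enter at this stage, not only in the final change of variables. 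Your proposed fallback does not repair this as stated: inserting a mollifier is not a perturbation of the form $\widetilde{\mathcal{L}}_{\Psi_{X(z)}\circ F,g_z}$, so item \ref{item:holomorphic_family} of Theorem \ref{theorem:anisotropic_space} gives no control on the limit, and passing to the limit in the trace would require trace-norm convergence or uniform flat-trace bounds, which would have to be proved separately. So either carry out the localization honestly (which amounts to re-proving \cite[Theorem 2.12 (iv)]{local_and_global}), or argue as the paper does, via Lidskii's theorem and Proposition \ref{proposition:determnation_spectrum}, and cite that result.
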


\begin{proof}
Since $\widetilde{\mathcal{L}}_{\Psi_X \circ F,w}^n$ is a trace class operator, its trace is the sum of its eigenvalues (counted with multiplicities), which by Proposition \ref{proposition:determnation_spectrum} is also the sum of the Ruelle resonances of $\mathcal{L}_{\Psi_X \circ F,w}^n$. Since $\Psi_X \circ F$ and $w$ are real-analytic, they are in particular Gevrey, so that it follows from \cite[Theorem 2.12 (iv)]{local_and_global} that the sum of the Ruelle resonances of $\mathcal{L}_{\Psi_X \circ F,w}^n$. coincides with the right-hand side of \eqref{eq:trace_formula}.
\end{proof}

We are now ready to prove Theorems \ref{theorem:upper_bound_resonances} and \ref{thm:upper_bound_determinant}.

\begin{proof}[Proof of Theorems \ref{theorem:upper_bound_resonances} and \ref{thm:upper_bound_determinant}]
It follows from Proposition \ref{proposition:trace_formula} that for every $z \in \mathbb{C}$ we have $d_{F,w}(z) = \det(I - z \widetilde{\mathcal{L}}_{F,g})$. The results then follow from Lemma \ref{lemma:order_exponential} since $\widetilde{\mathcal{L}}_{F,w}$ is of exponential class $1/d$: Theorem \ref{thm:upper_bound_determinant} follows from the upper bound on $\det(I - z \widetilde{\mathcal{L}}_{F,g})$ and Theorem \ref{theorem:upper_bound_resonances} from \eqref{eq:bound_eigenvalues}.
\end{proof}

\subsection{Proof of Theorem \ref{theorem:optimal_dense}}\label{subsection:optimal_dense}

We are going to prove a slightly more general statement than Theorem \ref{theorem:optimal_dense}. To do so, we will need the following definition.

\begin{definition}\label{definition:analytic_curve}
Let $I$ be an interval of $\mathbb{R}$ and $M$ a closed real-analytic manifold. We say that a function $c : I \to \Anos^\omega(M)$ is a real-analytic curve if the map $(t,x) \mapsto c(t)(x)$ is real-analytic. 

We define similarly real-analytic curves from $I$ to $C^\omega(M)$, and we say that a function from $I$ to $\Anos^\omega(M) \times C^\omega(M)$ is a real-analytic curve if its components are.
\end{definition}

Notice that this definition coincides with the one given in \cite{kriegl_michor_real_analytic} using the structure of real-analytic manifold on the space of real-analytic maps from $M$ to itself (according to \cite[Lemma 8.6]{kriegl_michor_real_analytic}). With this definition, we can state a generalization of Theorem \ref{theorem:optimal_dense}.

\begin{thm}\label{theorem:more_general}
Let $M$ be a closed real-analytic manifold of dimension $d$. Let $\mathcal{Q}$ be a subset of $\Anos^\omega(M) \times C^\omega(M)$ such that
\begin{enumerate}[label=(\roman*)]
\item $\mathcal{Q}$ is connected for the topology induced by the $C^1$ topology;
\item for every $x \in \mathcal{Q}$, there is a neighbourhood $U$ of $x$ in $\mathcal{Q}$ (for the $C^1$ topology) such that for every $y,z \in U$ there is a real-analytic curve that joins $y$ and $z$ in $\mathcal{Q}$;
\item there is $(F_0,w_0) \in \mathcal{Q}$ such that 
\begin{equation*}
\limsup_{r \to 0} \frac{\log N_{F_0,w_0}(r)}{\log |\log r|} = d.
\end{equation*}
Then, for every $(F,w) \in \mathcal{Q}$, there is a sequence $((F_n,w_n))_{n \geq 1}$ of elements of $\mathcal{Q}$ that converges to $(F,w)$ in the $C^\omega$ topology and such that
\begin{equation*}
\limsup_{r \to 0} \frac{\log N_{F_n,w_n}(r)}{\log |\log r|} = d
\end{equation*}
for every $n \in \mathbb{N}$.
\end{enumerate}
\end{thm}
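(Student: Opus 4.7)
The plan is to follow the Bandtlow--Naud strategy: reduce density of optimality in $\mathcal{Q}$ to an abstract statement about holomorphic families of entire functions of controlled logarithmic order, using Theorem \ref{theorem:anisotropic_space} to build such families from real-analytic curves in $\mathcal{Q}$. Fix $(F^*,g^*)\in\mathcal{Q}$ satisfying the optimality condition, a target $(F,g)\in\mathcal{Q}$, and an arbitrary $C^\omega$-open neighborhood $U$ of $(F,g)$ in $\mathcal{Q}$. By $C^1$-connectedness of $\mathcal{Q}$ together with the local real-analytic path-connectivity assumption, I construct a finite chain of real-analytic curves $c_1,\dots,c_N:[0,1]\to\mathcal{Q}$ with $c_1(0)=(F^*,g^*)$, $c_N(1)=(F,g)$, $c_i(1)=c_{i+1}(0)$ for $i<N$, and (by shortening the last one) $c_N([0,1])\subseteq U$.

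Each curve $c_i$ extends, by Definition \ref{definition:analytic_curve}, to a holomorphic family $\tilde c_i=(\tilde F^i_z,\tilde g^i_z)$ defined on a complex neighborhood $D_i$ of $[0,1]$. Cover $[0,1]$ by short subintervals $[a_j,b_j]$ with real base points $t_j\in[a_j,b_j]$; shrinking $D_i$ and the intervals as needed, write $\tilde F^i_z=\Psi_{X_j(z)}\circ F^i_{t_j}$ with $X_j(z)\in B_{\epsilon,\delta}$ and $\tilde g^i_z\in\mathcal{O}_\epsilon$, holomorphic in $z$. Apply Theorem \ref{theorem:anisotropic_space} to $F^i_{t_j}$ to get a Hilbert space $\mathcal{H}_j$ on which $z\mapsto\widetilde{\mathcal{L}}_{\Psi_{X_j(z)}\circ F^i_{t_j},\tilde g^i_z}$ is a holomorphic family of trace-class operators. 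The dynamical determinant $(z,w)\mapsto d_{\tilde c_i(z)}(w)$ is then jointly holomorphic on $D_i\times\mathbb{C}$, entire in $w$, and satisfies $|d_{\tilde c_i(z)}(w)|\leq C\exp\bigl(C(\log(1+|w|))^{d+1}\bigr)$ uniformly in $z$ by the proof of Theorem \ref{thm:upper_bound_determinant}. For real $z$ its zeros are the inverses of the Ruelle resonances of $c_i(z)$, independently of the chart $j$, by Proposition \ref{proposition:determnation_spectrum}.

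The core input is then the following abstract lemma, due to Christiansen and adapted in Bandtlow--Naud: if $(f_z)_{z\in D}$ is a holomorphic family of entire functions on a connected complex neighborhood $D$ of $[0,1]$, satisfying a uniform bound $|f_z(w)|\leq C\exp(C(\log(1+|w|))^{d+1})$, and if at some $z_0\in D\cap\mathbb{R}$ the counting function of inverse zeros of $f_{z_0}$ achieves the optimality condition \eqref{eq:optimality}, then the same equality holds on a dense subset of $D\cap[0,1]$. The proof proceeds by contradiction: failure of optimality on a subinterval would yield a strict improvement on the logarithmic order of $f_z$ there, and plurisubharmonicity of $z\mapsto\log|f_z(w)|$ combined with Jensen's and Cauchy's formulas applied uniformly in $z\in D$ would propagate this improvement throughout $D$, contradicting maximality at $z_0$. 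Apply the lemma to $c_1$ (with $z_0=0$) to get density of optimal parameters in $c_1([0,1])$; pick one arbitrarily close to $c_1(1)=c_2(0)$, use local real-analytic path-connectivity to replace $c_2$ by a nearby real-analytic curve starting at this optimal pair and ending at $c_2(1)=c_3(0)$, apply the lemma to this new curve, and iterate through $c_2,\dots,c_N$. After $N$ iterations, optimal pairs are dense in a neighborhood of $(F,g)$ in $\mathcal{Q}$, hence meet $U$.

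I expect the main obstacle to be the geometric uniformization in the second paragraph, namely realizing each local piece of the complex extension $\tilde c_i$ in the form $\Psi_{X(z)}\circ F^i_{t_j}$ with $X(z)$ in a ball $B_{\epsilon,\delta}$ suited for Theorem \ref{theorem:anisotropic_space}; this calls for uniform bounds on the size of the holomorphic extensions of the curves $c_i$ and careful patching as the base point $F^i_{t_j}$ varies along the curve. The abstract complex-analytic lemma itself is a direct adaptation of the key estimate in Bandtlow--Naud, and the induction through the chain is straightforward once the local real-analytic path-connectivity assumption is used at each step to realign the starting point to a freshly found optimal pair.
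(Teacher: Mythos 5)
Your proposal follows the same Bandtlow--Naud/Christiansen strategy as the paper, and both rest on the same two ingredients: a holomorphic family of Fredholm determinants $D(t,z)$ along a real-analytic curve, built from Theorem \ref{theorem:anisotropic_space} (this is the paper's Lemma \ref{lemma:family_determinant}), and a potential-theoretic fact that once optimality holds at one real parameter of the curve it holds on a dense set of real parameters (the paper's Lemma \ref{lemma:generic_curve}). Where you differ is in the globalization step. You build an explicit finite chain of real-analytic curves from the known optimal pair to the target and propagate density along the chain; the paper instead defines $\mathcal{C}$ (optimal pairs) and $\mathcal{B}$ ($C^\omega$-limits of optimal pairs), shows that the $C^1$-closure of $\mathcal{C}$ is open using hypothesis (ii) and Lemma \ref{lemma:generic_curve} once, and concludes by connectedness. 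The paper's version is a bit cleaner, and in particular sidesteps a delicate point in your iteration: to ``replace $c_2$ by a nearby real-analytic curve starting at the optimal pair $p$ and ending at $c_2(1)$'' via hypothesis (ii), you need $p$ and $c_2(1)$ to lie in a common local path-connectivity neighbourhood, which forces the chain to consist of short curves. This is achievable (it comes out of the standard clopen argument giving the chain), but you should say so explicitly, since otherwise the replacement step is unjustified.

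One further caution: your sketch of the ``abstract lemma'' --- propagation by contradiction via plurisubharmonicity, Jensen and Cauchy --- does not describe the actual mechanism. The paper's Lemma \ref{lemma:generic_curve} relies on the Lelong--Gruman results: a plurisubharmonic function with controlled order in the second variable gives rise to a sequence of subharmonic functions in the parameter that compute $\frac{1}{d+1} - \frac{1}{\rho(t)}$ in the $\limsup$, and the set where a bounded-above sequence of subharmonic functions has $\limsup$ strictly below its supremum is polar, hence of Hausdorff dimension zero. This is a structural fact, not a contradiction/propagation argument, and it is what underlies both density and the sharper Hausdorff-dimension-zero conclusion that the paper records. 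Since you explicitly defer to Christiansen and Bandtlow--Naud for this lemma, the appeal is legitimate; but the informal re-derivation you gesture at would not survive being written out, so cite the polar-set result precisely instead of reproving it.
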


Theorem \ref{theorem:optimal_dense} is deduced from Theorem \ref{theorem:more_general} by taking $\mathcal{Q} = W \times \set{1}$. The following lemma ensures that the hypotheses of Theorem \ref{theorem:more_general} hold in that case.

\begin{lemma}
Let $F \in \Anos^\omega(M)$. Then there is a neighbourhood $U$ of $F$ in $\Anos^\omega(M)$ for the $C^1$ topology such that if $G,H \in U$ then there is a real-analytic curve that joins $G$ and $H$.
\end{lemma}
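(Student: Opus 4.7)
The plan is to transport the affine structure of $\mathcal{V}^\omega$ to $\Anos^\omega(M)$ via the chart $H_F : X \mapsto \Psi_X \circ F$, and to join two nearby Anosov diffeomorphisms by the $H_F$-image of a straight line segment in $\mathcal{V}^\omega$.

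First I would choose $U$ as follows. Since $\Anos^\omega(M)$ is $C^1$-open \cite{original_anosov} and $H_F$ is a bijection from $\mathfrak{U}$ onto $\mathcal{U}_F$, the preimage $H_F^{-1}(\Anos^\omega(M) \cap \mathcal{U}_F)$ is a $C^1$-open neighborhood of $0$ in $\mathcal{V}^\omega$. I pick a $C^1$-ball $B$ of radius $r$ around $0$ contained in this preimage; since $B$ is convex, the set $U \coloneqq H_F(\tfrac{1}{2} B)$ is a $C^1$-open neighborhood of $F$ contained in $\Anos^\omega(M)$.

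For $G, H \in U$, set $X_G = H_F^{-1}(G)$ and $X_H = H_F^{-1}(H)$, and define the candidate curve
\begin{equation*}
c(t) \coloneqq \Psi_{(1-t)X_G + tX_H} \circ F, \qquad t \in \mathbb{R}.
\end{equation*}
By convexity of $B$, for every $t \in [0,1]$ we have $(1-t)X_G + tX_H \in B$, hence $c(t) \in \Anos^\omega(M)$, with $c(0) = G$ and $c(1) = H$. To verify that $c$ is real-analytic in the sense of Definition \ref{definition:analytic_curve}, I would choose a common Grauert tube $(M)_\epsilon$ to which both $X_G$ and $X_H$ extend holomorphically (the intersection of their individual tubes of definition works). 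Then $(t,x) \mapsto (1-t) X_G(x) + t X_H(x)$ is jointly holomorphic on $\mathbb{C} \times (M)_\epsilon$, and composing with the real-analytic maps $F$ and $\exp$ shows that $(t,x) \mapsto c(t)(x) = \exp_{F(x)}\bigl((1-t) X_G(F(x)) + t X_H(F(x))\bigr)$ extends to a holomorphic function on a complex neighborhood of $\mathbb{R} \times M$; one harmless check, resolved by shrinking $r$ if needed, is that the argument of $\exp$ stays in its domain $\mathbb{U}$.

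The argument is short and no step is genuinely difficult. The only subtlety worth flagging is the interaction between the two different flavors of smallness at play: the convexity reasoning in Steps 1--2 is naturally phrased in terms of $C^1$-norms, whereas real-analyticity in Step 3 requires a \emph{common} Grauert tube for $X_G$ and $X_H$. These two issues are decoupled here because the existence of a common tube is automatic once $G$ and $H$ are each real-analytic, independently of the quantitative $C^1$ control.
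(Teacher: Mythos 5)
Your proposal is correct and follows essentially the same route as the paper: pull $G$ and $H$ back to a convex $C^1$-ball in $\mathcal{V}^\omega$ via $H_F$, take the straight-line segment there, and push it forward by $X \mapsto \Psi_X \circ F$; the real-analyticity of $(t,x)\mapsto c(t)(x)$ follows from the real-analyticity of the Riemannian exponential and a common Grauert tube for $X_G, X_H$. The only cosmetic differences are the superfluous factor $\tfrac{1}{2}$ (convexity of $B$ already keeps the segment inside $B$) and the fact that you spell out the common-tube check that the paper leaves implicit.
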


\begin{proof}
Recall that there is a $C^1$ neighbourhood $V$ of $0$ in $\mathcal{V}^\omega$ such that if $X \in V$ then $\Psi_X$ is a diffeomorphism of $M$. Moreover, we may assume that $V$ is convex (since the $C^1$ topology is normable). Notice that
\begin{equation*}
U = \set{ \Psi_X \circ F : X \in V \textup{ and } \Psi_X \in \Diff^\omega(M)}
\end{equation*}
is a neighbourhood of $F$ for the $C^1$ topology. By taking $V$ small enough, we may ensure that $U$ is contained in $\Anos^\omega(M)$ \cite{original_anosov}. Now, if $G$ and $H$ are in $U$, then there are $X_0,X_1 \in V$ such that $G = \Psi_{X_0} \circ F$ and $H = \Psi_{X_1} \circ F$. If we set $c(t) = \Psi_{X_t} \circ F$ where $X_t = (1-t)X_0 + t X_1$ for $t \in [0,1]$, then $c: [0,1] \to \Anos^\omega(M)$ is a real-analytic curve (since the exponential map associated to a real-analytic metric is real-analytic). Notice here that the fact that $c(t) \in \Anos^\omega(M)$ for every $t \in [0,1]$ is ensured by the fact that $V$ is convex.
\end{proof}

The proof of Theorem \ref{theorem:more_general} is based on the strategy from \cite{bandtlow_naud,christiansen_several,christiansen_euclidean,christiansen_potential,christiansen_schrodinger,christiansen_hyperbolic}. This method is based on potential theoretic tools. Let us start by recalling the following definition.

\begin{definition}
If $f$ is an entire function, we define the order of growth of $f$ as
\begin{equation*}
\limsup_{r \to + \infty} \sup_{\va{z} = r} \frac{\log(\max(1, \log \va{f(z)}))}{\log r}.
\end{equation*}
\end{definition}

Let us now relate the notion of order of growth with the number of resonances for Koopman operators.

\begin{lemma}\label{lemma:optimal_order}
Let $(F,w) \in \Anos^\omega(M) \times C^\omega(M)$. Then the order of growth of $z \mapsto d_{F,w}(e^z)$ is less than or equal to $d+1$ with equality if and only if
\begin{equation}\label{eq:optimal}
\limsup_{r \to 0} \frac{\log N_{F,w}(r)}{\log |\log r|} = d.
\end{equation}
\end{lemma}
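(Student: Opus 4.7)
The upper bound on the order is immediate from Theorem \ref{thm:upper_bound_determinant}: substituting $z = e^w$ in the inequality $|d_{F,g}(z)| \leq C\exp(C(\log(1+|z|))^{d+1})$ yields $|f(w)| \leq C\exp(C'(1+|w|)^{d+1})$ for large $|w|$, where I write $f(w) := d_{F,g}(e^w)$. Denote by $\alpha$ the $\limsup$ appearing in \eqref{eq:optimal}; by Theorem \ref{theorem:upper_bound_resonances} we always have $\alpha \leq d$.

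For the forward implication, I would exploit the explicit zero structure. By Theorem \ref{theorem:determinant} the zeros of $d_{F,g}$ are the inverses $1/\lambda$ of Ruelle resonances, so the zero set of $f$ is $\{-\log\lambda + 2\pi i k : \lambda \textup{ a resonance}, k \in \mathbb{Z}\}$. A geometric count shows that each resonance $\lambda$ with $|\lambda| \geq e^{-R/2}$ contributes at least $cR$ zeros of $f$ in the disk of radius $R$, so $n_f(R) \gtrsim R \cdot N_{F,g}(e^{-R/2})$. If $\alpha = d$, picking a sequence $R_n \to \infty$ along which $N_{F,g}(e^{-R_n/2}) \geq R_n^{d - o(1)}$ forces $n_f(R_n) \gtrsim R_n^{d+1 - o(1)}$. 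Jensen's formula (applied at a point where $f$ does not vanish, which exists because $f \not\equiv 0$) then gives $\log M_f(2R) \geq c\, n_f(R) - O(1)$, whence the order of $f$ is at least $d+1$; combined with the upper bound, it equals $d+1$.

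For the reverse implication I argue by contraposition: assume $\alpha < d$ and fix $\alpha' \in (\alpha, d)$, so that $N_{F,g}(r) \leq C(1 + |\log r|^{\alpha'})$ for every $r > 0$. The essential ingredient is the Fredholm product representation $d_{F,g}(z) = \prod_\lambda (1 - z\lambda)$, valid since $\widetilde{\mathcal{L}}_{F,g}$ from Theorem \ref{theorem:anisotropic_space} is trace class; this yields the pointwise bound $\log|d_{F,g}(z)| \leq \sum_\lambda \log(1 + |z||\lambda|)$. Splitting the sum at $|\lambda| = 1/|z|$ and applying layer-cake integration against the bound on $N_{F,g}$, the small-$|\lambda|$ piece contributes $O((\log|z|)^{\alpha'})$ while the large-$|\lambda|$ piece contributes $O((\log|z|)^{\alpha'+1})$, whence $\log|d_{F,g}(z)| \leq C(\log(1+|z|))^{\alpha'+1}$ and $\log|f(w)| \leq C|w|^{\alpha'+1}$. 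This forces the order of $f$ to be at most $\alpha' + 1 < d+1$, so order $= d+1$ implies $\alpha = d$.

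The main difficulty is this reverse direction: Hadamard factorization alone does not rule out an exponential polynomial factor that could raise the order of $f$ above the convergence exponent of its zeros. The Fredholm determinant structure is what resolves this, exhibiting $d_{F,g}$ as a canonical product of genus zero with no additional exponential factor, so that the growth of $d_{F,g}$ (and hence of $f$) is controlled directly by the density of Ruelle resonances.
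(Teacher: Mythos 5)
Your proof is correct and follows essentially the same route as the paper: the upper bound from Theorem~\ref{thm:upper_bound_determinant}, Jensen's formula for the direction ``$\alpha=d \Rightarrow$ order $=d+1$'' (the paper phrases this as the contrapositive), and a split of the genus-zero product $\prod_n(1-z\lambda_n)$ at $|\lambda|=1/|z|$ for the converse. The only cosmetic difference is how you justify the product formula: you invoke the trace-class Fredholm determinant from Theorem~\ref{theorem:anisotropic_space}, whereas the paper observes that the growth bound of Theorem~\ref{thm:upper_bound_determinant} already forces $d_{F,g}$ to have order zero, hence genus zero; both are valid and in fact the latter is a consequence of the former, so this is not a substantive divergence (your closing remark slightly overstates the issue, since once $d_{F,g}$ is known to have order zero Hadamard factorization alone rules out any exponential factor).
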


\begin{proof}
That the order of growth of $z \mapsto d_{F,w}(e^z)$ is less than $d+1$ follows from Theorem \ref{thm:upper_bound_determinant}. It follows from Jensen's formula (as in the proof of Lemma \ref{lemma:order_exponential}) that if the order of growth of $z \mapsto d_{F,w}(e^z)$ is strictly less than $d+1$ then \eqref{eq:optimal} does not hold. Reciprocally, assume that \eqref{eq:optimal} does not hold and choose a real number $\tau$ such that
\begin{equation*}
\limsup_{r \to 0} \frac{\log N_{F,w}(r)}{\log |\log r|} < \tau < d.
\end{equation*}
Notice then that $N_{F,w}(r) \leq |\log r|^\tau$ for $r$ large enough. Let then $(\lambda_n)_{n \in \mathbb{N}}$ be the sequence of Ruelle resonances of $\mathcal{L}_{F,w}$ (ordered so that the modulus is decreasing). It follows from Theorem \ref{thm:upper_bound_determinant} that $d_{F,w}$ has genus zero, and thus for $z \in \mathbb{C}$ we have
\begin{equation*}
d_{F,w}(z) = \prod_{n = 0}^{+ \infty} (1 - z \lambda_n).
\end{equation*}
Thus, we have for $z \in \mathbb{C}$ large and $r$ small enough:
\begin{equation*}
\begin{split}
\log |d_{F,w}(z)| & \leq C \sum_{n = 0}^{+ \infty} \log(1 + |z| |\lambda_n|) \\
    & \leq C N_{F,w}(r) \log(1 + |z|) + |z| \sum_{n \geq N_{F,w}(r)}|\lambda_n| \\
    & \leq C  |\log r|^\tau \log(1 + |z|) + |z| \int_{0}^r N_{F,w}(t) \mathrm{d}t \\
    &  \leq C  |\log r|^\tau \log(1 + |z|) + |z| \int_{0}^r |\log t|^\tau \mathrm{d}t,
\end{split}
\end{equation*}
where the constant $C$ may change from one line to another. Here, we used Fubini's theorem to get
\begin{equation*}
\begin{split}
\sum_{n \geq N_{F,w}(r)}|\lambda_n| & = \sum_{n \geq N_{F,w}(r)} \int_0^r 1_{[0,|\lambda_n|]}(t) \mathrm{d}t = \int_0^r \sum_{n \geq N_{F,w}(r)} 1_{[0,|\lambda_n|]}(t) \mathrm{d}t \\
    & \leq \int_0^r \sum_{n \geq 0} 1_{[0,|\lambda_n|]}(t) \mathrm{d}t = \int_0^r N_{F,w}(t) \mathrm{d}t.
\end{split}
\end{equation*}
Noticing that
\begin{equation*}
\int_{0}^r |\log t|^\tau \mathrm{d}t = \int_{|\log r|}^{+ \infty} e^{-x}x^\tau \mathrm{d}x \underset{r \to 0}{\sim} r|\log r|^\tau
\end{equation*}
and taking $r = |z|^{-1}$, we find that for large $z \in \mathbb{C}$ we have
\begin{equation*}
\log |d_{F,w}(z)| \leq C \log(1 + |z|)^{1+\tau},
\end{equation*}
for some new constant $C$. It follows that the order of growth of the function $z \mapsto d_{F,w}(e^z)$ is of order at most $\tau + 1 < d+1$.
\end{proof}

The following technical lemma will be fundamental in order to understand how the asymptotic of the number of resonances varies along a real-analytic curve in $\Anos^\omega(M) \times C^\omega(M)$

\begin{lemma}\label{lemma:family_determinant}
Let $I$ be an interval in $\mathbb{R}$ and $c : I \to \Anos^\omega(M) \times C^\omega(M)$ be a real-analytic curve. There is a complex neighbourhood $\mathcal{I}$ of $I$ and a holomorphic function $D : \mathcal{I} \times \mathbb{C} \to \mathbb{C}$ such that the following holds:
\begin{enumerate}[label=(\roman*)]
\item for every $t \in I$ and every $z \in \mathbb{C}$, we have $D(t,z) = d_{c(t)}(e^z)$;
\item for every $t \in \mathcal{I}$, the order of growth of the entire function $z \mapsto D(t,z)$ is less than $d+1$.
\end{enumerate}
\end{lemma}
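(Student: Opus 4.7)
The plan is to localize the construction of Theorem~\ref{theorem:anisotropic_space} around each $t_0 \in I$ to obtain a local holomorphic family of determinants on a complex neighbourhood of $t_0$, and then glue these families along $I$ via the identity principle.

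For $t_0 \in I$, since $c = (F, g)$ is real-analytic, the map $t \mapsto F_t \circ F_{t_0}^{-1}$ extends to a holomorphic family of near-identity holomorphic maps on a complex neighbourhood $V_{t_0}$ of $t_0$. Inverting the (holomorphically extendable) exponential map of our real-analytic metric then yields a holomorphic family $z \mapsto X(z) \in \mathcal{V}_\epsilon$ with $X(t_0) = 0$ and $F_z = \Psi_{X(z)} \circ F_{t_0}$ on $V_{t_0}$; shrinking $V_{t_0}$ allows us to impose $X(z) \in B_{\epsilon,\delta}$ for any prescribed $\delta > 0$, and to obtain simultaneously a holomorphic extension $z \mapsto g_z \in \mathcal{O}_\epsilon$. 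For real $t \in V_{t_0} \cap I$ we have in addition $X(t) \in B_{\epsilon,\delta}^{\mathbb{R}}$.

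I would then apply Theorem~\ref{theorem:anisotropic_space} to $F_{t_0}$ and this $\epsilon$ (with $\delta$ chosen small enough in the previous step) to obtain a Hilbert space $\mathcal{H}_{t_0}$ on which, by parts (ii)--(iii), $z \mapsto \widetilde{\mathcal{L}}_{\Psi_{X(z)} \circ F_{t_0}, g_z}$ is a holomorphic family of trace class operators, each of exponential class $1/d$. Set
\[
D_{t_0}(z, w) := \det\bigl(I - e^w \widetilde{\mathcal{L}}_{\Psi_{X(z)} \circ F_{t_0}, g_z}\bigr),
\]
which is holomorphic on $V_{t_0} \times \mathbb{C}$. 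For real $t \in V_{t_0} \cap I$, Proposition~\ref{proposition:trace_formula} combined with the Fredholm expansion $\log \det(I - \zeta L) = -\sum_{n \geq 1} \zeta^n \tr(L^n)/n$ gives $D_{t_0}(t, w) = d_{c(t)}(e^w)$, exactly as in the proof of Theorem~\ref{thm:upper_bound_determinant}. Furthermore, for each fixed $t \in V_{t_0}$, Lemma~\ref{lemma:order_exponential} applied to the exponential class $1/d$ operator $\widetilde{\mathcal{L}}_{\Psi_{X(t)} \circ F_{t_0}, g_t}$, together with $\log(1 + |e^w|) \leq 1 + |w|$, bounds $w \mapsto D_{t_0}(t, w)$ by $C \exp(C(1+|w|)^{d+1})$, giving order of growth at most $d+1$ and establishing (ii) locally.

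To globalize, I would choose the $V_{t_0}$'s so that $\mathcal{I} := \bigcup_{t_0 \in I} V_{t_0}$ is a simply connected tubular neighbourhood of $I$ and every nonempty intersection $V_{t_0} \cap V_{t_1}$ is connected and meets $I$. On such an overlap, both $D_{t_0}$ and $D_{t_1}$ equal $(t, w) \mapsto d_{c(t)}(e^w)$ on the totally real set $V_{t_0} \cap V_{t_1} \cap I$, hence they agree on the whole overlap by the identity principle for holomorphic functions of several variables. They therefore glue into a single holomorphic $D : \mathcal{I} \times \mathbb{C} \to \mathbb{C}$ satisfying both (i) and (ii). The main technical point is that the local Hilbert spaces $\mathcal{H}_{t_0}$ depend on $t_0$, so the patching must be carried out at the scalar determinant level rather than at the operator level; the fact that $I$ is totally real in $\mathbb{C}$ is precisely what makes the identity principle applicable here.
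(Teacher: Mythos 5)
Your proposal is correct and follows essentially the same route as the paper: localize around each $t_0 \in I$ by writing $F_t = \Psi_{X_t} \circ F_{t_0}$ (the paper gets $X_t$ via the real-analytic implicit function theorem, you invert the exponential applied to $F_t \circ F_{t_0}^{-1}$, which is the same computation), apply Theorem~\ref{theorem:anisotropic_space} to $F_{t_0}$, set $D(t,z) = \det(I - e^z \widetilde{\mathcal{L}}_{\Psi_{X_t}\circ F_{t_0}, g_t})$, and invoke Proposition~\ref{proposition:trace_formula} and Lemma~\ref{lemma:order_exponential}. The only cosmetic difference is that you spell out the gluing via the identity principle on a tubular neighbourhood of $I$, whereas the paper compresses this into the phrase ``thanks to the unique continuation principle, we only need to construct $D$ locally in $t$.''
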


\begin{proof}
Let us write $c(t) = (F_t,w_t)$ for $t \in I$. Thanks to the unique continuation principle, we only need to construct $D$ locally in $t$. Let $t_0 \in I$. From the real-analytic implicit function theorem, we know that there is a real-analytic function $(t,x) \mapsto X_t(x)$, from a neighbourhood of $\set{t_0} \times M$ in $\mathbb{R} \times M$ to $TM$, such that for every $t \in I$ near $t_0$ and $x \in M$, we have $X_t(F_{t_0}(x)) \in T_{F_{t_0}(x)} M$ and
\begin{equation*}
\exp_{F_{t_0}(x)}\p{X_t(F_{t_0}(x))} = F_t(x).
\end{equation*}
Notice that when $t$ is fixed, $X_t$ defines a real-analytic vector field on $M$. Using that the map $(t,x) \mapsto X_t(x)$ has a holomorphic extension to a complex neighbourhood of $\set{t_0} \times M$, we find that there is $\epsilon \in (0,\epsilon_0)$ such that $t \mapsto X_t$ is a real-analytic curve taking values in the Banach space $\mathcal{V}_\epsilon$. Similarly, up to making $\epsilon$ smaller, we find that $t \mapsto w_t$ is a real-analytic curve from $I$ to $\mathcal{O}_\epsilon$.

Apply then Theorem \ref{theorem:anisotropic_space}, for that value of $\epsilon$ and with $F = F_{t_0}$, to get $\delta > 0$ and $\mathcal{H}$ satisfying the conclusion of the theorem. Since $X_{t_0} = 0$, we find that for $t$ in a complex neighbourhood $U$ of $t_0$, we have $X_t \in B_{\epsilon,\delta}$ and $F_t = \Psi_{X_t} \circ F_{t_0}$. Up to making $U$ smaller, we can assume that the map $t \mapsto w_t$, originally defined from $I$ to $\mathcal{O}_\epsilon$, extends holomorphically as a map from $U$ to $\mathcal{O}_\epsilon$. Using the notation from Theorem \ref{theorem:anisotropic_space}, we define for $t \in U$ and $z \in \mathbb{C}$:
\begin{equation*}
D(t,z) = \det\p{ I - e^z \widetilde{\mathcal{L}}_{\Psi_{X_t} \circ F_{t_0},w_t}}.
\end{equation*}
If $t$ is real, it follows from Proposition \ref{proposition:trace_formula}, that $D(t,z) = d_{F_t,w_t}(e^z)$. It follows from (iii) in Theorem \ref{theorem:anisotropic_space} that $D$ is holomorphic. Finally, we get from point (ii) in Theorem \ref{theorem:anisotropic_space} and Lemma \ref{lemma:order_exponential} that for every $t \in U$ the order of growth of the entire function $z \mapsto D(t,z)$ is less than $d+1$.
\end{proof}

Before understanding how the number of resonances varies on $\mathcal{Q}$, we start by studying the particular case of a real-analytic curve. This is where potential theoretic tools are used.

\begin{lemma}\label{lemma:generic_curve}
Let $I$ be an interval in $\mathbb{R}$ and $c : I \to \Anos^\omega(M) \times C^\omega(M)$ be a real-analytic curve. Assume that there is a $t \in I$ such that 
\begin{equation}\label{eq:optimal_c}
\limsup_{r \to 0} \frac{\log N_{c(t)}(r)}{\log |\log r|} = d.
\end{equation}
Then the set of $t \in I$ such that \eqref{eq:optimal_c} does not hold has Hausdorff dimension zero.
\end{lemma}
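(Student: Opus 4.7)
The approach is via potential theory, adapting the strategy of Bandtlow--Naud. Apply Lemma \ref{lemma:family_determinant} to obtain a complex neighbourhood $\mathcal{I} \subset \mathbb{C}$ of $I$ and a holomorphic function $D : \mathcal{I} \times \mathbb{C} \to \mathbb{C}$ such that $D(t,z) = d_{c(t)}(e^z)$ for every $t \in I$ and each section $z \mapsto D(t,z)$ is entire of order at most $d+1$. Inspecting the construction (via Theorem \ref{theorem:anisotropic_space} and Lemma \ref{lemma:order_exponential}), one checks that this yields a locally uniform bound $\va{D(t,z)} \leq C_K \exp\p{C_K (1+\va{z})^{d+1}}$ for $t$ in any compact $K \subset \mathcal{I}$. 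By Lemma \ref{lemma:optimal_order}, \eqref{eq:optimal_c} fails at $c(t)$ if and only if $\rho(t) < d+1$, where $\rho(t)$ denotes the order of $z \mapsto D(t,z)$. After restricting to the connected component of $\mathcal{I}$ containing a distinguished $t_0 \in I$ with $\rho(t_0) = d+1$, it suffices to show that $S := \set{t \in \mathcal{I} : \rho(t) < d+1}$ is polar in $\mathcal{I}$, since polar subsets of $\mathbb{C}$ have Hausdorff dimension zero.

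Expand $D(t, z) = \sum_{n \geq 0} a_n(t) z^n$, where each $a_n$ is holomorphic on $\mathcal{I}$ by Cauchy's integral formula. The classical coefficient formula $\rho(t) = \limsup_{n \to \infty} n \log n / (-\log \va{a_n(t)})$ yields
\[
S = \bigcup_{\rho' \in \mathbb{Q} \cap (0,d+1)} \bigcup_{N \geq 1} E(\rho', N), \qquad E(\rho', N) := \set{t \in \mathcal{I} : \va{a_n(t)} \leq n^{-n/\rho'} \textup{ for all } n \geq N},
\]
so it is enough to prove each closed set $E = E(\rho', N)$ is polar. Introduce the subharmonic function
\[
v_n(t) := \frac{1}{n} \log \va{a_n(t)} + \frac{1}{d+1} \log n.
\]
Optimizing Cauchy's estimate with a radius of order $n^{1/(d+1)}$ together with the uniform bound on $D$ gives $v_n(t) \leq A_K$ uniformly in $n$ on every compact $K \subset \mathcal{I}$; on $E$, the defining inequality directly yields $v_n(t) \leq -c \log n$ with $c := 1/\rho' - 1/(d+1) > 0$.

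Assume for contradiction that $E$ is non-polar, and fix a compact disk $K \subset \mathcal{I}$ containing $t_0$ in its interior and with $E \cap K$ non-polar. The two-constants theorem for subharmonic functions, applied to $v_n$ with upper bound $A_K$ on $K$ and $-c \log n$ on $E \cap K$, produces for every $t \in K$ and every large $n$
\[
v_n(t) \leq A_K \p{1 - \lambda(t)} - c \lambda(t) \log n, \qquad \lambda(t) := -\omega^*_{E \cap K}(t ; \mathcal{I}) \in [0,1],
\]
where $\omega^*_{E \cap K}(\cdot\,; \mathcal{I})$ is the relative extremal function of $E \cap K$ in $\mathcal{I}$. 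Since $\omega^*_{E \cap K}$ is subharmonic, bounded above by $0$, and equal to $-1$ quasi-everywhere on the non-polar set $E \cap K$, the strong maximum principle on the connected domain $\mathcal{I}$ prevents it from attaining its supremum $0$ at an interior point; in particular $\lambda(t_0) > 0$. Unwinding the estimate at $t_0$ produces $-\log \va{a_n(t_0)} \geq \p{1/(d+1) + \eta} n \log n$ for some $\eta > 0$ and all large $n$, hence $\rho(t_0) < d+1$, contradicting the hypothesis $\rho(t_0) = d+1$. Therefore each $E(\rho',N)$ is polar, $S$ is polar, and the conclusion follows.

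The main technical hurdle is the precise execution of the two-constants step together with the strict positivity $\lambda(t_0) > 0$. Both rely on the non-polarity of $E \cap K$ for a suitable compact $K$ containing $t_0$ in its interior (which can be arranged from the local character of polarity and the non-polarity assumption on $E$) and on the connectedness of $\mathcal{I}$ (which can be assumed after restriction). The other steps are either recorded (Lemmas \ref{lemma:family_determinant} and \ref{lemma:optimal_order}) or reduce to standard facts about subharmonic functions and the order-coefficient formula for entire functions.
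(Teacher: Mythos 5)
Your argument is correct and reaches the same conclusion by a genuinely different route. Both proofs start from Lemma \ref{lemma:family_determinant} and reduce, via Lemma \ref{lemma:optimal_order}, to showing that the set $\{t \in \mathcal{I} : \rho(t) < d+1\}$ is polar (hence of Hausdorff dimension zero), where $\rho(t)$ is the order of the entire function $z \mapsto D(t,z)$. The paper handles this step in two lines by citing \cite[Propositions 1.39 and 1.40]{book_lelong_gruman}, which package the fact that for a holomorphically parametrised family of entire functions of uniformly bounded order, the order can only drop on a pluripolar set; the cited propositions run an upper-envelope argument on a sequence of subharmonic functions manufactured from the plurisubharmonic function $\max(1,\log|D(t,z)|)$. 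Your proof rederives this mechanism from scratch in the present application: you expand $D(t,z)=\sum_n a_n(t) z^n$ with holomorphic Taylor coefficients, translate the order into coefficient decay, form the subharmonic functions $v_n(t)=n^{-1}\log|a_n(t)|+(d+1)^{-1}\log n$ (bounded above on compacts by a Cauchy estimate, and $\le -c\log n$ on each candidate exceptional set $E(\rho',N)$), and run a two-constants estimate with the relative extremal function of $E(\rho',N)\cap K$, using the strong maximum principle to force $\omega^*_{E\cap K}(t_0)<0$ whenever $E\cap K$ is non-polar, contradicting $\rho(t_0)=d+1$. This is more self-contained and makes the potential-theoretic mechanism explicit, at the cost of length; your $F_\sigma$ decomposition also supplies the Borel measurability that the paper invokes separately. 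The one step you assert without full justification is the locally uniform bound $|D(t,z)|\le C_K\exp(C_K(1+|z|)^{d+1})$ for $t$ in a compact $K\subset\mathcal{I}$; it does hold, because in the construction of $D$ the exponential-class constants coming from Theorem \ref{theorem:anisotropic_space} depend only on $\|g_t\|_{\mathcal{O}_\epsilon}$ and on the fixed $\epsilon,\delta,\gamma$, and these are locally bounded along the holomorphic families $t\mapsto X_t,\,g_t$, but this deserves an explicit sentence (note the paper's appeal to Lelong--Gruman likewise requires restricting to a relatively compact $U\subset\mathcal{I}$, for essentially the same reason).
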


\begin{proof}
Let $D$ be the holomorphic function from Lemma \ref{lemma:family_determinant}. We may assume that $\mathcal{I}$ is connected. We follow then the lines of the argument in \cite[p.309]{bandtlow_naud}. For $t \in \mathcal{I}$, we let $\rho(t)$ denotes the order of growth of the entire function $z \mapsto D(t,z)$. Let $U$ be a connected, relatively compact subset of $\mathcal{I}$ that contains $I$. Then, according to \cite[Proposition 1.40]{book_lelong_gruman} applied to the plurisubharmonic function $(t,z) \mapsto \max(1,\log |D(t,z)|)$, there is a sequence $(\psi_n)_{n \in \mathbb{N}}$ of subharmonic functions bounded above on $U$ such that for every $t \in U$ we have
\begin{equation*}
\limsup_{k \to + \infty} \psi_k(t)= \frac{1}{d+1} - \frac{1}{\rho(t)} \leq 0.
\end{equation*}
By assumption and Lemma \ref{lemma:optimal_order}, there is a $t \in I$ such that $\rho(t) = d+1$ and for such a $t$,
\begin{equation*}
\limsup_{k \to + \infty} \psi_k(t)= 0.
\end{equation*}
Hence, according to \cite[Proposition 1.39]{book_lelong_gruman}, the set of $t \in U$ such that $\rho(t) < d+1$ is polar. It is also a Borel set (since $\rho$ is measurable), and thus has Hausdorff dimension $0$ (see for instance \cite[Theorems 5.10 and 5.13]{hayman_kennedy_book}), and so does the intersection with $I$. However, according to Lemma \ref{lemma:optimal_order}, a point $t \in I$ is such that \eqref{eq:optimal_c} holds if and only if $\rho(t) = d+1$.
\end{proof}

With Lemma \ref{lemma:generic_curve}, we have all the tools to prove Theorem \ref{theorem:optimal_dense}.

\begin{proof}[Proof of Theorem \ref{theorem:more_general}]
Let us write $\mathcal{C}$ for the set of $x \in \mathcal{Q}$ such that
\begin{equation*}
\limsup_{r \to 0} \frac{\log N_{x}(r)}{\log |\log r|} = d.
\end{equation*}
We let $\mathcal{B}$ be the set of $x \in \mathcal{Q}$ such that there is a sequence $(x_n)_{n \in \mathbb{N}}$ of elements of $\mathcal{C}$ that converges to $x$ in the real-analytic topology. We want to prove that $\mathcal{B} = \mathcal{Q}$.

Let $x \in \mathcal{Q}$ be in the adherence of $\mathcal{C}$ for the $C^1$ topology. By assumption, there is a neighbourhood $U$ of $x \in \mathcal{Q}$ such that for every $y,z \in U$ there is a real-analytic curve in $\mathcal{Q}$ that joins $y$ and $z$. Since $x$ is in the adherence of $\mathcal{C}$, there is an element $y \in \mathcal{C} \cap U$. Now, if $z \in U$, there is a real-analytic curve that joins $y$ and $z$ and it follows from Lemma \ref{lemma:generic_curve} that $z$ actually belongs to $\mathcal{B}$. Thus, the neighbourhood $U$ of $x$ is included in $\mathcal{B}$, and thus in the adherence of $\mathcal{C}$. 

It follows that the adherence of $\mathcal{C}$ is open. Since $\mathcal{C}$ is not empty and $\mathcal{Q}$ is connected, we get that $\mathcal{Q}$ is the adherence of $\mathcal{C}$. Moreover, we showed that the adherence of $\mathcal{C}$ coincides with $\mathcal{B}$, so that $\mathcal{B} = \mathcal{Q}$.
\end{proof}

\subsection{Proof of Theorem \ref{theorem:optimality_torus}}\label{subsection:optimality_torus}

In this section, we specify to the case $M = \mathbb{T}^2$, and deduce Theorem \ref{theorem:optimality_torus} from Theorem \ref{theorem:optimal_dense}. We will rely on the examples given in \cite{optimal_examples}. For certain Anosov diffeomorphisms $F$ of $\mathbb{T}^2$, the authors of \cite{optimal_examples} construct a space $H_\nu$ on which $\mathcal{L}_{F,1}$ (denoted there by $C_F$) is trace class and has an explicit spectrum. They mention \cite[Remark 1.4]{optimal_examples} that the Fredholm determinant of $\mathcal{L}_{F,1}$ acting on $H_\nu$ coincides with its dynamical determinant defined by \eqref{eq:dynamical_determinant}. From Theorem \ref{theorem:determinant}, it follows that the spectrum of $\mathcal{L}_{F,1}$ on $H_\nu$ coincides with its Ruelle spectrum. This fact will be crucial in the proof of Theorem \ref{theorem:optimality_torus}, and since its proof is not detailed in \cite{optimal_examples}, we explain here why it holds.

\begin{lemma}\label{lemma:lesmemes}
If $F$ is one of the $C^\omega$ Anosov diffeomorphisms of $\mathbb{T}^2$ given in \cite[Theorem 1.3(i)]{optimal_examples} then, the spectrum of $\mathcal{L}_{F,1}$ on the space $H_\nu$ from \cite[Theorem 1.3]{optimal_examples} is the Ruelle spectrum of $\mathcal{L}_{F,1}$.
\end{lemma}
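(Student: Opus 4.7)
The goal is to identify two a priori different notions of spectrum attached to $\mathcal{L}_{F,1}$: the Ruelle spectrum, defined via the meromorphic continuation of the resolvent from Theorem \ref{theorem:resonances}, and the Hilbert space spectrum of $\mathcal{L}_{F,1}$ acting on the concrete space $H_\nu$ from \cite{optimal_examples}. The natural bridge between the two is the Fredholm determinant. Since $\mathcal{L}_{F,1}$ is trace class (in fact, even nuclear) on $H_\nu$ by \cite[Theorem 1.3]{optimal_examples}, the function
\begin{equation*}
D_\nu(z) \coloneqq \det\bigl(I - z\,\mathcal{L}_{F,1}|_{H_\nu}\bigr)
\end{equation*}
is entire, and by Lidskii's theorem its zeroes, counted with multiplicities, are precisely the inverses of the non-zero eigenvalues of $\mathcal{L}_{F,1}|_{H_\nu}$ (counted with algebraic multiplicities). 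Similarly, Theorem \ref{theorem:determinant} says that the zeroes of the dynamical determinant $d_{F,1}$ are the inverses of the Ruelle resonances. Hence the whole lemma is reduced to the identity $D_\nu = d_{F,1}$.

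To prove $D_\nu = d_{F,1}$, the plan is to check that the two entire functions have the same logarithmic derivative near $0$. On the $H_\nu$ side, since $\mathcal{L}_{F,1}|_{H_\nu}$ is trace class, one has
\begin{equation*}
- z\,\frac{D_\nu'(z)}{D_\nu(z)} = \sum_{n \geq 1} z^n\, \tr\bigl(\mathcal{L}_{F,1}^n|_{H_\nu}\bigr)
\end{equation*}
for $\va{z}$ small. On the dynamical determinant side, the formal expression \eqref{eq:dynamical_determinant} gives
\begin{equation*}
- z\,\frac{d_{F,1}'(z)}{d_{F,1}(z)} = \sum_{n \geq 1} z^n\, \sum_{F^n x = x} \frac{1}{\va{\det(I - D_x F^n)}}.
\end{equation*}
So it suffices to prove the flat trace formula
\begin{equation*}
\tr\bigl(\mathcal{L}_{F,1}^n|_{H_\nu}\bigr) = \sum_{F^n x = x} \frac{1}{\va{\det(I - D_x F^n)}}
\end{equation*}
for every $n \geq 1$. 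This is where the specific structure of the examples from \cite{optimal_examples} enters: the diffeomorphisms there are real-analytic (in particular Gevrey), hence both sides make sense, and the right hand side is known to coincide with the sum of the Ruelle resonances of $\mathcal{L}_{F,1}^n$ via \cite[Theorem 2.12 (iv)]{local_and_global}, exactly as invoked in the proof of Proposition \ref{proposition:trace_formula} above. On the other hand, the left hand side equals the sum of the eigenvalues of $\mathcal{L}_{F,1}^n|_{H_\nu}$ by Lidskii, so we only need that these eigenvalues coincide with the Ruelle resonances of $\mathcal{L}_{F,1}^n$ counted with the correct multiplicities — which is precisely what we want to establish. To avoid the apparent circularity, the cleanest route is to compute the trace directly: write the kernel of $\mathcal{L}_{F,1}^n$ on $H_\nu$ using the explicit Hilbert basis (or reproducing kernel) of $H_\nu$ described in \cite{optimal_examples}, then evaluate it on the diagonal by an application of the holomorphic Lefschetz fixed point formula, exploiting the hyperbolicity of $F^n$ at its fixed points to justify the pointwise evaluation.

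The main obstacle is this final trace computation, which requires unpacking the definition of $H_\nu$ from \cite{optimal_examples} and checking that the pointwise/diagonal manipulations of the kernel are valid in that space; all other steps are standard. Once the flat trace formula is established for every $n$, the equality of logarithmic derivatives gives $D_\nu(z) = d_{F,1}(z)$ for all $z \in \mathbb{C}$ (both are entire functions equal to $1$ at $0$), and combining this with Theorem \ref{theorem:determinant} and Lidskii's theorem yields the equality of the two spectra with multiplicities.
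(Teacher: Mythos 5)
Your framework is right, and you correctly identify that everything reduces to showing the flat trace formula $\tr\bigl(\mathcal{L}_{F,1}^n|_{H_\nu}\bigr) = \sum_{F^n x = x} |\det(I - D_x F^n)|^{-1}$ for every $n \geq 1$, after which the identity $D_\nu = d_{F,1}$ follows by matching logarithmic derivatives and the lemma follows from Theorem~\ref{theorem:determinant} and Lidskii. You also correctly flag the apparent circularity in trying to use \cite[Theorem 2.12(iv)]{local_and_global} here. But you then leave the essential step — the actual trace computation — as an acknowledged ``main obstacle'', sketching a reproducing-kernel/holomorphic Lefschetz approach without carrying it out. That is a genuine gap: the entire content of this lemma is the trace formula, and a gesture toward Atiyah--Bott does not establish it, particularly because $H_\nu$ is not a Bergman space in the usual sense and you would need to verify that the diagonal evaluation and localization at fixed points are legitimate there.

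The paper fills this gap without any Lefschetz machinery, and the route is shorter than what you propose. It uses the explicit orthonormal basis $(q_n)_{n \in \mathbb{Z}^2}$ of $H_\nu$, where $q_n = e_n / \n{e_n}_{H_\nu}$ and $e_n$ are the trigonometric monomials, and then invokes the key identity from \cite[Remark 4.7]{optimal_examples}:
\begin{equation*}
\brac{\mathcal{L}_{F,1}^k q_n, q_n}_{H_\nu} = \brac{\mathcal{L}_{F,1}^k e_n, e_n}_{L^2} = \int_{\mathbb{T}^2} e^{2i\pi n (F^k(x) - x)}\,\mathrm{d}x.
\end{equation*}
This converts the $H_\nu$-trace into a purely classical $L^2$ Fourier computation. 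One then observes, from the explicit form of $DF$ in \cite[Proposition 5.5]{optimal_examples}, that $1$ is never an eigenvalue of $D_xF$, hence $g_k(x) = F^k(x) - x$ is a local diffeomorphism. A change of variables turns $\brac{\mathcal{L}_{F,1}^k e_n, e_n}_{L^2}$ into the $n$-th Fourier coefficient of the smooth function $x \mapsto \sum_{F^k y - y = x} |\det(I - D_y F^k)|^{-1}$, and summing over $n$ via Fourier inversion gives exactly $\tr(\mathcal{L}_{F,1}^k) = \sum_{F^k x = x} |\det(I - D_x F^k)|^{-1}$. So the ``unpacking of $H_\nu$'' you anticipated is in fact a single citation to Remark 4.7, and the fixed-point localization you were hoping to get from a Lefschetz theorem is instead obtained elementarily from the local-diffeomorphism property of $g_k$ together with Fourier inversion.
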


\begin{proof}
Using the notation from \cite{optimal_examples}, we recall that $(e_n)_{n \in \mathbb{Z}^2}$ denotes the family of trigonometric monomials on $\mathbb{T}^2$. Then, for every $n \in \mathbb{Z}^2$, the function $e_n$ belongs to $H_\nu$. We let $q_n$ denote $e_n / \n{e_n}_{H_\nu}$. Then, $(q_n)_{n \in \mathbb{Z}^2}$ is an orthonormal basis of $H_\nu$ (see the remark after Definition 2.2 in \cite{optimal_examples}). Moreover, it is proven in \cite{optimal_examples} that $\mathcal{L}_{F,1}$ is trace class on $H_\nu$. 

Hence, for every $k \in \mathbb{N}^*$ we have
\begin{equation*}
\tr \p{\mathcal{L}_{F,1}^k} = \sum_{n \in \mathbb{Z}^2} \brac{\mathcal{L}_{F,1}^k q_n, q_n }_{H_\nu}.
\end{equation*} 
The trace here is the trace of $\mathcal{L}_{F,1}^k$ as an operator on $H_\nu$. Moreover, it follows from \cite[Remark 4.7]{optimal_examples} that for $k \in \mathbb{N}^*$ and $n \in \mathbb{Z}^2$
\begin{equation*}
\brac{\mathcal{L}_{F,1}^k q_n, q_n }_{H_\nu} = \brac{\mathcal{L}_{F,1}^k e_n, e_n}_{L^2} = \int_{\mathbb{T}^2} e^{2 i \pi n \cdot (F^k(x) - x)} \mathrm{d}x.
\end{equation*}
For $k \in \mathbb{N}^*$, let $g_k$ be the map from $\mathbb{T}^2$ to itself defined by $g_k(x) = F^k(x) -x$.

The map $F$ is of the form given by \cite[(19)]{optimal_examples}. Let $k \in \mathbb{N}^*$. From the computation of $D F^k$ that is made in \cite[Proposition 5.5]{optimal_examples}, we find that, for every $x \in \mathbb{T}^2$, the number $1$ is not an eigenvalue of $D_xF^k$ (where we identify $D_xF$ with an endomorphism of $\mathbb{R}^2$ using the usual parallelization of $\mathbb{T}^2$). Indeed, $D_x F^k$ is, up to sign, a product of matrices of the form
\begin{equation*}
\left[ \begin{array}{cc}
a & 1 \\ 1 & 0
\end{array}\right]
\end{equation*}
with $a > 0$. Consequently, $(D_x F^k)^4$ is of the form $I + A$, where $A$ is a matrix with positive coefficients. It follows from Perron--Frobenius theorem that $(D_x F)^4$ has an eigenvector with positive coefficients, and it must consequently corresponds to an eigenvalue greater than $1$. Since the determinant of $(D_x F)^4$ is $1$, its other eigenvalue is in $(0,1)$. It implies that $1$ is not an eigenvalue of $(D_x F)^4$.

Consequently, for $k \in \mathbb{N}^*$, the map $g_k$ is a local diffeomorphism, and thus by changing variable we find that for $n \in \mathbb{Z}^2$ we have
\begin{equation*}
\brac{\mathcal{L}_{F,1}^k e_n, e_n}_{L^2} = \int_{\mathbb{T}^2} e^{2 i \pi n \cdot x} \sum_{F^k y -y = x} \frac{1}{\va{\det\p{I - D_y F^k}}} \mathrm{d}x.
\end{equation*}
We recognize here a Fourier coefficient of a smooth function and it follows from the Fourier inversion formula that
\begin{equation*}
\tr \p{\mathcal{L}_{F,1}^k} = \sum_{n \in \mathbb{Z}^2} \brac{\mathcal{L}_{F,1}^k e_n, e_n}_{L^2} = \sum_{\substack{x \in \mathbb{T}^2\\ F^k x = x}}\frac{1}{\va{\det\p{I - D_x F^k}}}.
\end{equation*}
Consequently, the Fredholm determinant of $\mathcal{L}_{F,1}$ acting on $H_\nu$ coincides with the dynamical determinant \eqref{eq:dynamical_determinant}, and it follows from Theorem \ref{theorem:determinant} that the spectrum of $\mathcal{L}_{F,1}$ on $H_\nu$ is its Ruelle spectrum.
\end{proof}

Now that this precision has been made, we can write the proof of Theorem \ref{theorem:optimality_torus}.

\begin{proof}[Proof of Theorem \ref{theorem:optimality_torus}]
Let $F$ be a real-analytic diffeomorphism of the torus and let $W$ denote the connected component of $W$ in $\Anos^\omega(\mathbb{T}^2)$. According to Theorem \ref{theorem:optimal_dense}, we only need to prove that $W$ contains an element $G$ such that
\begin{equation}\label{eq:optimal_again}
\limsup_{r \to 0} \frac{\log N_{G,1}(r)}{\log \va{\log r}} = d.
\end{equation}
Using the standard identification of the fundamental group of $\mathbb{T}^2$ with $\mathbb{Z}^2$, we let $A \in \SL(2,\mathbb{Z})$ be the matrix giving the action of $F$ on the fundamental group of $\mathbb{Z}^2$. By a result of Maning \cite{no_new}, the matrix $A$ is hyperbolic (it has no eigenvalues of modulus $1$). According to \cite[Theorem 1.3(i)]{optimal_examples}, there is a $C^\omega$ Anosov diffeomorphism $G$ on $M$ whose action on the fundamental group is given by $A$ and such that \eqref{eq:optimal_again} holds. Here, we use Lemma \ref{lemma:lesmemes} to identify the eigenvalues from \cite{optimal_examples} with the Ruelle resonances.

It follows from results of Franks and Manning \cite{franks_classification, no_new},  that $F$ and $G$ are topologically conjugate to the action of $A$ on $\mathbb{T}^2$ via a homeomorphism homotopic to the identity. Then, \cite[Theorem 1]{space_Anosov} implies that $G$ belongs to $W$. This reference only gives a path of $C^\infty$ diffeomorphisms joining $F$ and $G$. However, one can then deduce the existence of a path of $C^\omega$ diffeomorphisms joining $F$ and $G$ by a mollification argument. 
\end{proof}

\appendix

\section{Kernel estimates}\label{appendix:kernel_estimates}

In this appendix, we gather several technical estimates that are used at different places of the paper.

\begin{proof}[Proof of Proposition \ref{proposition:localisation_torus}]

The proof is based on an application of the holomorphic non-stationary phase method. However, there is a subtlety in the proof that forbids to apply directly the versions of this argument in the literature we are aware of. 

Let $\mathcal{K}_{\mathcal{T}}$ denotes the kernel of the FBI transform $\mathcal{T}$ on $\mathbb{T}^d$, and let $\mathfrak{a}$ and $\Phi_{\mathcal{T}}$ be the associated symbol and phase, from \eqref{eq:local_descrption}. By definition if $(x,\xi) \in T^* \mathbb{T}^d$ and $k \in \mathbb{Z}^d$ then
\begin{equation}\label{eq:explicit_transform_torus}
\mathcal{T} e_k (x,\xi) = \int_{\mathbb{T}^d} K_{\mathcal{T}}(x,\xi,y) e^{2 i \pi k \cdot y} \mathrm{d}y.
\end{equation}
The estimate \eqref{eq:straightforward_torus} then follows by using a straightforward $L^\infty$ bound on $\mathcal{K}_{\mathcal{T}}$.

Let now $c> 0$ and assume that $\va{\xi - 2 \pi k} > c \brac{\xi}$. Choose some small number $s > 0$. Since the integrand in \eqref{eq:explicit_transform_torus} is holomorphic, we may shift contour and replace the integral on $\mathbb{T}^d$ by an integral over $\mathbb{T}^d + is\frac{2 \pi k - \xi}{\va{2 \pi k - \xi}}$. After a change of variable, it yields
\begin{equation}\label{eq:torus_after_shift}
\mathcal{T} e_k (x,\xi) = \int_{\mathbb{T}^d} K_{\mathcal{T}}(x,\xi,y + i s \frac{2 \pi k - \xi}{\va{2 \pi k - \xi}}) e^{2 i \pi k \cdot y} e^{- s 2 \pi k \cdot \frac{2 \pi k - \xi}{\va{2 \pi k - \xi}}} \mathrm{d}y.
\end{equation}
The result will follow by bounding pointwise the integrand in this new integral. Let $r > 0$ be small. If the distance between $y$ and $x$ is larger than $r$, and $s$ is small enough (depending on $r$), it follows from \eqref{eq:pseudo_local} that we have
\begin{equation*}
\va{K_{\mathcal{T}}(x,\xi,y + i s \frac{2 \pi k - \xi}{\va{2 \pi k - \xi}})} \leq C \exp\p{ - \frac{\brac{\xi}}{C}},
\end{equation*}
where the constant $C > 0$ depends on $r$. Consequently, the integrand in \eqref{eq:torus_after_shift} is less than
\begin{equation*}
\begin{split}
C \exp\p{ - \frac{\brac{\xi}}{C} - s  2 \pi k \frac{2 \pi k - \xi}{\va{2 \pi k - \xi}} } & \leq C \exp\p{ - \frac{\brac{\xi}}{C} - s  \va{2 \pi k - \xi} - s \xi \cdot \frac{2 \pi k - \xi}{\va{2 \pi k - \xi}} } \\
   & \leq C \exp\p{ - \frac{\brac{\xi}}{C} - 2 \pi s \frac{c}{c+1} \va{k}  + s \va{\xi} } \\
   & \leq C \exp\p{ - \frac{\brac{\xi}}{2 C} - 2 \pi s \frac{c}{c+1} \va{k} },
\end{split}
\end{equation*}
where in the last line we assumed that $s < \frac{1}{2C}$. Let us now consider the case of $y$ and $x$ at distance less than $r$. We want to get a similar bound on the integrand in \eqref{eq:torus_after_shift}. To do so, we only need to bound
\begin{equation}\label{eq:local_integrand}
e^{i\Phi_{\mathcal{T}}(x,\xi,y + i s \frac{2 \pi k - \xi}{\va{2 \pi k - \xi}})} \mathfrak{a}(x,\xi, y + i s \frac{2 \pi k - \xi}{\va{2 \pi k - \xi}}) e^{2 i \pi k \cdot y} e^{- s 2 \pi k \cdot \frac{2 \pi k - \xi}{\va{2 \pi k - \xi}}}.
\end{equation}
Indeed, the error term in \eqref{eq:local_descrption} may be dealt with as in the previous case ($x$ away from $y$). Let us estimate the phase in \eqref{eq:local_integrand} using Taylor's formula
\begin{equation*}
\begin{split}
& \Phi_{\mathcal{T}}(x,\xi,y + i s \frac{2 \pi k - \xi}{\va{2 \pi k - \xi}}) \\ & \qquad \qquad \qquad = \Phi_{\mathcal{T}}(x,\xi,y) + i s \mathrm{d}_y \Phi_{\mathcal{T}}(x,\xi,y) \cdot \frac{2 \pi k - \xi}{\va{2 \pi k - \xi}} + \mathcal{O}\p{s^2 \brac{\xi}} \\
     \\ & \qquad \qquad \qquad = \Phi_{\mathcal{T}}(x,\xi,y) + i s \mathrm{d}_y \Phi_{\mathcal{T}}(x,\xi,x) \cdot \frac{2 \pi k - \xi}{\va{2 \pi k - \xi}} + \mathcal{O}\p{s (s+r) \brac{\xi}}.
\end{split}
\end{equation*}
Here, we used Cauchy's formula and \eqref{eq:control_phase} to bound the second derivative of $\Phi_{\mathcal{T}}$. Consequently, we have for some $C > 0$:
\begin{equation*}
\im \Phi_{\mathcal{T}}(x,\xi,y + i s \frac{2 \pi k - \xi}{\va{2 \pi k - \xi}}) \geq - s \xi \cdot \frac{2 \pi k - \xi}{\va{2 \pi k - \xi}} - C s (s+r) \brac{\xi}. 
\end{equation*}
It follows that \eqref{eq:local_integrand} is less than
\begin{equation*}
\begin{split}
& C \brac{\xi}^{\frac{d}{4}} \exp\p{- s \va{2 \pi k - \xi} + C s (s+r) \brac{\xi}} \\ & \qquad \qquad \qquad \leq C \brac{\xi}^{\frac{d}{4}} \exp\p{ - \frac{cs}{c+1} \max(\va{k}, \va{\xi})+ C s (s+r) \brac{\xi} }.
\end{split}
\end{equation*}
The result then follows by taking $s$ and $r$ small enough.
\end{proof}

\begin{remark}\label{remark:meaning_kernel}
The following proof establishes estimates on kernel of operators of the form $T \mathfrak{L} S$ where $T$ and $S$ are a real-analytic FBI transform on $M$ and its adjoint, as defined in \S \ref{subsection:FBI}, and $\mathfrak{L}$ is an operator that maps real-analytic functions on $M$ to smooth functions (we could deal with more general operators, but this is the only case that will appear here). Let us detail what we mean by this.

Pick $\epsilon_1 > 0$ small enough so that for every $\alpha \in T^* M$ the function $x \mapsto K_S(x,\alpha)$ is in $\mathcal{O}_{\epsilon_1}$, where $K_S$ is the kernel of $S$. Assume that $\mathfrak{L}$ is a bounded operator from $\mathcal{O}_{\epsilon_1}$ to $C^\infty(M)$, for $\alpha,\beta \in T^* M$, we define formally the kernel $T \mathfrak{L} S (\alpha,\beta)$ of $T\mathfrak{L}S$ as
\begin{equation*}
T\mathfrak{L}S(\alpha,\beta) = \int_M K_T(\alpha,y) \mathfrak{L}(K_S(\cdot,\beta))(y) \mathrm{d}y.
\end{equation*}
We use here the common practice to identify an operator with its kernel.

Such a kernel is interesting for the following reason. If $\epsilon \in (0,\epsilon_0)$ and $\epsilon_1$ is small enough, then one may use Proposition \ref{proposition:decay_from_analytic} and  \cite[Lemma 2.5]{BJ20} to find that, for every $u \in \mathcal{O}_\epsilon$ and $\alpha \in T^* M$, the function on $M \times T^* M$ given by $(y,\beta) \mapsto K_T(\alpha,y) \mathfrak{L}(K_S(\cdot,\beta))(y) Tu(\beta)$ is integrable on $M \times T^* M$. It follows then from Fubini's theorem that
\begin{equation}\label{eq:usage_kernel}
T \mathfrak{L} u(\alpha) = \int_{T^* M} T\mathfrak{L}S(\alpha,\beta) Tu(\beta) \mathrm{d}\beta.
\end{equation}
In particular, the integrand is integrable (one can even check that it decays exponentially fast). We will use such formulae to get estimates on $T\mathfrak{L}u$ from estimates on $Tu$ and on the kernel of $T\mathfrak{L}S$.

In Proposition \ref{proposition:localisation_graph}, we have $\mathfrak{L} = \mathcal{L}_{\Psi_X \circ F, w}$. When $X$ is tangent to $M$, this is just the composition of a multiplication and a composition operator, but this is in general more complicated as $\Psi_X$ may not leave $M$ invariant. In Lemma \ref{lemma:chgt_manifold} below, we study what happens when $\mathfrak{L}$ is the composition of a multiplication and a composition operator, but we allow there to multiply by a function which is only $C^\infty$ (rather than analytic) at some places.
\end{remark}

\begin{proof}[Proof of Proposition \ref{proposition:localisation_graph}]
Let us write the kernel of $T\mathcal{L}_{\Psi_X \circ F, w} S$ as
\begin{equation}\label{eq:def_kernel_Koopman}
T\mathcal{L}_{\Psi_X \circ F, w} S(\alpha,\beta) = \int_M K_T(\alpha,y) w(y) K_S(\Psi_X \circ F(y),\beta) \mathrm{d}y
\end{equation}
for $\alpha,\beta \in T^* M$. We start by proving \eqref{eq:small_exponential_bound}. By direct inspection of the kernel $K_S$ of $S$, we see that there is $\epsilon' > 0$ such that for every $y \in (M)_{\epsilon'}$ and $\beta \in T^* M$ we have
\begin{equation*}
\va{K_S(y,\beta)} \leq C \exp\p{\frac{c \brac{\beta}}{2}}.
\end{equation*}
By taking $\delta > 0$ small enough, we may ensure that for every $X \in B_{\epsilon,\delta}$ the map $\Psi_X$ sends $M$ into $(M)_{\epsilon'}$. Using a crude estimate to bound the kernel of $T$, we find that 
\begin{equation*}
\begin{split}
& \va{T\mathcal{L}_{\Psi_X \circ F, w} S(\alpha,\beta)} \\ & \qquad \quad \leq C \n{w}_{L^\infty} \brac{\alpha}^{\frac{d}{4}} \exp\p{\frac{c \brac{\beta}}{2}} \leq C \n{w}_{\mathcal{O}_\epsilon} \exp\p{c (\brac{\alpha} + \brac{\beta})}.
\end{split}
\end{equation*}

Let us now prove \eqref{eq:exponential_decay_Koopman}. We assume that the distance between $(\alpha,\beta)$ and $\mathcal{G}$ is larger than $c$. Let $s > 0$ be very small and assume first that the distance between $\alpha_x$ and $F^{-1} \beta_x$ is larger than $s/2$. Let us split the integral in \eqref{eq:def_kernel_Koopman} as
\begin{equation}\label{eq:split_kernel_Koopman}
\begin{split}
& T\mathcal{L}_{\Psi_X \circ F, w} S(\alpha,\beta) \\ & \qquad \qquad = \p{\int_{D(\alpha_x,s/100)} + \int_{D(F^{-1} \beta_x,s/100)} + \int_{M \setminus \p{D(\alpha_x,s/100) \cup D(F^{-1} \beta_x,s/100)}}} \\ & \qquad \qquad \qquad \qquad \qquad \qquad \qquad \qquad \qquad K_T(\alpha,y) w(y) K_S(\Psi_X \circ F(y),\beta) \mathrm{d}y,
\end{split}
\end{equation}
where $D(x,r)$ denotes the ball of center $x$ and radius $r$. From the second point \eqref{eq:pseudo_local} in the definition of a real-analytic FBI transform, we see that if $X \in B_{\epsilon,\delta}$ with $\delta$ small enough, then the third integral in \eqref{eq:split_kernel_Koopman} decays exponentially fast with $\alpha$ and $\beta$ (because if $F(y)$ is away from $\beta_x$ and $\delta$ is small enough then $\Psi_X \circ F(y)$ will also be away from $\beta_x$, and it will also be close to $M$).

Let us now deal with the second integral in \eqref{eq:split_kernel_Koopman} (the first one is dealt with similarly taking $F$ to be the identity and $X$ to be zero). From the definition of a real-analytic FBI transform, we see that for $y$ on a small complex neighbourhood of $D(F^{-1} \beta_x,s/100)$, the kernel $K_T(\alpha,y)$ is an $\mathcal{O}\p{\exp\p{- \brac{\va{\alpha}}/C}}$ and $K_S(\Psi_X \circ F(y),\beta)$ is given by $e^{i \Phi_S(\Psi_X \circ F(y),\beta)} b(\Psi_X \circ F(y),\beta)$ up to a $\mathcal{O}\p{\exp\p{- \brac{\va{\beta}}/C}}$. Here, the phase $\Phi_S$ is given by $\Phi_S(y,\beta) = - \overline{\Phi_T(\bar{\beta},\bar{y})}$ and the symbol $b(y,\beta)$ is given by $b(y,\beta) = \overline{a(\bar{\beta},\bar{y})}$. Hence, up to a negligible term, the second integral in \eqref{eq:split_kernel_Koopman} is given by
\begin{equation}\label{eq:simplified_kernel}
\int_{D(F^{-1} \beta_x,s/100)} e^{i \Phi_S(F (y),\beta)} h_{\alpha,\beta}(y) \mathrm{d}y,
\end{equation}
where
\begin{equation*}
h_{\alpha,\beta}(y) \coloneqq w(y) b(\Psi_X \circ F(y),\beta) K_T(\alpha,y) e^{i (\Phi_S(\Psi_X \circ F (y),\beta) - \Phi_S(F(y),\beta))}.
\end{equation*}
Notice that the function $h_{\alpha,\beta}$ has a holomorphic extension to a small complex neighbourhood of $D(\beta_x,s/100)$, and that this extension is bounded by
\begin{equation*}
C \n{w}_{\mathcal{O}_\epsilon} \exp\p{ - \gamma \brac{\alpha} + \gamma \brac{\beta} },
\end{equation*}
where the constant $\gamma > 0$ can be made arbitrarily small by taking $\delta$ small enough. Notice also that the phase in \eqref{eq:simplified_kernel} is non-stationary when $\beta_\xi$ is large:
\begin{equation*}
\mathrm{d}_y\p{\Phi_S (F(y),\beta)}_{| y = F^{-1} \beta_x} = {}^t D_{F^{-1} \beta_x} F \cdot \beta_\xi.
\end{equation*}
Consequently, if $s$ is small enough, then the norm of $\mathrm{d}_y\p{\Phi_S (F(y),\beta)}$ will be larger than $C^{-1} \brac{\beta}$ for every $y \in D(F^{-1} \beta_x,s/100)$. We can then use the holomorphic non-stationary phase \cite[Proposition 1.1]{BJ20} method with phase $\Phi_S(F(y),\beta)/ \brac{\beta}$ and large parameter $\brac{\beta}$ (notice that the phase is positive on the boundary of $D(F^{-1} \beta_x,s/100)$ as a consequence of \eqref{eq:coercivity_phase}) to get that \eqref{eq:simplified_kernel} is bounded by
\begin{equation*}
C \n{w}_{\mathcal{O}_\epsilon} \exp\p{ - \gamma \brac{\alpha} + \gamma \brac{\beta} - \tau \brac{\beta}},
\end{equation*}
for some $\tau$ that does not depend on $\gamma$. Taking $\delta > 0$ small enough, we get $\gamma < \tau$, and we find that the second integral in \eqref{eq:split_kernel_Koopman} is exponentially small in $\brac{\alpha}$ and $\brac{\beta}$. 

Let us now deal with the case of $\alpha$ and $\beta$ such that the distance between $\alpha_x$ and $F^{-1} \beta_x$ is less than $s$. We can consequently work in coordinates, and in these coordinates the distance between  $\alpha_\xi$ and ${}^t D_{F^{-1} \beta_x} F \cdot \beta_\xi$ is much larger than $s\p{\brac{\alpha} + \brac{\beta}}$ when $s$ is much smaller than $c$. As above, we can write the integral \eqref{eq:def_kernel_Koopman}, up to a negligible term, as
\begin{equation*}
\int_{D(\alpha_x,100s)} e^{i (\Phi_T(\alpha,y) + \Phi_S(F(y),\beta))} h_{\alpha,\beta}(y) \mathrm{d}y
\end{equation*}
where
\begin{equation*}
h_{\alpha,\beta}(y) = a(\alpha,y) w(y) b(\Psi_X \circ F(y),\beta) e^{i (\Phi_S(\Psi_X \circ F(y),\beta) - \Phi_S(F(y),\beta)}.
\end{equation*}
As above, we see that $h_{\alpha,\beta}$ as a holomorphic extension to a complex neighbourhood of $D(\alpha_x,s/100)$, and that this extension is bounded by
\begin{equation*}
C \brac{\alpha}^{\frac{d}{4}} \n{w}_{\mathcal{O}_\epsilon} \exp\p{\gamma \brac{\beta}},
\end{equation*}
where $\gamma > 0$ can be made arbitrarily small by taking $\delta > 0$ small enough. Provided $s$ is small enough, the phase $y \mapsto \Phi_T(\alpha,y) + \Phi_S(F(y),\beta)$ is non-stationary on $D(\alpha_x,100s)$ as
\begin{equation*}
\mathrm{d}_y\p{\Phi_T(\alpha,y) + \Phi_S(F(y),\beta)} = - \alpha_\xi + {}^t D_{F^{-1} \beta_x}F \cdot \beta_\xi + \mathcal{O}\p{s\p{\brac{\alpha} + \brac{\beta}}},
\end{equation*}
and the distance between $\alpha_\xi$ and ${}^t D_{F^{-1} \beta_x} F \cdot \beta_\xi$ is much larger than $s\p{\brac{\alpha} + \brac{\beta}}$. The result then follows from the holomorphic non-stationary phase method using a rescaling argument as in the previous case.
\end{proof}

We state now a result that allows to deal both with change of variables and multiplication by $C^\infty$ (\emph{a priori} not analytic) functions when working with FBI transforms.

\begin{lemma}\label{lemma:chgt_manifold}
Let $M_1$ and $M_2$ be closed real-analytic manifolds endowed with real-analytic FBI transform $T_1,T_2$ with adjoint $S_1,S_2$. Let $U_1,U_2$ be open subsets respectively of $M_1$ and $M_2$ and $\kappa : U_1 \to U_2$ be a real-analytic diffeomorphism. Let $\theta$ be a $C^\infty$ function on $M_1$ supported in $U_1$. Let $W$ be a closed subset of $M_1$ such that $\theta$ is real-analytic on a neighbourhood of $W$. Then, for every $c,N > 0$, there is a constant $C > 0$ such that for every $\alpha \in T^* M_1$ and $\beta \in T^* M_2$ such that the Kohn--Nirenberg distance between ${}^t D \kappa^{-1} \alpha$ and $\beta$ is more than $c$ then the kernel of $T_1 \theta \kappa^* S_2$ satisfies
\begin{itemize}
\item if $\alpha_x \in W$ and $\beta_x \in \kappa(W)$ then
\begin{equation*}
\va{T_1 \theta \kappa^* S_2(\alpha,\beta)} \leq C \exp\p{- \frac{\brac{\alpha} + \brac{\beta}}{C}};
\end{equation*}
\item if $\alpha_x \in W$ then
\begin{equation*}
\va{T_1 \theta \kappa^* S_2(\alpha,\beta)} \leq C \exp\p{- \frac{\brac{\alpha}}{C}} \brac{\beta}^{-N};
\end{equation*}
\item if $\beta_x \in \kappa(W)$ then
\begin{equation*}
\va{T_1 \theta \kappa^* S_2(\alpha,\beta)} \leq C \brac{\alpha}^{-N} \exp\p{- \frac{\brac{\beta}}{C}};
\end{equation*}
\item in general
\begin{equation*}
\va{T_1 \theta \kappa^* S_2(\alpha,\beta)} \leq C \p{\brac{\alpha} + \brac{\beta}}^{-N}.
\end{equation*}
\end{itemize}
\end{lemma}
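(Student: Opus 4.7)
The plan is to write the kernel as
$$K(\alpha,\beta)=\int_{M_1}K_{T_1}(\alpha,y)\,\theta(y)\,K_{S_2}(\kappa(y),\beta)\,dy$$
and split it using smooth cutoffs based on the distances $d(y,\alpha_x)$ and $d(\kappa(y),\beta_x)$ with a small threshold $s>0$ (depending on $c$ and on $\kappa$) to be fixed at the end. On $\{d(y,\alpha_x)>s\}$ the pseudo-local bound \eqref{eq:pseudo_local} produces the factor $e^{-\langle|\alpha|\rangle/C}$, and on $\{d(\kappa(y),\beta_x)>s\}$ it produces $e^{-\langle|\beta|\rangle/C}$. Modulo exponentially small remainders these two pieces account for everything except the piece where $y$ is simultaneously close to $\alpha_x$ and $\kappa(y)$ close to $\beta_x$; on that piece one inserts the WKB descriptions \eqref{eq:local_descrption} of both kernels and analyses the resulting oscillatory integral in $y$.

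The hypothesis $d_{KN}({}^{t}D\kappa^{-1}\alpha,\beta)>c$ forces one of two alternatives. Either $d(\kappa(\alpha_x),\beta_x)\gtrsim c$, in which case the common piece is empty once $s$ is small enough, and one instead does non-stationary phase separately near $y=\alpha_x$ (using the phase $\Phi_{T_1}$, whose $y$-gradient has size $\sim\langle\alpha\rangle$, combined with the fact that $K_{S_2}(\kappa(y),\beta)$ then decays like $e^{-\langle|\beta|\rangle/C}$ since $\kappa(y)$ stays far from $\beta_x$) and symmetrically near $y=\kappa^{-1}(\beta_x)$. Or $\kappa(\alpha_x)\approx\beta_x$ but $|\alpha_\xi-{}^{t}D_{\alpha_x}\kappa\cdot\beta_\xi|\gtrsim c\max(\langle\alpha\rangle,\langle\beta\rangle)$, in which case on the common piece the combined phase $\Phi_{T_1}(\alpha,y)+\Phi_{S_2}(\kappa(y),\beta)$ has $y$-gradient of size at least $\gtrsim c\max(\langle\alpha\rangle,\langle\beta\rangle)$ (for $s$ small, by a first-order Taylor expansion around $y=\alpha_x$) and a single non-stationary phase argument handles everything.

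The four bullets of the lemma are distinguished by the \emph{type} of non-stationary phase method one can perform in each subregion. Wherever $\theta$ is real-analytic, i.e.\ on a neighbourhood of $W$, I would carry out the \emph{holomorphic} non-stationary phase method by deforming the $y$-contour into complex $M_1$, as in the proof of Proposition \ref{proposition:localisation_graph} above and based on \cite[Proposition 1.1]{BJ20}, and thereby extract exponential decay in the relevant spectral variable. Where $\theta$ is only $C^\infty$, I would instead use standard smooth integration by parts with the vector field $L=(\overline{\nabla_y\Phi}/|\nabla_y\Phi|^2)\cdot\nabla_y$, producing polynomial decay $\langle\cdot\rangle^{-N}$ for arbitrary $N$. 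The two conditions $\alpha_x\in W$ and $\beta_x\in\kappa(W)$ independently unlock exponential decay in $\langle\alpha\rangle$ and in $\langle\beta\rangle$ respectively, giving the four listed estimates. The main technical obstacle is the bookkeeping: in each subregion one must obtain the advertised decay simultaneously in both variables, so that when $y$-derivatives from smooth integration by parts fall on $K_{T_1}$ or $K_{S_2}$ they must be controlled via Cauchy's formula on the complex neighbourhood where that kernel is already known to be exponentially small, preserving the corresponding exponential bound throughout the argument.
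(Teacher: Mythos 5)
Your proposal follows the same approach as the paper: write the kernel as $\int_{M_1} K_{T_1}(\alpha,y)\,\theta(y)\,K_{S_2}(\kappa(y),\beta)\,\mathrm{d}y$, localize the $y$-integration around $\alpha_x$, around $\kappa^{-1}\beta_x$, and away from both, use the $d_{KN}$ hypothesis to split into ``positions far'' and ``positions close but momenta far,'' and run holomorphic non-stationary phase (as in Proposition~\ref{proposition:localisation_graph}) wherever $\theta$ is analytic versus $C^\infty$ integration by parts à la \cite[Theorem~7.7.1]{hormander_book_1} wherever it is not, controlling derivatives of the analytic kernels via Cauchy estimates. This is precisely the paper's argument; the only cosmetic difference is that the paper organizes the four bullets as separate cases (isolating a neighbourhood $V$ of $W$ in the first case, then reducing the mixed cases to the situation where $\kappa^{-1}\beta_x$ is far from $W$), while you present one unified decomposition.
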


\begin{remark}
The kernel of the operator $T_1 \theta \kappa^* S_2$ in Lemma \ref{lemma:chgt_manifold} does not fit exactly in the framework from Remark \ref{remark:meaning_kernel} as $T_1$ and $S_2$ are associated to different manifolds. However, the kernel is defined similary as
\begin{equation*}
T_1 \theta \kappa^* S_2 (\alpha,\beta) = \int_{M_1} K_{T_1}(\alpha,y)\theta(y) K_{S_2}(\kappa(y), \beta) \mathrm{d}y,
\end{equation*}
for $\alpha \in T^* M_1$ and $\beta \in T^* M_2$. One can adapt \eqref{eq:usage_kernel} similarly.
\end{remark}

\begin{proof}[Proof of Lemma \ref{lemma:chgt_manifold}]
In the first case, one may write
\begin{equation*}
\begin{split}
T_1 \theta \kappa^* S_2(\alpha,\beta) & = \int_{M_1} K_{T_1}(\alpha,y) \theta(y) K_{S_2}(\kappa y, \beta) \mathrm{d}y \\
     & = \int_{V} K_{T_1}(\alpha,y) \theta(y) K_{S_2}(\kappa y, \beta) \mathrm{d}y \\ & \qquad \qquad \qquad + \int_{M_1 \setminus V} K_{T_1}(\alpha,y) \theta(y) K_{S_2}(\kappa y, \beta) \mathrm{d}y,
\end{split}
\end{equation*}
where $V$ is a neighbourhood of $W$ on which $\theta$ is analytic. The integral on $V$ may be dealt as in the proof of Proposition \ref{proposition:localisation_graph} (this is actually slightly simpler since there is no $\Psi_X$ involved). The second integral is negligible since both kernels are exponentially small on $M_1 \setminus V$ when $\alpha_x$ and $\beta_x$ are in $W$.

Let us now consider the second case. We only need to deal with the case in which the distance between $\kappa^{-1} \beta_x$ and $W$ (and thus $\alpha_x$) is more than some small constant $s > 0$ (since otherwise the first case would apply). As in the proof of Proposition \ref{proposition:localisation_graph}, we split the integral defining the kernel of $T_1 \theta \kappa^* S_2$ into three:
\begin{equation}\label{eq:split_chgt_manifold}
\begin{split}
& T_1 \theta \kappa^* S_2(\alpha,\beta) \\  & \qquad = \p{\int_{D(\alpha_x,s/100)} + \int_{D(\kappa^{-1} \beta_x,s/100)} + \int_{M \setminus (D(\alpha_x,s/100) \cup D(\kappa^{-1} \beta_x,s/100)}} \\ & \qquad \qquad \qquad \qquad \qquad \qquad \qquad \qquad \qquad \qquad \quad K_{T_1}(\alpha,y) \theta(y) K_{S_2}(\kappa (y), \beta) \mathrm{d}y.
\end{split}
\end{equation}
The first and the last integrals may be dealt with as in the proof of Proposition \ref{proposition:localisation_graph}, and thus they are actually exponentially decaying with $\alpha$ and $\beta$. In the last integrals, both kernels $K_{T_1}$ and $K_{S_2}$ are exponentially small, and in the first integral the integrand is analytic, so that it can be dealt with using the holomorphic non-stationary phase method as for the first and second integral in \eqref{eq:split_kernel_Koopman} in the proof of Proposition \ref{proposition:localisation_graph}. Up to an exponentially decaying term, the second integral in \eqref{eq:split_chgt_manifold} is given by 
\begin{equation}\label{eq:simplified_mixed}
\int_{D(\kappa^{-1} \beta_x,s/100)} e^{i \Phi_{S_2}(\kappa( y),\beta)} \underbrace{b_2(\kappa (y), \beta) \theta(y) K_{T_1}(\alpha,y)}_{= h_{\alpha,\beta}(y)} \mathrm{d}y.
\end{equation}
Here the indices $1$ and $2$ are used to denote objects associated respectively to the manifolds $M_1$ and $M_2$. Since the imaginary part of $\Phi_{S_2}(\kappa( y),\beta)$ is positive when $y$ is away from $\kappa^{-1} \beta_x$, we may mutliply $h_{\alpha,\beta}$ by a $C^\infty$ bump function and assume that $h_{\alpha,\beta}$ is compactly supported in the interior of $D(\kappa^{-1} \beta_x,s/100)$. Using that $K_{T_1}$ is negligible away from the diagonal, we see that there are constants $\gamma,L > 0$ such that, for every $k \in \mathbb{N}$, there is a constant $C > 0$ such that the $C^k$ norm of $h_{\alpha,\beta}$ is less than
\begin{equation*}
C \brac{\beta}^L \exp\p{ - \gamma \brac{\alpha}}.
\end{equation*}
As in the proof of Proposition \ref{proposition:localisation_graph}, when $\beta$ is large the phase $y\mapsto \Phi_{S_2}(\kappa (y), \beta)$ is non-stationary, and we can consequently use the $C^\infty$ non-stationary phase method \cite[Theorem 7.7.1]{hormander_book_1} to find, using the $C^k$ bound on $h_{\alpha,\beta}$, that the integral \eqref{eq:simplified_mixed} is a $\mathcal{O}\p{\brac{\beta}^{-N} \exp\p{- \gamma \brac{\alpha}}}$ for every $N > 0$.

The third case is similar to the second, with the roles of $\alpha$ and $\beta$ swapped.

The fourth case follows the lines of the proof of Proposition \ref{proposition:localisation_graph} in the case $X = 0$, with the application of the holomorphic non-stationary phase method replaced by an application of the $C^\infty$ non-stationary phase method \cite[Theorem 7.7.1]{hormander_book_1}.
\end{proof}

\begin{remark}\label{remark:TS_away_diagonal}
Taking $M_1 = M_2 = M, \kappa$ the identity and $\theta$ identically equal to $1$ in Lemma \ref{lemma:chgt_manifold}, we retrieve a consequence of \cite[Lemma 2.9]{BJ20}: the kernel of the operator $TS$ is exponentially small away from the diagonal (we are always in the first case in Lemma \ref{lemma:chgt_manifold}).
\end{remark}

\begin{remark}\label{remark:small_to_analytic}
In the proof of Proposition \ref{proposition:reconstruction} below, we are going to make extensive use of the following fact: if $\mathfrak{K}$ is a measurable function on $T^* M \times T^*M$ such that
\begin{equation}\label{eq:meaning_exponentially_small}
\mathfrak{K}(\alpha,\beta) = \mathcal{O}\p{\exp\p{- \frac{\brac{\alpha} + \brac{\beta}}{C}}}
\end{equation}
for some $C > 0$, then the function
\begin{equation}\label{eq:explicit_analytic_kernel}
y \mapsto \int_{T^* M \times T^* M} K_S(y,\alpha) \mathfrak{K}(\alpha,\beta) K_T(\beta,y) \mathrm{d}\alpha \mathrm{d}\beta
\end{equation}
is real-analytic on $M$ (see for instance \cite[Lemmas 2.5 and 2.6]{BJ20}). Here, $K_T$ and $K_S$ are the kernels of $T$ and $S$ from \S \ref{subsection:FBI}. In terms of operators, the function \eqref{eq:explicit_analytic_kernel} is the kernel of the operator $S \mathfrak{K} T$, where we identify $\mathfrak{K}$ with the operator on $T^* M$ with kernel $\mathfrak{K}$.

In the proof of Proposition \ref{proposition:reconstruction}, we are going to combine this observation with Remark \ref{remark:meaning_kernel}.
\end{remark}

\begin{proof}[Proof of Proposition \ref{proposition:reconstruction}]
It follows from Lemmas \ref{lemma:ekomega_analytic} and \ref{lemma:dual_ekomega} that if $u$ is $C^\omega$ then the sum $\sum_{\substack{\omega \in \Omega \\ k \in \mathbb{Z}^d}} \langle u , e_k^\omega \rangle_{L^2} \tilde{e}_k^\omega$ converges in $C^\omega$. For $\omega \in \Omega$ and $k \in \mathbb{Z}^d$, we have, since $A_\omega$ and $B_\omega$ are self-adjoint and $\kappa_\omega$ has Jacobian $1$:
\begin{equation*}
\langle u , e_k^\omega \rangle_{L^2} = \langle B_\omega \theta_\omega (\kappa_\omega^{-1})^* A_\omega u, e_k \rangle_{L^2}.
\end{equation*}
For $\omega \in \Omega$, since $B_\omega \theta_\omega (\kappa_\omega^{-1})^* A_\omega u$ is $C^\omega$ (it follows from Propositions \ref{proposition:decay_from_analytic} and \ref{proposition:analytic_from_decay} and Lemma \ref{lemma:chgt_manifold}), we can write it as the sum of its Fourier series
\begin{equation*}
B_\omega \theta_\omega (\kappa_\omega^{-1})^* A_\omega u = \sum_{k \in \mathbb{Z}^d} \langle u , e_k^\omega \rangle_{L^2} e_k.
\end{equation*}
Since the operator $\widetilde{A}_\omega (\kappa_\omega)^* \theta_\omega B_\omega $ is bounded on $C^\omega$ (this is also a consequence of Propositions \ref{proposition:decay_from_analytic} and \ref{proposition:analytic_from_decay} and Lemma \ref{lemma:chgt_manifold}), we get
\begin{equation*}
\widetilde{A}_\omega (\kappa_\omega)^* \theta_\omega B_\omega B_\omega \theta_\omega (\kappa_\omega^{-1})^* A_\omega u  =  \sum_{k \in \mathbb{Z}^d} \langle u , e_k^\omega \rangle_{L^2} \tilde{e}_k^\omega.
\end{equation*}
Since $(\chi_\omega)_{\omega \in \Omega}$ is a partition of unity and $S(Tu) = u$, we have
\begin{equation*}
u = \sum_{\substack{\omega \in \Omega \\ k \in \mathbb{Z}^d}} \langle u , e_k^\omega \rangle_{L^2} \tilde{e}_k^\omega + \underbrace{\sum_{\omega \in \Omega} \p{A_\omega - \widetilde{A}_\omega (\kappa_\omega)^* \theta_\omega B_\omega B_\omega \theta_\omega (\kappa_\omega^{-1})^* A_\omega}u}_{= \mathcal{K}u}.
\end{equation*}
Thus, we only need to prove that for $\omega \in \Omega$ the operator 
\begin{equation}\label{eq:analytic_whole}
A_\omega - \widetilde{A}_\omega (\kappa_\omega)^* \theta_\omega B_\omega B_\omega \theta_\omega (\kappa_\omega^{-1})^* A_\omega
\end{equation}
has a real-analytic kernel.

We start by proving that the operator
\begin{equation}\label{eq:analytic_1}
\begin{split}
\widetilde{A}_\omega (\kappa_\omega)^* \theta_\omega (I - B_\omega) & B_\omega \theta_\omega (\kappa_\omega^{-1})^* A_\omega \\
     & = S \tilde{\chi}_\omega T (\kappa_\omega)^*\theta_\omega \mathcal{S} (1 - \rho_\omega) \mathcal{T} \mathcal{S} \rho_\omega \mathcal{T} \theta_\omega ( \kappa_\omega^{-1})^* S \chi_\omega T 
\end{split}
\end{equation}
has a real-analytic kernel. Recall that $\theta_\omega \equiv 1$ on a neighbourhood of the supports of $\rho_\omega$ and $\chi_\omega \circ \kappa_\omega^{-1}$. Consequently, it follows from Lemma \ref{lemma:chgt_manifold} that the kernel $\rho_\omega \mathcal{T} \theta_\omega ( \kappa_\omega^{-1})^* S \chi_\omega(\alpha,\beta)$ of $\rho_\omega \mathcal{T} \theta_\omega ( \kappa_\omega^{-1})^* S \chi_\omega$ is exponentially small (it satisfies \eqref{eq:meaning_exponentially_small}) when $\alpha$ is away from ${}^t D \kappa_\omega^{-1} \beta$. Similarly, see Remark \ref{remark:TS_away_diagonal}, the kernel of $\mathcal{T}\mathcal{S}$ is exponentially small away from the diagonal of $T^* \mathbb{T}^d \times T^* \mathbb{T}^d$. Writing the kernel of $\mathcal{T} \mathcal{S} \rho_\omega \mathcal{T} \theta_\omega ( \kappa_\omega^{-1})^* S \chi_\omega$ as
\begin{equation*}
\mathcal{T} \mathcal{S} \rho_\omega \mathcal{T} \theta_\omega ( \kappa_\omega^{-1})^* S \chi_\omega(\alpha,\beta) = \int_{T^* \mathbb{T}^d} \mathcal{T}\mathcal{S}(\alpha,\gamma) \rho_\omega \mathcal{T} \theta_\omega ( \kappa_\omega^{-1})^* S \chi_\omega(\gamma,\beta) \mathrm{d}\gamma, 
\end{equation*}
we find that $\mathcal{T} \mathcal{S} \rho_\omega \mathcal{T} \theta_\omega ( \kappa_\omega^{-1})^* S \chi_\omega(\alpha,\beta)$ is exponentially small when $\alpha$ is away from ${}^t D \kappa_\omega^{-1} \beta$. Since $\rho_\omega \equiv 1$ on a neighbourhood of the support of $\chi_\omega \circ \kappa_\omega^{-1}$, we find that the kernel of $\sqrt{1 - \rho_\omega}\mathcal{T} \mathcal{S} \rho_\omega \mathcal{T} \theta_\omega ( \kappa_\omega^{-1})^* S \chi_\omega$ is exponentially small (it satisfies \eqref{eq:meaning_exponentially_small} everywhere). We study now the kernel of $\tilde{\chi}_\omega T (\kappa_\omega)^*\theta_\omega \mathcal{S} \sqrt{1 - \rho_\omega}$. Since $\theta_\omega \equiv 1$ on a neighbourhood of the support of $\tilde{\chi}_\omega \circ \kappa_\omega^{-1}$, we can apply the second point in Lemma \ref{lemma:chgt_manifold} to bound the kernel of  $\tilde{\chi}_\omega T (\kappa_\omega)^*\theta_\omega \mathcal{S} \sqrt{1 - \rho_\omega}$. Using in addition that $\rho_\omega \equiv 1$ on a neighbourhood of the support of $\tilde{\chi}_\omega \circ \kappa_\omega^{-1}$, we find that for every $N > 0$ there is a $C > 0 $ such that the kernel $\tilde{\chi}_\omega T (\kappa_\omega)^*\theta_\omega \mathcal{S} \sqrt{1 - \rho_\omega}(\alpha,\beta)$ of $\tilde{\chi}_\omega T (\kappa_\omega)^*\theta_\omega \mathcal{S} \sqrt{1 - \rho_\omega}$ is bounded by the quantity $C\exp(- \brac{\alpha}/C) \brac{\beta}^{-N}$. Writing
\begin{equation*}
\begin{split}
\tilde{\chi}_\omega & T (\kappa_\omega)^*\theta_\omega \mathcal{S} (1 - \rho_\omega) \mathcal{T} \mathcal{S} \rho_\omega \mathcal{T} \theta_\omega ( \kappa_\omega^{-1})^* S \chi_\omega(\alpha,\beta) \\ & = \int_{T^* \mathbb{T}^d} \tilde{\chi}_\omega T (\kappa_\omega)^*\theta_\omega \mathcal{S} \sqrt{1 - \rho_\omega}(\alpha,\gamma) \sqrt{1 - \rho_\omega}\mathcal{T} \mathcal{S} \rho_\omega \mathcal{T} \theta_\omega ( \kappa_\omega^{-1})^* S \chi_\omega(\gamma,\beta) \mathrm{d}\gamma
\end{split}
\end{equation*}
and using the bound we just proved, we find that the kernel $\tilde{\chi}_\omega T (\kappa_\omega)^*\theta_\omega \mathcal{S} (1 - \rho_\omega) \mathcal{T} \mathcal{S} \rho_\omega \mathcal{T} \theta_\omega ( \kappa_\omega^{-1})^* S \chi_\omega(\alpha,\beta)$ is exponentially small. It follows then from Remark \ref{remark:small_to_analytic} that the operator \eqref{eq:analytic_1} is analytic.

We prove now that the operator
\begin{equation}\label{eq:analytic_2}
\begin{split}
\widetilde{A}_\omega (\kappa_\omega)^* \theta_\omega (I - B_\omega) & \theta_\omega (\kappa_\omega^{-1})^* A_\omega \\
     & = S \tilde{\chi}_\omega T (\kappa_\omega)^*\theta_\omega \mathcal{S} (1 - \rho_\omega) \mathcal{T} \theta_\omega ( \kappa_\omega^{-1})^* S \chi_\omega T 
\end{split}
\end{equation}
has a real-analytic kernel. As we proved above that, for every $N > 0$ there is a $C > 0$ such that the kernel $\tilde{\chi}_\omega T (\kappa_\omega)^*\theta_\omega \mathcal{S} \sqrt{1 - \rho_\omega}(\alpha,\beta)$ is bounded by $C\exp(- \brac{\alpha}/C) \brac{\beta}^{-N}$, we find that the kernel $\sqrt{1 - \rho_\omega} \mathcal{T} \theta_\omega ( \kappa_\omega^{-1})^* S \chi_\omega(\alpha,\beta)$ is a $\mathcal{O}(\brac{\alpha}^{-N} \exp(- \brac{\beta}/C))$. Writing
\begin{equation*}
\begin{split}
\tilde{\chi}_\omega & T (\kappa_\omega)^*\theta_\omega \mathcal{S} (1 - \rho_\omega) \mathcal{T} \theta_\omega ( \kappa_\omega^{-1})^* S \chi_\omega(\alpha,\beta)\\ & = \int_{T^* \mathbb{T}^d} \tilde{\chi}_\omega T (\kappa_\omega)^*\theta_\omega \mathcal{S} \sqrt{1 - \rho_\omega}(\alpha,\gamma)\sqrt{1 - \rho_\omega} \mathcal{T} \theta_\omega ( \kappa_\omega^{-1})^* S \chi_\omega(\gamma,\beta) \mathrm{d}\gamma,
\end{split}
\end{equation*}
we find that the kernel of $\tilde{\chi}_\omega T (\kappa_\omega)^*\theta_\omega \mathcal{S} (1 - \rho_\omega) \mathcal{T} \theta_\omega ( \kappa_\omega^{-1})^* S \chi_\omega$ is exponentially small. It follows then from Remark \ref{remark:small_to_analytic} that \eqref{eq:analytic_2} has a real-analytic kernel.

We keep going and prove that the operator
\begin{equation}\label{eq:analytic_3}
\widetilde{A}_\omega A_\omega - \widetilde{A}_\omega (\kappa_\omega)^* \theta_\omega \theta_\omega (\kappa_\omega^{-1})^* A_\omega = S \tilde{\chi}_\omega T (1 - \theta_\omega^2 \circ \kappa_\omega ) S \chi_\omega T
\end{equation}
has a real-analytic kernel. To do so, we write for $\alpha,\beta \in T^* M$
\begin{equation}\label{eq:integral_petite}
\tilde{\chi}_\omega T (1 - \theta_\omega^2 \circ \kappa_\omega ) S \chi_\omega = \int_{M} \tilde{\chi}_\omega(\alpha_x) \chi_{\omega}(\beta_x) (1 - \theta_\omega^2 ( \kappa_\omega(y)) ) K_T(\alpha,y) K_S(y,\beta) \mathrm{d}y.
\end{equation}
Since $\theta_\omega \circ \kappa_\omega \equiv 1$ on a neighbourhood of the supports of $\tilde{\chi}_\omega$ and $\chi_\omega$, and the kernels $K_T(\alpha,y)$ and $K_S(y,\beta)$ are exponentially small when $y$ is away respectively from $\alpha_x$ and $\beta_x$, we find that the integral in \eqref{eq:integral_petite} satisfies the estimate \eqref{eq:meaning_exponentially_small}. It follows then from Remark \ref{remark:small_to_analytic} that the operator \eqref{eq:analytic_3} has a real-analytic kernel.

Finally, we prove that the operator
\begin{equation}\label{eq:analytic_4}
A_\omega - \widetilde{A}_\omega A_\omega = S (1 - \tilde{\chi}_\omega)TS\chi_\omega T
\end{equation}
has a real-analytic kernel. Since $\tilde{\chi}_\omega \equiv 1$ on the support of $\chi_\omega$, it follows from Remark \ref{remark:TS_away_diagonal} that the kernel of $(1 - \tilde{\chi}_\omega)TS\chi_\omega$ is exponentially small (it satisfies \eqref{eq:meaning_exponentially_small}), and thus the operator \eqref{eq:analytic_4} has a real-analytic kernel according to Remark \ref{remark:small_to_analytic}.

Summing the operators \eqref{eq:analytic_1}, \eqref{eq:analytic_2}, \eqref{eq:analytic_3} and \eqref{eq:analytic_4}, we retrieve the operator \eqref{eq:analytic_whole}, which consequently has a real-analytic kernel.
\end{proof}

\section{Gevrey case}\label{section:Gevrey}

\subsection{Main results}

Let us explain how the analysis above can be applied in the Gevrey case. Let $\sigma \geq 1$. We recall that a $C^\infty$ function $f$ from an open subset of $\mathbb{R}^d$ to $\mathbb{C}$ is $\sigma$-Gevrey if for every compact subset $K$ of $U$, there are constants $C,R > 0$ such that for every $\alpha \in \mathbb{N}^d$ and $x \in K$, we have
\begin{equation*}
\va{\partial^\alpha f(x)} \leq C R^{\va{\alpha}} \alpha!^\sigma.
\end{equation*}
Notice that $1$-Gevrey functions are exactly real-analytic functions. When $\sigma > 1$ however, there are $\sigma$-Gevrey compactly supported functions. 

From now on, we assume that $\sigma > 1$. We say that a function valued in $\mathbb{R}^m$ is $\sigma$-Gevrey if its components are. With this definition, $\sigma$-Gevrey mappings are stable under composition \cite{GevreyOG}. We can then define a $\sigma$-Gevrey manifold: we just modify the usual definition of $C^\infty$ or $C^\omega$ manifold by asking for $\sigma$-Gevrey (instead of $C^\infty$ or $C^\omega$) change of charts in the atlas defining the $\sigma$-Gevrey structure. Since there is a $\sigma$-Gevrey version of the implicit function theorem (see for instance \cite{komatsu_implicit}), most of the basic differential geometry carries to the $\sigma$-Gevrey setting. Notice that a $C^\omega$ manifold has a natural structure of $\sigma$-Gevrey manifold, since real-analytic functions are $\sigma$-Gevrey. Reciprocally, if $M$ is a compact $\sigma$-Gevrey manifold, one can endow $M$ with a structure of $C^\omega$ manifold that is coherent with its $\sigma$-Gevrey structure. We say that a function on a $\sigma$-Gevrey manifold, or a map between $\sigma$-Gevrey manifolds, is $\sigma$-Gevrey if it is $\sigma$-Gevrey in $\sigma$-Gevrey coordinates.

The Gevrey analogue of Theorem \ref{thm:upper_bound_determinant} is then:

\begin{thm}\label{theorem:gevrey_determinant}
Let $\sigma > 1$. Let $M$ be a closed $\sigma$-Gevrey manifold and $F : M \to M$ be a $\sigma$-Gevrey Anosov diffeomorphism. If $w :M \to \mathbb{C}$ is a $\sigma$-Gevrey function, then there is a constant $C > 0$ such that for every $z \in \mathbb{C}$, we have
\begin{equation*}
\va{d_{F,w}(z)} \leq C\exp\p{C\p{\log(1 + \va{z})}^{\sigma d+1}}.
\end{equation*}
\end{thm}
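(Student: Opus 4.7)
The strategy is to reprove the analysis of Sections \ref{section:generalities} and \ref{section:construction} in the Gevrey setting, aiming to exhibit the Koopman operator as an operator of exponential class $1/(\sigma d)$ acting on a suitable anisotropic Hilbert space. Once this is achieved, Lemma \ref{lemma:order_exponential}, applied with $\alpha = 1/(\sigma d)$, immediately yields the desired bound
\begin{equation*}
\va{d_{F,g}(z)} \leq C \exp\p{C \p{\log(1 + \va{z})}^{1 + \sigma d}},
\end{equation*}
provided one checks that the Fredholm determinant of the operator on the Hilbert space coincides with the dynamical determinant $d_{F,g}$.

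The key simplification in the $\sigma$-Gevrey setting (with $\sigma > 1$) is that compactly supported $\sigma$-Gevrey functions exist, so genuine $\sigma$-Gevrey partitions of unity are available. This lets us bypass the FBI-based localization of Section \ref{section:construction}: after covering $M$ by finitely many $\sigma$-Gevrey charts $\kappa_\omega : U_\omega \to V_\omega \subset \mathbb{T}^d$, I would build local models on each $V_\omega$ in the spirit of \cite{faure_roy} and glue them with a Gevrey partition $(\chi_\omega)_{\omega \in \Omega}$, exactly as in the usual $C^\infty$ construction of anisotropic spaces. The anisotropic weights will be of the form $\exp(-\gamma G(x,\xi)^{1/\sigma})$, where $G$ is the escape function from \S \ref{subsection:Anosov_diffeomorphisms}, the $1/\sigma$ power reflecting the Gevrey regularity scale: a function is $\sigma$-Gevrey if and only if its FBI (or Fourier) transform decays like $\exp(-\rho \brac{\alpha}^{1/\sigma})$.

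The central quantitative point is the Gevrey analogue of Lemma \ref{lemma:exponential_inclusion}: the inclusion of $\sigma$-Gevrey functions with Gevrey norm of radius $R$ into those of radius $R' < R$ is of exponential class $1/(\sigma d)$. This follows from the same Taylor series / Fourier decomposition argument as in the analytic case but with the sharper growth $\alpha!^\sigma$ on coefficients: truncating to degrees $\leq m$ gives a rank $\mathcal{O}(m^d)$ approximation with error $\mathcal{O}(\exp(-c m^{1/\sigma}))$, feeding Lemma \ref{lemma:concrete} (with $\nu = 1/\sigma$). Repeating the chain of estimates of \S \ref{section:construction} with $\brac{\alpha}$ replaced by $\brac{\alpha}^{1/\sigma}$ in all exponentials (the escape estimate \eqref{eq:approximate_decay}, the localization Lemmas analogous to \ref{lemma:localisation}, \ref{lemma:discard}, and \ref{lemma:individual_bound}) then shows that $\mathcal{L}_{F,g}$ defines a compact operator of exponential class $1/(\sigma d)$ on the Gevrey-adapted Hilbert space $\mathcal{H}^\sigma$.

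To conclude, I would identify the Fredholm determinant $z \mapsto \det(I - z \widetilde{\mathcal{L}}_{F,g})$ on $\mathcal{H}^\sigma$ with $d_{F,g}$ by the same argument as in Proposition \ref{proposition:trace_formula}: the trace of $\widetilde{\mathcal{L}}_{F,g}^n$ equals the sum of its eigenvalues, which in turn equals the Atiyah--Bott sum in \eqref{eq:trace_formula} by the Gevrey trace identity \cite[Theorem 2.12 (iv)]{local_and_global}. Lemma \ref{lemma:order_exponential} then delivers the claimed bound. The main technical obstacle is the routine but lengthy verification that every kernel estimate of Appendix \ref{appendix:kernel_estimates} admits a Gevrey version with $\brac{\alpha}^{1/\sigma}$ in the exponentials; however, the availability of Gevrey cutoffs (including for the phase-space weights) makes all these steps strictly easier than their real-analytic counterparts, in agreement with the remark in the introduction that the Gevrey proof is slightly simpler than the proof of Theorem \ref{thm:upper_bound_determinant}.
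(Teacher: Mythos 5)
Your overall strategy matches the paper's: build a Gevrey anisotropic Hilbert space on which the Koopman operator is of exponential class $1/(\sigma d)$, identify the Fredholm determinant with $d_{F,g}$ via the Gevrey trace identity, and conclude with Lemma~\ref{lemma:order_exponential}. The use of genuine Gevrey partitions of unity to bypass the FBI machinery is exactly what the paper does: it sets $e_k^\omega = \chi_\omega(\kappa_\omega)^* e_k$ and $\tilde{e}_k^\omega = \tilde\chi_\omega(\kappa_\omega)^* e_k$, so the reconstruction formula is just Fourier inversion, with no remainder operator $\mathcal{K}$. That last point is worth noting: since the decomposition is exact, there is no need for a Gevrey analogue of Lemma~\ref{lemma:exponential_inclusion} or of Remark~\ref{remark:analytic_kernels}; Lemma~\ref{lemma:concrete} with $\nu = 1/\sigma$ suffices.

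However, there is a genuine internal inconsistency in your proposal. You first (correctly) state that the Gevrey cutoffs let you bypass the FBI-based localization, and then describe the main technical obstacle as producing Gevrey versions of all the kernel estimates of Appendix~\ref{appendix:kernel_estimates}. If you truly bypass the FBI transform, none of those estimates are needed, and indeed the paper's Gevrey section makes no use of them. What replaces them is the proof of the analogue of Lemma~\ref{lemma:discard} (Lemma~\ref{lemma:discard_gevrey} in the paper): writing $\langle\mathcal{L}_{F,g}\tilde{e}_k^\omega, e_{k'}^{\omega'}\rangle$ in a chart as an oscillatory integral with phase $2\pi k\cdot\kappa_\omega\circ F\circ\kappa_{\omega'}^{-1}(x) - 2\pi k'\cdot x$, and then, instead of holomorphic non-stationary phase, shifting the contour by a finite depth $\delta \sim \max(|k|,|k'|)^{(1-\sigma)/\sigma}$ using Gevrey pseudo-analytic (almost-analytic) extensions of the amplitude and phase. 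The $\bar\partial$ error is $O(\exp(-c\,\delta^{-1/(\sigma-1)})) = O(\exp(-c\max(|k|,|k'|)^{1/\sigma}))$, which balances exactly the gain from the contour shift. This is not a line-by-line transcription of the analytic estimates with $\brac{\alpha}$ replaced by $\brac{\alpha}^{1/\sigma}$, but a different technique, and it is the substantive content of the Gevrey appendix.

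Two smaller corrections. First, the weight should not be $\exp(-\gamma G(x,\xi)^{1/\sigma})$: the escape function $G(x,\xi)=|\xi_s|-|\xi_u|$ changes sign, so $G^{1/\sigma}$ is ill-defined. The paper instead redefines the escape function itself as $G(x,\xi)=|\xi_s|^{1/\sigma}-|\xi_u|^{1/\sigma}$ and keeps the weight $\exp(-2\gamma G_\omega(k))$. Second, the Gevrey--decay characterization of the FBI transform you invoke is true, but the paper avoids it: the bounds on $e_k^\omega$ (Lemmas~\ref{lemma:ekomega_gevrey} and \ref{lemma:dual_ekomega_gevrey}) are obtained by elementary Gevrey-norm estimates and repeated integration by parts against $e^{-2i\pi k x}$, which is simpler and makes the Gevrey section entirely self-contained.
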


As a corollary, we have

\begin{corollary}\label{corollary:upper_bound_resonances_gevrey}
Under the assumption of Theorem \ref{theorem:gevrey_determinant}, the number $N_{F,w}(r)$ of Ruelle resonances of $\mathcal{L}_{F,w}$ of modulus more than $r$ satisfy the asymptotic bound
\begin{equation*}
N_{F,w}(r) \underset{r \to 0}{=} \mathcal{O}\p{\va{\log r}^{\sigma d}}.
\end{equation*}
\end{corollary}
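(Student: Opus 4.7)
The plan is to deduce this counting bound for Ruelle resonances directly from the growth bound on the dynamical determinant provided by Theorem \ref{theorem:gevrey_determinant}, together with the identification of resonances with inverses of zeros of $d_{F,g}$ given by Theorem \ref{theorem:determinant}. The strategy is exactly the one already used in the proof of Lemma \ref{lemma:order_exponential} (where the exponent of the Fredholm determinant $(\log(1+|z|))^{1+1/\alpha}$ is translated into a resonance count $|\log r|^{1/\alpha}$). The only change is that here $1/\alpha$ is replaced by $sd$.

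More concretely, I would consider the entire function
\begin{equation*}
f : w \mapsto d_{F,g}(e^w).
\end{equation*}
By Theorem \ref{theorem:gevrey_determinant} this function satisfies $|f(w)| \leq C \exp(C(\log(1 + e^{|w|}))^{sd+1}) \leq C' \exp(C' |w|^{sd+1})$ for $|w|$ large, so $f$ has order of growth at most $sd+1$. Jensen's formula then yields that the number of zeros of $f$ in the disc $|w| \leq R$ is a $\mathcal{O}(R^{sd+1})$ as $R \to +\infty$.

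Next, I would translate this into a bound on $N_{F,g}(r)$ by exploiting the $2i\pi$-periodicity of the change of variables $z = e^w$. Each zero $z_0$ of $d_{F,g}$, whose multiplicity equals that of the Ruelle resonance $1/z_0$ by Theorem \ref{theorem:determinant}, produces a whole line of zeros of $f$ of the form $\log|z_0| + i\arg z_0 + 2i\pi \mathbb{Z}$. For a resonance $\lambda$ with $|\lambda| \geq r$, one has $|z_0| = |\lambda|^{-1} \leq r^{-1}$, hence $\log|z_0| \leq |\log r|$. Taking $R = 2|\log r|$, each such resonance contributes at least $cR$ zeros of $f$ inside the disc $|w| \leq R$, for some absolute constant $c > 0$. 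Combining this with the Jensen estimate gives
\begin{equation*}
c R \cdot N_{F,g}(r) \leq \mathcal{O}(R^{sd+1}),
\end{equation*}
which rearranges precisely into $N_{F,g}(r) = \mathcal{O}(|\log r|^{sd})$.

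There is no substantial obstacle in this argument: it is a direct translation between growth of an entire function of finite order and density of its zeros, entirely parallel to the proof of Lemma \ref{lemma:order_exponential} and to the deduction of Theorem \ref{theorem:upper_bound_resonances} from Theorem \ref{thm:upper_bound_determinant} in \S \ref{section:consequences}. The only slightly delicate point is the accounting of resonances versus zeros of $f$ via the substitution $z = e^w$, but this is handled by the periodicity argument above.
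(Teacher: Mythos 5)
Your proposal is correct and follows essentially the same approach as the paper: the paper deduces Theorem \ref{theorem:gevrey_determinant} and Corollary \ref{corollary:upper_bound_resonances_gevrey} simultaneously by identifying $d_{F,g}(z) = \det(I - z\widetilde{\mathcal{L}}_{F,g})$ via Theorem \ref{theorem:anisotropic_space_gevrey} and then invoking Lemma \ref{lemma:order_exponential}, whose eigenvalue-counting conclusion \eqref{eq:bound_eigenvalues} is established there by exactly the Jensen's-formula argument on $w \mapsto \det(I - e^w L)$ that you spell out. Your version simply unpacks that cited step, taking the determinant bound as the input rather than re-deriving it from the exponential class of the operator.
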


\begin{remark}
Notice that Theorem \ref{theorem:gevrey_determinant} and Corollary \ref{corollary:upper_bound_resonances_gevrey} are quantitative improvements over the bounds given in \cite[Theorem 2.12]{local_and_global}.
\end{remark}

Theorem \ref{theorem:gevrey_determinant} will follow from a Gevrey analogue of Theorem \ref{theorem:anisotropic_space} (Theorem \ref{theorem:anisotropic_space_gevrey} below). To state this result, we will need to define the Gevrey analogue of the spaces $\mathcal{O}_\epsilon$. To do so, let $\sigma > 1$ and $M$ be a closed $\sigma$-Gevrey manifold of dimension $d$, and cover $M$ by a finite family $(K_\lambda)_{\lambda \in \Lambda}$ of compact subsets such that for every $\lambda \in \Lambda$, the set $K_\lambda$ is contained in the domain $O_\lambda$ of a $\sigma$-Gevrey chart $\psi_\lambda : O_\lambda \to \mathfrak{O}_\lambda$. If $f : M \to \mathbb{C}$ is a $C^\infty$ function and $\epsilon > 0$, we define the norm
\begin{equation*}
\n{f}_{\sigma,\epsilon} \coloneqq \sup_{\lambda \in \Lambda} \sup_{x \in K_\lambda} \sup_{\alpha \in  \mathbb{N}^d} \frac{\epsilon^{\va{\alpha}}\va{\partial^\alpha (f \circ \psi_\lambda^{-1})(\psi_\lambda x)}}{\alpha!^\sigma}.
\end{equation*}
We let then $\mathcal{G}^\sigma_\epsilon$ be the space of $C^\infty$ functions $f$ on $M$ such that $\n{f}_{\sigma,\epsilon} < \infty$. One easily checks that $\mathcal{G}^\sigma_\epsilon$ is a Banach space. As in the real-analytic case the space $\mathcal{O}_\epsilon$ depends on the choice of a real-analytic metric on $M$, the space $\mathcal{G}^\sigma_\epsilon$ depends on the particular choice of charts $(\psi_\lambda)_{\lambda \in \Lambda}$, but the union $\bigcup_{\epsilon > 0} \mathcal{G}^\sigma_\epsilon$ is the space of $\sigma$-Gevrey functions on $M$, and thus does not depend on our particular choices.

We can now state the Gevrey analogue of Theorem \ref{theorem:anisotropic_space}.

\begin{thm}\label{theorem:anisotropic_space_gevrey}
Let $\sigma > 1$. Let $M$ be a closed $\sigma$-Gevrey manifold. Let $F$ be a $\sigma$-Gevrey diffeomorphism on $M$. Let $\epsilon > 0$. There is a separable Hilbert space $\mathcal{H}$ with the following properties:
\begin{enumerate}[label=(\roman*)]
\item there are continuous injections with dense images $\iota$ and $j$ from $\mathcal{G}^\sigma_\epsilon$ respectively to $\mathcal{H}$ and to $\mathcal{H}^*$; \label{item:injections_gevrey}
\item for every $w \in \mathcal{G}^\sigma_\epsilon$, there is a compact operator $\widetilde{\mathcal{L}}_{F,w}$ from $\mathcal{H}$ to itself of exponential class of type $1/(\sigma d)$; \label{item:extension_operator_gevrey}
\item the map $w \mapsto \widetilde{\mathcal{L}}_{F,w}$ is a bounded linear operator from $\mathcal{G}^\sigma_\epsilon$ to the space of trace class operators on $\mathcal{H}$; \label{item:holomorphic_family_gevrey}
\item for every $w \in \mathcal{G}^\sigma_\epsilon, n \in \mathbb{N}$ and $u,v \in \mathcal{G}^\sigma_\epsilon$, we have
\begin{equation*}
j(u)\p{\widetilde{\mathcal{L}}^n_{F,w} \iota(v)} = \int_M u \p{\mathcal{L}_{F,w}^n v} \mathrm{d}x.
\end{equation*} \label{item:integrals_gevrey}
\end{enumerate}
\end{thm}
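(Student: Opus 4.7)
The plan is to follow the blueprint of the proof of Theorem \ref{theorem:anisotropic_space}, with the crucial simplification that, for $\sigma > 1$, there exist $\sigma$-Gevrey partitions of unity. This dispenses with the FBI machinery of Section \ref{section:construction}: one localises directly by multiplication with Gevrey cutoffs and expands in Fourier series on the torus, which produces an \emph{exact} reconstruction formula with no analytic remainder operator $\mathcal{K}$. Since $\sigma$-Gevrey functions on the torus are characterised by Fourier decay of the form $\exp\p{-c\va{k}^{1/\sigma}}$, an application of Lemma \ref{lemma:concrete} with $\nu = 1/\sigma$ will produce the target exponential class $1/(\sigma d)$.

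Concretely, I would choose a finite cover of $M$ by $\sigma$-Gevrey charts $\kappa_\omega : U_\omega \to V_\omega \subset \mathbb{T}^d$ of Jacobian one, small enough that the phase-space cells $\mathcal{W}_{\omega,k}$ of Section \ref{section:construction} have Kohn--Nirenberg diameter at most $\varpi/10$, together with $\sigma$-Gevrey cutoffs $\chi_\omega \prec \tilde\chi_\omega$ with $(\chi_\omega)$ a partition of unity and $\tilde\chi_\omega \equiv 1$ on $\supp\chi_\omega$. Define
\begin{equation*}
e_k^\omega := \chi_\omega \cdot (e_k\circ\kappa_\omega), \qquad \tilde e_k^\omega := \tilde\chi_\omega \cdot (e_k\circ\kappa_\omega),
\end{equation*}
so that expansion of $(\chi_\omega u)\circ\kappa_\omega^{-1}$ in Fourier series on $\mathbb{T}^d$ gives the identity $u = \sum_{\omega,k}\langle u, e_k^\omega\rangle \tilde e_k^\omega$ on $\mathcal{G}^\sigma_\epsilon$. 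The Gevrey/Fourier-decay duality then provides the analogues of Lemmas \ref{lemma:ekomega_analytic} and \ref{lemma:dual_ekomega}: $\n{\tilde e_k^\omega}_{\sigma,\epsilon'} \leq C\exp\p{\rho\va{k}^{1/\sigma}}$ for $\epsilon'$ small and $\rho > 0$ arbitrary, and $\va{\langle u, e_k^\omega\rangle} \leq C\n{u}_{\sigma,\epsilon}\exp\p{-\rho\va{k}^{1/\sigma}}$ for some $\rho > 0$ depending only on $\epsilon$.

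Define then $G_\omega(k)$ and the relation $\hookrightarrow$ on $\Gamma = \Omega\times\mathbb{Z}^d$ exactly as in Section \ref{section:construction}, and let $\mathcal{H} = \mathcal{H}_{\gamma,\epsilon}$ be the completion of $\mathcal{G}^\sigma_\epsilon$ for the norm
\begin{equation*}
\n{u}_\gamma^2 := \sum_{\omega\in\Omega, k\in\mathbb{Z}^d} e^{-2\gamma G_\omega(k)}\va{\langle u, e_k^\omega\rangle}^2,
\end{equation*}
with $\gamma > 0$ small; separability, density of $\iota, j$ and the analogues of Lemmas \ref{lemma:injection_dual}--\ref{lemma:expression_norm} transpose verbatim. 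The main technical step, and the analogue of Lemma \ref{lemma:discard}, is the matrix-element estimate
\begin{equation*}
\va{\langle \mathcal{L}_{F,g}\tilde e_k^\omega, e_{k'}^{\omega'}\rangle} \leq C\n{g}_{\sigma,\epsilon}\exp\p{-\tau\max(\va{k},\va{k'})^{1/\sigma}}
\end{equation*}
when $(\omega,k)\not\hookrightarrow(\omega',k')$, with a weaker a priori bound $C\n{g}_{\sigma,\epsilon}\exp\p{\rho\max(\va{k},\va{k'})^{1/\sigma}}$ in general. This is the main obstacle and the only place where the Gevrey exponent really intervenes: the inner product is the oscillatory integral
\begin{equation*}
\int_M g\cdot\chi_{\omega'}\cdot(\tilde\chi_\omega\circ F)\,e^{2i\pi(k\cdot\kappa_\omega\circ F - k'\cdot\kappa_{\omega'})}\,\mathrm{d}x,
\end{equation*}
and the condition $(\omega,k)\not\hookrightarrow(\omega',k')$, together with the diameter condition on the cells $\mathcal{W}_{\omega,k}$, forces the phase to be non-stationary at scale $\lambda := \max(\va{k},\va{k'})$. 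Iterated integrations by parts, with the number of steps optimised against the Faà di Bruno bounds on the derivatives of the $\sigma$-Gevrey amplitude (the optimum being $n \sim \lambda^{1/\sigma}$), then produce the $\exp\p{-c\lambda^{1/\sigma}}$ decay; this is the classical $\sigma$-Gevrey non-stationary phase method.

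Combining this estimate with \eqref{eq:decay_escape} as in Lemma \ref{lemma:individual_bound} yields $\n{\mathcal{L}_{F,g}\tilde e_k^\omega}_\gamma \leq C\n{g}_{\sigma,\epsilon}\exp\p{-\tau\va{k}^{1/\sigma} - \gamma G_\omega(k)}$, while the norm of $l_{e_k^\omega}$ on $\mathcal{H}_{\gamma,\epsilon}$ is at most $e^{\gamma G_\omega(k)}$. Setting
\begin{equation*}
\widetilde{\mathcal{L}}_{F,g} := \sum_{\omega\in\Omega, k\in\mathbb{Z}^d} \iota(\mathcal{L}_{F,g}\tilde e_k^\omega)\otimes l_{e_k^\omega},
\end{equation*}
Lemma \ref{lemma:concrete} applied with $\nu = 1/\sigma$ gives an operator of exponential class $1/(\sigma d)$ on $\mathcal{H}$, and the map $g \mapsto \widetilde{\mathcal{L}}_{F,g}$ is manifestly linear and continuous from $\mathcal{G}^\sigma_\epsilon$ to the space of trace-class operators on $\mathcal{H}$. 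Property \ref{item:integrals_gevrey} follows by the same induction as in Lemma \ref{lemma:integrals_and_operators}, in fact simplified by the absence of a remainder term in the reconstruction formula.
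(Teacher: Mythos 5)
Your proposal follows the same overall blueprint as the paper's proof in Appendix B: define the $e_k^\omega$ and $\tilde e_k^\omega$ directly by $\sigma$-Gevrey cutoffs times pulled-back trigonometric monomials (no FBI transform), use the modified escape function with exponent $1/\sigma$, the same relation $\hookrightarrow$ and cells $\mathcal{W}_{\omega,k}$, the exact Fourier reconstruction formula without a remainder operator $\mathcal{K}$, the same weighted $\ell^2$ norm, and the same rank-one sum for $\widetilde{\mathcal{L}}_{F,g}$ fed into Lemma \ref{lemma:concrete} with $\nu = 1/\sigma$. The one place where your route genuinely differs from the paper's is in the proof of the crucial matrix-element estimate (the analogue of Lemma \ref{lemma:discard}): you propose iterated integration by parts with the number of steps $L$ optimised against Faà di Bruno bounds on the derivatives of the $\sigma$-Gevrey amplitude and phase (the ``derivative-balance'' version of Gevrey non-stationary phase, which the paper itself uses for the simpler estimate in Lemma \ref{lemma:dual_ekomega_gevrey}), whereas the paper proves Lemma \ref{lemma:discard_gevrey} by constructing $\sigma$-Gevrey pseudo-analytic extensions of the amplitude $h_{\omega,\omega'}$ and the phase, shifting the contour into $\mathbb{C}^d/\mathbb{Z}^d$ by an amount $\delta \sim \max(\va{k},\va{k'})^{(1-\sigma)/\sigma}$, and controlling the Cauchy--Pompeiu error through the $\bar\partial$ bound \eqref{eq:bound_dbar}. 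Both are standard Gevrey non-stationary phase methods and both yield the $\exp(-\tau\max(\va{k},\va{k'})^{1/\sigma})$ decay; the contour-shift route avoids the combinatorics of high-order derivatives of the composite phase $k\cdot\kappa_\omega\circ F\circ\kappa_{\omega'}^{-1}$ at the cost of introducing almost-analytic extensions, while your IBP route is more elementary provided one tracks the $k$-dependent growth of derivatives of $\nabla\Phi$ carefully (a rescaling in $\lambda = \max(\va{k},\va{k'})$ takes care of it). One small overcaution in your proposal: since no complex perturbations of $F$ are needed here, the ``weak'' a priori bound on the matrix elements is simply $C\n{g}_\infty$, which the paper notes and which is all you need for the sum in the analogue of Lemma \ref{lemma:individual_bound}; reaching for $\exp(\rho\max(\va{k},\va{k'})^{1/\sigma})$ is harmless but unnecessary.
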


\begin{proof}[Proof of Theorem \ref{theorem:gevrey_determinant} and Corollary \ref{corollary:upper_bound_resonances_gevrey}]
Using the point \ref{item:integrals_gevrey} from Theorem \ref{theorem:anisotropic_space_gevrey} as in the proof of Proposition \ref{proposition:trace_formula}, one gets that 
\begin{equation*}
d_{F,w}(z) = \det(I - z\widetilde{\mathcal{L}}_{F,w}),
\end{equation*}
and the results then follow from Lemma \ref{lemma:order_exponential}.
\end{proof}

The proof of Theorem \ref{theorem:anisotropic_space_gevrey} follows roughly the same lines as the proof of Theorem \ref{theorem:anisotropic_space}, so that we will only outline the main differences. The proof is actually simpler due to the existence of compactly supported Gevrey functions. For the rest of this section, let us fix $\sigma > 1$, a closed $\sigma$-Gevrey manifold $M$ and $F$ be a $\sigma$-Gevrey Anosov diffeomorphism on $M$.

\subsection{Construction of the space}

As in the real-analytic case, our construction starts with the choice of an escape function. In order to take into account our Gevrey parameter, we replace \eqref{eq:definition_escape_function} by
\begin{equation*}
G(x,\xi) = \va{\xi_s}^{\frac{1}{\sigma}} - \va{\xi_u}^{\frac{1}{\sigma}}.
\end{equation*}

The crucial estimate in that case is that there are constants $ \varpi > 0$ and $C$ such that if $\alpha,\beta,\gamma \in T^* M$ are large enough and such that $d_{KN}(\alpha,\beta) \leq \varpi$ and $d_{KN}(\mathcal{F} \beta,\gamma) \leq \varpi$, then
\begin{equation*}
G(\gamma) - G(\alpha) \leq - C^{-1} \va{\beta}^{\frac{1}{\sigma}}.
\end{equation*}

We follow the exposition from \S \ref{section:construction}. We let the $(U_\omega)_{\omega \in \Omega}, (\kappa_\omega)_{\omega \in \Omega}, (\chi_\omega)_{\omega \in \Omega}$ and $(\tilde{\chi}_\omega)_{\omega}$ be as in \S \ref{section:construction}. The only difference is that we require that the $\kappa_\omega$'s, the $\chi_\omega$'s and the $\tilde{\chi}_\omega$'s are $\sigma$-Gevrey (instead of real-analytic or $C^\infty$). In that case, we can give a simpler definition of the $e_k^\omega$'s and the $\tilde{e}_k^\omega$'s. For $\omega \in \Omega$ and $k \in \mathbb{Z}^d$, we define
\begin{equation*}
e_k^\omega = \chi_\omega (\kappa_\omega)^* e_k \quad \textup{and} \quad \tilde{e}_k^\omega = \tilde{\chi}_\omega (\kappa_\omega)^* e_k.
\end{equation*}
This simpler definition is made possible by the existence of $\sigma$-Gevrey bump functions.

It would be possible to state an analogue of Lemma \ref{lemma:localisation}, but we will not need it. The analogues of Lemmas \ref{lemma:ekomega_analytic} and \ref{lemma:dual_ekomega} in this context are:

\begin{lemma}\label{lemma:ekomega_gevrey}
Let $\rho > 0$. Then there are $C,\epsilon > 0$ such that for every $\omega \in \Omega$ and $k \in \mathbb{Z}^d$ we have
\begin{equation*}
\n{e_k^\omega}_{\sigma,\epsilon} \leq C e^{\rho \va{k}^{\frac{1}{\sigma}}} \quad \textup{ and } \n{\tilde{e}_k^\omega}_{\sigma,\epsilon} \leq C e^{\rho \va{k}^{\frac{1}{\sigma}}}.
\end{equation*}
\end{lemma}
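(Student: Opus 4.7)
The plan is to first obtain a tight estimate for the pure exponential $e_k$ in the $\sigma$-Gevrey norm on $\mathbb{T}^d$ via Stirling's formula, and then transfer that estimate to $e_k^\omega$ and $\tilde{e}_k^\omega$ by composing with the fixed $\sigma$-Gevrey diffeomorphism $\kappa_\omega$ and multiplying by the fixed $\sigma$-Gevrey bump $\chi_\omega$ (resp.\ $\tilde{\chi}_\omega$). Since $\Omega$ and the chart atlas $(\psi_\lambda)_{\lambda \in \Lambda}$ defining $\n{\cdot}_{\sigma,\epsilon}$ are both finite, it suffices to fix a single pair $(\omega,\lambda)$ and produce bounds for $\partial^\alpha(e_k^\omega \circ \psi_\lambda^{-1})$ on $\psi_\lambda(K_\lambda)$ with a constant depending only on $(\omega,\lambda)$.

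First I would establish the baseline bound on the torus: there is $C>0$ such that for every small enough $\tilde{\epsilon}>0$ and every $k\in \mathbb{Z}^d$,
\begin{equation*}
\sup_{\alpha \in \mathbb{N}^d} \frac{\tilde{\epsilon}^{\va{\alpha}} \n{\partial^\alpha e_k}_\infty}{\alpha!^\sigma} \leq C \exp\p{\sigma \p{C_1 \tilde{\epsilon}}^{\frac{1}{\sigma}} \va{k}^{\frac{1}{\sigma}}},
\end{equation*}
with a constant $C_1$ depending only on $\sigma$ and $d$. This follows from $\n{\partial^\alpha e_k}_\infty \leq (2\pi\va{k})^{\va{\alpha}}$, the inequality $\alpha!^\sigma \geq c^{\va{\alpha}}\va{\alpha}!^\sigma / d^{\sigma \va{\alpha}}$ (coming from $\alpha_1! \cdots \alpha_d! \geq ((\va{\alpha}/d)!)^d$ and Stirling), and a standard optimization $\sup_{n\geq 0} A^n/n!^\sigma \leq C_2 \exp(\sigma A^{1/\sigma})$ obtained by writing the logarithm and locating the maximum near $n = A^{1/\sigma}$. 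Choosing $\tilde{\epsilon}$ small enough that $\sigma(C_1 \tilde{\epsilon})^{1/\sigma} \leq \rho$ yields the claim with exponential rate $\rho$.

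To transfer to $M$, write in the chart $\psi_\lambda$
\begin{equation*}
e_k^\omega \circ \psi_\lambda^{-1} = (\chi_\omega \circ \psi_\lambda^{-1}) \cdot \exp\p{2 i \pi k \cdot \phi_\lambda^\omega}, \qquad \phi_\lambda^\omega \coloneqq \kappa_\omega \circ \psi_\lambda^{-1},
\end{equation*}
which vanishes outside $\psi_\lambda(K_\lambda \cap U_\omega)$. On this compact set, $\phi_\lambda^\omega$ is a fixed $\sigma$-Gevrey map with $\va{\partial^\beta \phi_\lambda^\omega} \leq A_\omega R_\omega^{\va{\beta}} \beta!^\sigma$. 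Faà di Bruno applied to $e_k \circ \phi_\lambda^\omega$, together with the partition inequality $\va{\pi}! \prod_{B \in \pi} \va{B}! \leq \va{\alpha}!$ over set-partitions $\pi$ of the multiset associated to $\alpha$ and a standard count of the combinatorial prefactors, yields an estimate of the form
\begin{equation*}
\frac{\epsilon^{\va{\alpha}} \va{\partial^\alpha (e_k \circ \phi_\lambda^\omega)}}{\alpha!^\sigma} \leq C' \sup_{\beta \in \mathbb{N}^d} \frac{\tilde{\epsilon}^{\va{\beta}} \n{\partial^\beta e_k}_\infty}{\beta!^\sigma},
\end{equation*}
valid for $\epsilon$ small enough in terms of $\tilde{\epsilon}, A_\omega, R_\omega$ but independent of $k$. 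A Leibniz-rule argument handles multiplication by the fixed $\sigma$-Gevrey bump $\chi_\omega \circ \psi_\lambda^{-1}$, contributing another $k$-independent constant and at most a further shrinkage of $\epsilon$. Combining with the torus bound gives $\n{e_k^\omega}_{\sigma,\epsilon} \leq C'' e^{\rho \va{k}^{1/\sigma}}$; the argument for $\tilde{e}_k^\omega$ is identical, and maximizing over the finite sets $\Omega$ and $\Lambda$ yields the uniform constants $C, \epsilon$ of the statement. The main subtlety will be the Faà di Bruno step: one must verify that composition with $\kappa_\omega$ costs only a $k$-independent constant and a $k$-independent shrinkage of $\epsilon$, which is the content of the standard Gevrey chain rule and relies crucially on the fact that all $k$-dependence is encapsulated in the outside factor $e_k$ whose Gevrey norm has already been controlled.
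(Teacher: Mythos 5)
Your proposal is correct and follows essentially the same route as the paper: first estimate the $\sigma$-Gevrey norm of the pure exponential $e_k$ on $\mathbb{T}^d$ via the Stirling-type optimization $\sup_{x>0} a^x x^{-\sigma x} = \exp(\sigma a^{1/\sigma}/e)$, then transfer the bound through composition with $\kappa_\omega$ and multiplication by the cutoff. The only difference is one of detail: the paper's proof stops after the torus estimate and simply cites Gevrey's original paper for stability of such bounds under Gevrey multiplication and composition, whereas you spell out the transfer step via Faà di Bruno and Leibniz. Your remark that the chain rule works because all $k$-dependence sits in the outer factor $e_k$, whose Gevrey norm has already been controlled, is exactly the right observation; the paper treats it as folklore.
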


\begin{proof}
Let us start by estimating the derivatives of the functions $e_k$'s defined on the torus $\mathbb{T}^d$. If $k \in \mathbb{Z}^d$ and $\alpha \in \mathbb{N}^d$, then we have
\begin{equation*}
\begin{split}
\frac{\va{\partial^\alpha (e^{2 i \pi k \cdot x})}\epsilon^{\va{\alpha}}}{\alpha!^\sigma} & = \frac{(2 \pi \epsilon )^{\va{\alpha}}}{\alpha!^\sigma} |k^\alpha| \\
     & \leq \frac{(2 \pi \va{k} \epsilon)^{\va{\alpha}}}{\alpha!^\sigma}.
\end{split}
\end{equation*}
Using Stirling's formula to estimate $\alpha!^\sigma$, we find constants $C$ and $A$ that does not depend on $k$ nor $\alpha$ such that
\begin{equation*}
\frac{\va{\partial^\alpha (e^{2 i \pi k \cdot x})}\epsilon^{\va{\alpha}}}{\alpha!^\sigma} \leq C \frac{(A \va{k} \epsilon)^{\va{\alpha}}}{\va{\alpha}^{\sigma \va{\alpha}}}.
\end{equation*}
Then, we notice that for every $a > 0$ we have
\begin{equation*}
\sup_{x \in \mathbb{R}_+^*} a^x x^{- \sigma x} = \exp\p{\sigma a^{\frac{1}{\sigma}}/e},
\end{equation*}
and thus
\begin{equation*}
\frac{\va{\partial^\alpha (e^{2 i \pi k x})}\epsilon^{\va{\alpha}}}{\alpha!^\sigma} \leq C \exp\p{\frac{\sigma (A \va{k} \epsilon)^{\frac{1}{\sigma}}}{e}}.
\end{equation*}
To end the proof, we only need to notice that such estimates are preserved by multiplication by a Gevrey function and composition by a Gevrey mapping (up to making $C$ larger and the $\epsilon$ on the right hand side smaller). A proof of this fact may be found for instance in the original paper of Gevrey \cite{GevreyOG}.
\end{proof}

\begin{lemma}\label{lemma:dual_ekomega_gevrey}
Let $\epsilon > 0$. Then there are constants $C,\rho > 0$ such that for every $\omega \in \Omega, k \in \mathbb{Z}^d$ and $u \in \mathcal{G}^\sigma_\epsilon$ we have
\begin{equation*}
\va{\brac{u,e_k^\omega}_{L^2}} \leq C \n{u}_{\sigma,\epsilon} e^{- \rho \va{k}^{\frac{1}{\sigma}}} \quad \textup{and} \quad \va{\brac{u,\tilde{e}_k^\omega}_{L^2}} \leq C \n{u}_{\sigma,\epsilon} e^{- \rho \va{k}^{\frac{1}{\sigma}}}.
\end{equation*}
\end{lemma}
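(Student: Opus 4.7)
My plan is to prove the lemma for $e_k^\omega$; the argument for $\tilde{e}_k^\omega$ is identical with $\chi_\omega$ replaced by $\tilde\chi_\omega$. Since $\kappa_\omega$ has Jacobian identically equal to $1$, the change of variable $y = \kappa_\omega(x)$ gives
\begin{equation*}
\langle u, e_k^\omega \rangle \;=\; \int_{U_\omega} u(x)\,\chi_\omega(x)\,e^{2 i \pi k \kappa_\omega(x)} \,\mathrm{d}x \;=\; \int_{\mathbb{T}^d} \bigl((u \chi_\omega)\circ \kappa_\omega^{-1}\bigr)(y)\, e^{2 i \pi k y}\,\mathrm{d}y,
\end{equation*}
where $(u\chi_\omega)\circ \kappa_\omega^{-1}$ is extended by zero outside $V_\omega = \kappa_\omega(U_\omega)$. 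Since $\chi_\omega$ is compactly supported in $U_\omega$, this extension is a genuine $C^\infty$ (in fact $\sigma$-Gevrey) function on $\mathbb{T}^d$, and the bracket $\langle u, e_k^\omega\rangle$ is thus the $k$-th Fourier coefficient of this function.

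Next I would check that $(u\chi_\omega)\circ \kappa_\omega^{-1}$ lies in the Gevrey space $\mathcal{G}^\sigma_{\epsilon'}(\mathbb{T}^d)$ (for the standard chart on $\mathbb{T}^d$) for some $\epsilon' > 0$ depending only on $\epsilon$, with norm controlled by $C \n{u}_{\sigma,\epsilon}$. This is the classical fact that the $\sigma$-Gevrey class is stable under multiplication by a $\sigma$-Gevrey function and composition with a $\sigma$-Gevrey diffeomorphism; see for instance \cite{GevreyOG}. Since $\Omega$ is finite and the $\chi_\omega$'s and $\kappa_\omega$'s are fixed, the loss in the Gevrey parameter and the multiplicative constant can be chosen uniformly in $\omega \in \Omega$. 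Consequently, there are constants $C,R>0$ (independent of $\omega,k,u$) such that the function $f_u = (u\chi_\omega)\circ \kappa_\omega^{-1}$, extended by zero to $\mathbb{T}^d$, satisfies
\begin{equation*}
|\partial^\alpha f_u(y)| \;\leq\; C\,\n{u}_{\sigma,\epsilon}\,R^{|\alpha|}\,\alpha!^\sigma \qquad \text{for every } \alpha \in \mathbb{N}^d, \ y \in \mathbb{T}^d.
\end{equation*}

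Then the decay of $\langle u, e_k^\omega\rangle$ follows from the standard Gevrey Fourier estimate. Writing $j \in \{1,\ldots,d\}$ for an index with $|k_j| = \max_i |k_i| \geq |k|/\sqrt{d}$, integration by parts $N$ times in the $y_j$ direction gives, for any $N \in \mathbb{N}$,
\begin{equation*}
(2\pi|k_j|)^N |\langle u, e_k^\omega\rangle| \;=\; \left| \int_{\mathbb{T}^d} \partial_{y_j}^N f_u(y)\, e^{2 i \pi k y}\,\mathrm{d}y \right| \;\leq\; C\,\n{u}_{\sigma,\epsilon}\, R^N\, (N!)^\sigma.
\end{equation*}
Using Stirling to bound $(N!)^\sigma \leq C^N N^{\sigma N} e^{-\sigma N}$ and choosing $N = \lfloor c |k_j|^{1/\sigma} \rfloor$ for a small enough constant $c > 0$ (depending on $R$ and $\sigma$ but not on $k$ or $u$), the right-hand side becomes $C \n{u}_{\sigma,\epsilon} e^{-\rho |k_j|^{1/\sigma}}$ for some $\rho > 0$. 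Since $|k_j| \geq |k|/\sqrt{d}$, this yields the desired bound $|\langle u, e_k^\omega\rangle| \leq C\n{u}_{\sigma,\epsilon}\, e^{-\rho |k|^{1/\sigma}}$, up to a harmless redefinition of $\rho$.

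There is no real obstacle here: the only mildly delicate point is the uniformity of the Gevrey seminorm of $(u\chi_\omega)\circ \kappa_\omega^{-1}$ in terms of $\n{u}_{\sigma,\epsilon}$, which is a standard consequence of Faà di Bruno's formula combined with the Gevrey bounds on $\chi_\omega$ and $\kappa_\omega^{-1}$, and this is easily made uniform in $\omega$ because $\Omega$ is finite.
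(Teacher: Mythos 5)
Your proof is correct and follows essentially the same route as the paper: change variables to reduce to a Fourier coefficient of $v = (u\chi_\omega)\circ\kappa_\omega^{-1}$, integrate by parts a large number of times, use the Gevrey bounds on the resulting derivative, and optimize the number of integrations at $N \sim c|k|^{1/\sigma}$. The only cosmetic difference is the choice of differential operator: the paper applies $(I-\Delta)^L$ and divides by $(1+4\pi^2|k|^2)^L$, whereas you apply $\partial_{y_j}^N$ in a single well-chosen direction $j$ with $|k_j|\geq |k|/\sqrt d$ and divide by $(2\pi|k_j|)^N$. Both give the same exponent $e^{-\rho|k|^{1/\sigma}}$. (The sign in your exponential $e^{2i\pi k y}$ should be a $-$ after passing the conjugation from the $L^2$ pairing through the change of variables, but this does not affect the modulus and hence the estimate.)
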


\begin{proof}
We will deal only with the case of $e_k^\omega$'s. The $\tilde{e}_k^\omega$ are dealt with similarly. Let us change variable and write
\begin{equation*}
\brac{u,e_k^\omega}_{L^2} =\int_{\mathbb{T}^d} e^{-2 i \pi k \cdot x} v(x) \mathrm{d}x,
\end{equation*}
where $v = (\chi_\omega u) \circ \kappa_\omega^{-1}$ is a $\sigma$-Gevrey function. Integrating by parts, we find that for every $L \in \mathbb{N}$, we have
\begin{equation*}
\begin{split}
\brac{u,e_k^\omega}_{L^2} & = \int_{\mathbb{T}^d} \frac{(I - \Delta)^L (e^{-2 i \pi k \cdot x})}{(1+ 4 \pi^2 |k|^2)^L} v(x) \mathrm{d}x \\
     & = \int_{\mathbb{T}^d} e^{-2 i \pi k \cdot x} \frac{(I - \Delta)^L v (x)}{(1+ 4 \pi^2 |k|^2)^L} \mathrm{d}x,
\end{split}
\end{equation*}
where $\Delta$ denotes the standard (non-positive) Laplace operator on $\mathbb{T}^d$. It follows that, for some constants $C,R > 0$ (that depend on $\epsilon$), we have
\begin{equation*}
\begin{split}
\va{\brac{u,e_k^\omega}_{L^2}} & \leq \frac{1}{(1+ 4 \pi^2 |k|^2)^L} \sup_{\mathbb{T}^d} \va{(I - \Delta)^L v} \\
     & \leq C \n{u}_{\sigma,\epsilon} \p{\frac{R}{1+ 4 \pi^2 |k|^2}}^{L} L^{2 \sigma L}.
\end{split}
\end{equation*}
Taking $L$ to be approximately $\p{\frac{1+ 4 \pi^2 \va{k}^2}{R}}^{\frac{1}{2\sigma}}/e$, we get the announced result.
\end{proof}

The analogue of Proposition \ref{proposition:reconstruction} is just the following direct consequence of Fourier inversion formula.

\begin{proposition}
If $u$ is a smooth function on $M$ then
\begin{equation*}
u = \sum_{\omega \in \Omega} \sum_{k \in \mathbb{Z}^d} \langle u, e_k^\omega \rangle_{L^2} \tilde{e}^\omega_k.
\end{equation*}
\end{proposition}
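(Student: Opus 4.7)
The plan is to reduce the identity to the classical Fourier inversion formula on the torus, applied chart by chart. For each $\omega \in \Omega$, I would introduce the function $v_\omega : \mathbb{T}^d \to \mathbb{C}$ defined as $(u \chi_\omega) \circ \kappa_\omega^{-1}$ on $V_\omega$ and extended by zero outside. Since $u$ is smooth and $\chi_\omega$ is a $\sigma$-Gevrey function compactly supported in $U_\omega$, the function $v_\omega$ is smooth (and compactly supported) on $\mathbb{T}^d$.

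Next, I would compute the Fourier coefficients of $v_\omega$. Using that the charts $\kappa_\omega$ have Jacobian identically equal to $1$, the change of variable formula gives
\begin{equation*}
\int_{\mathbb{T}^d} v_\omega(y) e^{-2 i \pi k y} \, \mathrm{d}y = \int_M u(x) \chi_\omega(x) e^{-2 i \pi k \kappa_\omega(x)} \, \mathrm{d}x = \langle u, e_k^\omega \rangle,
\end{equation*}
so the Fourier coefficients of $v_\omega$ are exactly the numbers $\langle u, e_k^\omega \rangle$. Since $v_\omega$ is smooth on $\mathbb{T}^d$, the classical Fourier inversion formula yields
\begin{equation*}
v_\omega = \sum_{k \in \mathbb{Z}^d} \langle u, e_k^\omega \rangle \, e_k,
\end{equation*}
with convergence in $C^\infty(\mathbb{T}^d)$, in particular absolute and uniform.

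Pulling back by $\kappa_\omega$ and restricting to $U_\omega$ then gives
\begin{equation*}
u(x) \chi_\omega(x) = \sum_{k \in \mathbb{Z}^d} \langle u, e_k^\omega \rangle \, ((\kappa_\omega)^* e_k)(x), \qquad x \in U_\omega.
\end{equation*}
Multiplying both sides by $\tilde{\chi}_\omega$ and using that $\tilde{\chi}_\omega \equiv 1$ on the support of $\chi_\omega$ leaves the left-hand side unchanged while converting the right-hand side into $\sum_k \langle u, e_k^\omega \rangle \, \tilde{e}_k^\omega$. Both sides being supported in $U_\omega$, the identity extends by zero to an identity of smooth functions on $M$, namely $u \chi_\omega = \sum_k \langle u, e_k^\omega \rangle \, \tilde{e}_k^\omega$. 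Summing over the finite set $\Omega$ and using that $(\chi_\omega)_{\omega \in \Omega}$ is a partition of unity yields the claim.

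There is no real obstacle here, which is why the authors call the statement a direct consequence of Fourier inversion; the only point worth highlighting is that the existence of a $\sigma$-Gevrey partition of unity (available since $\sigma > 1$) is precisely what makes the argument dramatically simpler than in the real-analytic Proposition \ref{proposition:reconstruction}, where the analogous identity involves an extra operator $\mathcal{K}$ with real-analytic kernel to absorb the failure of a real-analytic partition of unity to exist.
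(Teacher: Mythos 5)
Your proof is correct and is precisely the argument the paper alludes to when it calls the proposition ``a direct consequence of Fourier inversion formula'': change variables to $\mathbb{T}^d$ using the unit-Jacobian Gevrey charts, identify $\langle u,e_k^\omega\rangle$ as the Fourier coefficients of the compactly supported smooth function $(u\chi_\omega)\circ\kappa_\omega^{-1}$, invert, and then absorb the pullback into $\tilde e_k^\omega$ using $\tilde\chi_\omega\equiv 1$ on $\supp\chi_\omega$ before summing the partition of unity. Your closing remark, that the availability of compactly supported Gevrey bump functions is exactly what removes the correction operator $\mathcal{K}$ present in the real-analytic version, is also the intended explanation.
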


We define the $G_\omega$'s and the norms $\n{\cdot}_\gamma$'s by the same formulae as in \S \ref{section:construction} (using our new escape function). Using Lemma \ref{lemma:dual_ekomega_gevrey}, we immediately get the following analogue of Lemma \ref{lemma:finite_norm}.

\begin{lemma}\label{lemma:finite_norm_gevrey}
Let $\epsilon > 0$. There are constants $C,\gamma_0 > 0$ such that for every $0 < \gamma \leq \gamma_0$ and every $u \in \mathcal{G}_\epsilon^\sigma$ we have $\n {u}_{\gamma} \leq C\n{u}_{\sigma,\epsilon}$.
\end{lemma}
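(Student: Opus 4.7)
The plan is to deduce the estimate directly from the dual bound of Lemma \ref{lemma:dual_ekomega_gevrey} combined with an elementary polynomial-type bound on the escape function. In the Gevrey case the reconstruction formula has no remainder, so the natural analogue of the norm used in \S \ref{section:construction} is simply
\begin{equation*}
\n{u}_\gamma^2 = \sum_{\omega \in \Omega, k \in \mathbb{Z}^d} e^{-2\gamma G_\omega(k)} \va{\brac{u, e_k^\omega}}^2,
\end{equation*}
(with no $\mathcal{K}$-term). The argument will be a termwise pointwise estimate followed by summation.

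First, observe that $G(x,\xi) = \va{\xi_s}^{1/\sigma} - \va{\xi_u}^{1/\sigma}$ satisfies $\va{G(x,\xi)} \leq \va{\xi}^{1/\sigma}$ up to a constant depending on the metric. Since $G_\omega(k) = G(x_\omega, {}^tD_{x_\omega}\kappa_\omega(2\pi k))$, there exists a constant $A > 0$, uniform in $\omega \in \Omega$ (which is finite) and $k \in \mathbb{Z}^d$, such that $\va{G_\omega(k)} \leq A\va{k}^{1/\sigma}$. On the other hand, Lemma \ref{lemma:dual_ekomega_gevrey} furnishes constants $C,\rho > 0$ (depending on $\epsilon$) such that $\va{\brac{u,e_k^\omega}} \leq C\n{u}_{\sigma,\epsilon} e^{-\rho\va{k}^{1/\sigma}}$.

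Setting $\gamma_0 = \rho/(4A)$, for every $0 < \gamma \leq \gamma_0$ we get the termwise bound
\begin{equation*}
e^{-2\gamma G_\omega(k)}\va{\brac{u,e_k^\omega}}^2 \leq C^2 \n{u}_{\sigma,\epsilon}^2 e^{2A\gamma\va{k}^{1/\sigma} - 2\rho\va{k}^{1/\sigma}} \leq C^2 \n{u}_{\sigma,\epsilon}^2 e^{-\tfrac{3\rho}{2}\va{k}^{1/\sigma}}.
\end{equation*}
Since $\Omega$ is finite and $\sum_{k \in \mathbb{Z}^d} e^{-\tfrac{3\rho}{2}\va{k}^{1/\sigma}} < \infty$ (the summand decays faster than any polynomial), summing over $\omega$ and $k$ gives $\n{u}_\gamma \leq C' \n{u}_{\sigma,\epsilon}$ for some constant $C'$ independent of $u$.

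There is essentially no obstacle here: this is a direct quantitative comparison between the $\mathcal{G}^\sigma_\epsilon$-norm and the weighted $\ell^2$-norm defining $\n{\cdot}_\gamma$, made possible by the fact that the decay rate $\rho\va{k}^{1/\sigma}$ in Lemma \ref{lemma:dual_ekomega_gevrey} and the growth rate $A\va{k}^{1/\sigma}$ of the escape function $G_\omega$ share the same Gevrey-adapted exponent $1/\sigma$. The only mildly subtle point is that, compared to the real-analytic case of \S \ref{section:construction}, the $\mathcal{K}$-term has disappeared because the Fourier-reconstruction formula here is exact; consequently no estimate analogous to Lemma \ref{lemma:estimee_analytic_rest} is needed.
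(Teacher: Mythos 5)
Your proof is correct and takes the same route as the paper, which simply declares the lemma an immediate consequence of Lemma \ref{lemma:dual_ekomega_gevrey}; you have merely spelled out the two missing ingredients: the crude bound $\va{G_\omega(k)} \leq A\va{k}^{1/\sigma}$ (uniform over the finite index set $\Omega$) and the choice of $\gamma_0$ small enough that the decay from Lemma \ref{lemma:dual_ekomega_gevrey} dominates the weight, after which the $\ell^2$ sum converges. You also correctly note that the $\mathcal{K}$-term is absent here because the Gevrey reconstruction formula is exact.
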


For $\gamma$ and $\epsilon$ as in Lemma \ref{lemma:finite_norm_gevrey}, we let $\mathcal{H}_{\gamma,\epsilon}$ denotes the completion of $\mathcal{G}_\epsilon^\sigma$ for the norm $\mathcal{H}_{\gamma,\epsilon}$, and we let $\iota : \mathcal{G}_\epsilon^\sigma \to \mathcal{H}_{\gamma,\epsilon}$ be the inclusion. The analogue of Lemma \ref{lemma:injection_dual} is the following consequence of Lemma \ref{lemma:dual_ekomega_gevrey}.

\begin{lemma}\label{lemma:dual_inclusion_gevrey}
Let $\epsilon > 0$. Then there are $C,\gamma_0 >0$ such that for every $0 < \gamma \leq \gamma_0$ if $v \in \mathcal{G}_\epsilon^s$ then the linear form
\begin{equation*}
u \mapsto \langle u, v \rangle_{L^2}
\end{equation*}
extends to a continuous linear form $l_v$ on $\mathcal{H}_{\gamma,\epsilon}$. Moreover, the map $j : v \mapsto l_{\bar{v}}$ is $\mathbb{C}$-linear and continuous from $\mathcal{G}_\epsilon^\sigma$ to the dual of $\mathcal{H}_{\gamma,\epsilon}$.
\end{lemma}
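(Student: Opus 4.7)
The plan is to mimic exactly the strategy of Lemma \ref{lemma:injection_dual} in the real-analytic case, exploiting the Gevrey reconstruction formula and Lemma \ref{lemma:dual_ekomega_gevrey} in place of their real-analytic analogues. The key starting point is the identity
\begin{equation*}
\langle u, v\rangle = \sum_{\omega \in \Omega, k \in \mathbb{Z}^d} \langle u, e_k^\omega\rangle \langle \tilde{e}_k^\omega, v\rangle,
\end{equation*}
valid for $u,v \in \mathcal{G}^\sigma_\epsilon$ by applying the Fourier-inversion reconstruction of the previous proposition to $u$ and pairing with $v$ (interversion of sum and integral is justified by the decay provided by Lemma \ref{lemma:dual_ekomega_gevrey}).

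Next, I would split the weight $1 = e^{-\gamma G_\omega(k)} e^{\gamma G_\omega(k)}$ and apply Cauchy--Schwarz to get
\begin{equation*}
|\langle u, v\rangle| \leq \Bigl(\sum_{\omega,k} e^{-2\gamma G_\omega(k)} |\langle u, e_k^\omega\rangle|^2\Bigr)^{1/2} \Bigl(\sum_{\omega,k} e^{2\gamma G_\omega(k)} |\langle \tilde{e}_k^\omega, v\rangle|^2\Bigr)^{1/2}.
\end{equation*}
The first factor is bounded by $\|u\|_\gamma$ by definition. For the second factor, I would use that $|G_\omega(k)| \leq C_0 |k|^{1/\sigma}$ (this is immediate from the definition of the escape function in the Gevrey setting, since $\xi_u$ and $\xi_s$ both have size at most $|\xi|$) together with Lemma \ref{lemma:dual_ekomega_gevrey} applied to $v$: the bound $|\langle \tilde{e}_k^\omega, v\rangle| \leq C \|v\|_{\sigma,\epsilon} e^{-\rho |k|^{1/\sigma}}$. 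Choosing $\gamma_0$ small enough that $2\gamma_0 C_0 \leq \rho$, the resulting exponential weight satisfies $e^{2\gamma G_\omega(k) - 2\rho|k|^{1/\sigma}} \leq e^{-\rho|k|^{1/\sigma}}$, which gives a summable series bounded by $C^2 \|v\|_{\sigma,\epsilon}^2$.

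Combining these bounds yields $|\langle u, v\rangle| \leq C\|u\|_\gamma \|v\|_{\sigma,\epsilon}$ for every $u \in \mathcal{G}^\sigma_\epsilon$. Since $\mathcal{G}^\sigma_\epsilon$ is dense in $\mathcal{H}_{\gamma,\epsilon}$, the linear form $u \mapsto \langle u,v\rangle$ extends uniquely to a continuous linear form $l_v$ on $\mathcal{H}_{\gamma,\epsilon}$ with $\|l_v\|_{\mathcal{H}_{\gamma,\epsilon}^*} \leq C\|v\|_{\sigma,\epsilon}$. The continuity of $j : v \mapsto l_{\bar v}$ from $\mathcal{G}^\sigma_\epsilon$ to $\mathcal{H}_{\gamma,\epsilon}^*$ is then immediate from this same inequality (using $\|\bar v\|_{\sigma,\epsilon} = \|v\|_{\sigma,\epsilon}$). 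There is no real obstacle here; the whole argument is a direct Gevrey transcription of the real-analytic proof, the only thing to check is the compatibility $2\gamma_0 C_0 \leq \rho$ between the exponent $|k|^{1/\sigma}$ in the escape-function bound and the exponent in Lemma \ref{lemma:dual_ekomega_gevrey}, which is exactly the role of shrinking $\gamma_0$.
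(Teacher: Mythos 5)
Your proof is correct and is exactly the argument the paper intends: the paper disposes of this lemma in one line as a consequence of Lemma \ref{lemma:dual_ekomega_gevrey} and the Fourier reconstruction formula, and your expansion $\langle u,v\rangle=\sum_{\omega,k}\langle u,e_k^\omega\rangle\langle\tilde e_k^\omega,v\rangle$ followed by the weighted Cauchy--Schwarz, the bound $|G_\omega(k)|\leq C_0|k|^{1/\sigma}$, and the choice $2\gamma_0 C_0\leq\rho$ is precisely the omitted computation (simpler here than in the real-analytic case since there is no $\mathcal{K}$ remainder). Nothing to add.
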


We also have an analogue of Lemma \ref{lemma:expression_norm}: for $\epsilon > 0$ and $\gamma > 0$, the norm of $\mathcal{H}_{\gamma,\epsilon}$ is given by the expression \eqref{eq:expression_norm}, and as a consequence the Hilbert space $\mathcal{H}_{\gamma,\epsilon}$ is separable, and the map $j$ has dense image.

With $\Gamma = \Omega \times \mathbb{Z}^d$, we define the relation $\hookrightarrow$ as in \S \ref{section:construction}, and we notice that there is a constant $C > 0$ such that if $(\omega,k) \hookrightarrow (\omega',k')$ and $k$ is large enough then
\begin{equation*}
G_{\omega}(k) \leq G_{\omega'}(k') - C \va{k}^{\frac{1}{\sigma}}.
\end{equation*}

The main point to prove Theorem \ref{theorem:anisotropic_space_gevrey} is to get an analogue of Lemma \ref{lemma:discard}.

\begin{lemma}\label{lemma:discard_gevrey}
Let $\epsilon > 0$. There are $C, \tau >0$ such that, if $w \in \mathcal{G}_\epsilon^s$, for every $(\omega,k),(\omega',k') \in \Gamma$ such that $(\omega,k) \not\hookrightarrow (\omega',k')$ we have
\begin{equation*}
\va{\langle \mathcal{L}_{F,w} \tilde{e}_{k}^{\omega}, e_{k'}^{\omega'} \rangle_{L^2}} \leq C \n{w}_{\sigma,\epsilon} e^{ - \tau \max(\va{k},\va{k'})^{\frac{1}{\sigma}}}.
\end{equation*}
\end{lemma}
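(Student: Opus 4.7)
The plan is to unwind the pairing as a single oscillatory integral on $M$ and apply a Gevrey non-stationary phase argument, with the lower bound on the phase gradient coming directly from the hypothesis $(\omega,k)\not\hookrightarrow(\omega',k')$.

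First I would write the pairing explicitly. Since the $L^2$ scalar product is Hermitian and $\chi_{\omega'}$ is real, one has
\begin{equation*}
\langle \mathcal{L}_{F,g}\tilde e_k^\omega, e_{k'}^{\omega'}\rangle
 = \int_M g(x)\,\tilde\chi_\omega(Fx)\,\chi_{\omega'}(x)\,e^{i\Phi(x)}\,\mathrm{d}x,
\end{equation*}
with $\Phi(x)=2\pi(k\cdot\kappa_\omega(Fx)-k'\cdot\kappa_{\omega'}(x))$. The integrand is supported in $U_{\omega'}\cap F^{-1}(U_\omega)$, and by taking the cover $(U_\omega)_{\omega\in\Omega}$ fine enough from the start I may assume this support lies inside a single $\sigma$-Gevrey chart. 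After the change of variable $y=\kappa_{\omega'}(x)$ (whose Jacobian is $1$) the integral becomes $\int_{\mathbb{R}^d} A(y)\,e^{i\Psi(y)}\,\mathrm{d}y$, with $\Psi(y)=2\pi(k\cdot\varphi(y)-k'\cdot y)$ and $\varphi\coloneqq\kappa_\omega\circ F\circ\kappa_{\omega'}^{-1}$, and $A$ compactly supported in a fixed compact set. Stability of the $\sigma$-Gevrey class under products and composition with fixed Gevrey maps gives $A\in\mathcal{G}^\sigma_{\epsilon'}$ with $\n{A}_{\sigma,\epsilon'}\leq C\n{g}_{\sigma,\epsilon}$, where $\epsilon'>0$ and $C>0$ depend only on $F$, the finite atlas and the partition of unity, never on $k,k'$.

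Second, I would quantify non-stationarity. A direct chain-rule computation gives
\begin{equation*}
\nabla_y\Psi(y)=2\pi\bigl(k\cdot D\varphi(y)-k'\bigr),
\end{equation*}
which, up to the bounded invertible factor $D_x\kappa_{\omega'}$, coincides with the $\xi$-coordinate difference at the common base point $Fx$ between $(Fx,(2\pi k)D_{Fx}\kappa_\omega)\in\mathcal{W}_{\omega,k}$ and $\mathcal{F}(x,(2\pi k')D_x\kappa_{\omega'})\in\mathcal{F}(\mathcal{W}_{\omega',k'})$. The assumption $(\omega,k)\not\hookrightarrow(\omega',k')$ forces the Kohn--Nirenberg distance between these two points to be at least $\varpi/10$; since their base points agree, unrolling the definition of $d_{KN}$ on the fibre (where the fibre magnitudes are of order $|k|$ and $|k'|$) yields $|\nabla\Psi(y)|\geq c\max(|k|,|k'|)$ on $\supp A$, with $c>0$ independent of $\omega,\omega',k,k'$.

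Third, I would apply the standard $\sigma$-Gevrey non-stationary phase lemma: iterating integration by parts $N$ times against the first-order operator $L=(i|\nabla\Psi|^2)^{-1}\nabla\Psi\cdot\nabla$ and bounding derivatives of the amplitude via its Gevrey seminorm yields
\begin{equation*}
\Bigl|\int A\,e^{i\Psi}\,\mathrm{d}y\Bigr|\leq C^{N+1}\,N!^{\sigma}\,\mu^{-N}\,\n{A}_{\sigma,\epsilon'},\qquad \mu\coloneqq c\max(|k|,|k'|),
\end{equation*}
and the Stirling-type optimisation $N\sim(\mu/C)^{1/\sigma}$ produces the bound $C\n{A}_{\sigma,\epsilon'}e^{-\tau\mu^{1/\sigma}}$. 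Combined with the amplitude estimate from the first step, this gives exactly the claim.

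The main obstacle I anticipate is not any single step in isolation but the bookkeeping needed to make all constants uniform in $\omega,\omega',k,k'$. This reduces to the fact that, after factoring out the large parameter $\mu$, the normalised phases $\Psi/\mu$ lie in a relatively compact family of smooth maps built from a finite atlas and the fixed diffeomorphism $F$, so all $C^N$ seminorms entering the iterated integration by parts are controlled uniformly; likewise $A$ is supported in a fixed compact set so that the Gevrey seminorms used above do not deteriorate as $k,k'$ vary.
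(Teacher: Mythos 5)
Your proposal is correct in substance and sets up the problem exactly as the paper does: the same oscillatory integral with amplitude $g\cdot(\tilde\chi_\omega\circ F)\cdot\chi_{\omega'}$ and phase $2\pi\bigl(k\cdot\kappa_\omega\circ F\circ\kappa_{\omega'}^{-1}-k'\cdot(\cdot)\bigr)$, the same control of the amplitude's Gevrey norm by $\n{g}_{\sigma,\epsilon}$, and the same lower bound $\va{\nabla\Psi}\geq c\max(\va{k},\va{k'})$ deduced from $(\omega,k)\not\hookrightarrow(\omega',k')$ by identifying $\nabla\Psi$, up to uniformly bounded invertible factors, with a fibre difference between a point of $\mathcal{W}_{\omega,k}$ and a point of $\mathcal{F}(\mathcal{W}_{\omega',k'})$ over a common base point (note that two factors are involved, ${}^tD\kappa_{\omega'}^{-1}$ \emph{and} ${}^tDF$, not just the chart differential, but both are uniformly bounded with bounded inverses, so this changes nothing). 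Where you diverge from the paper is the mechanism producing the exponential gain: the paper constructs $\sigma$-Gevrey pseudo-analytic (almost-analytic) extensions of the amplitude and of the phase and shifts the contour by $\delta=\max(\va{k},\va{k'})^{(1-\sigma)/\sigma}$ in the direction $\overline{\nabla\Phi}/\va{\nabla\Phi}$, estimating the deformed integral by Taylor's formula and the $\bar\partial$-remainder by the Gevrey flatness bound $\exp(-C^{-1}\va{\im z}^{-1/(\sigma-1)})$; you instead invoke the Gevrey non-stationary phase estimate obtained by iterating integration by parts, getting $C^{N+1}N!^{\sigma}\mu^{-N}$ and optimizing $N\sim\mu^{1/\sigma}$. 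The two devices are essentially equivalent and both yield $e^{-\tau\max(\va{k},\va{k'})^{1/\sigma}}$: your route parallels the paper's own proof of the flat-phase case (Lemma \ref{lemma:dual_ekomega_gevrey}, done by iterated integration by parts against $(I-\Delta)^L$) and avoids building almost-analytic extensions, while the paper's contour shift avoids the one point you take as a black box, namely the Fa\`a di Bruno bookkeeping needed to check that the iterated transpose of $L=(i\va{\nabla\Psi}^2)^{-1}\nabla\Psi\cdot\nabla$ really costs only $N!^{\sigma}$ (and not, say, $N!^{\sigma+1}$) uniformly over the finitely many normalised phases; that estimate is standard in Gevrey microlocal analysis but should be cited or proved if you write this up.
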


\begin{proof}
Changing variables, we may write
\begin{equation}\label{eq:in_the_chart}
\langle \mathcal{L}_{F,w} \tilde{e}_{k}^{\omega}, e_{k'}^{\omega'} \rangle_{L^2} = \int_{\mathbb{T}^d} e^{i \Phi_{\omega,k}^{\omega',k'} (x)} h_{\omega,\omega'}(x) \mathrm{d}x,
\end{equation}
where
\begin{equation*}
h_{\omega,\omega'}(x) = w(\kappa_{\omega'}^{-1}(x))\tilde{\chi}_{\omega}(F(\kappa_{\omega'}^{-1} x)) \chi_\omega(\kappa_{\omega'}^{-1}x)
\end{equation*}
and
\begin{equation*}
\Phi_{\omega,k}^{\omega',k'}(x) = 2 \pi k \cdot \kappa_{\omega} \circ F \circ \kappa_{\omega'}^{-1}(x) - 2 \pi k' \cdot x.
\end{equation*}
Notice that the function $h_{\omega,\omega'}$ is $\sigma$-Gevrey, with Gevrey norm controlled by $\n{w}_{\sigma,\epsilon}$. Consequently, $h_{\omega,\omega'}$ has a $\sigma$-Gevrey pseudo-analytic extension $\tilde{h}_{\omega,\omega'}$. This is a smooth function on the complex neighbourhood $\mathbb{C}^d / \mathbb{Z}^d$ of $\mathbb{T}^d$ that coincides with $h_{\omega,\omega'}$ on $\mathbb{T}^d$ and such that $\bar{\partial} \tilde{h}_{\omega,\omega'}$ vanishes at infinite order on $\mathbb{T}^d$. Moreover, since $h_{\omega,\omega'}$ is $\sigma$-Gevrey, we may choose $\tilde{h}_{\omega,\omega'}$ as a $\sigma$-Gevrey function, with Gevrey norm controlled by $\n{w}_{\sigma,\epsilon}$. It implies in particular that the moduli of the components of $\bar{\partial} \tilde{h}_{\omega,\omega'}$ at a point $z \in \mathbb{C}^d / \mathbb{Z}^d$ are less than
\begin{equation}\label{eq:bound_dbar}
C \n{w}_{\sigma,\epsilon} \exp\p{ - C^{-1} \va{\im z}^{- \frac{1}{\sigma - 1}}},
\end{equation}
where the constant $C$ may depend on $\sigma$ and $\epsilon$ but not on $w$. Moreover, the sup norm of $\tilde{h}_{\omega,\omega'}$ is also controlled by $\n{w}_{\sigma,\epsilon}$. We can also assume that the support of $\tilde{h}_{\omega,\omega'}$ is contained in a small neighbourhood of the support of $h_{\omega,\omega'}$. Similarly, construct a $\sigma$-Gevrey pseudo-analytic extension $\widetilde{\Phi}_{\omega,k}^{\omega',k'}$ by choosing a $\sigma$-Gevrey pseudo-analytic extension for $\kappa_\omega \circ F \circ \kappa_{\omega'}^{-1}$ near the support of $h_{\omega,\omega'}$. One may refer to \cite[\S 1.1.1.3]{BJ20} for the details of this construction.

Let us now study the phase $\Phi_{\omega,k}^{\omega',k'}$. For $x \in \mathbb{T}^d$, the gradient of this phase is given by the formula
\begin{equation*}
\begin{split}
&\nabla \Phi_{\omega,k}^{\omega',k'}(x) \\ & \qquad = 2 \pi {}^t D \kappa_{\omega'}^{-1} (x)\p{{}^t D F (\kappa_{\omega'}^{-1} x) \cdot {}^t D \kappa_{\omega} (F(\kappa_{\omega'}^{-1} x))\cdot k - {}^t D \kappa_{\omega'}(\kappa_{\omega'}^{-1}x) \cdot k'}.
\end{split}
\end{equation*}
This quantity is ${}^t D \kappa_{\omega'}^{-1} (x)$ applied to the difference of a point of a point in $\mathcal{W}_{\omega',k'}$ and a point in $\mathcal{F}(\mathcal{W}_{\omega,k})$. Since these two points are in the same fiber of $T^* M$, it follows from our hypothesis $(\omega,k) \not\hookrightarrow (\omega',k')$ that
\begin{equation}\label{eq:lower_bound_phase}
\va{\nabla \Phi_{\omega,k}^{\omega',k'}} \geq C^{-1} \max(\va{k},\va{k'}),
\end{equation}
for some constant $C$ that does not depend on $\omega,\omega',k$ nor $k'$. 

Let then $\delta = \delta(k,k') = \max(|k|,|k'|)^{\frac{1 - \sigma}{\sigma}}$, and shift contour in \eqref{eq:in_the_chart}, replacing $x$ by $x + i \delta \overline{\nabla \Phi_{\omega,k}^{\omega',k'}}/ \va{\nabla \Phi_{\omega,k}^{\omega',k'}}$. We find that
\begin{equation}\label{eq:post_shift}
\begin{split}
& \langle \mathcal{L}_{F,g} \tilde{e}_{k'}^{\omega'}, e_k^\omega \rangle \\ & \hspace{0.1cm} = \int_{\mathbb{T}^d} e^{i \widetilde{\Phi}_{\omega,k}^{\omega',k'} (x+ i\delta \overline{\nabla \Phi_{\omega,k}^{\omega',k'}} /\va{\nabla \Phi_{\omega,k}^{\omega',k'}})} \tilde{h}_{\omega,\omega'}(x + i\delta \overline{\nabla \Phi_{\omega,k}^{\omega',k'}}/\va{\nabla \Phi_{\omega,k}^{\omega',k'}}) J_{\delta,\omega,k}^{\omega',k'} \mathrm{d}x \\ & \qquad \qquad \qquad \qquad \qquad \qquad \qquad \qquad \qquad \qquad \qquad \qquad \qquad \qquad \hspace{0.8cm}+ R_{\omega,k}^{\omega',k'}.
\end{split}
\end{equation}
Here, $J_{\delta,\omega,k}^{\omega',k'}$ is the Jacobian that appears when parametrizing the new domain of integration by $\mathbb{T}^d$, it is uniformly bounded. The remainder $R_{\omega,k}^{\omega',k'}$ is an integral involving $\bar{\partial} \tilde{h}_{\omega,\omega'}$ and $\bar{\partial} \tilde{\Phi}_{\omega,k}^{\omega',k'}$. It follows from Taylor's formula that the phase that appears in $R_{\omega,k}^{\omega',k'}$  has a non-negative imaginary part, provided $\max(|k|,|k'|)$ is large enough. Hence, it follows from the bound \eqref{eq:bound_dbar} that for some $C > 0$, we have
\begin{equation*}
\va{R_{\omega,k}^{\omega',k'}} \leq C \n{w}_ {\sigma,\epsilon} \exp\p{ - C^{-1} \delta^{- \frac{1}{\sigma - 1}}} = C \n{w}_{\sigma,\epsilon} \exp\p{ - C^{-1} \max(|k|,|k'|)^{\frac{1}{\sigma}}}.
\end{equation*}
To estimate the integral in \eqref{eq:post_shift}, we start by noticing that, when $\max(|k|,|k'|)$ is large (and thus $\delta$ is small), Taylor's formula and \eqref{eq:lower_bound_phase} give that
\begin{equation*}
\begin{split}
\im \widetilde{\Phi}_{\omega,k}^{\omega',k'} (x+ i\delta \overline{\nabla \Phi_{\omega,k}^{\omega',k'}}/\va{\nabla \Phi_{\omega,k}^{\omega',k'}} ) \geq - C^{-1} \delta & \max(|k|,|k'|) \\ & = -C^{-1} \max(|k|,|k'|)^{\frac{1}{\sigma}},
\end{split}
\end{equation*}
for some constant $C > 0$. Here, we used the fact that the derivative of $\widetilde{\Phi}_{\omega,k}^{\omega',k'}$ at a real point is $\mathbb{C}$-linear. This estimate with the bound on $R_{\omega,k}^{\omega',k'}$ and \eqref{eq:post_shift} gives the announced result.
\end{proof}

From this estimate, we get immediately the analogue of Lemma \ref{lemma:individual_bound}. Notice that, since we do not consider complex perturbations of $F$, the analogue of \eqref{eq:weak_discard} is the straightforward bound $|\brac{\mathcal{L}_{F,w} \tilde{e}_{k}^\omega, e_{k'}^{\omega'}}| \leq \n{w}_\infty$.

\begin{lemma}\label{lemma:individual_bound_gevrey}
Let $\epsilon > 0$. There are constants $C,\tau,\gamma_0 > 0$ such that if $w \in \mathcal{G}_\epsilon^s$ and $0 < \gamma \leq \gamma_0$ then for every $(\omega,k) \in \Gamma$ we have
\begin{equation*}
\n{\mathcal{L}_{\Psi_X \circ F,w} \tilde{e}_{k}^\omega}_\gamma \leq C \n{w}_{\mathcal{O}_\epsilon} e^{- \tau \va{k}^{\frac{1}{\sigma}} - \gamma G_\omega(k)}.
\end{equation*}
\end{lemma}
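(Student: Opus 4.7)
The plan is to adapt the argument for Lemma \ref{lemma:individual_bound} to the Gevrey scaling, replacing $|k|$ by $|k|^{1/\sigma}$ throughout. First note that in the Gevrey setting the reconstruction formula (the analogue of Proposition \ref{proposition:reconstruction}) has no remainder kernel $\mathcal{K}$, so the norm reduces to
\[
\n{u}_\gamma^2 = \sum_{(\omega',k') \in \Gamma} e^{-2\gamma G_{\omega'}(k')} \va{\brac{u, e_{k'}^{\omega'}}}^2.
\]
I would expand this norm for $u = \mathcal{L}_{F,g} \tilde{e}_k^\omega$ and split the sum according to whether $(\omega,k) \hookrightarrow (\omega',k')$ or not.

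For the part where $(\omega,k) \hookrightarrow (\omega',k')$, I would use the escape function descent $G_{\omega'}(k') \geq G_\omega(k) + C^{-1}|k|^{1/\sigma}$ (valid for $|k|$ large) to factor out $e^{-2\gamma G_\omega(k)}$ at the cost of a small gain $e^{-2\gamma |k|^{1/\sigma}/C}$. The matrix coefficient itself is bounded crudely by combining Lemmas \ref{lemma:ekomega_gevrey} and \ref{lemma:dual_ekomega_gevrey}: choose $\epsilon_1$ so that $\n{\tilde{e}_k^\omega}_{\sigma,\epsilon_1} \leq C e^{\rho |k|^{1/\sigma}}$, pick $\epsilon_2$ small enough that $\mathcal{L}_{F,g} : \mathcal{G}_{\epsilon_1}^\sigma \to \mathcal{G}_{\epsilon_2}^\sigma$ has norm $\lesssim \n{g}_{\sigma,\epsilon}$, and pair against $e_{k'}^{\omega'}$ using the dual bound $|\brac{\cdot, e_{k'}^{\omega'}}| \leq C\n{\cdot}_{\sigma,\epsilon_2} e^{-\rho' |k'|^{1/\sigma}}$ with $\rho$ chosen much smaller than $\rho'$. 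Since $(\omega,k)\hookrightarrow(\omega',k')$ forces $|k'|\asymp |k|$ and the number of such $(\omega',k')$ is polynomial in $|k|$, summing yields a contribution $\leq C\n{g}_{\sigma,\epsilon}^2 e^{-2\gamma G_\omega(k)} e^{-|k|^{1/\sigma}/C'}$, provided $\rho$ is chosen small enough.

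For the part where $(\omega,k) \not\hookrightarrow (\omega',k')$, Lemma \ref{lemma:discard_gevrey} provides the decay $|\brac{\mathcal{L}_{F,g}\tilde{e}_k^\omega, e_{k'}^{\omega'}}| \leq C\n{g}_{\sigma,\epsilon} e^{-\tau\max(|k|,|k'|)^{1/\sigma}}$. Since $G_{\omega'}(k') = O(|k'|^{1/\sigma})$, taking $\gamma_0$ small enough (so that $2\gamma_0 < \tau$ up to the implicit constant in the comparison $G_{\omega'}(k') \leq C|k'|^{1/\sigma}$) ensures that the weight $e^{-2\gamma G_{\omega'}(k')}$ is absorbed by half of the decay in $|k'|^{1/\sigma}$, while the other half gives summability in $k'$ and the overall factor $e^{-\tau|k|^{1/\sigma}}$. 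Writing $1 = e^{-2\gamma G_\omega(k)} e^{2\gamma G_\omega(k)}$ and using $G_\omega(k) = O(|k|^{1/\sigma})$ recasts this in the desired form, and combining the two pieces and taking a square root yields the lemma.

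The genuine analytic content lies entirely in Lemma \ref{lemma:discard_gevrey}, whose proof already exploited the pseudo-analytic extension with $\bar\partial$ bound $\exp(-c|\im z|^{-1/(\sigma-1)})$ to carry out a contour shift. Once that decay is granted, the present lemma is a bookkeeping exercise: the only mild care required is to ensure that the small constants $\rho$, $\tau$, $\gamma_0$ and the implicit constants in $G_\omega(k) = O(|k|^{1/\sigma})$ are compatible, i.e.\ $2\gamma_0$ is dominated by both $\tau$ and $C^{-1}$ from the escape-function estimate, and $\rho$ is dominated by $C^{-1}$ so the near-diagonal contribution still decays in $|k|^{1/\sigma}$.
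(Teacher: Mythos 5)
Your overall structure — drop the $\mathcal{K}$ remainder, split the sum over $(\omega',k')$ into the hooked part handled by the escape-function decay and the non-hooked part handled by Lemma \ref{lemma:discard_gevrey} — is exactly the paper's approach, and your treatment of the non-hooked sum is correct. Where you diverge is the bound on the hooked matrix coefficients. The paper observes that, since there is no complex perturbation $X$ to shrink, the analogue of the weak discard estimate is simply the crude bound $\va{\brac{\mathcal{L}_{F,g}\tilde{e}_k^\omega, e_{k'}^{\omega'}}} \leq C\n{g}_\infty$ (the functions $\tilde{e}_k^\omega$ and $e_{k'}^{\omega'}$ are uniformly bounded by $1$ in $L^\infty$); the polynomial count of hooked pairs is then killed by the escape-function factor $e^{-2\gamma c\va{k}^{1/\sigma}}$ alone, with no further tuning. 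Your alternative, pairing a bound $\n{\tilde{e}_k^\omega}_{\sigma,\epsilon_1}\leq Ce^{\rho\va{k}^{1/\sigma}}$ with the dual bound $e^{-\rho'\va{k'}^{1/\sigma}}$, still works, but you should not ask for $\rho \ll \rho'$: those two constants are coupled through $\epsilon_1 > \epsilon_2$, and both scale like $\epsilon^{1/\sigma}$, so their ratio is fixed and cannot be pushed in the direction you want. What actually makes your estimate close is the part you mention last, namely that $\rho$ can be taken small (via Lemma \ref{lemma:ekomega_gevrey}), so that $e^{2\rho\va{k}^{1/\sigma}}$ is absorbed by the escape-function gain $e^{-2\gamma c\va{k}^{1/\sigma}}$. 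In short, the decaying factor $e^{-\rho'\va{k'}^{1/\sigma}}$ is not needed and the phrase \emph{``with $\rho$ chosen much smaller than $\rho'$''} should be dropped; the paper's $L^\infty$ bound avoids this bookkeeping entirely.
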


As in \S \ref{section:construction}, we use this bound to define the operator $\widetilde{\mathcal{L}}_{F,w}$ by the formula
\begin{equation}\label{eq:def_L_tilde_gevrey}
\widetilde{\mathcal{L}}_{F,w} = \sum_{\omega \in \Omega,k \in \mathbb{Z}^d} \iota(\mathcal{L}_{F,w} \tilde{e}_{k}^\omega) \otimes l_{e_k^\omega}.
\end{equation}
It follows immediately from Lemmas \ref{lemma:concrete} and \ref{lemma:individual_bound_gevrey} that $\widetilde{\mathcal{L}}_{F,w}$ is an operator in the exponential class $1/(d \sigma)$. One can then check that these operators fulfill the conclusions of Theorem \ref{theorem:anisotropic_space_gevrey} (to prove the point \ref{item:integrals_gevrey}, one may follow the lines of the proof of Lemma \ref{lemma:integrals_and_operators}).

\subsection{Consequences}

As we mentioned, Theorem \ref{theorem:anisotropic_space_gevrey} implies Theorem \ref{theorem:gevrey_determinant} and Corollary \ref{corollary:upper_bound_resonances_gevrey}. Let us sketch the proof of another consequence of Theorem \ref{theorem:anisotropic_space_gevrey}. We consider a closed real-analytic manifold $M$. Let us consider the space $\mathcal{G}^{1+}$ of functions that are $\sigma$-Gevrey for every $\sigma >1$. Similarly, we say that a map $F: M \to M$ is $1+$-Gevrey if it is $\sigma$-Gevrey for every $\sigma > 1$.

We endow $\mathcal{G}^{1+}$ with a structure of Fréchet space (in particular, this is a Baire space) by endowing it with the semi-norms $\n{\cdot}_{\sigma,\epsilon}$ for $\sigma > 1$ and $\epsilon > 0$. To do so, we require that the charts used to define the norm $\n{\cdot}_{\sigma,\epsilon}$ are real-analytic rather than Gevrey. 

Thanks to this notion, we can state:

\begin{thm}\label{theorem:optimal_strange}
Let $M$ be a closed real-analytic manifold. Let $F: M \to M$ be a $1+$-Gevrey Anosov diffeomorphism. Then the set of $w \in \mathcal{G}^{1+}$ such that
\begin{equation}\label{eq:equality_optimal}
\limsup_{r \to 0} \frac{\log N_{F,w}(r)}{\log \va{\log r}} = d
\end{equation}
is a $G_\delta$ dense subset of $\mathcal{G}^{1+}$.
\end{thm}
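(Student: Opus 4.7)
The plan is to follow the potential-theoretic strategy from the proof of Theorem \ref{theorem:more_general} (specifically Lemma \ref{lemma:generic_curve}), now perturbing the weight $g$ rather than the diffeomorphism. The essential inputs are Theorem \ref{theorem:anisotropic_space_gevrey}(iii), which gives a bounded linear (hence complex-entire) dependence $g \mapsto \widetilde{\mathcal{L}}_{F,g}$ into the trace class on a fixed Hilbert space, and Theorem \ref{theorem:gevrey_determinant}, which for $g \in \mathcal{G}^\sigma_\epsilon$ bounds the order of $z \mapsto d_{F,g}(e^z)$ by $\sigma d + 1$, hence by $d+1$ for $g \in \mathcal{G}^{1+}$ upon letting $\sigma \to 1^+$.

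For the $G_\delta$ property, observe that
\begin{equation*}
\left\{g \in \mathcal{G}^{1+} : \limsup_{r \to 0} \frac{\log N_{F,g}(r)}{\log \va{\log r}} = d\right\} = \bigcap_{n \geq 1} \bigcup_{0 < r < 1/n} \left\{g : N_{F,g}(r) > \va{\log r}^{d-1/n}\right\},
\end{equation*}
since the $\limsup$ is automatically $\leq d$ by Corollary \ref{corollary:upper_bound_resonances_gevrey}. Each inner set is open because the non-zero eigenvalues of $\widetilde{\mathcal{L}}_{F,g}$ (which coincide with the Ruelle resonances of $\mathcal{L}_{F,g}$ by the Gevrey analogue of Proposition \ref{proposition:determnation_spectrum}) move continuously under the continuous map $g \mapsto \widetilde{\mathcal{L}}_{F,g}$ from Theorem \ref{theorem:anisotropic_space_gevrey}(iii); a standard Rouché argument applied to $d_{F,g}$ shows that if $N_{F,g_0}(r_0)$ strictly exceeds $\va{\log r_0}^{d-1/n}$, the same inequality persists upon slightly decreasing $r_0$ and varying $g$ in a small neighbourhood of $g_0$.

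For density, fix $g_0 \in \mathcal{G}^{1+}$ and $h \in \mathcal{G}^{1+}$. Since $g \mapsto \widetilde{\mathcal{L}}_{F,g}$ is linear, $\zeta \mapsto \widetilde{\mathcal{L}}_{F,g_0 + \zeta h}$ is entire into the trace class and therefore
\begin{equation*}
D(\zeta,z) = \det\p{I - e^z \widetilde{\mathcal{L}}_{F,g_0 + \zeta h}}
\end{equation*}
is entire on $\mathbb{C}^2$. Writing $\rho(\zeta)$ for the order of $D(\zeta,\cdot)$, the reasoning in the proof of Theorem \ref{theorem:gevrey_determinant} (via Lemma \ref{lemma:order_exponential}) gives $\rho(\zeta) \leq d+1$ uniformly in $\zeta$. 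Exactly as in Lemma \ref{lemma:generic_curve}, applying \cite[Proposition 1.40]{book_lelong_gruman} to the plurisubharmonic function $(\zeta,z) \mapsto \max(1, \log |D(\zeta,z)|)$ gives that, provided some $\zeta^* \in \mathbb{C}$ achieves $\rho(\zeta^*) = d+1$, the set $\{\zeta \in \mathbb{C} : \rho(\zeta) < d+1\}$ is polar, hence of Hausdorff dimension zero. Thus real $\zeta$ arbitrarily close to $0$ yield $\rho(\zeta) = d+1$; by the Gevrey analogue of Lemma \ref{lemma:optimal_order}, the corresponding weights $g_0 + \zeta h$ satisfy \eqref{eq:equality_optimal}, which gives density.

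The whole argument thus reduces to producing, for every $g_0$, at least one $h \in \mathcal{G}^{1+}$ and one $\zeta^* \in \mathbb{C}$ such that $\rho(g_0 + \zeta^* h) = d+1$, or equivalently to exhibiting a single $g^* \in \mathcal{G}^{1+}$ with $\rho(g^*) = d+1$ (then take $h = g^* - g_0$, $\zeta^* = 1$). This is the main obstacle. The decisive advantage of $\mathcal{G}^{1+}$ over the real-analytic category is that it contains compactly supported bumps and admits partitions of unity: concentrating $g^*$ near a well-chosen periodic orbit of $F$ should allow one to engineer the dynamical traces
\begin{equation*}
\tr \widetilde{\mathcal{L}}_{F,g^*}^n = \sum_{F^n x = x} \frac{\prod_{k=0}^{n-1} g^*(F^k x)}{\va{\det(I - D_x F^n)}}
\end{equation*}
to grow fast enough that the Jensen-type estimate underlying Lemma \ref{lemma:order_exponential} is saturated, forcing $\rho(g^*) = d+1$. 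This construction, unavailable in the real-analytic framework, is the key new ingredient made possible by working in the Gevrey class.
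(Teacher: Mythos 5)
Your proof follows the same potential-theoretic strategy as the paper's, and you correctly reduce the density claim to producing a single $g^*\in\mathcal{G}^{1+}$ with order of growth equal to $d+1$ (take $h=g^*-g_0$, $\zeta^*=1$, then the Lelong--Gruman argument shows real $\zeta$ near $0$ give weights $g_0+\zeta h \in A$ converging to $g_0$). Your $G_\delta$ argument is also sound, though the paper simply cites \cite[Theorem 2.7]{gouezel_liverani_1} for this.

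The genuine gap is the last paragraph. You flag the construction of $g^*$ as ``the main obstacle'' but leave it as a hope, and the mechanism you describe is not quite right. You say one should concentrate $g^*$ near a periodic orbit ``so that the dynamical traces grow fast enough to saturate the Jensen estimate.'' But that is not what happens: when $g_0$ is supported near a single periodic orbit, the traces $\tr\widetilde{\mathcal{L}}_{F,g_0}^n$ are not large — they decay like $|\det(I-D_{x_0}F^n)|^{-1}$. What the paper actually does is the following. First it invokes expansiveness of $F$ (\cite[Corollary 6.4.10]{katok_hasselblatt}) to find a neighbourhood $U$ of the orbit $\{F^k x_0\}$ so small that the only complete $F$-orbits contained in $U$ are the points of that orbit; then a Gevrey bump $g_0$ supported in $U$ and equal to $1$ on the orbit makes the product $\prod_k g_0(F^k x)$ vanish for every periodic $x$ not on the orbit. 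This collapses the dynamical determinant to the explicit infinite product $d_{F,g_0}(z)=\prod_{\lambda\in\mathcal{Z}}(1-\lambda z^m)$, where $\mathcal{Z}$ is the set of monomials $\lambda_1^{-k_1}\cdots\lambda_t^{-k_t}\lambda_{t+1}^{k_{t+1}}\cdots\lambda_d^{k_d}$ built from the eigenvalues of $D_{x_0}F^m$. Counting the lattice points $(k_1,\dots,k_d)$ with $|\lambda_1^{-k_1}\cdots\lambda_d^{k_d}|\geq r$ gives $N_{F,g_0}(r)\gtrsim|\log r|^d$ directly, with no appeal to Jensen. Neither the use of expansiveness to kill the other periodic orbits, nor the explicit factorization of $d_{F,g_0}$, nor the lattice-point count appears in your sketch, and without them you have not produced the required $g^*$, so the density claim is not yet proved.
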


In Theorem \ref{theorem:optimal_strange}, we can consider indifferently $\mathcal{G}^{1+}$ functions valued in $\mathbb{R}$ or $\mathbb{C}$.

\begin{proof}
Start by noticing that it follows from Theorem \ref{theorem:gevrey_determinant} that for every $w \in \mathcal{G}^{1+}$ we have
\begin{equation*}
\limsup_{r \to 0} \frac{\log N_{F,w}(r)}{\log \va{\log r}} \leq d.
\end{equation*}
Let $A$ denote the set of $w \in \mathcal{G}^{1+}$ such that \eqref{eq:equality_optimal} holds. It follows from \cite[Theorem 2.7]{gouezel_liverani_1} that $A$ is a $G_\delta$ in the $C^\infty$ topology (and thus in the $\mathcal{G}^{1+}$ topology which is finer).

It remains to prove that $A$ is dense. We will start by proving that $A$ is non-empty. Let $x_0 \in M$ be a periodic point for $F$. Thanks to the expansiveness of $F$ \cite[Corollary 6.4.10]{katok_hasselblatt}, there is an open neighbourhood $U$ of the orbit $\set{T^k x_0: k \in \mathbb{Z}}$ such that if $y \in M$ is such that the orbit of $y$ for $F$ is contained in $U$ then $y$ is a point of the orbit of $x_0$. Let then $w_0$ be a $\mathcal{G}^{1+}$ function supported in $U$ and such that $w_0(y) = 1$ for every $y$ in the orbit of $x_0$.

Now, if $n \geq 1$, we notice that in the sum
\begin{equation*}
\sum_{F^n x = x} \frac{\prod_{k = 0}^{n-1} w_0(F^k x)}{\va{\det\p{ I - D_x F^n}}},
\end{equation*}
the only $x$'s that will have a non-zero contributions are the elements of the orbit of $x_0$, and this only happens if $n$ is a multiple of the minimal period $m$ of $x_0$. Hence,
\begin{equation*}
\sum_{F^n x = x} \frac{\prod_{k = 0}^{n-1} w_0(F^k x)}{\va{\det\p{ I - D_x F^n}}} = 0
\end{equation*}
if $m$ does not divide $n$. Now, if $n = \ell m$ for some integer $\ell$, we find that
\begin{equation*}
\sum_{F^n x = x} \frac{\prod_{k = 0}^{n-1} w_0(F^k x)}{\va{\det\p{ I - D_x F^n}}} = \sum_{p = 0}^{m-1} \frac{1}{\va{\det\p{I - D_{F^p x_0} F^{m \ell}}}} = \frac{m}{\va{\det\p{I - D_{x_0} F^{m \ell}}}}.
\end{equation*}
Let us denote by $\lambda_1,\dots,\lambda_d$ the eigenvalues of $D_{x_0} F^m$. We order them in such a way that, for some $t \in \set{2,\dots,d-1}$, we have $\va{\lambda_j} > 1$ for $j = 1,\dots,t$ and $\va{\lambda_j} < 1$ for $j = t+1,\dots,d$. This way, we have
\begin{equation}\label{eq:explicit_trace}
\begin{split}
\frac{1}{\va{\det\p{I - D_{x_0} F^{m \ell}}}} & = \frac{1}{\prod_{j = 1}^t (\lambda_j^\ell - 1) \prod_{j = t+1}^{d} (1 - \lambda_j^{\ell})} \\
     & = \prod_{j = 1}^{t} \frac{\lambda_j^{- \ell} }{1 - \lambda_j^{- \ell}} \prod_{j = t+1}^{d} \frac{1}{1 - \lambda_j^t} \\
     & = \prod_{j = 1}^t \sum_{k \geq 1} \lambda_j^{- k \ell} \prod_{j = t+1}^{d} \sum_{k \geq 0} \lambda_j^{k \ell} \\
     & = \sum_{\lambda \in \mathcal{Z}} \lambda^{\ell},
\end{split}
\end{equation}
where $\mathcal{Z}$ is the set of complex numbers of the form $\lambda_1^{- k_1} \dots \lambda_{t}^{- k_t} \lambda_{t+1}^{k_{t+1}} \dots \lambda_d^{k_d}$, where $k_1,\dots,k_t$ are larger than or equal to $1$ and $k_{t+1},\dots,k_d$ are larger than or equal to $0$. It is understood that the multiplicity of an element of $\mathcal{Z}$ is the number of ways to write it in the form above.

From \eqref{eq:explicit_trace}, we get that
\begin{equation*}
d_{F,w_0}(z) = \exp\p{- \sum_{\ell = 1}^{+ \infty} \frac{z^{m \ell}}{\ell}\sum_{\lambda \in \mathcal{Z}} \lambda^{\ell}} = \prod_{\lambda \in \mathcal{Z}} (1 - \lambda z^m).
\end{equation*}
It follows that the resonances of $\mathcal{L}_{F,w_0}$ are the $m$th roots of the elements of $\mathcal{Z}$. Counting the elements of $\mathcal{Z}$, we find that there is a constant $C$ such that $N_{F,w_0}(r) \geq C^{-1} |\log r|^d$ for $r$ small enough, and it follows that $w_0 \in A$.

We prove now that $A$ is dense in $\mathcal{G}^{1+}$. Working as in the proof of Lemma \ref{lemma:optimal_order}, we see that if $w \in \mathcal{G}^{1+}$ then the order of growth of $z \mapsto d_{F,w}(e^z)$ is less than $d+1$ with equality if and only if $w \in A$. For $w$ an element of $\mathcal{G}^{1+}$, let us consider the function of two complex parameters $f(u,z) = d_{F, (1-u) w_0 + u w}(e^z)$.

For every $s > 1$, we can find $\epsilon > 0$ such that $w_0$ and $w$ belong to $\mathcal{G}_{\epsilon}^s$. Applying Theorem \ref{theorem:anisotropic_space_gevrey} and then working as in the proof of Proposition \ref{proposition:trace_formula}, we find that the for $u,z \in \mathbb{C}$ we have $f(u,z) = \det\p{I - e^z \widetilde{\mathcal{L}}_{F,(1-u) w_0 + u w}}$. In particular, $f$ is holomorphic in $\mathbb{C}^2$, and for every $u \in \mathbb{C}$, the order of growth of the function $z \mapsto f(u,z)$ is less than $\sigma d +1$. Since this is true for every $\sigma > 1$, the order of growth of this function is actually less than $d+1$. Moreover, we have equality for $u = 0$, since $w_0 \in A$.

Working as in the proof of Theorem \ref{theorem:optimal_dense}, we see that the set of $u \in \mathbb{C}$ such that $(1-u) w_0 + u w$ does not belong to $A$ is contained in a polar set, in particular it has Hausdorff dimension zero (and thus its intersection with $\mathbb{R}$ has empty interior). As a consequence, $w$ belongs to the closure of $A$, and following $A$ is dense in $\mathcal{G}^{1+}$.
\end{proof}

\section*{Acknowledgements}

I would like to thank Oscar Bandtlow and Julia Slipantschuk for explaining the construction from \cite{optimal_examples} to me. The author benefits from the support of the French government “Investissements d’Avenir” program integrated to France 2030, bearing the following reference ANR-11-LABX-0020-01. Most of this work was done while the author was working at Massachusetts Institute of Technology.

\bibliographystyle{alpha}
\bibliography{biblio_Anosov_analytic.bib}

\begin{thebibliography}{BCHP11}

\bibitem[Ano67]{original_anosov}
D.~V. Anosov.
\newblock Geodesic flows on closed {R}iemannian manifolds of negative
  curvature.
\newblock {\em Trudy Mat. Inst. Steklov.}, 90:209, 1967.

\bibitem[Bal17]{baladi_quest}
V.~Baladi.
\newblock The quest for the ultimate anisotropic {B}anach space.
\newblock {\em J. Stat. Phys.}, 166(3-4):525--557, 2017.

\bibitem[Bal18]{baladi_book}
V.~Baladi.
\newblock {\em Dynamical zeta functions and dynamical determinants for
  hyperbolic maps. A functional approach}, volume~68 of {\em Ergebnisse der
  Mathematik und ihrer Grenzgebiete. 3. Folge. A Series of Modern Surveys in
  Mathematics [Results in Mathematics and Related Areas. 3rd Series. A Series
  of Modern Surveys in Mathematics]}.
\newblock Springer, Cham, 2018.

\bibitem[Ban08]{bandtlow_exponential}
O.~F. Bandtlow.
\newblock Resolvent estimates for operators belonging to exponential classes.
\newblock {\em Integral Equations Operator Theory}, 61(1):21--43, 2008.

\bibitem[BCHP11]{christiansen_hyperbolic}
D.~Borthwick, T.~J. Christiansen, P.~D. Hislop, and P.~A. Perry.
\newblock Resonances for manifolds hyperbolic near infinity: optimal lower
  bounds on order of growth.
\newblock {\em Int. Math. Res. Not. IMRN}, (19):4431--4470, 2011.

\bibitem[BJ20]{BJ20}
Y.~Guedes Bonthonneau and M.~Jézéquel.
\newblock {FBI} transform in {G}evrey classes and {A}nosov flows.
\newblock {\em arXiv:2001.03610}, 2020.

\bibitem[BKL02]{blank_keller_liverani}
M.~Blank, G.~Keller, and C.~Liverani.
\newblock Ruelle-{P}erron-{F}robenius spectrum for {A}nosov maps.
\newblock {\em Nonlinearity}, 15(6):1905--1973, 2002.

\bibitem[BN19]{bandtlow_naud}
O.~F. Bandtlow and F.~Naud.
\newblock Lower bounds for the {R}uelle spectrum of analytic expanding circle
  maps.
\newblock {\em Ergodic Theory Dynam. Systems}, 39(2):289--310, 2019.

\bibitem[Bow08]{bowen_book}
R.~Bowen.
\newblock {\em Equilibrium states and the ergodic theory of {A}nosov
  diffeomorphisms}, volume 470 of {\em Lecture Notes in Mathematics}.
\newblock Springer-Verlag, Berlin, revised edition, 2008.
\newblock With a preface by David Ruelle, edited by Jean-Ren\'{e} Chazottes.

\bibitem[BT07]{baladi_tsujii}
V.~Baladi and M.~Tsujii.
\newblock Anisotropic {H}\"{o}lder and {S}obolev spaces for hyperbolic
  diffeomorphisms.
\newblock {\em Ann. Inst. Fourier (Grenoble)}, 57(1):127--154, 2007.

\bibitem[BT08]{baladi_tsujii_determinant}
V.~Baladi and M.~Tsujii.
\newblock Dynamical determinants and spectrum for hyperbolic diffeomorphisms.
\newblock In {\em Geometric and probabilistic structures in dynamics}, volume
  469 of {\em Contemp. Math.}, pages 29--68. Amer. Math. Soc., Providence, RI,
  2008.

\bibitem[CH05]{christiansen_schrodinger}
T.~Christiansen and P.~D. Hislop.
\newblock The resonance counting function for {S}chr\"{o}dinger operators with
  generic potentials.
\newblock {\em Math. Res. Lett.}, 12(5-6):821--826, 2005.

\bibitem[CH10]{christiansen_potential}
T.~J. Christiansen and P.~D. Hislop.
\newblock Maximal order of growth for the resonance counting functions for
  generic potentials in even dimensions.
\newblock {\em Indiana Univ. Math. J.}, 59(2):621--660, 2010.

\bibitem[Chr05]{christiansen_several}
T.~Christiansen.
\newblock Several complex variables and the distribution of resonances in
  potential scattering.
\newblock {\em Comm. Math. Phys.}, 259(3):711--728, 2005.

\bibitem[Chr06]{christiansen_euclidean}
T.~J. Christiansen.
\newblock Several complex variables and the order of growth of the resonance
  counting function in {E}uclidean scattering.
\newblock {\em Int. Math. Res. Not.}, pages Art. ID 43160, 36, 2006.

\bibitem[FG14]{space_Anosov}
F.~T. Farrell and A.~Gogolev.
\newblock The space of {A}nosov diffeomorphisms.
\newblock {\em J. Lond. Math. Soc. (2)}, 89(2):383--396, 2014.

\bibitem[Flo71]{lokalkonvexe}
K.~Floret.
\newblock Lokalkonvexe {S}equenzen mit kompakten {A}bbildungen.
\newblock {\em J. Reine Angew. Math.}, 247:155--195, 1971.

\bibitem[FR06]{faure_roy}
F.~Faure and N.~Roy.
\newblock Ruelle-{P}ollicott resonances for real analytic hyperbolic maps.
\newblock {\em Nonlinearity}, 19(6):1233--1252, 2006.

\bibitem[Fra69]{franks_classification}
J.~Franks.
\newblock Anosov diffeomorphisms on tori.
\newblock {\em Trans. Amer. Math. Soc.}, 145:117--124, 1969.

\bibitem[Fri86]{fried_selberg}
D.~Fried.
\newblock The zeta functions of {R}uelle and {S}elberg. {I}.
\newblock {\em Ann. Sci. \'{E}cole Norm. Sup. (4)}, 19(4):491--517, 1986.

\bibitem[Fri95]{fried_zeta}
D.~Fried.
\newblock Meromorphic zeta functions for analytic flows.
\newblock {\em Comm. Math. Phys.}, 174(1):161--190, 1995.

\bibitem[FRS08]{faure_roy_sjostrand}
F.~Faure, N.~Roy, and J.~Sj\"{o}strand.
\newblock Semi-classical approach for {A}nosov diffeomorphisms and {R}uelle
  resonances.
\newblock {\em Open Math. J.}, 1:35--81, 2008.

\bibitem[Gev18]{GevreyOG}
M.~Gevrey.
\newblock Sur la nature analytique des solutions des \'{e}quations aux
  d\'{e}riv\'{e}es partielles. {P}remier m\'{e}moire.
\newblock {\em Ann. Sci. \'{E}cole Norm. Sup. (3)}, 35:129--190, 1918.

\bibitem[GL06]{gouezel_liverani_1}
S.~Gou\"{e}zel and C.~Liverani.
\newblock Banach spaces adapted to {A}nosov systems.
\newblock {\em Ergodic Theory Dynam. Systems}, 26(1):189--217, 2006.

\bibitem[GL08]{gouezel_liverani_2}
S.~Gou\"{e}zel and C.~Liverani.
\newblock Compact locally maximal hyperbolic sets for smooth maps: fine
  statistical properties.
\newblock {\em J. Differential Geom.}, 79(3):433--477, 2008.

\bibitem[GS91]{grauert_tube_I}
V.~Guillemin and M.~Stenzel.
\newblock Grauert tubes and the homogeneous {M}onge-{A}mp\`ere equation.
\newblock {\em J. Differential Geom.}, 34(2):561--570, 1991.

\bibitem[GS92]{grauert_tube_II}
V.~Guillemin and M.~Stenzel.
\newblock Grauert tubes and the homogeneous {M}onge-{A}mp\`ere equation. {II}.
\newblock {\em J. Differential Geom.}, 35(3):627--641, 1992.

\bibitem[H\"03]{hormander_book_1}
L.~H\"{o}rmander.
\newblock {\em The analysis of linear partial differential operators. {I}}.
\newblock Classics in Mathematics. Springer-Verlag, Berlin, 2003.
\newblock Distribution theory and Fourier analysis, reprint of the second
  (1990) edition.

\bibitem[HK76]{hayman_kennedy_book}
W.~K. Hayman and P.~B. Kennedy.
\newblock {\em Subharmonic functions. {V}ol. {I}}.
\newblock London Mathematical Society Monographs, No. 9. Academic Press
  [Harcourt Brace Jovanovich, Publishers], London-New York, 1976.

\bibitem[HP70]{hirsch_pugh}
M.~W. Hirsch and C.~C. Pugh.
\newblock Stable manifolds and hyperbolic sets.
\newblock In {\em Global {A}nalysis ({P}roc. {S}ympos. {P}ure {M}ath., {V}ol.
  {XIV}, {B}erkeley, {C}alif., 1968)}, pages 133--163. Amer. Math. Soc.,
  Providence, R.I., 1970.

\bibitem[Jé20]{local_and_global}
M.~Jézéquel.
\newblock Local and global trace formulae for smooth hyperbolic
  diffeomorphisms.
\newblock {\em J. Spectr. Theory}, 10(1):185--249, 2020.

\bibitem[Jé22]{upper_bound_resonances}
M.~Jézéquel.
\newblock Upper bound on the number of resonances for even asymptotically
  hyperbolic manifolds with real-analytic ends.
\newblock {\em arXiv:2209.06064}, 2022.

\bibitem[KH95]{katok_hasselblatt}
A.~Katok and B.~Hasselblatt.
\newblock {\em Introduction to the modern theory of dynamical systems},
  volume~54 of {\em Encyclopedia of Mathematics and its Applications}.
\newblock Cambridge University Press, Cambridge, 1995.
\newblock With a supplementary chapter by Katok and Leonardo Mendoza.

\bibitem[Kit99a]{kitaev_corrigendum}
A.~Y. Kitaev.
\newblock Corrigendum: ``{F}redholm determinants for hyperbolic diffeomorphisms
  of finite smoothness''.
\newblock {\em Nonlinearity}, 12(6):1717--1719, 1999.

\bibitem[Kit99b]{kitaev}
A.~Y. Kitaev.
\newblock Fredholm determinants for hyperbolic diffeomorphisms of finite
  smoothness.
\newblock {\em Nonlinearity}, 12(1):141--179, 1999.

\bibitem[KM90]{kriegl_michor_real_analytic}
A.~Kriegl and P.~W. Michor.
\newblock The convenient setting for real analytic mappings.
\newblock {\em Acta Math.}, 165(1-2):105--159, 1990.

\bibitem[KM97]{kriegl_michor_global_analysis}
A.~Kriegl and Peter~W. Michor.
\newblock {\em The convenient setting of global analysis}, volume~53 of {\em
  Mathematical Surveys and Monographs}.
\newblock American Mathematical Society, Providence, RI, 1997.

\bibitem[Kom79]{komatsu_implicit}
H.~Komatsu.
\newblock The implicit function theorem for ultradifferentiable mappings.
\newblock {\em Proc. Japan Acad. Ser. A Math. Sci.}, 55(3):69--72, 1979.

\bibitem[LG86]{book_lelong_gruman}
P.~Lelong and L.~Gruman.
\newblock {\em Entire functions of several complex variables}, volume 282 of
  {\em Grundlehren der mathematischen Wissenschaften [Fundamental Principles of
  Mathematical Sciences]}.
\newblock Springer-Verlag, Berlin, 1986.

\bibitem[LT06]{liverani_tsujii}
C.~Liverani and M.~Tsujii.
\newblock Zeta functions and dynamical systems.
\newblock {\em Nonlinearity}, 19(10):2467--2473, 2006.

\bibitem[Man74]{no_new}
A.~Manning.
\newblock There are no new {A}nosov diffeomorphisms on tori.
\newblock {\em Amer. J. Math.}, 96:422--429, 1974.

\bibitem[MJ58]{morrey_embedding}
C.~B. Morrey~Jr.
\newblock The analytic embedding of abstract real-analytic manifolds.
\newblock {\em Ann. of Math. (2)}, 68:159--201, 1958.

\bibitem[Nau12]{naud_local}
F.~Naud.
\newblock The {R}uelle spectrum of generic transfer operators.
\newblock {\em Discrete Contin. Dyn. Syst.}, 32(7):2521--2531, 2012.

\bibitem[Pie87]{pietsch_book}
A.~Pietsch.
\newblock {\em Eigenvalues and {$s$}-numbers}, volume~13 of {\em Cambridge
  Studies in Advanced Mathematics}.
\newblock Cambridge University Press, Cambridge, 1987.

\bibitem[PS22]{other_example}
M.~Pollicott and B.~Sewell.
\newblock Explicit examples of resonances for {A}nosov maps of the torus.
\newblock {\em Nonlinearity}, 36:110--132, 2022.

\bibitem[Rue76]{ruelle_zeta_expanding}
D.~Ruelle.
\newblock Zeta-functions for expanding maps and {A}nosov flows.
\newblock {\em Invent. Math.}, 34(3):231--242, 1976.

\bibitem[Rug92]{rugh_correlation}
H.~H. Rugh.
\newblock The correlation spectrum for hyperbolic analytic maps.
\newblock {\em Nonlinearity}, 5(6):1237--1263, 1992.

\bibitem[Rug96]{rugh_fredholm_axiomA}
H.~H. Rugh.
\newblock Generalized {F}redholm determinants and {S}elberg zeta functions for
  {A}xiom {A} dynamical systems.
\newblock {\em Ergodic Theory Dynam. Systems}, 16(4):805--819, 1996.

\bibitem[SBJ17]{first_example}
J.~Slipantschuk, O.~F. Bandtlow, and W.~Just.
\newblock Complete spectral data for analytic {A}nosov maps of the torus.
\newblock {\em Nonlinearity}, 30(7):2667--2686, 2017.

\bibitem[SBJ22]{optimal_examples}
J.~Slipantschuk, Oscar~F. Bandtlow, and Wolfram Just.
\newblock Resonances for rational {A}nosov maps on the torus.
\newblock {\em arXiv:2211.05925}, 2022.

\end{thebibliography}

\end{document}